\documentclass[12pt]{amsart}
\usepackage{amsfonts,amssymb,color}
\usepackage[mathscr]{eucal}
\usepackage{amsmath, amsthm}
\usepackage{mathrsfs}
\usepackage{amsbsy}
\usepackage{dsfont}
\usepackage{bbm}
\usepackage{wasysym}
\usepackage[utf8]{inputenc}
\usepackage[russian,english]{babel}
\usepackage{hyperref}
\usepackage[active]{srcltx} 
\hoffset = -10mm
 \voffset = -18mm
  \textheight 240mm
   \textwidth 150mm
\input xypic
\xyoption{all}

\baselineskip = 5mm

\newcommand{\incl}{\hookrightarrow}

\newcommand{\into}{\rightarrow}
\newcommand{\isom}{\simeq}


\newcommand{\tensor}{\otimes}
\newcommand{\llb}{\left[\kern-0.15em\left[}
\newcommand{\rrb}{\right]\kern-0.15em\right]}

\newcommand{\Bei}{\text{\foreignlanguage{russian}{Б}}}

\newcommand \ZZ {{\mathbb Z}} 
\newcommand \NN {{\mathbb N}} 
\newcommand \QQ {{\mathbb Q}} 
\newcommand \PR {{\mathbb P}} 
\newcommand \AF {{\mathbb A}} 
\newcommand \GG {{\mathbb G}} 

\newcommand \bcA {{\mathscr A}}
\newcommand \bcB {{\mathscr B}}
\newcommand \bcC {{\mathscr C}}
\newcommand \bcD {{\mathscr D}}
\newcommand \bcE {{\mathscr E}}

\newcommand \bcH {{\mathscr H}}

\newcommand \bcL {{\mathscr L}}

\newcommand \bcS {{\mathscr S}}
\newcommand \bcT {{\mathscr T}}

\newcommand \bcX {{\mathscr X}}
\newcommand \bcY {{\mathscr Y}}
\newcommand \bcZ {{\mathscr Z}}

\newcommand \mathA {\mathcal{A}}

\newcommand \mathG {\mathcal{G}}

\newcommand \mathJ {\mathcal{J}}

\newcommand \mathP {\mathcal{P}}
\newcommand \mathQ {\mathcal{Q}}


\newcommand \bfW {\mathbf{W}}

\newcommand \Spec {{\rm {Spec}}}

\newcommand \Sym {{\rm {Sym}}}

\newcommand \cone {{\rm {cone}}}

\newcommand \tr {{\rm {tr}}}

\newcommand \im {{\rm im}\,} 

\newcommand \Hom {{\rm Hom}}
\newcommand \End {{\rm {End}}}
\newcommand \id {{\rm {id}}}

\newcommand \Aut {{\rm Aut}}

\newcommand \cor {{\rm {cor}}}
\newcommand \res {{\rm {res}}}
\newcommand \Gal {{\rm {Gal}}}

\newcommand \dom {{\rm {dom}}\,}
\newcommand \codom {{\rm {codom}}\,}
\newcommand \Map {{\rm {Map}}\,}

\newcommand \cell {{\rm {cell}}}
\newcommand \inj {{\rm {inj}}}
\newcommand \proj {{\rm {proj}}}

\newcommand \ob {\rm {ob}\,}
\newcommand \one {\mathds{1}}

\newcommand \shf {{\rm {a}}}

\newcommand \hocolim {{\rm hocolim}}

\newcommand \Nis {{\rm Nis}}

\newcommand \Hu {{\it {Hu}}}

\newcommand \Ab {{\mathcal{A}b}}
\newcommand \coeq {{\rm {coeq}}}

\newcommand \Const {{\rm {Const}}}

\newcommand \Lan {{\rm {Lan}}}
\newcommand \qfh {{\rm {qfh}}}
\newcommand \stab {{\rm {stab}}}

\newcommand \Nm {{\rm {Nm}}}

\newcommand \cl {{\rm {cl}}}

\newcommand{\bigslant}[2]{{\raisebox{.2em}{$#1$}\left/\raisebox{-.2em}{$#2$}\right.}}

\newcommand \red {{\rm {red}}}
\newcommand \diag {{\rm {diag}}}
\newcommand \tBox {\tilde{\Box}}


\newcommand \DM {{\sf DM}}

\newcommand \op {{\rm{op}}}
\newcommand \pr {{\rm{pr}}}

\newcommand \Set {{\mathscr S\! et}}
\newcommand \SSet {\Delta^{\op}\Set}
\newcommand \Shv {{Shv}}
\newcommand \Deltaop {\Delta^{\op}}

\newcommand \Sch {{\mathscr S\! ch}}
\newcommand \Sm {{\mathscr S\! m}}
\newcommand \Pre {{\it Pre}}
\newcommand \Ho {{\rm Ho}\,}

\newcommand \Sp {{\rm Spt}}
\newcommand \Ev {{\rm {Ev}}}
\newcommand \SH {{\mathcal S\! \mathcal H}}
\newcommand \tor {{\rm {tor}}}
\newcommand \sym {{\rm {sym}}}

 \DeclareMathOperator*{\colim}{colim}
 
\begin{document}
\baselineskip = 5mm

\newtheorem{theorem}{Theorem}
\newtheorem{lemma}[theorem]{Lemma}
\newtheorem{corollary}[theorem]{Corollary}
\newtheorem{proposition}[theorem]{Proposition}
\newtheorem{remark}[theorem]{Remark}
\newtheorem{definition}[theorem]{Definition}
\newtheorem{conjecture}[theorem]{Conjecture}
\newtheorem{example}[theorem]{Example}
\newtheorem{question}[theorem]{Question}
\newtheorem{warning}[theorem]{Warning}
\newtheorem{assumption}[theorem]{Assumption}
\newtheorem{fact}[theorem]{Fact}
\newtheorem{crucialquestion}[theorem]{Crucial Question}
\newcommand \lra {\longrightarrow}
\newcommand \hra {\hookrightarrow}
\def\blue {\color{blue}}
\def\green {\color{green}}
\newenvironment{pf}{\par\noindent{\em Proof}.}{\hfill\framebox(6,6)
\par\medskip}
\title[Geometric symmetric powers]
{\bf Geometric symmetric powers in the motivic homotopy category}
\author{Joe Palacios Baldeon}

\date{14 April 2015}   



\begin{abstract}
Symmetric powers of quasi-projective schemes can be extended, in terms of left Kan extensions, to geometric symmetric powers of motivic spaces. In this paper, we study geometric symmetric powers and compare with various symmetric powers in the unstable and stable $\AF^1$-homotopy category of schemes over a field. 
\end{abstract}

\subjclass[2010]{14F42, 18D10, 18G55}




\keywords{geometric symmetric powers, $\lambda$-structures,  Nisnevich sheaves, $\AF ^1$-homotopy theory, admissible categories.}

\maketitle

\tableofcontents

\section{Introduction}
\label{s-intro}

In the last two decades the development of the $\AF^1$-homotopy theory of schemes has had a noticeable impact in algebraic geometry, particularly in the successful resolution of the Bloch-Kato conjecture \cite{SusVoe99}.  In topology, the Dold-Thom theorem says that the group completion of the infinite symmetric power of a pointed connected $CW$-complex is weak equivalent to a product of the Eilenberg-MacLane spaces associated to its homology groups. Symmetric powers has been used to encode (co)-homological information of motivic spaces, as the Dold-Thom's theorem predicts.


In general terms, motivic spaces depends of two coordinates: one simplicial coordinate and one geometric coordinate, i.e. the category of schemes.  This suggests the possibility of defining symmetric powers of motivic spaces in a different approach than the categoric ones. In\cite{Voe10}, Voevodsky proved a motivic version of the Dold-Thom's theorem. The symmetric powers considered in his work are what we call geometric symmetric powers, as they are induced from the geometric coordinate.  

 An admissible category \footnote{ $f$-admissible in \cite{Voe10}.} is a subcategory of schemes over a base field that is closed under taking quotients of schemes by finite groups and contains the affine line as an interval. Geometric symmetric powers are Kan extensions of the symmetric powers of schemes considered in an admissible category \cite{Voe10}.  Categoric symmetric powers are the quotients of Cartesian powers of motivic spaces by the action of symmetric groups. A $\lambda$-structure on a model category, or in its homotopy category, is a categoric version of a $\lambda$-structure on commutative rings. As functors, categoric symmetric powers preserve $\AF ^1$-weak equivalences, and their left derived functors provide a $\lambda$-structure in cofibre sequences on the pointed and unpointed (unstable) motivic homotopy categories of an admissible category, \cite{GoGu}. The aim of the present work is to develop a systematic study of symmetric powers in the unstable and stable homotopy category of an admissible category over a field $k$.

Our first goal is to prove that geometric symmetric powers provide a $\lambda$-structure on the pointed unstable motivic homotopy category of an admissible category. For this purpose we first consider the projective cofibrant resolution on the category of simplicial Nisnevich sheaves on an admissible category, deduced from the small object argument applied to the class of morphisms resulting by multiplying representable sheaves with the generating cofibrations of the category of simplicial sets. This allows us to deduce that every motivic space is $\AF ^1$-weak equivalent to a simplicial sheaf, given termwise by coproducts of representable sheaves, as it was shown by Voevodsky in the context of radditive functors, see \cite{Voe10,Voe10-5}. The key point consists in fact that geometric symmetric powers of morphisms of ind-representable simplicial sheaves have canonical filtrations, called K\"unneth towers, and they provide a $\lambda$-structure on the motivic homotopy category. This gives the following result (Theorem \ref{radmainth1} in the text):

\medskip

\begin{itemize}

\item[]{}
{\it The left derived geometric symmetric powers provide a $\lambda $-structure on the pointed unstable motivic homotopy category of an admissible category of quasi-projective schemes over a field.
}
\end{itemize}

   \medskip

 On the other hand, in both unstable and stable case, there is a natural transformation from the categoric symmetric power $\Sym^n$ to the geometrical symmetric power $\Sym^n_g$. Let $E$ be a functor from an admissible category to the unstable (or stable) $\AF^1$-homotopy category on an admissible category. An interesting problem is to investigate whether the canonical morphisms $\vartheta^m_X:\Sym^n E(X)\into \Sym^n_gE(X)$ are isomorphisms for schemes $X$ in an admissible category. It turns out that, in the unstable case, $\vartheta^n_X$ is not always an isomorphism, for example when $X$ is the $2$-dimensional affine space $\AF^2$ and $n=2$, cf. Proposition \ref{papxz301}. Our second goal is to show that these canonical morphisms become isomorphisms in the rational stable $\AF^1$-homotopy category of schemes. However, the same result is not true on the stable $\AF^1$-homotopy category of schemes with integral coefficients (see Remark \ref{Remm74}). 

Let us explain in few words our approach for the second goal. The rationalization of a stable homotopy category causes the loss of information of the torsion objects. However, it allows us to think a rational stable homotopy category as a derived category of chain complexes, and the latter is, philosophically, more accessible to understand. Morel showed that these phenomena happen in the motivic set-up \cite{Mor04}. More precisely, the stable $\AF^1$-homotopy category of schemes is equivalent to the triangulated category of unbounded motives, cf. {\em loc.cit}.

An important ingredient to be used in this text is the notion of transfer of a morphism.  This notion appears naturally in algebraic topology. For instance, let us consider a positive integer $n$ and a $n$-sheeted covering $\pi:\tilde{X}\into X$. Let $\pi^*:H^r(X;\ZZ)\into H^r(\tilde{X};\ZZ)$ be the induced homomorphism of cohomology groups, for some $r\in\NN$. A transfer for $\pi^{\ast}$ is a homomorphism $\tr:H^r(\tilde{X};\ZZ)\into H^r(X;\ZZ)$ such that the composite $\tr\circ \pi^*$ is the multiplication by $n$.  Voevodosky proved the existence of transfers morphisms of $\qfh$-sheaves induced by finite surjective morphisms of normal connected schemes. This implies the existence transfers of morphisms of $\qfh$-motives induced by such finite morphisms of schemes, see \cite{Voe96}.  We use this notion in order to get transfers, for morphisms induced the canonical morphism $X^n\into X^n/\Sigma_n$ for a quasi-projective scheme $X$, in the rational stable $\AF^1$-homotopy category.
 Let $T$ be the projective line $\PR^1$ pointed at $\infty$, and let $E_{\QQ}$ be the canonical functor from the category of quasi-projective schemes over a field $k$ to the rational stable  $\AF^1$-homotopy category of $T$-spectra. We denote by $\Sym^n_{T}$ the $n$th fold categoric symmetric power on the category of symmetric $T$-spectra.   
Since the rational stable homotopy category of schemes is pseudo-abelian, one can use projectors in order to define projector symmetric powers, denoted by $\Sym^n_{\pr}$. 
As a result, we obtain that, if $-1$ is a sum of squares, then the categoric, geometric and projector symmetric powers of a quasi-projective scheme are isomorphic in rational stable $\AF^1$-homotopy category. More precisely, our result is the following 
(Theorem \ref{finth1} in the text):
\medskip
\begin{itemize}
\item[]
{\it 
Let $k$ be a field such that $-1$ is a sum of squares in it. Then, for any quasi-projective $k$-scheme $X$, we have the following isomorphisms
$$L\Sym^n_TE_{\QQ}(X)\isom  E_{\QQ}(\Sym^n X)\isom \Sym^n_{\pr}E_{\QQ}(X)\,.$$ 
}
\end{itemize}
\medskip          
The fourth type of symmetric power, that is defined as a homotopy quotient under the action of a symmetric group, is called homotopy symmetric power, and is denoted by $\Sym^n_{h,T}$. It turns out that the natural transformation from the $n$th fold homotopy symmetric power to the $n$th categoric symmetric power is an isomorphism on the stable $\AF^1$-homotopy category (see \cite{GoGu11}). Consequently, for a quasi-projective $k$-scheme $X$, the $n$th fold homotopy symmetric power $\Sym^n_{h,T}E_{\QQ}(X)$ is isomorphic to $L\Sym^n_TE_{\QQ}(X)$. Thus, we get a comparison of four types of  symmetric powers in the rational stable $\AF^1$-homotopy category.

Although in this paper we are limited to work only over a base field, our constructions might be generalized to a broader class of nice base schemes. It would be interesting to investigate how to construct categoric (resp. geometric) symmetric powers, in a more general framework, namely on premotivic categories (resp. premotivic categories with geometric sections) defined in \cite{C-D13}; but this question is beyond the author's knowledge.  
   \medskip

The paper is organized as follows. In Section \ref{Prelim}, we give a survey of admissible categories and basic properties of simplicial Nisnevich sheaves. Section \ref{geomsymu} is devoted to the study of geometric symmetric powers and K\"unneth towers associated to cofibre sequences. Section \ref{weakequiv} concerns the $\AF ^1$-localization of geometric symmetric powers. In section  \ref{lamstr} we recall the notion of a $\lambda $-structure and prove one of the main result (Theorem \ref{radmainth1}). In section \ref{MorphismL}, we construct a canonical morphism of $\lambda$-structures from categoric symmetric powers to geometric symmetric powers. In Section \ref{Prelims}, we give an outline on the category of spectra and the notion of rational stable $\AF^1$-homotopy category. Section \ref{geomsym} presents the construction of geometric symmetric powers in the category of motivic symmetric spectra. Section
\ref{lammdctt} concerns the $\lambda$-structure of geometric symmetric powers in the stable set-up. 
Section \ref{Transfers} is devoted to the formalism of transfers. In section \ref{Mainsec}, we prove our main result in the stable set-up, see Theorem \ref{finth1}. Finally, in \ref{Appen}, we show that the categoric symmetric powers and geometric symmetric powers does not coincide on the unstable $\AF^1$-homotopy category of schemes. However, on sheaves represented by Galois field extensions, the above morphism of $\lambda$-structures gives isomorphisms on sections of the form $\Spec(L)$, where $L$ is a finite Galois extension over the base field, see Proposition \ref{abs.prgx}. 

\medskip

{\sc Acknowledgements.} I am grateful to Vladimir Guletski\u \i \,  for his inspiring explanations and helpful suggestions, and to Anwar Alameddin for several discussions. The paper is written in the framework of the EPSRC grant EP/I034017/1 ``Lambda-structures in stable categories". I am grateful to EPSRC for funding my PhD project.

%

\section{Admissible categories and Nisnevich sheaves}
\label{Prelim}

 Throughout the paper $k$ will denote a field of arbitrary characteristic and we write $\AF^1$ for the affine line over $\Spec(k)$. Let $\Sch/k$ be the category of schemes over $k$.  For two $k$-schemes $X$ and $Y$, we write $X\times Y$ to mean the Cartesian product $X\times_{\Spec(k)} Y$.   We also denote by $X\amalg Y$ the disjoint union of $X$ and $Y$, as schemes. The point $\Spec(k)$ and the empty scheme $\emptyset$, as objects of $\Sch/k$, are the terminal and initial object respectively.   

 We say that a full subcategory $\bcC$ of $\Sch/k$ is {\em admissible}, if it satisfies the following five axioms: ({\it i}) the point $\Spec(k)$ and the affine line $\AF^1$ are objects of $\bcC$, ({\it ii}) for any two objects $X$ and $Y$ of $\bcC$, the product $X\times Y$ is in $\bcC$, ({\it iii}) for any two objects $X$ and $Y$ of $\bcC$, the disjoint union $X\amalg Y$ is in $\bcC$, ({\it iv}) if $U\into X$ is an \'etale morphism of  $k$-schemes such that $X$ is in $\bcC$, then $U$ is in $\bcC$, ({\it v}) If $G$ is a finite group acting on an object $X$ of $\bcC$, then the quotient $X/G$ is in $\bcC$ whenever it exists as $k$-scheme.  In \cite{Voe10},  this definition appears as $f$-admissible category.    
 
A typical example of an admissible category is the category of quasi-projective schemes over $k$. 

\begin{remark}{\em
 By definition every admissible category of schemes over a field contains the affine line $\AF^1$, but it is not true that all admissible categories contain the projective line $\PR^1$ over a field. For example, the subcategory of normal quasi-affine schemes over a perfect field is admissible, but the projective line $\PR^1$ is not quasi-affine. }
\end{remark}
%
Unless indicated otherwise, $\bcC$ will be a small, admissible category contained in the category of quasi-projective schemes over $k$. These conditions will allow us to use Voevodsky's results in \cite {Del09,Voe10-214}. The smallness condition on $\bcC$ permits to use Kan extensions, expressed in terms of small colimits, when we define geometric symmetric powers of simplicial sheaves. In the sequel, all colimits will mean small colimits.  
\bigskip 

 An {\em elementary distinguished square} in $\bcC$ is a Cartesian square of the form
\begin{displaymath}\label{papeq10}
\mathQ:\qquad\vcenter{\xymatrix@C=10ex@R=10ex{Y\ar[d]\ar[r]&V\ar[d]^p\\
U\ar[r]_j&X}
}
\end{displaymath}
where $j$ is an open embedding and $p$ is an \'etale morphism such that the induced morphism $p^{-1}(X-U)_{\red}\into (X-U)_{\red}$ of reduced schemes is an isomorphism. We recall that a family of \'etale morphisms $\{f_i:U_i\into X\}_{i\in I}$ of $\bcC$ is a {\em Nisnevich covering} if for every point $x\in X$ there exists an index $i\in I$ and a point $y\in U_i$ such that $f_i(y)=x$ and the corresponding morphism of residual fields $k(x)\into k(y)$ is an isomorphism. 
The Nisnevich topology on $\bcC$ can be described as the smallest Grothendieck topology generated by families of the form $\{j:U\into X,\,p:V\into X\}$ associated to elementary distinguished squares of the form \eqref{papeq10}, see \cite[page~1400]{Voe10-214}.  We denote by $\bcC_{\Nis}$ the site consisting of $\bcC$ and the Nisnevich topology on it.  

The category of presheaves $\Pre(\bcC)$ on $\bcC$ is the category of functors from the opposite category $\bcC^{\op}$ to the category of sets. A presheaf $F$ on $\bcC$ is a Nisnevich sheaf if and only if for each elementary distinguished square \eqref{papeq10}, the square of sets 
$$
F(\mathQ):\qquad\vcenter{\xymatrix@C=10ex@R=10ex{F(X)\ar[d]_{F(j)}\ar[r]^{F(p)}&F(V)\ar[d]\\
F(U)\ar[r]& F(Y)}
}
$$
 is Cartesian, see \cite{MoVoe99}. In the sequel, $\bcS$ will denote the 
category of sheaves on $\bcC_{\Nis}$. 

We denote by $h$ the Yoneda embedding of $\bcC$ into the category of presheaves $\Pre(\bcC)$.
 Since representable sheaves are Nisnevich sheaves, sometimes we use the same letter $h$ to denote the full embedding of $\bcC$ into $\bcS$.  We recall that the forgetful functor from $\bcS$ to $\Pre(\bcC)$ has a left adjoint which we denote by $\shf_{\Nis}$. The category $\bcS$ is complete and cocomplete, its terminal object is $h_{\Spec(k)}$, and filtered colimits of Nisnevich sheaves in the category of presheaves are Nisnevich sheaves. Let $\{F_i\}_{i\in I}$ be a family of objects in $\bcS$.  The coproduct of this family in $\bcS$ is the sheafification $\shf_{\Nis} \left(\coprod_{i\in I} F_i\right)$
of the coproduct  $\coprod_{i\in I} F_i$ in $\Pre(\bcC)$.  We abusively denote it by $\coprod_{i\in I}F_i$, if no confusion arises.

In this paper, we shall consider the injective model structure on the category of simplicial sheaves $\Delta^{\op}\bcS$, where the class of cofibrations is the class of monomorphisms, a weak equivalence is a stalk-wise weak equivalence and fibrations are morphisms having the right lifting property with respect to trivial cofibrations. 

The category $\Delta^{\op}\bcS$ is a simplicial category.  For a simplicial sheaf $\bcX$ and a simplicial set $K$, we define the product $\bcX\times K$ to be the simplicial sheaf, such that  for every  $n\in\NN$, its term $(\bcX\times K)_n$ is defined to be the coproduct $\coprod_{K_n}\bcX_{n}$ in $\bcS$.   For a couple of sheaves $(\bcX,\bcY)$, the function complex $\Map(\bcX,\bcY)$ is defined to be the simplicial set which assigns an object $[n]$ of $\Delta$ to the set 
$\Hom_{\Delta^{\op}\bcS}(\bcX\times\Delta[n],\bcY)\,.$
Then, for every pair of simplicial sheaves $(\bcX,\bcY)$ and every simplicial set $K$, one has a natural bijection, 
\begin{equation}\label{papex15}
\Hom_{\Delta^{\op}\bcS}(\bcX\times K,\bcY)\isom\Hom_{\SSet}\left(K, \Map(\bcX,\bcY)\right)\,,
\end{equation}
which is functorial in $\bcX$, $\bcY$ and $K$. 

For each object $U$ of $\bcC$, we denote by $\Delta_U[0]$ the constant functor from $\Delta^{\op}$ to $\bcS$ with value $h_U$. Sometimes, we shall simply write $h_U$ instead of $\Delta_U[0]$, if no confusion arises.
For each $n\in\NN$ and each object $U$ of $\bcC$, we denote by $\Delta_U[n]$ the simplicial sheaf $\Delta_U[0]\times\Delta[n]$. Similarly, we denote by $\partial\Delta_U[n]$ the simplicial sheaf $\Delta_U[0]\times\partial\Delta[n]$.

Notice that Yoneda lemma provides an isomorphism $\Map(\Delta_U[0],\bcY)\isom \bcY(U)$  for every object $U$ of $\bcC$ and every simplicial sheaf $\bcY$. Hence, replacing $\bcX$ by $\Delta_U[0]$ in \eqref{papex15}, we obtain an isomorphism 
\begin{equation}\label{papex16}
\Hom_{\Delta^{\op}\bcS}(\Delta_U[0]\times K,\bcY)\isom\Hom_{\SSet}\left(K, \bcY(U)\right)\,.
\end{equation}

We write $\bcC_+$ to denote the full subcategory of the pointed category $\bcC_{\ast}$ generated by objects of the form $X_+:=X\amalg \Spec(k)$. We denote by $\bcS_{\ast}$ the pointed category of $\bcS$. The symbols $\vee$ and $\wedge$ denote, respectively, the coproduct and the smash product in $\bcS_{\ast}$. An elementary square in $\bcC_+$ is a square of the form $\mathQ_+$, where $\mathQ$ is an elementary distinguished square in $\bcC$. A family $\{(f_i)_+\}_{i\in I}$ of morphisms in $\bcC_+$ is a {\em Nisnevich covering} if the family $\{f_i\}_{i\in I}$ is a Nisnevich covering in $\bcC$. Let $\bcC_{\Nis,+}$ be the site consisting of $\bcC_+$ and the Nisnevich topology on it. The category $\Shv(\bcC_{\Nis,+})$ is equivalent to the pointed category $\Shv(\bcC_{\Nis})_{\ast}$.

We denote by $\bcH(\bcC_{\Nis})$ (resp. by $\bcH(\bcC_{\Nis},\AF^1)$) the homotopy category of $\Deltaop\bcS$ localized with respect to weak equivalences (resp. $\AF^1$-weak equivalences). 
We write $\bcH_{\ast}(\bcC_{\Nis})$ (resp. by $\bcH_{\ast}(\bcC_{\Nis},\AF^1)$) for the homotopy category of $\Deltaop\bcS_{\ast}$ localized with respect to weak equivalences (resp. $\AF^1$-weak equivalences) in the pointed model structure. 

\begin{remark}\label{remffcx2}
\em The category $\bcH(\bcC_{\Nis,+})$ is equivalent to the pointed homotopy category $\bcH_{\ast}(\bcC_{\Nis})$. Similarly, the category $\bcH(\bcC_{\Nis,+},\AF^1)$ is equivalent to the pointed homotopy category $\bcH_{\ast}(\bcC_{\Nis},\AF^1)$. 
\end{remark}

\medskip

 In the next paragraph, we recall the notion of $\bar{\Delta}$-closed classes introduced by Voevodsky, see \cite{Voe10-5}. They allow us to express the class of $\AF^1$-weak equivalences as a certain class generated by local weak equivalences and by $\AF^1$-homotopies given in terms of  projections (see Example \ref{papex17}). 
 
 Let $\bcD$ be a category with  finite coproducts. A class of morphisms $E$ of $\Delta^{\op}\bcD$ is called {\em $\bar{\Delta}$-closed}, if it satisfies the following four axioms: ({\it i}) $E$ contains all $\Delta[1]$-homotopy equivalences in $\Delta^{\op}\bcD$,  ({\it ii})  $E$ has the $2$-out-of-$3$ property,  ({\it iii})  if $f$ is a morphism of bi-simplicial objects on $\bcD$ such that for every $n\in\NN$, either $f([n], -)$ or $f(-,[n])$ belongs to $E$, then the diagonal morphism of $f$ belongs to $E$, and  ({\it iv}) $E$ is closed under filtered colimits.  For any class of morphisms $E$ in $\Delta^{\op}\bcD$, we denote by $cl_{\bar{\Delta}}(E)$ the smallest $\bar{\Delta}$-closed class containing the class $E$.

\begin{example}\label{papex17}
{\em The class of $\AF^1$-weak equivalences in $\Delta^{\op}\bcS$ coincides with the $\bar{\Delta}$-class $cl^{}_{\bar{\Delta}}(\bfW_{\Nis}\cup \mathP_{\AF^1})$, where $\bfW_{\Nis}$ is the class of local equivalences with respect to the Nisnevich topology and  $\mathP_{\AF^1}$ is the class of projections from $\Delta_X[0]\times \Delta_{\AF^1}[0]$ to $\Delta_X[0]$, for $X\in\bcC$ (see \cite[Th.~4, page 378]{Del09}). 
Similarly, the class of $\AF^1$-weak equivalences in $\Delta^{\op}\bcS_{\ast}$ coincides with the class $cl^{}_{\bar{\Delta}}(\bfW_{\Nis,+}\cup \mathP_{\AF^1,+})$, where $\bfW_{\Nis,+}$ is the image of $\bfW_{\Nis}$ through the functor which sends a simplicial sheaf $\bcX$ to the pointed simplicial sheaf $\bcX_+$ and $\mathP_{\AF^1,+}$ is the image of $\mathP_{\AF^1}$ through the same functor. 
}
\end{example}

A morphism from $A$ to $X$ in $\bcD$ is called {\em coprojection}, if there exists an object $Y$ of $\bcD$ such that this morphism is isomorphic to the canonical morphism from $A$ to $A\amalg Y$. A morphism $f$ in $\Deltaop\bcD$ is called a {\em termwise coprojection}, if for each natural $n$, its term  $f_n$ is a coprojection. 

\begin{example}\label{radxcoprjcxx}
{\em Let $\bcX$ be a simplicial sheaf on $\bcC_{\Nis}$. If $K\subset L$ is an inclusion of simplicial sets, then the induced morphism from  
$\bcX\times K$ to $\bcX\times L$ is a termwise coprojection. Indeed, for each natural $n$, the $n$-simplex $(\bcX\times K)_n$ is equal to the coproduct of sheaves $\coprod_{K_n}\bcX_n$, similarly,  $(\bcX\times L)_n$ is equal to $\coprod_{L_n}\bcX_n$. In view of the inclusion $K_n\subset L_n$, we have a canonical isomorphism 
$$\coprod_{L_n}\bcX_n\isom \left(\coprod_{K_n}\bcX_n\right)\amalg\left(\coprod_{L_n\setminus K_n}\bcX_n\right)\,,$$ 
which allow us to deduce that $(\bcX\times K)_n\into (\bcX\times L)_n$ is a coprojection for all $n\in\NN$. 
}
\end{example}

We write $\omega$ for the countable ordinal. This notation will be used in Lemma \ref{radtermwise}, Corollary \ref{radtermwise1} and Corollary \ref{radcorsmll}.  
\begin{lemma}\label{radtermwise}
 Let $\bcD$ be a cocomplete category. Then, the class of termwise coprojections in $\Deltaop\bcD$ is stable under pushouts, small coproducts and countable transfinite compositions.
\end{lemma}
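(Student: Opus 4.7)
The plan is to reduce the three stability claims to the analogous statements about coprojections in $\bcD$ itself, using the fact that pushouts, small coproducts, and $\omega$-indexed (more generally, filtered) colimits in the functor category $\Deltaop\bcD$ are computed termwise. Thus, if one shows that coprojections in $\bcD$ are stable under pushouts, small coproducts and countable transfinite compositions, the same properties transfer levelwise to termwise coprojections in $\Deltaop\bcD$.

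For stability under pushouts, given a coprojection of the form $A\into A\amalg Y$ in $\bcD$ and an arbitrary morphism $A\into B$, a direct application of the universal property of coproducts identifies the pushout with $B\amalg Y$, and the induced morphism $B\into B\amalg Y$ is again a coprojection by definition. Stability under small coproducts is equally formal: given a family $\{f_i\colon A_i\into A_i\amalg Y_i\}_{i\in I}$, the associativity of the coproduct yields
$$\coprod_{i\in I}(A_i\amalg Y_i)\;\isom\; \Bigl(\coprod_{i\in I}A_i\Bigr)\amalg\Bigl(\coprod_{i\in I}Y_i\Bigr),$$
so $\coprod_i f_i$ is identified with the canonical inclusion of the first summand.

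For countable transfinite compositions, I would consider a sequence of coprojections
$$A_0\into A_1\into A_2\into\cdots$$
together with a choice of complementary objects $Y_n$ satisfying $A_{n+1}\isom A_n\amalg Y_n$. An induction on $n$, again using associativity of coproducts, produces compatible isomorphisms $A_n\isom A_0\amalg Y_0\amalg\cdots\amalg Y_{n-1}$. Passing to the colimit over $\omega$, and using that coproducts commute with filtered colimits in $\bcD$ (which holds in any cocomplete category for this specific diagram shape, since the colimit of a sequence of finite coproducts with added summands at each step is the countable coproduct), one obtains $\colim_n A_n\isom A_0\amalg\coprod_{n\in\NN}Y_n$, under which the transfinite composition $A_0\into\colim_n A_n$ corresponds to the inclusion of the first summand, i.e.\ a coprojection.

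The main obstacle I anticipate is the bookkeeping needed in the last step: namely, verifying that the inductive isomorphisms $A_n\isom A_0\amalg\coprod_{i<n}Y_i$ are natural with respect to the structure maps of the sequence, so that the identification of the colimit with $A_0\amalg\coprod_n Y_n$ is canonical rather than merely abstract. Once that coherence is in place, the three claims combine termwise to give the stability properties for $\Deltaop\bcD$.
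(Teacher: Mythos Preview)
Your proposal is correct and follows essentially the same approach as the paper: both reduce to the corresponding statements for coprojections in $\bcD$ via the termwise computation of colimits in $\Deltaop\bcD$, and then handle pushouts, coproducts, and countable compositions by exactly the associativity and induction arguments you describe. The coherence concern you flag in the last step is real but routine; the paper simply asserts the inductive identification $X_n\isom\coprod_{i=0}^n X'_i$ without dwelling on it.
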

\begin{proof}
Since colimits in $\Deltaop\bcD$ are termwise, it is enough to prove that the class of coprojections in $\bcD$ is closed under ({\it  a}) pushouts, ({\it  b}) arbitrary coproducts and ({\it  c}) countable transfinite compositions. Indeed, ({\it  a}) follows from the fact that the pushout square of a coprojection $i_A:A\into A\amalg Y$ and a morphism $f:A\into B$ is a cocartesian square of the form
 $$\xymatrix@C=10ex@R=10ex{A\ar[d]_f\ar[rr]^-{i_{A}}&&A\amalg Y\ar[d]^{f\amalg \id_{Y}}\\
 B\ar[rr]_-{i_{B}}&&B\amalg Y}$$
where the bottom horizontal and the right vertical arrows are the canonical morphisms.  To prove ({\it  b}), we give a family of canonical coprojections $\{A_i\into A_i\amalg X_i\}_{i\in I}$, where $I$ is a set of indices. We can assume that $I$ is an ordered set. Then,  the coproduct $\coprod_{i\in I} (A_i\amalg X_i)$ is isomorphic to the coproduct $\left(\coprod_{i\in I} A_i\right)\amalg\left( \coprod_{i\in I}X_i\right)$. This isomorphism allow us to deduce ({\it  b}). 
Finally, the class of coprojections is closed under countable coprojections because one can deduce that any countable sequence $X_0\into X_1\into X_2\into\cdots$ ($n<\omega$) has terms $X_n$ of the form $\coprod_{i=0}^n{X'}_i$ for all $n<\omega$, where ${X'}_0=X_0$. Therefore, its tranfinite composition coincides with the canonical morphism from $X'_0$ to $\coprod_{i<\omega }X'_i$. This shows ({\it  c}) . 
\end{proof}

 We denote by $\bar{\bcC}$ the full subcategory of $\bcS$ generated by objects which are isomorphic to coproducts of representable functors in $\bcS$.
The full embedding of $\bar{\bcC}$ into $\bcS$ induces a full embedding of $\Delta^{\op}\bar{\bcC}$ into $ \Delta^{\op}\bcS$. 
\bigskip

\begin{corollary}\label{radtermwise1}
Let $I$ be a set of morphisms in $\Delta^{\op}\bar{\bcC}$ consisting of termwise coprojections. Then, any countable transfinite composition of pushouts of coproducts of elements of $I$ is a termwise coprojection with terms of the form $\bcY_n\into \bcY_n\amalg \bcZ_n$, where $\bcZ_n$ is in $\bar{\bcC}$, for $n\in\NN$.  
\end{corollary}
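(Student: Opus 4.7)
The plan is to invoke Lemma \ref{radtermwise} to secure the termwise coprojection property, and then separately track the ``new part'' of the coprojection at each simplicial level to verify it lies in $\bar{\bcC}$. The key background fact I would record first is that $\bar{\bcC}$ is stable under arbitrary small coproducts in $\bcS$: a coproduct of coproducts of representables is again a coproduct of representables.

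Next I would verify the statement separately for each of the three operations, inducting on simplicial degree $n$. For a coproduct $\coprod_{\alpha} f^{(\alpha)}$ of morphisms $f^{(\alpha)}\colon \bcX^{(\alpha)}\into \bcY^{(\alpha)}$ in $I$, write each $f^{(\alpha)}_n$ as the canonical morphism $\bcX^{(\alpha)}_n\into \bcX^{(\alpha)}_n\amalg \bcW^{(\alpha)}_n$ with $\bcW^{(\alpha)}_n$ in $\bar{\bcC}$; then the $n$th term of the coproduct morphism is the coprojection with new part $\coprod_\alpha \bcW^{(\alpha)}_n\in \bar{\bcC}$. For a pushout of such a coproduct along an arbitrary morphism $g$, termwise pushouts of the coprojection $A_n\into A_n\amalg \bcW_n$ along $g_n\colon A_n\into B_n$ yield $B_n\into B_n\amalg \bcW_n$ (as in the square displayed in the proof of Lemma \ref{radtermwise}), so the new part $\bcW_n$ is unchanged and remains in $\bar{\bcC}$.

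For the transfinite composition step, I would consider a sequence $\bcY_{(0)}\into \bcY_{(1)}\into \bcY_{(2)}\into \cdots$ of pushouts of coproducts of elements of $I$, already known by the previous two cases to be termwise coprojections with new parts $\bcW^{(j)}_n$ in $\bar{\bcC}$, i.e. $(\bcY_{(j+1)})_n\isom (\bcY_{(j)})_n\amalg \bcW^{(j)}_n$. Since colimits in $\Deltaop\bcS$ are computed termwise and small colimits of sheaves are computed as sheafifications of the presheaf colimit, unwinding the telescope as in part ({\it c}) of the proof of Lemma \ref{radtermwise} identifies the transfinite composition at level $n$ with the canonical coprojection
$$(\bcY_{(0)})_n\into (\bcY_{(0)})_n\amalg \coprod_{j<\omega}\bcW^{(j)}_n\,.$$
The new part $\coprod_{j<\omega}\bcW^{(j)}_n$ lies in $\bar{\bcC}$ by the coproduct stability noted above, completing the argument.

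The only delicate point, and the one that requires some care rather than real difficulty, is the consistent choice of decomposition of each coprojection as the canonical inclusion into a coproduct: coprojections are defined only up to isomorphism, so one must fix representatives at each stage and verify that pushouts and colimits of these representatives continue to be recognisable as the canonical coprojections into coproducts of objects of $\bar{\bcC}$. Once this bookkeeping is in place, the three cases above combine formally to give the statement.
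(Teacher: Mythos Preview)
Your proposal is correct and follows essentially the same approach as the paper's proof: both invoke Lemma \ref{radtermwise} to handle the three operations (coproduct, pushout, countable transfinite composition) in turn, tracking that the ``new part'' of each termwise coprojection stays in $\bar{\bcC}$ because $\bar{\bcC}$ is closed under coproducts and the new part is preserved under pushout. Your version is slightly more explicit about the bookkeeping of choosing decompositions, but the argument is the same.
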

\begin{proof}
Let $\bcX:\omega\into\Delta^{\op}\bcS$
be a $\omega$-sequence such that for each $n<\omega$ the morphism $\bcX_n\into \bcX_{n+1}$ is a pushout of coproducts of elements of $I$. Since $I$ consists of termwise coprojections in $\Delta^{\op}\bar{\bcC}$, by Lemma \ref{radtermwise}, the coproduct of elements of $I$ is a termwise coprojection  in $\Delta^{\op}\bar{\bcC}$. Since termwise coprojection are closed under pushouts, we deduce that the terms of each morphism  $\bcX_n\into \bcX_{n+1}$ are canonical morphisms of the form $(\bcX_n)_i\into (\bcX_{n})_i\amalg \bcY_{n,i}$, where $\bcY_{n,i}$ is an object in $\bar{\bcC}$, for $i\in\NN$. Finally, by the same Lemma, we conclude the each term of the transfinite composition of $\bcX$ is a canonical morphism of the form $(\bcX_0)_i\into (\bcX_{0})_i\amalg \bcY_i$, where $\bcY_i$ is an object of $\bar{\bcC}$.   
\end{proof}

\bigskip

We define the following sets of morphisms of simplicial sheaves 
\begin{align}\label{lagssqq}
&I_{\proj}:=\left \lbrace \partial\Delta_U[n]\into \Delta_U[n]\,|\, U\in\bcC, n\in\NN\right\rbrace
\end{align}
\medskip

Notice that, by Example \ref{radxcoprjcxx}, the morphisms $\partial\Delta_U[n]\into \Delta_U[n]$ are termwise coprojections in $\Delta^{\op}\bar{\bcC}$ for all $U\in\bcC$ and  $n\in\NN$. 
 
\begin{lemma}\label{radfiniteob}
For any object $U\in \bcC$ and every finite simplicial set $K$, the object $\Delta_U[0]\times K$ is finite relative to $\Delta^{\op}\bcS$, in the sense of Definition~2.1.4 of \cite{Hovey0}.   
\end{lemma}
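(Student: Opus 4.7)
\begin{pf}[Proof plan]
The plan is to reduce the assertion, via the adjunction isomorphism \eqref{papex16}, to the corresponding finiteness statement for the simplicial set $K$ in the category $\SSet$. Applied to the simplicial sheaf $\bcX = \Delta_U[0]$, the bijection \eqref{papex16} gives a natural isomorphism
$$\Hom_{\Delta^{\op}\bcS}(\Delta_U[0]\times K,\bcY)\isom \Hom_{\SSet}(K,\bcY(U))$$
for every simplicial sheaf $\bcY$. Thus, to prove that $\Delta_U[0]\times K$ is finite (i.e.\ $\omega$-small) relative to $\Delta^{\op}\bcS$, it suffices to show that evaluation at $U$ commutes with filtered colimits of simplicial sheaves, and that $K$ is a finite object of $\SSet$.

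First I would verify that filtered colimits in $\Delta^{\op}\bcS$ are computed sectionwise. Colimits in a simplicial category are computed termwise in $\bcS$, and the text already records (see Section \ref{Prelim}) that filtered colimits of Nisnevich sheaves in the category of presheaves remain Nisnevich sheaves; hence evaluation at $U$ preserves filtered colimits of simplicial sheaves. Consequently, for any $\omega$-filtered ordinal $\alpha$ and any $\alpha$-sequence $\{\bcY^{(\beta)}\}_{\beta<\alpha}$ in $\Delta^{\op}\bcS$ one has
$$\left(\colim_{\beta<\alpha}\bcY^{(\beta)}\right)(U)\isom \colim_{\beta<\alpha}\bcY^{(\beta)}(U)\qquad\text{in }\SSet\,.$$

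Second I would invoke the classical fact that a finite simplicial set $K$ is finite in $\SSet$, in the sense of Definition~2.1.4 of \cite{Hovey0}. This is immediate from the fact that $K$ has only finitely many non-degenerate simplices, so any simplicial map $K\into\colim_{\beta<\alpha}L_\beta$ (into a filtered colimit of simplicial sets) is determined by the images of these finitely many non-degenerate simplices, each of which factors through some $L_{\beta_i}$; since $\alpha$ is filtered, all these finitely many factorisations can be collected into a single stage $L_{\beta}$, yielding the required factorisation of $K$.

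Combining the two steps, for any $\omega$-filtered $\alpha$-sequence $\{\bcY^{(\beta)}\}_{\beta<\alpha}$ the composite chain of natural isomorphisms
$$\colim_{\beta<\alpha}\Hom_{\Delta^{\op}\bcS}(\Delta_U[0]\times K,\bcY^{(\beta)})\isom \colim_{\beta<\alpha}\Hom_{\SSet}(K,\bcY^{(\beta)}(U))\isom \Hom_{\SSet}\!\left(K,\colim_{\beta<\alpha}\bcY^{(\beta)}(U)\right)\isom \Hom_{\Delta^{\op}\bcS}\!\left(\Delta_U[0]\times K,\colim_{\beta<\alpha}\bcY^{(\beta)}\right)$$
establishes the required $\omega$-smallness. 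The only genuinely delicate point is the verification that filtered colimits of Nisnevich sheaves are sectionwise, but this has been recorded in the preliminaries, and the finiteness of $K$ in $\SSet$ is standard; nothing model-theoretically deep is needed.
\end{pf}
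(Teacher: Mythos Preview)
Your proof is correct and follows essentially the same approach as the paper's: both use the adjunction isomorphism \eqref{papex16} to reduce to the finiteness of $K$ in $\SSet$, together with the fact (recorded in Section~\ref{Prelim}) that filtered colimits of Nisnevich sheaves are computed sectionwise. The paper organises the final step as a commutative square rather than a chain of isomorphisms, but the content is identical.
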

\begin{proof}
 Let us fix an object  $U\in \bcC$ and a  finite simplicial set $K$.   Since $K$ is finite, there is a finite cardinal $\kappa$ such that $K$ is $\kappa$-small relative to all morphisms of $\SSet$.   We claim that $\Delta_U[0]\times K$ is $\kappa$-small relative to all morphisms in $\Delta^{\op}\bcS$. Indeed,  let $\lambda$ be a $\kappa$ -filtered ordinal and let 
\[\bcX_0\into \bcX_1\into \cdots \into \bcX_{e}\into \cdots (\beta< \lambda)\]
be a $\lambda$-sequence of simplicial sheaves on $\bcC_{\Nis}$. Since filtered colimits  of Nisnevich sheaves (computed in the category of presheaves) are sheaves, we obtain a $\lambda$-sequence of simplicial sets,
\[\bcX_0(U)\into \bcX_1(U)\into \cdots \into \bcX_{e}(U)\into \cdots (\beta< \lambda)\]
Then, we have a commutative diagram 
\[\xymatrix@C=7ex@R=7ex{\colim_{d\in D}\Hom_{\Delta^{\op}\bcS}(\Delta_U[0]\times K, \bcX_{e})\ar[d]\ar[r]&\Hom_{\Delta^{\op}\bcS}\left(\Delta_U[0]\times K, \colim_{d\in D}\bcX_{e}\right)\ar[d]\\
\colim_{d\in D}\Hom_{\SSet}(K,\bcX_{e}(U))\ar[r]&\Hom_{\SSet}\left(K,\colim_{d\in D}(\bcX_{e}(U))\right)
}\]
where the vertical arrows are bijections.
Since  $K$ is $\kappa$-small relative to all morphisms of $\SSet$, the below arrow of the preceding diagram is bijective, hence the top arrow is so. This completes the proof.  
\end{proof}

\begin{lemma}\label{paplem74}
Every morphism in $(I_{\proj})$-$\inj$ is a sectionwise trivial fibration. 
\end{lemma}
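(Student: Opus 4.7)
The plan is to reduce the lifting problem against $I_{\proj}$ to a lifting problem in the category of simplicial sets, one section at a time, and then invoke the standard characterization of Kan trivial fibrations.

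First I would exploit the adjunction \eqref{papex16}. Given a morphism $f:\bcX\to\bcY$ in $\Delta^{\op}\bcS$ and an object $U\in\bcC$, the bijection
$$\Hom_{\Delta^{\op}\bcS}(\Delta_U[0]\times K,\bcZ)\isom \Hom_{\SSet}(K,\bcZ(U))$$
is natural in $\bcZ$ and $K$. Applying this with $K=\partial\Delta[n]$ and $K=\Delta[n]$, and using naturality in $K$, a commutative square of simplicial sheaves
$$\xymatrix{\partial\Delta_U[n]\ar[r]\ar[d]&\bcX\ar[d]^f\\ \Delta_U[n]\ar[r]&\bcY}$$
corresponds bijectively to a commutative square of simplicial sets
$$\xymatrix{\partial\Delta[n]\ar[r]\ar[d]&\bcX(U)\ar[d]^{f(U)}\\ \Delta[n]\ar[r]&\bcY(U)}$$
and a diagonal filler in the first square corresponds to a diagonal filler in the second. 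Consequently, $f\in (I_{\proj})$-$\inj$ if and only if, for every $U\in\bcC$, the map of simplicial sets $f(U):\bcX(U)\to\bcY(U)$ has the right lifting property with respect to every boundary inclusion $\partial\Delta[n]\incl\Delta[n]$ for $n\in\NN$.

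Next I would invoke the standard fact about the Kan--Quillen model structure on $\SSet$: the family $\{\partial\Delta[n]\incl\Delta[n]\}_{n\in\NN}$ is the set of generating cofibrations, so a morphism of simplicial sets has the right lifting property with respect to all of them if and only if it is a trivial fibration (equivalently, a Kan fibration that is also a weak homotopy equivalence). Combining this with the previous step, $f(U)$ is a trivial fibration of simplicial sets for every $U\in\bcC$, which is precisely the definition of $f$ being a sectionwise trivial fibration.

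There is no real obstacle here; the only thing to be careful about is that the adjunction \eqref{papex16} really does turn the generating set $I_{\proj}$ into the generating cofibrations of $\SSet$ sectionwise, which is immediate from the description $\partial\Delta_U[n]=\Delta_U[0]\times\partial\Delta[n]$ and $\Delta_U[n]=\Delta_U[0]\times\Delta[n]$ given just before \eqref{papex16}. The whole argument is thus a one-line application of the Yoneda-type adjunction followed by the recognition of generating cofibrations in $\SSet$.
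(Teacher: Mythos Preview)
Your proof is correct and follows essentially the same approach as the paper: both use the adjunction \eqref{papex16} to transfer the lifting problem from $\Delta^{\op}\bcS$ to $\SSet$ one section at a time, and then identify the resulting right lifting property against the boundary inclusions $\partial\Delta[n]\hookrightarrow\Delta[n]$ as the condition for $f(U)$ to be a trivial Kan fibration. The paper's argument is just a slightly more compressed version of what you wrote.
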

\begin{proof}
Let $f:\bcX\into \bcY$ be a morphism in  $(I_{\proj})$-$\inj$ and let us fix an object $U$ of $\bcC$. By the naturality of the isomorphism \eqref{papex16}, a commutative diagram 
\begin{equation}\label{papex22}
\xymatrix@C=10ex@R=10ex{\partial\Delta[n]\ar[d]\ar[r]&\bcX(U)\ar[d]\\
\Delta[n]\ar[r]&\bcY(U)}
\end{equation}
in $\SSet$, corresponds biunivocally to a diagram 
  \[\xymatrix@C=10ex@R=10ex{\partial\Delta_U[n]\ar[d]\ar[r]&\bcX\ar[d]\\
\Delta_U[n]\ar[r]&\bcY}\]
in $\Delta^{\op}\bcS$. As the left vertical arrow is an element of $I_{\proj}$, the above diagram has a lifting. Therefore, the bijection \eqref{papex16} induces a lifting of \eqref{papex22}. 
\end{proof}

The following corollary is a consequence of the small object argument. It will be useful to show that the cofibrant resolution takes its values in the category $\Delta^{\op}\bar{\bcC}$. 
 
\begin{corollary}\label{radcorsmll}
There exist a functorial factorization $(\alpha,\beta)$ on $\Delta^{\op}\bcS$ such that for every morphism $f$ is factored as $f=\beta(f)\circ\alpha(f)$, where $\beta(f)$ is sectionwise trivial fibration and  $\alpha(f)$ is a termwise coprojection with terms form $\bcX_n\into \bcX_n\amalg\bcY_n$, where  $\bcY_n$ is an object of $\bar{\bcC}$.
\end{corollary}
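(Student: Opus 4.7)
\begin{pf}[Proof plan.]
The plan is to apply Quillen's small object argument to the set $I_{\proj}$ and then combine the preceding three results to read off the two desired properties of the factorization.

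First, I would verify that the small object argument is available for $I_{\proj}$. The domain of each generator $\partial\Delta_U[n] \hookrightarrow \Delta_U[n]$ is of the form $\Delta_U[0]\times \partial\Delta[n]$ with $\partial\Delta[n]$ a finite simplicial set, so by Lemma~\ref{radfiniteob} it is finite relative to $\Delta^{\op}\bcS$, i.e.\ $\aleph_0$-small with respect to all morphisms. In particular the standard transfinite construction can be run along the countable ordinal $\omega$, producing, functorially in $f:\bcX\into\bcY$, a factorization $f=\beta(f)\circ\alpha(f)$ in which $\alpha(f)$ is a countable transfinite composition of pushouts of coproducts of elements of $I_{\proj}$, and $\beta(f)$ lies in $(I_{\proj})$-$\inj$.

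Next, I would dispose of the condition on $\beta(f)$: this is immediate from Lemma~\ref{paplem74}, which asserts that any morphism in $(I_{\proj})$-$\inj$ is a sectionwise trivial fibration.

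For $\alpha(f)$, the key observation, recorded in the paragraph after the definition of $I_{\proj}$, is that each generator $\partial\Delta_U[n] \hookrightarrow \Delta_U[n]$ is a termwise coprojection in $\Delta^{\op}\bar{\bcC}$, by Example~\ref{radxcoprjcxx}. Thus Corollary~\ref{radtermwise1}, applied to the set $I=I_{\proj}$, shows that the countable transfinite composition produced by the small object argument is itself a termwise coprojection whose $n$th term is of the form $\bcX_n \into \bcX_n\amalg \bcY_n$ with $\bcY_n\in\bar{\bcC}$. This gives both of the required properties of $\alpha(f)$.

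The only genuinely delicate step is checking that the small object argument can be stopped at $\omega$ rather than at some larger ordinal; this is precisely what Lemma~\ref{radfiniteob} buys us, so nothing further is needed. Functoriality of $(\alpha,\beta)$ is the usual one for the small object argument, coming from the fact that each stage of the construction is given by a pushout along a canonical map depending only on the set of commutative squares from elements of $I_{\proj}$ into the current morphism.
\end{pf}
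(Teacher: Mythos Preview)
Your proposal is correct and follows essentially the same approach as the paper: apply the small object argument to $I_{\proj}$, using Lemma~\ref{radfiniteob} to justify stopping at $\omega$, invoke Lemma~\ref{paplem74} for the property of $\beta(f)$, and use Example~\ref{radxcoprjcxx} together with Corollary~\ref{radtermwise1} for the termwise coprojection structure of $\alpha(f)$. Your write-up is in fact slightly more explicit than the paper's in citing Lemma~\ref{paplem74} for the passage from $(I_{\proj})$-$\inj$ to sectionwise trivial fibrations.
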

\begin{proof}
By Lemma \ref{radfiniteob}, the objects $\partial\Delta_U[n]$ and $\Delta_U[n]$ are finite relative to $\Delta^{\op}\bcS$. Since the countable ordinal $\omega$ is $\kappa$-filtered,  the small object argument provides a factorization such that $\beta(f)$ in $(I_{\proj})$-$\inj$ and $\alpha(f)$ is a countable transfinite composition of pushouts of coproducts of elements of  $I_{\proj}$. 
By Example \ref{radxcoprjcxx}, every morphism $\partial\Delta_U[n]\into\Delta_U[n]$ of $I_{\proj}$ is a termwise coprojection in $\Delta^{\op}\bar{\bcC}$. Therefore, Corollary \ref{radtermwise1}  
provides the desired factorization.   
\end{proof}


 We denote by $Q^{\proj}$ the endofunctor of $\Delta^{\op}\bcS$ which sends a simplicial sheaf $\bcX$ to the codomain of the morphism $\alpha(\emptyset\into\bcX)$, where $\emptyset$ is the initial object of $\Delta^{\op}\bcS$.  The endofunctor $Q^{\proj}$ will be called cofibrant resolution. 
In particular, for every object $\bcX$ of $\Delta^{\op}\bcS$, the canonical morphism from $Q^{\proj}(\bcX)$ to $\bcX$ is a section-wise trivial fibration.

\begin{corollary}\label{radreplac10}
The functor  $Q^{\proj}$ takes values in $\Delta^{\op}\bar{\bcC}$. 
\end{corollary}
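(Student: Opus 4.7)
The plan is to read off the conclusion essentially for free from Corollary~\ref{radcorsmll} by specialising its factorisation to the unique morphism from the initial object.

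More precisely, I would apply the functorial factorisation $(\alpha,\beta)$ of Corollary~\ref{radcorsmll} to $f\colon \emptyset\into\bcX$, so that by the very definition of $Q^{\proj}$ the morphism $\alpha(f)$ takes the form $\emptyset\into Q^{\proj}(\bcX)$. By that corollary, $\alpha(f)$ is a termwise coprojection whose $n$-th term is isomorphic to the canonical map
\[
\emptyset_n\into \emptyset_n\amalg \bcY_n,
\]
for some $\bcY_n\in\bar{\bcC}$. Since $\emptyset$ is the initial object of $\Delta^{\op}\bcS$, each term $\emptyset_n$ is the initial object of $\bcS$, hence a unit for the coproduct, so $\emptyset_n\amalg\bcY_n\cong \bcY_n$. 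Therefore $(Q^{\proj}(\bcX))_n\cong\bcY_n\in\bar{\bcC}$ for every $n\in\NN$, and $Q^{\proj}(\bcX)$ belongs to $\Delta^{\op}\bar{\bcC}$ as required; naturality in $\bcX$ comes from the functoriality of $(\alpha,\beta)$.

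There is no real obstacle here, as all the work has already been done: Lemma~\ref{radtermwise} and Corollary~\ref{radtermwise1} are what guarantee that transfinite compositions of pushouts of coproducts of the generating cofibrations in $I_{\proj}$ remain termwise coprojections with ``new pieces'' in $\bar{\bcC}$, and Corollary~\ref{radcorsmll} packages this into the factorisation. The only point that deserves a line of verification is the unit property $\emptyset_n\amalg\bcY_n\cong\bcY_n$ in $\bcS$, which is standard for the initial object in any cocomplete category.
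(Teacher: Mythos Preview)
Your proof is correct and follows essentially the same approach as the paper: apply Corollary~\ref{radcorsmll} to the morphism $\emptyset\into\bcX$ and read off that the terms of $Q^{\proj}(\bcX)$ lie in $\bar{\bcC}$. You have simply spelled out in more detail the step that $\emptyset_n\amalg\bcY_n\cong\bcY_n$, which the paper leaves implicit.
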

\begin{proof}
Let $\bcX$ be a simplicial sheaf in $ \Delta^{\op}\bcS$. By Corollary \ref{radcorsmll}, the morphism of simplicial sheaves $\emptyset\into\bcX$, where $\emptyset$ is the initial object of $ \Delta^{\op}\bcS$, factors into $\emptyset\into Q^{\proj}(\bcX)\into\bcX$ such that the terms of $Q^{\proj}(\bcX)$ are in $\bar{\bcC}$, that is, $Q^{\proj}(\bcX)$ is in $\Deltaop\bar{\bcC}$. 
\end{proof}



\begin{lemma}\label{radlzz9}
The class of $\AF^1$-weak equivalences in $\Delta^{\op}\bcS_{\ast}$ is closed under finite coproducts and smash products. 
\end{lemma}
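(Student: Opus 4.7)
The plan is to exploit the $\bar{\Delta}$-closure characterization of $\AF^1$-weak equivalences from Example \ref{papex17}. Given two $\AF^1$-weak equivalences $f:\bcX\to\bcY$ and $g:\bcX'\to\bcY'$, I will first use the factorizations $f\vee g=(f\vee\id_{\bcY'})\circ(\id_\bcX\vee g)$ and $f\wedge g=(f\wedge\id_{\bcY'})\circ(\id_\bcX\wedge g)$ together with the $2$-out-of-$3$ property to reduce to the special case where one of the two morphisms is an identity. Thus it suffices to prove that for every pointed simplicial sheaf $\bcZ$ and $\star\in\{\vee,\wedge\}$, the functor $-\star\bcZ$ preserves $\AF^1$-weak equivalences.

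For this, I will fix $\bcZ$ and $\star$, and consider the class $E_\star(\bcZ)$ of those morphisms $f$ for which $f\star\id_\bcZ$ is an $\AF^1$-weak equivalence. By Example \ref{papex17}, it will suffice to verify that $E_\star(\bcZ)$ is $\bar{\Delta}$-closed and contains $\bfW_{\Nis,+}\cup\mathP_{\AF^1,+}$. The $\bar{\Delta}$-closure is the easy half: the functor $-\star\bcZ$ is simplicial and commutes with all small colimits (for $\star=\wedge$, as the left adjoint to $\iHom(\bcZ,-)$; for $\star=\vee$, as the pushout $-\sqcup_\ast\bcZ$), and the $2$-out-of-$3$ property is inherited from the $\AF^1$-weak equivalences; this handles the $\Delta[1]$-homotopy equivalence, bisimplicial diagonal, and filtered colimit axioms. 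For the inclusion $\bfW_{\Nis,+}\subseteq E_\star(\bcZ)$, I will argue stalkwise, reducing to the standard fact that wedging or smashing pointed simplicial sets with a fixed object preserves weak equivalences.

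The delicate part is showing $\mathP_{\AF^1,+}\subseteq E_\star(\bcZ)$. For the projection $p_+\colon(\AF^1\times U)_+\to U_+$, an $\AF^1$-deformation retract structure is available: the pointed zero-section $(i_0)_+\colon U_+\to(\AF^1\times U)_+$ as a splitting, together with a contracting homotopy $H_+\colon(\AF^1\times U)_+\wedge\AF^1_+\to(\AF^1\times U)_+$ induced by scalar multiplication $((t,u),s)\mapsto(ts,u)$. Applying $-\star\id_\bcZ$ and exploiting the distributivity of $\wedge$ over $\vee$ transports this datum to $p_+\star\id_\bcZ$. For $\star=\vee$, one can alternatively observe that $p_+\vee\id_\bcZ$ is the pushout of the $\AF^1$-weak equivalence $p_+$ along the cofibration $\ast\hookrightarrow\bcZ$, so left properness of the $\AF^1$-local injective model structure applies. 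I expect the main obstacle to be the case $\star=\wedge$: since $p_+\wedge\id_\bcZ$ is no longer itself of the form of a projection in $\mathP_{\AF^1,+}$, one cannot conclude directly. The cleanest way around this is to handle first the case $\bcZ=V_+$ with $V\in\bcC$, where $p_+\wedge\id_{V_+}\in\mathP_{\AF^1,+}$ by admissibility of $\bcC$, and then bootstrap to arbitrary $\bcZ$ via the cofibrant resolution of Corollary \ref{radreplac10} combined with the already-established wedge closure, taking care to carry out the $\bar{\Delta}$-closure arguments in an order that avoids circular reasoning.
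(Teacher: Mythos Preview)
Your proposal is correct and follows essentially the same strategy as the paper's proof: both exploit the $\bar{\Delta}$-closure characterization of Example~\ref{papex17} and reduce the smash case to the functor $(-)\times\id_{\Delta_U[0]}$ (or $(-)\wedge V_+$) for representable $\bcZ$. Two organizational differences are worth noting. First, for finite coproducts the paper simply invokes the general fact that any $\bar{\Delta}$-closed class is closed under finite coproducts, whereas you spell out the $E_\vee(\bcZ)$ argument; both are fine, yours is just more explicit. Second, and more substantively, for the smash product the paper takes a shortcut you do not: it applies the cube lemma to reduce the pointed smash statement to the unpointed cartesian product in $\Delta^{\op}\bcS$, and then appeals to Lemma~2.20 of \cite{Voe10-5} (applied to the functor $(-)\times\id_{\Delta_U[0]}$) together with ``standard simplicial methods'' to handle general $\bcZ$. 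Your route stays in the pointed category throughout and replaces the black-box citation by an explicit bootstrap through $Q^{\proj}(\bcZ)$ (Corollary~\ref{radreplac10}) combined with the already-established wedge closure, which is exactly what the phrase ``standard simplicial methods'' is hiding. Your awareness of the circularity risk in the $\AF^1$-deformation-retract argument, and your decision to bypass it via the representable case first, is well judged; the paper sidesteps the same issue by the cube-lemma reduction.
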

\begin{proof}
By Example \ref{papex17}, the class of $\AF^1$-weak equivalences in $\Delta^{\op}\bcS_{\ast}$ is $\bar{\Delta}$-closed. Then, it is closed under finite coproducts. Let us consider the statement for smash products. By the cube lemma (see \cite[Lemma 5.2.6]{Hovey0}),  one reduces this problem to the unpointed case, i.e. for products in $\Delta^{\op}\bcS$. Using standard simplicial methods, the problem is reduced to show that; for every $\AF^1$-weak equivalence and  every simplicial sheaf $\bcZ$ of the form $\Delta_U[0]$, where $U$ is in $\bcC$, the product $f\times\id_{\bcZ}$ is an $\AF^1$-weak equivalence. But it follows from Example \ref{papex17} and Lemma 2.20 of \cite{Voe10-5} applied to the functor $(-)\times \id_{\bcZ}$.
\end{proof}

\section{Geometric symmetric powers of simplicial sheaves}
\label{geomsymu}

In this section, we study the geometric symmetric powers on the category of simplicial Nisnevich sheaves. Here, we prove the K\"unneth rule for geometric symmetric powers (see Corollary \ref{radccttff}).  
\bigskip

Let $\bcC\subset \Sch/k$ be an admissible category. Fix an object $X$ of $\bcC$ and an integer $n\geq1$. By definition of an admissible category,  $\bcC$ is closed under finite products and quotients by finite groups. Then $n$th fold product $X^{\times n }$ is an object of $\bcC$, hence, the quotient $X^{\times n }/\Sigma_n$ is also in $\bcC$. Denote this quotient by $\Sym^n(X)$.  Then, we have a functor $\Sym^n:\bcC\into\bcC$. It is immediate  to observe that $\Sym^n\left(\Spec(k)\right)$ is isomorphic to the point $\Spec(k)$ for $n\geq1$. By convention, $\Sym^0$ will be the constant endofunctor of $\bcC$ which sends an object $X$ of $\bcC$ to the point $\Spec(k)$.

Let us fix $n\in\NN$.  Since $\bcC$ is a small category and $\Delta^{\op}\bcS$ is cocomplete, Theorem 3.7.2 of \cite{Bor94-1} asserts the existence of  the left Kan extension of the composite 
$$\bcC\stackrel{\Sym^n}{\longrightarrow} \bcC\stackrel{h}{\longrightarrow}\bcS$$ along the Yoneda embedding $h$. We denote it by $\Sym_g^n$ and called it the $n$th-fold {\em geometric symmetric power} of Nisnevich sheaves.  Explicitly,  $\Sym_g^n$ is described as follows. For a sheaf $\bcX$ in $\bcS$, we denote by $(h\downarrow \bcX)$ the comma category whose objects are arrows of the form $h_U\into \bcX$ for $U\in\ob(\bcC)$. Let $F_{\bcX}:(h\downarrow \bcX)\into\bcS$ be the functor which sends a morphism $h_U\into \bcX$ to the representable sheaf $h_{\Sym^nU}$. Then,  $\Sym_g^n(\bcX)$ is nothing but the colimit of the functor $F_{\bcX}$.  

The endofunctor $\Sym_g^n$ of $\bcS$ induces an endofunctor of   $\Delta^{\op}\bcS$ defined termwise.  By abuse of notation, we denote this functor by the same symbol $\Sym_g^n$, if no confusion arises.

\begin{example}\label{papexxv71}
 {\em Let $k$ be an algebraically closed field and fix a natural number $n$. For each $k$-scheme $X$ in $\bcC$,  the $n$th fold geometric symmetric power  $\Sym_g^n(h_X)$ of the representable functor $h_X$ coincides with the representable functor $h_{\Sym^nX}$.
 The section $\Sym_g^n(h_X)(\Spec(k))$ is nothing but the set of effective zero cycles of degree $n$ on $X$.   
}
\end{example}

Let $X$ be an object of $\Deltaop\bcC$. The $n$th fold symmetric power $\Sym^n(X)$ is the simplicial object on $\bcC$ whose terms are $\Sym^n(X)_i:=\Sym^n(X_i)$ for all $i\in\NN$. Thus, $\Sym^n$ induces a endofunctor of $\Deltaop\bcC$.

The Yoneda embedding $h$ of $\bcC$ into $\bcS$ induces an embedding $\Deltaop h$ of $\Deltaop\bcC$ into $\Deltaop\bcS$. Sometimes, we denote it by the same letter $h$.


\begin{lemma}\label{lerffs}
For each $n\in\NN$, $\Sym^n_g$ is isomorphic to the left Kan extension of the composite 
$$\xymatrix@C=10ex@R=10ex{\Deltaop\bcC\ar[r]^{\Sym^n}&\Deltaop \bcC\ar[r]^{\Deltaop h}&\Deltaop\bcS}$$ 
along $\Deltaop h$.  
\end{lemma}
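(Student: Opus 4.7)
The plan is to check that the two functors agree pointwise, level by level, by exhibiting a cofinal functor between the defining colimit diagrams. Let $L := \Lan_{\Deltaop h}(\Deltaop h \circ \Sym^n)$. Since the Yoneda embedding is fully faithful, the Kan extension defining $\Sym^n_g$ satisfies $\Sym^n_g \circ h \cong h \circ \Sym^n$, and passing to simplicial objects yields $\Sym^n_g \circ \Deltaop h \cong \Deltaop h \circ \Sym^n$. The universal property of $L$ therefore produces a canonical natural transformation $\alpha\colon L \to \Sym^n_g$, which I would show is a natural isomorphism on every simplicial sheaf $\bcX$.

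Using the pointwise formula for the Kan extension together with the fact that colimits in $\Deltaop\bcS$ are computed termwise, the $i$-th level of $L(\bcX)$ equals $\colim_{(Y,f)\in (\Deltaop h \downarrow \bcX)} h(\Sym^n Y_i)$, while the $i$-th level of $\Sym^n_g(\bcX)$ is $\Sym^n_g(\bcX_i) = \colim_{(U,g)\in (h \downarrow \bcX_i)} h(\Sym^n U)$ by the original definition on $\bcS$. The evaluation functor $\pi_i\colon (\Deltaop h \downarrow \bcX) \to (h \downarrow \bcX_i)$ sending $(Y,f)$ to $(Y_i,f_i)$ induces the comparison map between these colimits, so the problem reduces to the cofinality of $\pi_i$.

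To establish cofinality, I would exhibit an initial object in the comma category $(U,g)\downarrow \pi_i$ for every $(U,g)$. The candidate is the simplicial scheme $Y^{(U,g)} := U\otimes \Delta[i]$ with $Y^{(U,g)}_n = \coprod_{\Hom_{\Delta}([n],[i])} U$, which lies in $\Deltaop\bcC$ thanks to admissibility axiom (iii); equipped with the structure map $f_g\colon \Delta_U[i]\to \bcX$ corresponding under the adjunction \eqref{papex16} to the $i$-simplex $g \in \bcX_i(U)$, and the identity inclusion $\id_U\colon U\to Y^{(U,g)}_i$ into the component indexed by $\id_{[i]}$. Universality follows because a morphism $\psi\colon U\otimes \Delta[i]\to Y'$ in $\Deltaop\bcC$ is uniquely determined by a single element of $\Hom_{\bcC}(U, Y'_i)$ via the standard $\SSet$-tensoring adjunction, and the compatibility with $f'$ is forced by the same adjunction \eqref{papex16}.

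The principal obstacle is the identification $\Deltaop h(U\otimes \Delta[i])\cong \Delta_U[i]$, which rests on the fact that $h\colon \bcC\to \bcS$ preserves finite coproducts. This holds in the Nisnevich topology on an admissible category because $\{X\into X\amalg Y,\ Y\into X\amalg Y\}$ is an elementary Nisnevich covering, forcing $h_{X\amalg Y}\cong h_X \amalg h_Y$ in $\bcS$. Once this identification is in place, the cofinality of each $\pi_i$ provides an isomorphism of the $i$-th levels, natural in $\bcX$ and compatible with faces and degeneracies; hence $\alpha$ is a natural isomorphism of simplicial sheaves, as required.
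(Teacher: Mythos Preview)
Your proposal is correct and follows essentially the same route as the paper: both arguments reduce to the finality of the evaluation functor $\pi_i\colon(\Deltaop h\downarrow\bcX)\to(h\downarrow\bcX_i)$ (the paper calls it $J_{\bcX,i}$) and then compare the two colimit formulas levelwise. The paper merely asserts this finality without justification, whereas you actually prove it by exhibiting the simplicial scheme $U\otimes\Delta[i]$ as supplying an initial object of each comma category $(U,g)\downarrow\pi_i$; this extra work is the natural way to fill the gap the paper leaves, and your use of the $\SSet$-tensoring adjunction and the preservation of finite coproducts by $h$ (which the paper itself invokes elsewhere, in the proof of Lemma~\ref{abs.lwm3n}) is exactly what is needed.
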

\begin{proof}
Notice that $\Deltaop\bcC$ is a small category. Let $\bcX$ be a simplicial sheaf and let $i$ be a natural number. Let us consider the functor  $F_{\bcX_i}$ such that $\Sym^n_g(\bcX_i)=\colim F_{\bcX_i}$, as defined above. Observe that the functor $$J_{\bcX,i}:(\Deltaop\bcC\downarrow\bcX)\into(\bcC\downarrow\bcX_i)\,,$$ given by $(\Deltaop h_U\into \bcX)\mapsto  (\Deltaop h _{U_i}\into \bcX_i)$, is final. This implies that there is a natural isomorphism  $\colim (J_{\bcX,i}\circ F_{\bcX_i})\isom \colim F_{\bcX_i}$. The colimit of  $J_{\bcX,i}\circ F_{\bcX_i}$ is nothing but the $i$th term of $\Lan_{\Deltaop h}(\Deltaop h\circ \Sym^n)(\bcX)$. Thus, we get a natural isomorphism $$\Lan_{\Deltaop h}(\Deltaop h\circ \Sym^n)(\bcX)\isom \Sym^n_g(\bcX)$$
 for every object $\bcX$ in $\Deltaop\bcS$. 
\end{proof}

Since the $n$th fold geometric symmetric power $\Sym^n_g$ on $\Deltaop\bcS$ preserves terminal object, it induces an endofunctor of $\Deltaop\bcS_{\ast}$, denoted by the same symbol if no confusion arises.  We denote by $h^+$ the canonical functor from $\Deltaop\bcC$ to $\Deltaop\bcS_{\ast}$. 

\begin{corollary}
For each $n\in\NN$, the $n$th fold geometric symmetric power $\Sym^n_g$ on $\Deltaop\bcS_{\ast}$ is isomorphic to the left Kan extension of the composite 
$$\xymatrix@C=10ex@R=10ex{\Deltaop\bcC\ar[r]^{\Sym^n}&\Deltaop \bcC\ar[r]^{h^+}&\Deltaop\bcS_{\ast}}$$ 
along $h^+$.  
\end{corollary}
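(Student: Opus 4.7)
The statement is the pointed analogue of Lemma \ref{lerffs}, and my plan is to mimic that proof step by step, carefully tracking basepoints. Since $\Deltaop\bcC$ is a small category and $\Deltaop\bcS_{\ast}$ is cocomplete, Theorem 3.7.2 of \cite{Bor94-1} guarantees the existence of the left Kan extension $L := \Lan_{h^+}(h^+ \circ \Sym^n)$. My goal is to exhibit a natural isomorphism $L \iso \Sym^n_g$, where the latter denotes the pointed endofunctor induced by the fact that $\Sym^n_g$ on $\Deltaop\bcS$ preserves the terminal object.

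I would proceed termwise: for a pointed simplicial sheaf $\bcX$ and an integer $i \geq 0$, the $i$-th term $L(\bcX)_i$ is a colimit over the comma category $(h^+ \downarrow \bcX)$ of the functor $(\alpha\colon h^+(U) \to \bcX) \mapsto h^+(\Sym^n U)_i$. Using the adjunction between adding a disjoint basepoint and forgetting basepoints, I would produce a final functor from $(h^+ \downarrow \bcX)$ to a suitable pointed slice $(\bcC \downarrow \bcX_i)_{\ast}$, analogous to the functor $J_{\bcX, i}$ constructed in the proof of Lemma \ref{lerffs}. Finality will collapse $L(\bcX)_i$ to a colimit over the simpler indexing, which by Lemma \ref{lerffs} agrees with $\Sym^n_g(\bcX_i)$ as an underlying sheaf. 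The pointing on $L(\bcX)$ coming from the cocone structure is then compared with the one on $\Sym^n_g(\bcX)$ induced from $\Sym^n_g$ applied to $\pt \to \bcX$, and both match because each is universally determined by the basepoint of $\bcX$ together with the fact that $\Sym^n_g(\pt) = \pt$.

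The main obstacle I expect is the cofinality verification for this pointed analogue of $J_{\bcX, i}$: while the underlying set-theoretic manipulation parallels that of Lemma \ref{lerffs}, one must systematically check that basepoints are preserved under the comparison. This requires careful use of the adjunction $(-)_+ \dashv (\mathrm{forget})$ and of the way pointed colimits in $\Deltaop\bcS_{\ast}$ relate to colimits in $\Deltaop\bcS$; once this bookkeeping is in place, the remainder of the proof is a formal consequence of the universal property of the Kan extension together with Lemma \ref{lerffs}.
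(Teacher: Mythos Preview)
Your approach is sound but takes a considerably longer route than the paper. The paper's proof is a single line: it invokes Lemma \ref{lerffs} (the unpointed statement) together with the fact that the functor $(-)_+\colon \Deltaop\bcS \to \Deltaop\bcS_{\ast}$ is a left adjoint. The point is that $h^+$ factors as $(-)_+ \circ (\Deltaop h)$, and left adjoints interact well with left Kan extensions (cf.\ Lemma \ref{papll12} later in the paper), so the pointed Kan extension is obtained formally from the unpointed one without repeating any cofinality verification.

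Your plan to rebuild the final functor $J_{\bcX,i}$ in a pointed setting and track basepoints by hand would work, but it duplicates the effort already invested in Lemma \ref{lerffs}. In fact, once you invoke the adjunction $(-)_+ \dashv U$ as you suggest, the comma category $(h^+\downarrow \bcX)$ is canonically identified with the unpointed comma category $(\Deltaop h \downarrow U\bcX)$, and the diagram you are taking a colimit of is $(-)_+$ applied to the unpointed diagram; since $(-)_+$ preserves colimits, you are immediately reduced to Lemma \ref{lerffs}. So the ``main obstacle'' you anticipate---the pointed cofinality check---evaporates, and what remains is exactly the paper's one-line argument. There is no need to introduce an auxiliary ``pointed slice'' $(\bcC\downarrow \bcX_i)_{\ast}$.
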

\begin{proof}
It follows from the previous lemma in view that the canonical functor from $\Deltaop\bcS$ to $\Deltaop\bcS_{\ast}$ is left adjoint. 
\end{proof}

\bigskip

{\em\noindent K\"unneth rules.}---  The Lemmas \ref{abs.tlt4}, \ref{abszb0} and Proposition \ref{pappro44s} will be useful to prove the K\"unneth rule for symmetric for schemes (Corollary \ref{abse7}), that is, for every $n\in\NN$ and for any two schemes $X$  and $Y$ on an admissible category, one has an isomorphism 
$$\Sym^n(X\amalg Y)\isom\coprod_{i+j=n}(\Sym^iX\times \Sym^jY)\,.$$
 We recall that   for a category $\bcC$ and a finite group $G$, the category $\bcC^G$ is the category of functors $G\into \bcC$, where $G$ is viewed as a category. A functor $G\into \bcC$ is identified with a $G$-object of $\bcC$. If $H$ is a subgroup of $G$, then the {\em restriction} functor $\res^G_H:\bcC^G\into \bcC^H$ sends a functor $G\into \bcC$ to the composite $H\incl G\into \bcC$. If $\bcC$ has finite coproducts and quotients by finite groups, then $\res^G_H$ is a right adjoint functor. Its left adjoint functor is called {\em corestriction} functor, and we denoted it by $\cor^G_H$.  
 
\begin{lemma}\label{abs.tlt4}
Let $\bcC$ be a category with finite coproducts and quotients by finite groups. Let $G$ be a finite group and let $H$ be a subgroup of $G$. If $X$ is an $H$-object of $\bcC$, then
    $$\cor^G_H(X)/G\isom X/H\,.$$
\end{lemma}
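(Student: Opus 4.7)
The plan is to prove the isomorphism by a purely formal Yoneda argument, using the adjunction $\cor^G_H \dashv \res^G_H$ together with the universal properties of the quotients by $G$ and by $H$. Nothing about explicit coset decompositions should be needed; the existence of quotients by finite groups (guaranteed by the hypothesis on $\bcC$) is all that is required.

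First I would record the universal property of the quotient that makes the argument run. If $K$ is a finite group and $Z$ is a $K$-object of $\bcC$, then $Z/K$ is, by construction, a colimit of the functor $K \to \bcC$ corresponding to $Z$. Equivalently, for any $Y \in \bcC$ regarded as the trivial $K$-object (call it $\Const^K_Y$), there is a natural bijection
\[
\Hom_{\bcC}(Z/K, Y) \isom \Hom_{\bcC^K}(Z, \Const^K_Y).
\]
I would also note the trivial but essential fact that restriction sends trivial $G$-objects to trivial $H$-objects, i.e.\ $\res^G_H \Const^G_Y = \Const^H_Y$.

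Given these ingredients, I would compute, for an arbitrary $Y \in \bcC$, the following chain of natural bijections:
\[
\begin{aligned}
\Hom_{\bcC}\!\bigl(\cor^G_H(X)/G,\, Y\bigr)
&\isom \Hom_{\bcC^G}\!\bigl(\cor^G_H(X),\, \Const^G_Y\bigr) \\
&\isom \Hom_{\bcC^H}\!\bigl(X,\, \res^G_H \Const^G_Y\bigr) \\
&\isom \Hom_{\bcC^H}\!\bigl(X,\, \Const^H_Y\bigr) \\
&\isom \Hom_{\bcC}\!\bigl(X/H,\, Y\bigr).
\end{aligned}
\]
The first and last steps are the universal property of the quotient, the second is the adjunction $\cor^G_H \dashv \res^G_H$, and the third is the preceding observation about trivial actions. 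By the Yoneda lemma, the resulting natural isomorphism of representable functors is induced by a unique isomorphism $\cor^G_H(X)/G \isom X/H$ in $\bcC$.

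The argument is essentially formal and I would not expect any serious obstacle. The only point requiring a small amount of care is the implicit claim that the quotient $Z/K$ actually represents $\Hom_{\bcC^K}(-, \Const^K_Y)$, which comes down to verifying that the quotient in $\bcC$ in the sense of the hypothesis agrees with the colimit of the functor $K \to \bcC$; this is the standard definition, so no difficulty is expected there either.
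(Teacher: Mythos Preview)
Your proof is correct and clean. It differs from the paper's argument in presentation: the paper works directly with colimits, writing $\cor^G_H(X)=\colim_H(G\times X)$ explicitly and then applying a Fubini interchange
\[
\colim_G\colim_H(G\times X)\;\isom\;\colim_H\colim_G(G\times X)\;\isom\;\colim_H X,
\]
whereas you never unpack $\cor^G_H$ and instead invoke only its defining adjunction $\cor^G_H\dashv\res^G_H$ together with the universal property of quotients, tied together by Yoneda. Your route is slightly more abstract and arguably more economical, since it uses only the characterising property of corestriction rather than its concrete construction; the paper's route is more hands-on and makes the underlying Fubini step visible. Both arguments are equally formal and rely on the same hypotheses, so nothing is gained or lost in generality.
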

\begin{proof}
Suppose $X$ is an $H$-object of $\bcC$. We recall that 
 $\cor^G_0(X)$ coincides with the coproduct of $|G|$-copies of $X$, it is usually denoted by $G\times X$ in the literature. Observe that the group $G\times H$ acts canonically on $G\times X$. By definition, $\cor^G_H(X)$ is equal to $\colim_H(G\times X)$. One can also notice that $\colim_G(G\times X)=X$. Then, we have,
\begin{align*}
\cor^G_H(X)/G&=\colim_G\,\cor^G_H(X)&&\\
&=\colim_G\,\colim_H(G\times X)&&\\
&=\colim_H\,\colim_G(G\times X)&& \text{(change of colimits)}\\
&=\colim_HX\,.&&
\end{align*}
By definition, $X/H$ is equal to $\colim_H X$, thus we obtain that  $\cor^G_H(X)/G$  is isomorphic to $X/H$. 
\end{proof}

\bigskip 

\begin{lemma}\label{abszb0}
Let $\bcC$ be a symmetric monoidal category with finite coproducts and quotients by finite groups. Let $n,i,j$ be three natural numbers such that  $i,j\leq n$ and $i+j=n$, and let $X_0,X_1$ be two objects of $\bcC$. Then, the symmetric group $\Sigma_n$ acts on the coproduct $\bigvee_{k_1+\cdots +k_n=j} X_{k_1}\wedge\cdots \wedge X_{k_n}$ by permuting the indices of the factors, and one has an isomorphism 
$$\bigslant{\left(\bigvee_{k_1+\cdots +k_n=j} X_{k_1}\wedge\cdots \wedge X_{k_n}\right)}{\Sigma_n}\isom \Sym^iX_0\wedge \Sym^j X_1$$
\end{lemma}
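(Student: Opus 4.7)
The plan is to realise the coproduct as a corestriction from $\Sigma_i\times\Sigma_j$ up to $\Sigma_n$ and then invoke Lemma \ref{abs.tlt4}. First I would fix a base tuple $k^{0}=(\underbrace{0,\ldots,0}_{i},\underbrace{1,\ldots,1}_{j})$, whose stabiliser under the $\Sigma_n$-action on $\{0,1\}^n$ by permutation of coordinates is precisely $H:=\Sigma_i\times\Sigma_j$ (acting on the first $i$ and last $j$ slots). The set of tuples $(k_1,\ldots,k_n)$ with $k_1+\cdots+k_n=j$ is exactly the $\Sigma_n$-orbit of $k^{0}$, giving a $\Sigma_n$-equivariant bijection with the coset space $\Sigma_n/H$.

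Next, I would observe that the factor indexed by $k^{0}$ is $X_0^{\wedge i}\wedge X_1^{\wedge j}$, which is naturally an $H$-object via the permutation action on the two blocks of factors. Reindexing the coproduct along the bijection above identifies
\[
\bigvee_{k_1+\cdots+k_n=j} X_{k_1}\wedge\cdots\wedge X_{k_n}\;\isom\;\bigvee_{\Sigma_n/H}\bigl(X_0^{\wedge i}\wedge X_1^{\wedge j}\bigr)
\]
as $\Sigma_n$-objects, where the right-hand side carries the canonical $\Sigma_n$-action permuting cosets and acting on each copy through $H$. By definition of the corestriction, this right-hand side is $\cor^{\Sigma_n}_{H}(X_0^{\wedge i}\wedge X_1^{\wedge j})$.

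Finally I would apply Lemma \ref{abs.tlt4} with $G=\Sigma_n$, $H=\Sigma_i\times\Sigma_j$, and $X=X_0^{\wedge i}\wedge X_1^{\wedge j}$ to conclude
\[
\cor^{\Sigma_n}_{H}\bigl(X_0^{\wedge i}\wedge X_1^{\wedge j}\bigr)\big/\Sigma_n\;\isom\;\bigl(X_0^{\wedge i}\wedge X_1^{\wedge j}\bigr)\big/(\Sigma_i\times\Sigma_j).
\]
Since $\Sigma_i$ and $\Sigma_j$ act on disjoint blocks of the smash product and the symmetric monoidal structure commutes with colimits in each variable, the quotient on the right is canonically $\Sym^iX_0\wedge\Sym^jX_1$, which yields the asserted isomorphism.

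The only genuine obstacle is bookkeeping: checking that the reindexing isomorphism is $\Sigma_n$-equivariant (not merely an isomorphism of underlying coproducts), and that the $H$-action used in the corestriction agrees with the block-permutation action whose quotient gives $\Sym^iX_0\wedge\Sym^jX_1$. Both are straightforward from the choice of base tuple $k^{0}$, but they are the steps one must verify to legitimately apply Lemma \ref{abs.tlt4}.
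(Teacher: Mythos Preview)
Your proposal is correct and follows essentially the same route as the paper: identify the coproduct as $\cor^{\Sigma_n}_{\Sigma_i\times\Sigma_j}(X_0^{\wedge i}\wedge X_1^{\wedge j})$, apply Lemma~\ref{abs.tlt4}, and then recognize the resulting $(\Sigma_i\times\Sigma_j)$-quotient as $\Sym^iX_0\wedge\Sym^jX_1$. Your version is slightly more explicit about the orbit-stabiliser bookkeeping and the equivariance check, which the paper leaves implicit.
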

\begin{proof}
After reordering of factors in a suitable way, we can notice that  the coproduct $\bigvee_{k_1+\cdots +k_n=j} X_{k_1}\wedge\cdots \wedge X_{k_n}$ is isomorphic to the coproduct of  $\binom{n}{i}$-copies of the term $X_{0}^{\wedge i}\wedge X_{1}^{\wedge j}$, in other words, it is isomorphic to $\cor^{\Sigma_n}_{\Sigma_i\times\Sigma_j}(X_{0}^{\wedge i}\wedge X_{1}^{\wedge j})$ which is a $\Sigma_n$-object.  
By Lemma \ref{abs.tlt4}, we have an isomorphism 
$$\bigslant{\left(\cor^{\Sigma_n}_{\Sigma_i\times\Sigma_j}(X_{0}^{\wedge i}\wedge X_{1}^{\wedge j})\right)}{\,\Sigma_n\,}\isom 
\bigslant{(X_{0}^{\wedge i}\wedge X_{1}^{\wedge j})}{(\Sigma_i\times\Sigma_j)}$$ 
and the right-hand-side is isomorphic to $ \Sym^iX_0\wedge \Sym^j X_1$, which implies the expected isomorphism.  
\end{proof}

\begin{proposition}\label{pappro44s}
Suppose $\bcC$ is a category as in Lemma \ref{abszb0}. Let $X_0$, $X_1$ be two objects of $\bcC$. For any integer $n\geq1$, there is an isomorphism 
\begin{equation}\label{radqee2}
\Sym^n(X_0\vee X_1)\isom \bigvee_{i+j=n}(\Sym^i X_0\wedge \Sym^{j}X_1)\,.
\end{equation}
\end{proposition}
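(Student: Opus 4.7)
The plan is to expand the $n$th smash power of $X_0\vee X_1$ by distributivity, organize the resulting terms by the number of $X_1$-factors, and then invoke Lemma \ref{abszb0} to identify each $\Sigma_n$-orbit summand.

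First I would use the fact that the monoidal product $\wedge$ commutes with the coproduct $\vee$ in each variable (being a left adjoint in each argument on such categories) to obtain a canonical isomorphism
\begin{equation*}
(X_0\vee X_1)^{\wedge n}\isom \bigvee_{(k_1,\dots,k_n)\in\{0,1\}^n}X_{k_1}\wedge\cdots\wedge X_{k_n}.
\end{equation*}
Grouping the index tuples $(k_1,\dots,k_n)$ according to their sum $j=k_1+\cdots+k_n\in\{0,\dots,n\}$, this rewrites as
\begin{equation*}
(X_0\vee X_1)^{\wedge n}\isom \bigvee_{j=0}^{n}\left(\bigvee_{k_1+\cdots+k_n=j}X_{k_1}\wedge\cdots\wedge X_{k_n}\right).
\end{equation*}

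Next I would observe that the canonical $\Sigma_n$-action on $(X_0\vee X_1)^{\wedge n}$, obtained from the symmetric monoidal structure and the universal property of coproducts, permutes the summands of the right-hand side by the rule $\sigma\cdot(k_1,\dots,k_n)=(k_{\sigma^{-1}(1)},\dots,k_{\sigma^{-1}(n)})$. Since this action preserves the sum $k_1+\cdots+k_n$, each inner coproduct indexed by $j$ is $\Sigma_n$-stable, and forming the quotient commutes with the outer coproduct. Hence
\begin{equation*}
\Sym^n(X_0\vee X_1)\isom \bigvee_{j=0}^{n}\left(\bigvee_{k_1+\cdots+k_n=j}X_{k_1}\wedge\cdots\wedge X_{k_n}\right)\Big/\Sigma_n.
\end{equation*}

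Finally, setting $i=n-j$, Lemma \ref{abszb0} identifies each inner quotient with $\Sym^iX_0\wedge \Sym^jX_1$, yielding the desired isomorphism \eqref{radqee2}. The main point to justify carefully is the middle step: that the combinatorial $\Sigma_n$-action permuting tensor factors matches, under the distributivity isomorphism, the permutation of the summands indexed by $\{0,1\}^n$, and that the inner coproducts are exactly the $\Sigma_n$-orbits summed over their stabilizers $\Sigma_i\times\Sigma_j$ up to the reindexing $\cor^{\Sigma_n}_{\Sigma_i\times\Sigma_j}(X_0^{\wedge i}\wedge X_1^{\wedge j})$ already used in the proof of Lemma \ref{abszb0}. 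Once this compatibility is in place, the rest is a direct application of previously established material.
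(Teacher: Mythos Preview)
Your proof is correct and follows essentially the same approach as the paper: expand $(X_0\vee X_1)^{\wedge n}$ by distributivity, group the summands by the number $j$ of $X_1$-factors, note that each such group is $\Sigma_n$-stable so that the quotient distributes over the outer coproduct, and then apply Lemma~\ref{abszb0} to each piece. Your write-up is in fact slightly more explicit than the paper's in articulating why the $\Sigma_n$-action respects the grouping by $j$ and why the quotient commutes with the outer coproduct.
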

\begin{proof}
Let us fix an integer $n\geq1$. We have the following isomorphism,  
$$(X_0\vee X_1)^{\wedge n}\isom\bigvee_{0\leq j\leq n}\left( \bigvee_{ k_1+\cdots +k_n=j} X_{k_1}\wedge\cdots \wedge X_{k_n}\right)\,,$$
and for each index $0\leq j\leq n$, the symmetric group $\Sigma_n$ acts by permuting factors on the coproduct $\bigvee_{k_1+\cdots +k_n=j} X_{k_1}\wedge\cdots \wedge X_{k_n}$. Hence, we deduce that $\bigslant{(X_0\vee X_1)^{\wedge n}}{\Sigma_n}$ is isomorphic to the coproduct of the quotients $\bigslant{( \bigvee_{k_1+\cdots +k_n=j} X_{k_1}\wedge\cdots \wedge X_{k_n})}{\Sigma_n}$ for $j=0,\dots, n$. Finally,
by Lemma \ref{abszb0}, we obtain that $\Sym^n(X_0\vee X_1)$ is isomorphic to the coproduct $\bigvee_{0\leq j\leq n}(\Sym^{n-j} X_0\wedge \Sym^{j}X_1)$, thus we have the isomorphism  \eqref{radqee2}. 
\end{proof}

\begin{corollary}\label{abse7}
{\em Let $\bcC\subset \Sch/k$ be an admissible category. Then, for every integer $n\geq 1$ and for any two objects $X,Y$ of $\bcC$, we have an isomorphism 
$$\Sym^n(X\amalg Y)\isom \coprod_{i+j=n}(\Sym^i X\times \Sym^{j}Y)\,. $$
}
\end{corollary}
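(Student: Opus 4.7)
The plan is to invoke Proposition \ref{pappro44s} directly, after recognizing that an admissible category $\bcC\subset\Sch/k$ carries a natural symmetric monoidal structure in which the Proposition specializes to the claim. Concretely, I would take the monoidal product on $\bcC$ to be the Cartesian product $\times$ over $\Spec(k)$, with unit object $\Spec(k)$ itself, and interpret the coproduct $\vee$ of the Proposition as the disjoint union $\amalg$. The smash notation $\wedge$ in the Proposition is then just a formal placeholder for whichever symmetric monoidal product is in play, and the formula \eqref{radqee2} reads precisely as the statement of the Corollary.

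The next step is to verify the hypotheses of the Proposition (equivalently, of Lemma \ref{abszb0}). By admissibility axioms (ii), (iii) and (v), the category $\bcC$ has finite products, finite coproducts and quotients by finite groups; the associativity, commutativity and unit isomorphisms for $\times$ are the standard ones in $\Sch/k$ restricted to $\bcC$. The only nontrivial point is the distributivity of $\times$ over $\amalg$ that is used implicitly in the proof of Proposition \ref{pappro44s} when expanding $(X_0\vee X_1)^{\wedge n}$. For schemes this is the classical fact that $Z\times_{\Spec(k)}(X\amalg Y)\isom (Z\times_{\Spec(k)} X)\amalg(Z\times_{\Spec(k)} Y)$, which follows from the universal property of the disjoint union together with the fact that $X\times_{\Spec(k)} -$ preserves colimits in $\Sch/k$; consequently $(X\amalg Y)^{\times n}$ decomposes into the coproduct over all sequences $(k_1,\dots,k_n)\in\{0,1\}^n$ indexed by $j=k_1+\cdots+k_n$.

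Once this verification is in place, the conclusion is immediate: Proposition \ref{pappro44s} yields
$$\Sym^n(X\amalg Y)\isom \coprod_{i+j=n}(\Sym^iX\times \Sym^j Y),$$
which is exactly Corollary \ref{abse7}. I do not anticipate a genuine obstacle, since everything is essentially a translation of notation; the only mildly delicate point is confirming the distributivity of $\times$ over $\amalg$ in $\bcC$ and checking that the corestriction/restriction adjunction invoked in Lemma \ref{abs.tlt4} applies verbatim in the unpointed setting of $\bcC$ (which it does, as Lemma \ref{abs.tlt4} was stated for an arbitrary category with finite coproducts and quotients by finite groups).
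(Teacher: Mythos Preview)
Your proposal is correct and matches the paper's approach: the corollary is stated immediately after Proposition~\ref{pappro44s} with no separate proof, precisely because it is the specialization you describe, taking $\wedge=\times$ and $\vee=\amalg$ in the admissible category $\bcC$. Your verification of the hypotheses (finite products, coproducts, quotients by finite groups, and distributivity) is exactly the implicit content the paper leaves to the reader.
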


\begin{example}\label{paprem51}
{\em Let $\varphi: X\into X\amalg Y$ be a coprojection in $\bcC$. Since $\bcC$ is symmetric monoidal and has finite coproducts and  quotients by finite groups, the filtration of the morphism $\Sym^n(\varphi)$,   
 $$\Sym^n(X)=\tBox^n_0(\varphi)\into\tBox^n_1(\varphi)\into \cdots \into \tBox^n_n(\varphi)=\Sym^n(X\amalg Y)$$
defined in \cite{GoGu}, exists. By virtue of Corollary \ref{abse7}, for every index $0\leq i\leq n$, the object $\tBox^n_i(\varphi)$ is isomorphic to the coproduct $\coprod_{n-i\leq j\leq n}(\Sym^j X\times \Sym^{n-j}Y)$, and the morphism $\tBox^n_{i-1}(\varphi)\into\tBox^n_i(\varphi)$ is isomorphic to the canonical morphism
$${\coprod_{n-(i-1)\leq j\leq n}(\Sym^j X\times \Sym^{n-j}Y)}\into\coprod_{n-i\leq j\leq n}(\Sym^j X\times \Sym^{n-j}Y)\,.$$
 }
\end{example}

\begin{lemma}\label{paplmxq8}
Let $\mathJ$ be a category with finite coproducts and Cartesian products. Then, for every integer $n\geq 1$, the diagonal functor $\diag:\mathJ\into \mathJ^{\times n}$ is final (see \cite[page~213]{McLa71}).  
\end{lemma}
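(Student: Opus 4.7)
The plan is to verify the standard criterion for finality \cite[p.~213]{McLa71}: for every object $(X_1,\ldots,X_n)$ of $\mathJ^{\times n}$, the comma category $((X_1,\ldots,X_n)\downarrow \diag)$ must be non-empty and connected. The two hypotheses on $\mathJ$ will enter separately---coproducts deliver non-emptiness, products deliver connectedness---and no compatibility between them is required.

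First I would establish non-emptiness. Given $(X_1,\ldots,X_n)$, set $Y:=X_1\amalg\cdots\amalg X_n$ and let $\iota_i:X_i\into Y$ denote the canonical coprojection. The tuple $(\iota_1,\ldots,\iota_n):(X_1,\ldots,X_n)\into \diag(Y)$ is a morphism in $\mathJ^{\times n}$, and thus provides an object of the comma category.

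For connectedness, take two objects $(Y,f=(f_1,\ldots,f_n))$ and $(Z,g=(g_1,\ldots,g_n))$ of the comma category. Form $W:=Y\times Z$; by the universal property of the product, the pairs $(f_i,g_i):X_i\into W$ assemble into a morphism $h:(X_1,\ldots,X_n)\into \diag(W)$. The projections $\pi_Y:W\into Y$ and $\pi_Z:W\into Z$ then satisfy $\pi_Y\circ(f_i,g_i)=f_i$ and $\pi_Z\circ(f_i,g_i)=g_i$, so they induce morphisms $(W,h)\into (Y,f)$ and $(W,h)\into (Z,g)$ in the comma category, yielding the zigzag $(Y,f)\leftarrow (W,h)\into (Z,g)$ that establishes connectedness.

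The argument is purely formal and presents no real obstacle; the case $n=1$ is trivial since $\diag$ is then the identity functor.
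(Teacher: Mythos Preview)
Your proof is correct and follows essentially the same approach as the paper: non-emptiness is obtained from the coproduct $X_1\amalg\cdots\amalg X_n$ with its coprojections, and connectedness from the product $Y\times Z$ with its projections, exactly as the paper does.
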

\begin{proof}
Let $A=(A_1,\dots,A_n)$ be an object of $\mathJ^{\times n}$. We shall prove that the comma category $A\downarrow \diag$, whose objects has the form $A\into\diag(B)$ for $B$ in $\mathJ$, is nonempty and connected. We set $B:=A_1\amalg\dots\amalg A_n$. For every index $0\leq i\leq n$, we have a canonical morphism $A_i\into B$, then we get a morphism from $A$ to $\diag(B)$.  Thus, the comma category $A\downarrow \diag$ is nonempty.  Let $B$ and $B'$ be two objects of $\mathJ$ and let $A=(A_1,\dots,A_n)$ be an object of $ \mathJ^{\times n}$.  Suppose that we have two morphisms: $(\varphi_1,\dots, \varphi_n)$ from $A$ to $\diag(B)$ and $(\varphi'_1,\dots, \varphi'_n)$ from $A$ to $\diag(B')$.  For every index $0\leq i\leq n$, we have a 
commutative diagram 
$$\xymatrix@C=5ex@R=5ex{&A_i\ar@/^1pc/[rdd]^{\varphi'_i}\ar@/_1pc/[ldd]_{\varphi_i}\ar@{.>}[d]^{\psi_i}&\\
&B\times B'\ar@/^0.5pc/[rd]\ar@/_0.5pc/[ld]&\\
B&&B'}$$
where the dotted arrow exists by the universal property of product. Notice that we get a morphism $(\psi_1,\dots, \psi_n)$ from $A$ to $\diag(B\times B')$ and a commutative diagram 
$$\xymatrix@C=5ex@R=5ex{&A\ar[rd]\ar[d]\ar[ld]&\\
\diag(B)&\diag(B\times B')\ar[r]\ar[l]& \diag(B')}$$ 
Thus, the comma category $A\downarrow \diag$ is connected. 
\end{proof}

\begin{corollary}\label{papcorgg18}
Let $\bcX$ be a sheaf in $\bcS$.  For every integer $n\geq 1$, we have an isomorphism   
 \begin{equation}
\bcX^{\times n}\isom \colim_{h_X\into \bcX}h_{X^n}\,,
\end{equation}  
\end{corollary}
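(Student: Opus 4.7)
The plan is to combine the density presentation of $\bcX$ as a canonical colimit of representables with the fact that Cartesian product preserves colimits in each variable in the topos $\bcS$, and then reduce the resulting $n$-fold colimit to a single one by finality of the diagonal, in the spirit of Lemma \ref{paplmxq8}.

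First I would use density to write $\bcX\isom\colim_{(h_X\to\bcX)\in(h\downarrow\bcX)} h_X$. Since $\bcS$ is a Grothendieck topos, Cartesian product preserves colimits in each variable, and since Yoneda preserves products ($h_U\times h_V\isom h_{U\times V}$ in $\bcS$), applying this $n$ times yields
$$\bcX^{\times n}\isom \colim_{\bigl((h_{X_1}\to\bcX),\ldots,(h_{X_n}\to\bcX)\bigr)\in(h\downarrow\bcX)^n} h_{X_1\times\cdots\times X_n}.$$
The composition of the functor on the right with the diagonal $\diag\colon(h\downarrow\bcX)\to(h\downarrow\bcX)^n$ is exactly $(h_X\to\bcX)\mapsto h_{X^n}$, so finality of $\diag$ will give the desired isomorphism.

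For finality I would exhibit an initial object of the comma category $A\downarrow\diag$ for each $A=((U_1,x_1),\ldots,(U_n,x_n))\in(h\downarrow\bcX)^n$. Take $W:=U_1\amalg\cdots\amalg U_n\in\bcC$, which exists since the admissible category is closed under finite disjoint unions, and let $w\in\bcX(W)$ be the element corresponding to $(x_1,\ldots,x_n)$ under the sheaf identity $\bcX(W)\isom\prod_i\bcX(U_i)$, valid for any Nisnevich sheaf on an admissible category. By construction $\iota_i^{\ast}w=x_i$ for each canonical inclusion $\iota_i\colon U_i\to W$, so the tuple $(\iota_i)_i$ gives a morphism $A\to\diag(h_W\to\bcX)$. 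For any other object $((V,v),(\phi'_i)_i)$ of $A\downarrow\diag$, the coproduct universal property in $\bcC$ supplies a unique $\gamma\colon W\to V$ with $\gamma\iota_i=\phi'_i$; the sheaf identity then forces $\gamma^{\ast}v=w$, because $\iota_i^{\ast}\gamma^{\ast}v=(\phi'_i)^{\ast}v=x_i$ for each $i$. Hence $\gamma$ is a (unique) morphism of $A\downarrow\diag$, so $(W,w)$ is initial and $A\downarrow\diag$ is nonempty and connected.

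The main subtlety is precisely this finality step. Lemma \ref{paplmxq8} cannot be applied to $\mathJ=(h\downarrow\bcX)$ verbatim: although $(h\downarrow\bcX)$ inherits finite coproducts from $\bcC$ via $h_{X\amalg Y}\isom h_X\amalg h_Y$, it does not in general admit Cartesian products, since a product $(U,x)\times(V,y)$ in $(h\downarrow\bcX)$ would require an element of $\bcX(U\times V)$ restricting both to $x$ and to $y$, and no such element need exist. This is why the finality conclusion must be obtained through the initial-object construction above, which uses only the coproduct structure together with the sheaf axiom for disjoint unions.
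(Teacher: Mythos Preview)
Your argument is correct and follows the same overall strategy as the paper: density of representables, commutation of Cartesian products with colimits in $\bcS$, and finality of the diagonal $\diag:(h\downarrow\bcX)\to(h\downarrow\bcX)^{\times n}$.

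The one point where you diverge is well taken. The paper invokes Lemma~\ref{paplmxq8} directly with $\mathJ=(h\downarrow\bcX)$, but as you observe, the hypothesis that $\mathJ$ have Cartesian products is not satisfied: given $(U,x)$ and $(V,y)$ there is in general no section in $\bcX(U\times V)$ restricting to both $x$ and $y$. Your replacement---building the initial object $(U_1\amalg\cdots\amalg U_n,\,w)$ of $A\downarrow\diag$ from coproducts in $\bcC$ together with the sheaf identity $\bcX(\coprod_i U_i)\isom\prod_i\bcX(U_i)$---bypasses this difficulty and in fact shows that $\diag$ has a left adjoint, hence is final. This is the honest justification; note that it is essentially the coproduct half of the proof of Lemma~\ref{paplmxq8} (which already produces an initial object, making the product half redundant), so your fix is a sharpening rather than a genuinely different route.
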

\begin{proof}
Let $(h\downarrow \bcX)$ be a comma category and let us consider the functor $F_{\bcX,n}$ from $(h\downarrow \bcX)^{\times n}$ to $ \Deltaop\bcS$ defined by 
$$(h_{X_1}\into \bcX,\dots, h_{X_n}\into \bcX)\mapsto h_{X_1}\times\cdots\times h_{X_n}\,.$$ 
For every integer $n\geq 1$, let us consider the diagonal functor $\diag$ from $(h\downarrow \bcX)$ to $(h\downarrow \bcX)^{\times n}$.  
We recall that we have an isomorphism 
\begin{equation}
\bcX\isom \colim_{h_X\into \bcX}h_X\,,
\end{equation}  
where the colimit is taken from the comma category with objects $h_X\into \bcX$, for $X\in \bcC$ to the category of sheaves. Hence, we deduce an isomorphism  
$$\bcX^{\times n}\isom \colim F_{\bcX,n}$$   
By Lemma \ref{paplmxq8}, the functor $\diag$ is final, then  by Theorem 1 of \cite[page~213]{McLa71}, the canonical morphism $\colim (F_{\bcX,n}\circ \diag)\into\colim F_{\bcX,n}$ is an isomorphism. Notice that the composite functor $F_{\bcX,n}\circ \diag$ is given by $(h_X\into \bcX)\mapsto h_{X^n}$.  Finally, composing the following isomorphisms
 \begin{equation}
\bcX^{\times n}\isom \colim F_{\bcX,n}\isom \colim (F_{\bcX,n}\circ \diag)\isom \colim_{h_X\into \bcX}h_{X^n}\,. 
\end{equation}  
we get the required isomorphism. 
\end{proof}
 
\begin{lemma}\label{abs.lwm3n}
Let $F$, $G$ be two objects in $\bcS$. For any integer $n\geq1$, there is an isomorphism 
$$\Sym^n_g(F\amalg G)\isom \coprod_{i+j=n}(\Sym^i_g F\times\Sym^{j}_g G)\,. $$
\end{lemma}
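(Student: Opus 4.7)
The plan is to reduce to the scheme case via the colimit-preservation of $\Sym^n_g$ and then invoke Corollary \ref{abse7}. Since $\Sym^n_g$ is defined as a left Kan extension along the Yoneda embedding $h : \bcC \into \bcS$, it preserves all (small) colimits. The strategy is therefore to write $F$ and $G$ as canonical colimits of representable sheaves indexed by their comma categories, apply $\Sym^n_g$ termwise, invoke the scheme-level Künneth formula for each representable, and finally commute products with colimits to reassemble the right-hand side.

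More concretely, I would first write $F \isom \colim_{h_X \to F} h_X$ and $G \isom \colim_{h_Y \to G} h_Y$ and note that, since coproducts commute with colimits,
$$F \amalg G \isom \colim_{(X,Y) \in (h\downarrow F)\times (h\downarrow G)} \bigl(h_X \amalg h_Y\bigr).$$
Using that $h$ takes disjoint unions in $\bcC$ to coproducts in $\bcS$, one has $h_X \amalg h_Y \isom h_{X \amalg Y}$, and by the defining property of the Kan extension $\Sym^n_g(h_{X\amalg Y}) \isom h_{\Sym^n(X\amalg Y)}$. Then applying Corollary \ref{abse7} gives
$$\Sym^n_g(h_X \amalg h_Y) \isom h_{\coprod_{i+j=n}\Sym^iX\times \Sym^jY} \isom \coprod_{i+j=n} h_{\Sym^iX}\times h_{\Sym^jY},$$
where the last step uses that $h$ preserves finite products and finite coproducts.

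Now I would apply $\Sym^n_g$ to the colimit presentation of $F \amalg G$, push it inside by colimit-preservation, and pull the finite coproduct $\coprod_{i+j=n}$ outside the colimit. What remains is to identify
$$\colim_{(X,Y)} \bigl(h_{\Sym^iX}\times h_{\Sym^jY}\bigr) \isom \Bigl(\colim_X h_{\Sym^iX}\Bigr) \times \Bigl(\colim_Y h_{\Sym^jY}\Bigr) \isom \Sym^i_g F \times \Sym^j_g G$$
for each fixed $i+j=n$; the first isomorphism follows because $\bcS$ is a Grothendieck topos, so $-\times -$ commutes with colimits in each variable separately, and the colimit over a product category of a product of functors coincides with the product of the two individual colimits. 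The second isomorphism is again the definition of $\Sym^i_g$ and $\Sym^j_g$ via Kan extension.

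The main obstacle is purely bookkeeping: verifying that $h$ sends disjoint unions in $\bcC$ to coproducts in $\bcS$ (which holds because sheafification preserves colimits and the Nisnevich site behaves well with disjoint unions), and that the commutation of binary products past colimits survives the passage to a product indexing category. Neither is genuinely difficult, but the proof needs to be written so that the indexing categories of the colimits are matched carefully.
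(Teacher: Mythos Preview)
Your proposal is correct and takes essentially the same approach as the paper: both reduce to the representable case via Corollary~\ref{abse7} and then extend to arbitrary sheaves by a colimit argument exploiting Cartesian closedness of $\bcS$. The only cosmetic difference is that the paper packages the extension step as showing that both sides are left Kan extensions along $h\times h$ of isomorphic functors $\Phi_1\isom\Phi_2$ on $\bcC\times\bcC$, whereas you unwind this directly using that $\Sym^n_g$ commutes with the relevant colimits; the underlying computation (and its reliance on Corollary~\ref{papcorgg18}--style finality and on $\bcS$ being Cartesian closed) is the same.
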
 
\begin{proof}
Let us fix an integer $n\geq 1$. By Corollary \ref{abse7}, for any two objects $X$ and $Y$ of $\bcC$, we have an isomorphism 
$$\Sym^n(X\amalg Y)\isom \coprod_{i+j=n}(\Sym^i X\times\Sym^{j}Y)\,. $$
 Since the Yoneda embedding $h:\bcC\into\bcS$ preserves finite product and coproduct, we get an isomorphism
 \begin{equation}\label{abseqjz8}
h_{\Sym^n(X\amalg Y)}\isom \coprod_{i+j=n}\left(h_{\Sym^i X}\times h_{\Sym^{j}Y}\right)\,. 
\end{equation}
By definition, we have the following three equalities,
\begin{align*}
&\Sym^n_g(h_{X})=h_{\Sym^n(X)}\,,\\&\Sym^n_g(h_{Y})=h_{\Sym^n(Y)}\,,\\&
\Sym^n_g(h_{X\amalg Y})=h_{\Sym^n(X\amalg Y)}.
\end{align*}
 $$$$ Replacing all these in \eqref{abseqjz8}, we get an isomorphism   
\begin{equation}\label{abseqjz9}
\Sym^n_g(h_{X}\amalg h_{Y})\isom \coprod_{i+j=n}\left(\Sym_g^i(h_X)\times \Sym_g^{j}(h_Y)\right)\,. 
\end{equation}
Let us consider the functor $\Phi_1:\bcC\times \bcC\into \bcS$ which sends a pair  $(X,Y)$ to the $n$th fold geometric symmetric power $\Sym^n_g(h_{X}\amalg h_{Y})$, and the functor $\Phi_2:\bcC\times \bcC\into\bcS$ which sends a pair $(X,Y)$ to $\coprod_{i+j=n}(\Sym_g^ih_X\times \Sym_g^{j}h_Y)$. Let
 $$\Lan \Phi_1,\Lan \Phi_2:\bcS\times\bcS\into \bcS$$
 be the left Kan extension of $\Phi_1$ and $\Phi_2$, respectively, along the embedding $h\times h$ from $\bcC\times \bcC$ into $\bcS\times\bcS$. By \cite[Prop.~3.4.17]{Bor94-3} $\bcS$ is a Cartesian closed, hence, using Corollary \ref{papcorgg18} one deduces that the functor $\Lan \Phi_1$ is nothing but the functor that sends a pair $(F,G)$ to $\Sym^n_g(F\amalg G)$. Analogously, $\Lan \Phi_2$ sends a pair $(F,G)$ to $\coprod_{i+j=n}(\Sym^i_g F\times\Sym^{j}_g G)$. Finally, from the isomorphism \eqref{abseqjz9}, we have $\Phi_1\isom \Phi_2$,  which implies that $\Lan \Phi_1$ is isomorphic to $\Lan \Phi_2$. This proves the lemma.   
\end{proof}

\begin{corollary}[K\"unneth rule]\label{radccttss}
Let $\bcX$, $\bcY$ be two objects in $\Delta^{\op}\bcS$. For any integer $n\geq1$, there is an isomorphism 
$$\Sym^n_g(\bcX\amalg \bcY)\isom \coprod_{i+j=n}(\Sym^i_g \bcX\times \Sym^{j}_g\bcY)\,. $$
\end{corollary}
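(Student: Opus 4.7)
My plan is to reduce the statement for simplicial sheaves to the already-established Lemma \ref{abs.lwm3n} for ordinary sheaves by working degreewise. The key observation is that, as emphasized right after the definition, the endofunctor $\Sym^n_g$ on $\Delta^{\op}\bcS$ is defined termwise: for a simplicial sheaf $\bcX$, one has $\Sym^n_g(\bcX)_i = \Sym^n_g(\bcX_i)$ for every $i\in\NN$. Moreover, finite coproducts and finite products in $\Delta^{\op}\bcS$ are computed termwise, since the forgetful functor $\Delta^{\op}\bcS \to \prod_{i\in\NN}\bcS$ creates both.

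Given this, I would proceed as follows. Fix $n\geq 1$ and simplicial sheaves $\bcX,\bcY$. For each simplicial degree $i$, apply Lemma \ref{abs.lwm3n} to the pair of sheaves $(\bcX_i,\bcY_i)$ to obtain an isomorphism
\[
\Sym^n_g(\bcX_i\amalg \bcY_i)\isom \coprod_{p+q=n}\bigl(\Sym^p_g \bcX_i\times \Sym^{q}_g \bcY_i\bigr)
\]
in $\bcS$. Using the termwise description recalled above, the left-hand side is exactly the $i$-th term of $\Sym^n_g(\bcX\amalg\bcY)$, while the right-hand side is the $i$-th term of $\coprod_{p+q=n}(\Sym^p_g\bcX\times\Sym^q_g\bcY)$.

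It remains to check that these isomorphisms assemble into a morphism of simplicial sheaves, i.e., are compatible with the face and degeneracy maps. This is automatic because the isomorphism of Lemma \ref{abs.lwm3n} is natural in the pair $(F,G)$: it arises from the universal property of left Kan extensions (via the natural isomorphism $\Phi_1\isom \Phi_2$ produced in its proof), hence commutes with any morphism $(\bcX_i,\bcY_i)\to(\bcX_j,\bcY_j)$ induced by a simplicial operator $[j]\to[i]$. Therefore the family of degreewise isomorphisms defines an isomorphism of simplicial sheaves, yielding the required decomposition.

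The only mildly delicate point is the naturality verification, but since the whole construction $\Sym^n_g$ and the isomorphism in Lemma \ref{abs.lwm3n} are built from left Kan extensions, naturality is a formal consequence of the universal property. No further calculation or additional input is needed.
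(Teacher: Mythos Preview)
Your proof is correct and follows exactly the paper's approach: the paper's proof reads in its entirety ``It follows from Lemma \ref{abs.lwm3n},'' and you have simply spelled out the implicit termwise reduction and naturality check that justify this one-line deduction.
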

\begin{proof}
It follows from Lemma \ref{abs.lwm3n}. 
\end{proof}

\begin{lemma}\label{potdb57}
Let $F$, $G$ be two objects in $\bcS_{\ast}$. For any integer $n\geq1$, there is an isomorphism 
$$\Sym^n_g(F\vee G)\isom \bigvee_{i+j=n}(\Sym^i_g F\wedge\Sym^{j}_g G)\,. $$
\end{lemma}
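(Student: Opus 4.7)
The plan is to mirror the proof of Lemma \ref{abs.lwm3n}, which established the unpointed K\"unneth rule for sheaves by reducing to the scheme-level K\"unneth rule (Corollary \ref{abse7}) via a left Kan extension argument. Here I will carry out the pointed analog. First, I will verify the formula at the level of pointed representable sheaves $h^+_X = h_{X_+}$ for $X \in \bcC$. Since the pointed Yoneda functor $h^+$ satisfies $h^+_X \wedge h^+_Y \isom h^+_{X \times Y}$ and $h^+_X \vee h^+_Y \isom h^+_{X \amalg Y}$, and since by construction $\Sym^n_g(h^+_X) = h^+_{\Sym^n X}$, Corollary \ref{abse7} yields
\[
\Sym^n_g(h^+_X \vee h^+_Y) \isom h^+_{\Sym^n(X \amalg Y)} \isom h^+_{\coprod_{i+j=n} \Sym^i X \times \Sym^j Y} \isom \bigvee_{i+j=n} \Sym^i_g h^+_X \wedge \Sym^j_g h^+_Y.
\]

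Second, I will extend this from representable pointed sheaves to arbitrary $F,G \in \bcS_*$ by the same Kan-extension principle used in Lemma \ref{abs.lwm3n}. Define two functors $\Psi_1, \Psi_2 : \bcC \times \bcC \to \bcS_*$ by $\Psi_1(X,Y) := \Sym^n_g(h^+_X \vee h^+_Y)$ and $\Psi_2(X,Y) := \bigvee_{i+j=n}(\Sym^i_g h^+_X \wedge \Sym^j_g h^+_Y)$. The representable case shows $\Psi_1 \isom \Psi_2$, so their left Kan extensions along $h^+ \times h^+ : \bcC \times \bcC \to \bcS_* \times \bcS_*$ are isomorphic. It then remains to identify $\Lan \Psi_1(F,G) \isom \Sym^n_g(F \vee G)$ and $\Lan \Psi_2(F,G) \isom \bigvee_{i+j=n} \Sym^i_g F \wedge \Sym^j_g G$.

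Both identifications rest on the observation that the constructions involved are cocontinuous in each variable: the wedge $\vee$ is a coproduct, the functor $\Sym^n_g$ is by definition a left Kan extension (hence cocontinuous), and the smash product $\wedge$ is cocontinuous in each variable because $\bcS_*$ is a pointed closed symmetric monoidal category (the pointed version of the Cartesian closed category $\bcS$ used in Lemma \ref{abs.lwm3n} via \cite[Prop.~3.4.17]{Bor94-3}). Together with the pointed analog of Corollary \ref{papcorgg18}, which expresses any pointed sheaf as a canonical colimit of pointed representables, this shows that both sides of the desired isomorphism agree with the corresponding left Kan extensions.

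The main obstacle I anticipate is a bookkeeping one rather than a conceptual one: one must be careful that the base points identify correctly under the various canonical isomorphisms $A_+ \wedge B_+ \isom (A \times B)_+$ and $A_+ \vee B_+ \isom (A \amalg B)_+$ when passing between the unpointed Corollary \ref{radccttss} and the pointed statement; and one must verify cocontinuity of $\wedge$ in the category of pointed Nisnevich sheaves, which follows formally from $\bcS$ being Cartesian closed, by the standard argument that smash in a pointed closed monoidal category preserves colimits in each variable.
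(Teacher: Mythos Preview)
Your proposal is correct and follows essentially the same approach as the paper's proof. The paper sketches it in one sentence: define two functors $\Phi_1,\Phi_2:\bcC_+\times\bcC_+\to\bcS_\ast$ (your $\Psi_1,\Psi_2$, modulo the harmless identification of $\bcC_+$ with $\bcC$ via $X\mapsto X_+$) and show their left Kan extensions along the canonical functor $\bcC_+\times\bcC_+\to\bcS_\ast\times\bcS_\ast$ agree, exactly mirroring Lemma~\ref{abs.lwm3n}; you have simply supplied more detail on the cocontinuity step than the paper does.
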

\begin{proof}
The proof is similar as in lemma \ref{abs.lwm3n}. In this case we define two functor $\Phi_1$ and $\Phi_2$ from $\bcC_{+}\times\bcC_+$ to $\bcS_{\ast}$ such that $\Phi_1$ takes a pair $(X_+,Y_+)$ to $\Sym^n_g(h_{X_+}\vee h_{Y_+})$ and $\Phi_2$ takes a pair $(X_+,Y_+)$ to $\bigvee_{i+j=n}(\Sym_g^ih_{X_+}\wedge \Sym_g^{j}h_{Y_+})$. Hence we prove that the left Kan extensions of $\Phi_1$ and $\Phi_2$, along the canonical functor $\bcC_+\times\bcC_+\into\bcS_{\ast}$, are isomorphic.  
\end{proof}

\begin{corollary}[Pointed version of K\"unneth rule]\label{radccttff}
Let $\bcX$, $\bcY$ be two objects in $\Delta^{\op}\bcS_{\ast}$. For any integer $n\geq1$, there is an isomorphism 
$$\Sym^n_{g}(\bcX\vee \bcY)\isom \bigvee_{i+j=n}(\Sym^i_{g} \bcX\wedge \Sym^{j}_{g}\bcY)\,. $$
\end{corollary}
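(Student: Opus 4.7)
The plan is to reduce the statement from simplicial pointed sheaves to pointed sheaves, i.e., to Lemma \ref{potdb57}, by working degreewise. Recall from the construction preceding Example \ref{papexxv71} that $\Sym^n_g$ on $\Delta^{\op}\bcS_{\ast}$ is induced termwise from the endofunctor $\Sym^n_g$ of $\bcS_{\ast}$, and that both the wedge sum $\vee$ and the smash product $\wedge$ in $\Delta^{\op}\bcS_{\ast}$ are computed degreewise from the corresponding operations in $\bcS_{\ast}$.

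First I would construct the natural comparison morphism
$$\Phi^n_{\bcX,\bcY}:\bigvee_{i+j=n}\left(\Sym^i_g \bcX \wedge \Sym^j_g \bcY\right) \lra \Sym^n_g(\bcX \vee \bcY)\,.$$
For each pair $(i,j)$ with $i+j=n$, this map is induced, via the two canonical inclusions $\bcX \into \bcX \vee \bcY$ and $\bcY \into \bcX \vee \bcY$, by the pairing $\Sym^i_g(-) \wedge \Sym^j_g(-) \to \Sym^{i+j}_g(- \vee -)$ of geometric symmetric powers of pointed sheaves. Since all of these operations are defined termwise, the simplicial degree $m$ component of $\Phi^n_{\bcX,\bcY}$ agrees with the analogous comparison morphism built from the pointed sheaves $\bcX_m$ and $\bcY_m$.

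Next I would apply Lemma \ref{potdb57} in each simplicial degree separately. This yields that $(\Phi^n_{\bcX,\bcY})_m$ is an isomorphism in $\bcS_{\ast}$ for every $m\in\NN$. Since a morphism in $\Delta^{\op}\bcS_{\ast}$ is an isomorphism if and only if it is a termwise isomorphism, we conclude that $\Phi^n_{\bcX,\bcY}$ is itself an isomorphism, which is precisely the asserted K\"unneth decomposition.

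The only delicate point is verifying that the termwise isomorphisms produced by Lemma \ref{potdb57} are compatible with the face and degeneracy operators of $\bcX$ and $\bcY$, so that they genuinely assemble into a simplicial isomorphism. This compatibility, however, is automatic: Lemma \ref{potdb57} identifies the two sides as left Kan extensions of naturally isomorphic functors $\Phi_1,\Phi_2:\bcC_+\times\bcC_+\to\bcS_{\ast}$, and left Kan extensions are functorial, so the resulting isomorphism is natural in both arguments. Thus no further work beyond the sheaf-level Lemma is required.
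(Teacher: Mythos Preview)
Your proposal is correct and takes essentially the same approach as the paper, which simply states that the corollary is a consequence of Lemma~\ref{potdb57}. You have supplied the details the paper leaves implicit: since $\Sym^n_g$, $\vee$, and $\wedge$ on $\Delta^{\op}\bcS_{\ast}$ are all defined termwise, the sheaf-level isomorphism of Lemma~\ref{potdb57} applied in each simplicial degree (together with its naturality) assembles into the desired simplicial isomorphism.
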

\begin{proof}
It is a consequence of Lemma \ref{potdb57}.  
\end{proof}

\bigskip


\bigskip
 A {\em coprojection sequence} in $\Delta^{\op}\bcS_{\ast}$ is a diagram of the form 
$\bcX\into \bcY\into \bcY/\bcX$,
where $\bcX\into \bcY$ is a termwise coprojection.
Termwise coprojections are particular examples of cofibre sequences. 

\begin{proposition}\label{fertsga}
For each $n\in\NN$, the functor $\Sym^n_g$ preserves termwise coprojections. 
\end{proposition}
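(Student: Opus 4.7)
The plan is to reduce to a statement in $\bcS_{\ast}$ and then invoke the pointed K\"unneth rule. Since $\Sym^n_g$ acts termwise on $\Delta^{\op}\bcS_{\ast}$, and a termwise coprojection is by definition a morphism which is a coprojection in each simplicial degree, the problem reduces to showing that for a canonical coprojection $\iota\colon F\hookrightarrow F\vee G$ in $\bcS_{\ast}$, the induced morphism $\Sym^n_g(\iota)$ is again a coprojection in $\bcS_{\ast}$.

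Corollary \ref{radccttff} provides a natural isomorphism
$$\Sym^n_g(F\vee G)\;\isom\;\bigvee_{i+j=n}\Sym^i_g(F)\wedge\Sym^j_g(G).$$
The summand indexed by $(i,j)=(n,0)$ is $\Sym^n_g(F)\wedge\Sym^0_g(G)$. Under the convention that $\Sym^0$ is the constant functor to $\Spec(k)$, its pointed geometric extension $\Sym^0_g$ takes every pointed sheaf to the smash unit $S^0=h^+(\Spec(k))$, so this summand is canonically isomorphic to $\Sym^n_g(F)$. Hence the K\"unneth decomposition separates as
$$\Sym^n_g(F\vee G)\;\isom\;\Sym^n_g(F)\,\vee\,\bigvee_{\substack{i+j=n\\ j\geq 1}}\Sym^i_g(F)\wedge\Sym^j_g(G),$$
and the inclusion of the first summand is tautologically a coprojection in $\bcS_{\ast}$.

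It remains to identify this summand inclusion, under the K\"unneth isomorphism, with the morphism $\Sym^n_g(\iota)$ itself. I will deduce this from naturality of the K\"unneth isomorphism in $G$ applied to the basepoint arrow $\ast\to G$: the decomposition for $\Sym^n_g(F)\isom\Sym^n_g(F\vee\ast)$ collapses onto its $(n,0)$-summand, since $\Sym^j_g(\ast)\simeq\ast$ for $j\geq 1$, while the induced map between the two decompositions places $\Sym^n_g(\iota)$ precisely in the $(n,0)$-summand of the target. The main technical point to verify is that $\Sym^0_g(\ast)\simeq S^0$ and $\Sym^j_g(\ast)\simeq\ast$ for $j\geq 1$, so that this collapse is clean; this is a direct unpacking of the Kan extension defining $\Sym^n_g$ on the terminal pointed sheaf. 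An alternative route, bypassing the pointed subtleties, is to run the same argument using the unpointed K\"unneth rule (Corollary \ref{radccttss}) in combination with Example \ref{paprem51}, where one sees directly that $\Sym^n$ preserves coprojections of representable sheaves, and then extends the result to arbitrary simplicial sheaves via the Kan extension description of $\Sym^n_g$.
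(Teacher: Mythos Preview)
Your proof is correct and follows essentially the same approach as the paper: the paper's proof is the one-line citation ``It follows from Lemma~\ref{abs.lwm3n} for the unpointed case and from Lemma~\ref{potdb57} for the pointed case,'' i.e.\ the K\"unneth decomposition. You have simply unpacked this citation, spelling out why the K\"unneth isomorphism identifies $\Sym^n_g(\iota)$ with the inclusion of the $(n,0)$-summand via naturality in $G$ along $\ast\to G$, and verifying the needed facts $\Sym^0_g(\ast)\simeq S^0$ and $\Sym^j_g(\ast)\simeq\ast$ for $j\geq 1$.
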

\begin{proof}
It follows from Lemma \ref{abs.lwm3n} for the unpointed case and from Lemma \ref{potdb57} for the pointed case. 
\end{proof}

{\em\noindent K\"unneth towers.}--- 
Let $f:\bcX\into\bcY$ be a morphism of pointed simplicial sheaves. A filtration of $\Sym^n_g(f)$ in $\Deltaop\bcS_{\ast}$, 
$$\Sym^n_g(\bcX)=\bcL^n_0(f)\into \bcL^n_1(f)\into\cdots\into \bcL^n_n(f)=\Sym^n_g(\bcY)\,,$$
 is called ({\em geometric}) {\em K\"unneth tower} of $\Sym^n_g(f)$, if for every index $1\leq i\leq n$, there is an isomorphism 
$$\cone\Big(\bcL^n_{i-1}(f)\into \bcL^n_{i}(f)\Big)\isom \Sym^{n-i}_g(\bcX)\wedge \Sym^{i}_g(\bcX)$$ 
in $\bcH_{\ast}(\bcC_{\Nis},\AF^1)$. 

\medskip

Later, we shall prove that the $n$th fold geometric symmetric power of a $I_{\proj}$-cell complex has canonical K\"unneth towers, see Proposition \ref{gsjpr5}. In the next paragraphs, $h^+$ will denote the canonical functor from $\Deltaop\bcC_+$ to $\Deltaop\bcS_{\ast}$. 
 
 \medskip

A pointed simplicial sheaf is called {\em representable}, if it is isomorphic to a simplicial sheaf of the form $h^+(X)$, where $X$ is a simplicial object on $\bcC$. For example, the simplicial sheaves $\Delta_U[n]_+$ and $\partial\Delta_U[n]_+$, where $U$ is an object of $\bcC$ and $n\in\NN$, are both representable.

A {\em directed} set is a nonempty set with a preorder such that every pair of elements has an upper bound. A {\em directed colimit} is the colimit of a directed diagram, i.e. a functor whose source is a directed set. 

We shall denote by $(\Deltaop\bcC)^{\#}_+$ the full subcategory of $\Deltaop\bcS_{\ast}$ generated by directed colimits of representable simplicial sheaves. By Theorem 6.1.8 of \cite{KS06}, $(\Deltaop\bcC)^{\#}_+$ is closed under directed colimits.

\label{lpagrttf}
Let $\bcD$ be small category with finite coproducts. The {\em coequalizer completion} of   $\bcD$ is a category $\bcD_{\coeq}$ with finite colimits together with a finite coproducts preserving functor $\Phi_{\bcD}: \bcD\into \bcD_{\coeq}$, such that for any finite coproducts preserving functor $F: \bcD\into \bcE$, there exists a unique finite colimit preserving functor $\overline{F}: \bcD_{\coeq}\into \bcE$ with $\overline{F}\circ\Phi_{\bcD}= F$. The objects of $\bcD_{\coeq}$ are reflexive pairs in $\bcD$, that is, parallel pairs $U\rightrightarrows V$ with common section (see \cite{BC95}). The functor $\Phi_{\bcD}$ sends an object $U$ of $\bcD$ to the reflexive pair
$$\xymatrix{U\ar@<0.7ex>[r]^{\id}\ar@<-0.7ex>[r]_{\id}&U}\,.$$

In the next paragraphs, we shall consider the coequalizer completion $\bcC_{\coeq}$ of an admissible category $\bcC$. For simplicity, we shall write $\Phi$ instead of $\Phi_{\bcC}$. If $G$ is a finite group acting on an object $X$ of $\bcC$, then $\Phi(X/G)$ is canonically isomorphic to $\Phi(X/G)$. In particular, for every $n\in\NN$ and every object $X$ of $\bcC$, we have a canonical isomorphism 
\begin{equation}\label{tffbb39}
\Phi(\Sym^nX)\isom \Sym^n\Phi(X)\,.
\end{equation}
where $\Sym^n\Phi(X)$ denotes the quotient $\Phi(X)^{\tensor}/\Sigma_n$. The cartesian product of $\bcC$ induces, in a natural way, a symmetric monoidal product on 
$\bcC_{\coeq}$. Let us write $\bcC_{\coeq,+}$ for the category $(\bcC_{\coeq})_+$. Thus, 
$\bcC_{\coeq,+}$ is a symmetric monoidal category; its product will be denoted by $\wedge$. The functor $\Phi$ induces a functor of $\bar{\Phi}$ from $\Deltaop\bcC_+$ to $\Deltaop\bcC_{\coeq,+}$. Notice that the category $\Deltaop\bcC_{\coeq,+}$ is also symmetric monoidal and has finite colimits. From \eqref{tffbb39}, we deduce an isomorphism 
  \begin{equation}\label{tffbb30}
\bar{\Phi}(\Sym^nX)\isom \Sym^n\bar{\Phi}(X)\,.
\end{equation}
for every simplicial object $X$ on $\bcC_+$. 
By the universal property of $\bcC_{\coeq,+}$, there exists a functor $$\bar{h}:\Deltaop\bcC_+\into \Deltaop\bcS_{\ast}$$
such that $\bar{\phi}\circ\bar{h}=h^+$. From \eqref{tffbb30}, we obtain an isomorphism   
\begin{equation}\label{tffbb31}
h^+(\Sym^nX)\isom \bar{h}(\Sym^n\bar{\Phi}(X))\,.
\end{equation}
for every object $X$ of $\Deltaop\bcC_+$.

\begin{proposition}\label{exvvg4}
For every $n\in\NN$, the $n$th fold geometric symmetric power of a morphism of representable simplicial sheaves has a canonical K\"unneth tower.
\end{proposition}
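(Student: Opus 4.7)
The plan is to lift the construction to the symmetric monoidal coequalizer completion $\Deltaop\bcC_{\coeq,+}$, where every morphism admits a cofibre, build the K\"unneth filtration there by the standard symmetric-monoidal formalism, and then transport it to $\Deltaop\bcS_{\ast}$ via the functor $\bar{h}$ using the compatibility isomorphism \eqref{tffbb31}.

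Concretely, given a morphism $f:\bcX\to\bcY$ of representable simplicial sheaves, write $\bcX=h^+(X)$ and $\bcY=h^+(Y)$ for some morphism $X\to Y$ in $\Deltaop\bcC_+$. First I would apply $\bar{\Phi}:\Deltaop\bcC_+\to\Deltaop\bcC_{\coeq,+}$ to obtain $\tilde{f}:\tilde{X}\to\tilde{Y}$, and form the cofibre $\tilde{Z}:=\tilde{Y}/\tilde{X}$, which exists because $\Deltaop\bcC_{\coeq,+}$ has finite colimits. Next, in the symmetric monoidal category $\Deltaop\bcC_{\coeq,+}$, I would construct for each $0\le i\le n$ a subobject $\tilde{\bcL}^n_i(\tilde{f})\subset \Sym^n(\tilde{Y})$ as the image of the symmetrized maps $\tilde{X}^{\wedge(n-j)}\wedge \tilde{Y}^{\wedge j}\to \Sym^n(\tilde{Y})$ for $j\le i$. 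This is exactly the categorical analogue of the filtration of Example \ref{paprem51}, except that now $\tilde{X}\to\tilde{Y}$ need not be a coprojection; the K\"unneth rule (Proposition \ref{pappro44s} applied in $\Deltaop\bcC_{\coeq,+}$) then identifies the successive cofibres
$$\tilde{\bcL}^n_i(\tilde{f})/\tilde{\bcL}^n_{i-1}(\tilde{f})\;\isom\;\Sym^{n-i}(\tilde{X})\wedge \Sym^i(\tilde{Z})\,,$$
by the same combinatorial argument used in Lemma \ref{abszb0}.

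Finally, I would push the tower forward along $\bar{h}$ by defining $\bcL^n_i(f):=\bar{h}\bigl(\tilde{\bcL}^n_i(\tilde{f})\bigr)$. The identifications $\bar{h}(\Sym^n\bar{\Phi}(X))\isom h^+(\Sym^n X)=\Sym^n_g(\bcX)$ and $\bar{h}(\Sym^n\bar{\Phi}(Y))\isom \Sym^n_g(\bcY)$ from \eqref{tffbb31} extend to give that the two endpoints of the tower equal $\Sym^n_g(\bcX)$ and $\Sym^n_g(\bcY)$. For the successive cofibres, one combines \eqref{tffbb31} with the pointed K\"unneth rule of Corollary \ref{radccttff} to see that $\bar{h}(\Sym^{n-i}(\tilde{X})\wedge \Sym^i(\tilde{Z}))$ identifies with $\Sym^{n-i}_g(\bcX)\wedge \Sym^i_g(\bcY/\bcX)$ in $\bcH_\ast(\bcC_{\Nis},\AF^1)$.

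The main obstacle I anticipate is precisely the last identification: verifying that $\bar{h}$ sends the cofibre sequences $\tilde{\bcL}^n_{i-1}(\tilde{f})\to \tilde{\bcL}^n_i(\tilde{f})\to \Sym^{n-i}(\tilde{X})\wedge \Sym^i(\tilde{Z})$ to cofibre sequences in $\Deltaop\bcS_{\ast}$ up to $\AF^1$-weak equivalence, and that the resulting quotient matches $\Sym^i_g$ applied to the \emph{motivic} cofibre $\bcY/\bcX$. In $\bcC_{\coeq,+}$ the object $\tilde{Z}$ represents a formal coequalizer, so the key technical point is that its image under $\bar{h}$ agrees with $\bcY/\bcX$ on the nose (since $\bar{h}$ is built from $h^+$ and the identification $\Phi(X/G)\isom \Phi(X)/G$), after which preservation of smash products and quotients by finite groups by the left Kan extension $\Sym^i_g$ finishes the argument.
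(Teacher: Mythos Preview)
Your approach is essentially the same as the paper's: lift to $\Deltaop\bcC_{\coeq,+}$, build the filtration there via the Gorchinskiy--Guletski\u\i\ construction, and push forward along $\bar{h}$ using \eqref{tffbb31}. Two small points. First, your description of $\tilde{\bcL}^n_i$ as an ``image of symmetrized maps'' is imprecise in a non-abelian setting; the paper uses the $\tilde{\Box}^n_i$ colimit construction of \cite{GoGu} directly, which only requires finite colimits and a symmetric monoidal structure and already comes with the cofibre identifications $\tilde{\Box}^n_i/\tilde{\Box}^n_{i-1}\isom\Sym^{n-i}(\Phi(X))\wedge\Sym^i(\Phi(Y)/\Phi(X))$---so there is no need to invoke Proposition~\ref{pappro44s} or Corollary~\ref{radccttff} separately. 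Second, the obstacle you anticipate dissolves: $\bar{h}$ exists by the universal property of the coequalizer completion applied to the finite-coproduct-preserving functor $h^+$, hence it preserves finite colimits (in particular cofibres) and the monoidal product by construction, so the transported sequence is a K\"unneth tower on the nose.
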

\begin{proof}
Let $\varphi:X\into Y$ be a morphism in $\Deltaop\bcC_+$. Since the category $\Delta^{\op}\bcC_{\coeq,+}$ is symmetric  monoidal and has finite colimits, we follow \cite{GoGu} to obtain a filtration of $\Sym^n(\Phi(\varphi))$, 
\begin{equation}
\Sym^n(\Phi(X))=\tilde{\Box}^n_0\longrightarrow \tilde{\Box}^n_1\longrightarrow\cdots\longrightarrow \tilde{\Box}^n_n=\Sym^n(\Phi(Y))
\end{equation} 
such that, for every index $1\leq i\leq n$, there is an isomorphism
\begin{equation}
\tilde{\Box}^n_i/\tilde{\Box}^n_{i-1}\isom\Sym^{n-i}(\Phi(X))\wedge \Sym^i\Big(\Phi(Y)/\Phi(X)\Big)
\end{equation}
In view of \eqref{tffbb31}, the above sequence induces a filtration of $\Sym^n_{g}(h^+(\varphi))$,  
$$\bar{h}(\tilde{\Box}^n_0)\longrightarrow \bar{h}(\tilde{\Box}^n_1)\longrightarrow\cdots\longrightarrow\bar{h}(\tilde{\Box}^n_n)\,.$$ 
Since $\bar{h}$ preserves finite colimits and monoidal product, the above sequence is a K\"unneth tower of $\Sym^n_{g}(h^+(\varphi))$. 
\end{proof}

\begin{proposition}\label{prfkvvu}
Let $f:\bcX\into \bcX\vee \bcY$ be a coprojection of simplicial sheaves, where $\bcX$ and $\bcY$ are in  $(\Deltaop\bcC)^{\#}_+$. Then , for every $n\in\NN$, the K\"unneth tower of $\Sym^n_g(f)$ is a sequence 
$$\bcL^n_0(f)\longrightarrow \bcL^n_1(f)\longrightarrow\cdots\longrightarrow \bcL^n_n(f)\,, $$
such that each term $\bcL^n_i(f)$ is isomorphic to the coproduct 
\begin{equation}\label{tvvxzpp2}
\bigvee_{(n-i)\leq l\leq n}(\Sym^l_g \bcX\wedge \Sym^{n-l}_g\bcY)\,.
\end{equation}
\end{proposition}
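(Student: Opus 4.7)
The plan is to first establish the claim when both $\bcX$ and $\bcY$ are representable, and then extend by passing to directed colimits. In the representable case, one transfers the problem via $\bar{\Phi}$ to the category $\Deltaop\bcC_{\coeq,+}$, where Proposition \ref{exvvg4} already provides a canonical K\"unneth tower and the terms simplify for a coprojection by the argument of Example \ref{paprem51}.

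More precisely, assume $\bcX=h^+(X')$ and $\bcY=h^+(Y')$ for simplicial objects $X',Y'$ in $\bcC_+$, so that $f$ is induced by a coprojection $f':X'\into X'\vee Y'$ in $\Deltaop\bcC_+$. Applying $\bar{\Phi}$ gives a coprojection $\bar{\Phi}(f')$ in the symmetric monoidal category $\Deltaop\bcC_{\coeq,+}$, which has finite colimits and quotients by finite groups. Being a coprojection, the filtration $\tilde{\Box}^n_i(\bar{\Phi}(f'))$ from \cite{GoGu} admits the identification
$$\tilde{\Box}^n_i(\bar{\Phi}(f'))\isom \bigvee_{n-i\leq l\leq n}\Sym^l\bar{\Phi}(X')\wedge \Sym^{n-l}\bar{\Phi}(Y'),$$
by the pointed K\"unneth formula (Proposition \ref{pappro44s}) applied inductively as in Example \ref{paprem51}. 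Since $\bar{h}$ preserves finite colimits and the monoidal product, and since the identification \eqref{tffbb31} together with the definition of $\Sym^n_g$ on representables yields $\bar{h}(\Sym^m\bar{\Phi}(X'))\isom h^+(\Sym^mX')\isom \Sym^m_g\bcX$ (and similarly for $\bcY$), the tower $\bcL^n_i(f)=\bar{h}(\tilde{\Box}^n_i(\bar{\Phi}(f')))$ has the required form.

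To extend to general $\bcX,\bcY\in(\Deltaop\bcC)^{\#}_+$, write $\bcX\isom\colim_{i\in I}\bcX_i$ and $\bcY\isom\colim_{j\in J}\bcY_j$ as directed colimits of representable pointed simplicial sheaves; then $f$ is a directed colimit over $I\times J$ of coprojections between representables, and I would define $\bcL^n_i(f)$ as the directed colimit of the K\"unneth towers produced in the previous step. This is justified by three facts: (a) $\Sym^n_g$ preserves directed colimits, since filtered colimits of Nisnevich sheaves are computed in presheaves and the pointwise formula for the Kan extension commutes with such colimits; (b) smash products and finite wedge sums in $\Deltaop\bcS_{\ast}$ commute with directed colimits; and (c) the filtration $\tilde{\Box}^n_i$ is functorial in the morphism and built from iterated pushouts, which also commute with directed colimits. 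The main obstacle is coherence: one must verify that the resulting tower and the identification \eqref{tvvxzpp2} do not depend on the chosen ind-representable presentations of $\bcX$ and $\bcY$, which requires careful bookkeeping along the relevant comma categories. Once that is settled, the desired isomorphism follows by passing to the directed colimit of the representable-case identifications.
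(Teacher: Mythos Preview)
Your proposal is correct and follows essentially the same approach as the paper: reduce to the representable case, identify the tower terms via the explicit coprojection formula of Example \ref{paprem51}, and then pass to directed colimits over the product indexing set. The only cosmetic difference is that in the representable step you route through $\bar{\Phi}$ and $\Deltaop\bcC_{\coeq,+}$ as in Proposition \ref{exvvg4}, whereas the paper applies Example \ref{paprem51} directly to the coprojection $\varphi_{d,e}:X_d\into X_d\vee Y_e$ in $\Deltaop\bcC_+$ and then pushes forward by $h^+$; the coherence worry you flag is simply absorbed by \emph{defining} $\bcL^n_i(f)$ as the colimit of the $h^+(\tBox^n_i(\varphi_{d,e}))$, exactly as you do.
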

 \begin{proof}
 Let us write $\bcX:=\colim_{d\in D}h^+(X_{d})$ and $\bcY:=\colim_{e\in E}h^+(Y_{e})$, where $X_{d}$ and $Y_{e}$ are in $\Deltaop\bcC$. Then, the coproduct $\bcX\vee \bcY$ is isomorphic to the colimit 
 $$\colim_{(d,e)\in D\times E}(h^+(X_{d})\vee h^+(Y_{e}))\,,$$ and $f$ is the colimit of the coprojections $h^+(X_{d})\into h^+(X_{d})\vee h^+(Y_{e})$ over all pairs $(d,e)$ in $D\times E$. Let us write $\varphi_{d,e}$ for the coprojection $X_{d}\into X_{d}\vee Y_{e}$.  By Example \ref{paprem51}, the morphism $\Sym^n(\varphi_{d,e})$ has a K\"unneth tower whose $i$th term has the form 
$$\tBox_i(\varphi_{d,e})\isom \bigvee_{(n-i)\leq l\leq n}(\Sym^l X_{d}\wedge \Sym^{n-l}Y_{e})\,.$$
 Hence, we have an isomorphism 
 $$h^+(\tBox^n_i(\varphi_{d,e}))\isom \bigvee_{(n-i)\leq l\leq n}\Big(\Sym^l_g h^+(X_{d})\wedge \Sym^{n-l}_g h^+(Y_{e})\Big)\,.$$
Taking colimit over $D\times E$,  we get that $$\bcL^n_i(f):=\colim_{(d,e)\in D\times E}h^+(\tBox^n_i(\varphi_{d,e}))$$
 is isomorphic to the coproduct \eqref{tvvxzpp2}, as required. 
 \end{proof}

\begin{lemma}\label{tspecgq}
Let 
\begin{equation}\label{fddxzze}
\xymatrix@C=10ex@R=10ex{\bcA\ar[d]_f\ar[r]^g&\bcX\ar[d]^{f'}\\
\bcB\ar[r]_{g'}&\bcY
}
\end{equation}
be a cocartesian in $\Deltaop\bcS_{\ast}$, where $f$ is the image of a termwise coprojection in $\Deltaop\bcC$ through the functor $h^+$. One has the following assertions:
\begin{enumerate}
\item[($a$)]  If $\bcX$ is a representable simplicial sheaf, then $\bcY$ is so, and $f'$ is the image of a termwise coprojection in $\Deltaop\bcC$ through the functor $h^+$. 
\item[($b$)] Suppose that $\bcA$ and $\bcB$ are compact objects. If $\bcX$ is in $(\Deltaop\bcC)^{\#}_+$, then so is $\bcY$. Moreover, if $\bcX$ is a directed colimit of representable simplicial sheaves which are compact, then so is $\bcY$.
\end{enumerate}
\end{lemma}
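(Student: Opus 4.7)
The strategy is to handle (a) by a termwise analysis of the pushout --- exploiting the fact that the pushout of a coprojection $A \into A \amalg Y$ along any morphism $A \to X$ is simply the inclusion $X \into X \amalg Y$ --- and then to bootstrap (a) to (b) by means of a compactness-based factorization argument.

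For (a), write $\bcA = h^+(A_0)$, $\bcB = h^+(B_0)$, $\bcX = h^+(X_0)$, and $f = h^+(f_0)$ with $f_0 : A_0 \into B_0$ a termwise coprojection in $\Deltaop\bcC$. For each $n$ there is then an object $Y_n$ of $\bcC$ with $B_{0,n} \cong A_{0,n} \amalg Y_n$ such that $f_{0,n}$ identifies with the canonical coprojection. Since $h^+$ sends finite coproducts in $\bcC$ to wedge sums in $\bcS_{\ast}$, the morphism $f_n$ identifies with the wedge inclusion $h^+(A_{0,n}) \into h^+(A_{0,n}) \vee h^+(Y_n)$. Pushouts in $\Deltaop\bcS_{\ast}$ are computed termwise, and the pushout of such a wedge inclusion along an arbitrary map $g_n : \bcA_n \to \bcX_n$ is simply $\bcX_n \vee h^+(Y_n) \cong h^+(X_{0,n} \amalg Y_n)$. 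The simplicial face and degeneracy maps on $\{X_{0,n} \amalg Y_n\}_n$ are recovered from the universal property of the pushout, using the simplicial structures of $\bcX$ and $\bcB$; one then assembles a simplicial object $B'_0 \in \Deltaop\bcC$ with $B'_{0,n} = X_{0,n} \amalg Y_n$, together with an evident termwise coprojection $X_0 \into B'_0$ whose image under $h^+$ realises $f'$.

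For (b), write $\bcX \cong \colim_{d \in D} \bcX_d$ as a directed colimit of representable simplicial sheaves. The compactness of $\bcA$ produces a factorization of $g$ through some $g_{d_0} : \bcA \to \bcX_{d_0}$. For every $d \geq d_0$, composition with the transition map gives $g_d : \bcA \to \bcX_d$, and part (a) applied to the pushout $\bcY_d := \bcB \vee_{\bcA} \bcX_d$ shows that each $\bcY_d$ is representable. Since colimits commute with colimits, $\bcY \cong \colim_{d \geq d_0} \bcY_d$, which exhibits $\bcY$ as an object of $(\Deltaop\bcC)^{\#}_+$. For the \emph{moreover} statement, if each $\bcX_d$ is compact then so is the pushout $\bcY_d$, because the functor $\Hom(\bcY_d, -) \cong \Hom(\bcB, -) \times_{\Hom(\bcA, -)} \Hom(\bcX_d, -)$ is a finite limit of functors preserving filtered colimits, and finite limits commute with filtered colimits in $\Set$.

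The main obstacle is the careful reassembly of the simplicial structure in part (a): one must verify that the face and degeneracy maps induced on $\{X_{0,n} \amalg Y_n\}_n$ by the pushout actually come from morphisms in $\bcC$, so that $B'_0$ genuinely lifts to $\Deltaop\bcC$ rather than only to its pointed enlargement. This is ensured by the fact that both $\bcX$ and $\bcB$ lie in the image of $h^+$ on $\Deltaop\bcC$, so every structure map of $\bcY$ is glued from structure maps of $\bcX$ and $\bcB$, each of which is of the form $h^+(-)$ for an unpointed morphism in $\bcC$.
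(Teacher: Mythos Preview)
Your proof of (b) matches the paper's almost verbatim, including the factorization of $g$ through some $\bcX_{d_0}$ via compactness and the identification $\bcY \cong \colim_{d \geq d_0}(\bcB \vee_{\bcA} \bcX_d)$. The explicit compactness argument for the ``moreover'' clause is correct and more detailed than the paper, which leaves it implicit.

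For (a), the paper takes a cleaner route. Rather than computing the pushout termwise in $\bcS_{\ast}$ and then reassembling the simplicial structure, it first lifts $g$ to a morphism $\psi : A_0 \to X_0$ in $\Deltaop\bcC$ (using that $h^+$ is fully faithful, since termwise it is the Yoneda embedding followed by adding a disjoint basepoint), forms the pushout of $f_0$ along $\psi$ directly in $\Deltaop\bcC$ --- which exists termwise precisely because $f_0$ is a termwise coprojection --- and then observes that $h^+$ preserves this pushout because it preserves finite coproducts. This bypasses the reassembly issue entirely.

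Your termwise approach is workable, but the final paragraph has a gap: the structure maps of $\bcY$ are \emph{not} glued solely from the structure maps of $\bcX$ and $\bcB$. The induced map $\bcY_n \to \bcY_m$, restricted to the summand $h^+(Y_n)$, is computed as $g'_m \circ (\theta^*_{\bcB}|_{h^+(Y_n)})$, and $g'_m$ acts as $g_m$ on the $h^+(A_{0,m})$ component of $\bcB_m$. Since a termwise coprojection does not force the $Y_n$'s to assemble into a simplicial subobject of $B_0$, the restriction $\theta^*_{B_0}|_{Y_n}$ can genuinely have a nonzero $A_{0,m}$-component, so $g_m$ really enters. Hence to conclude that the structure maps of $\bcY$ lie in the image of $h^+$, you still need $g$ to lift to $\Deltaop\bcC$ --- that is, you need the full faithfulness of $h^+$ after all. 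Once you invoke it, your argument goes through, but at that point you have essentially recovered the paper's proof with extra bookkeeping.
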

\begin{proof}
($a$). By hypothesis, there are a termwise coprojection $\varphi:A\into B$ and a morphism $\psi: A\into X$ in $\Deltaop\bcC$ such that $f=h^+(\varphi)$ and $g=h^+(\psi)$. 
Since $\varphi$ is a termwise coprojection, we have a cocartesian square
$$\xymatrix@C=10ex@R=10ex{A\ar[d]_{\varphi}\ar[r]^{\psi}&X\ar[d]^{\varphi'}\\
B\ar[r]_{\psi'}&Y
}$$
in $\Deltaop\bcC$, where $\varphi'$ is a termwise coprojection. As $h$ preserves finite coproducts, we deduce that $\bcY$ is isomorphic to $h^+(Y)$ and $f'=h^+(\varphi')$. 

($b$).  Suppose that $\bcX$ is the colimit of a directed diagram $\{\bcX_{d}\}_{d\in D}$, where $\bcX_{e}$ is a representable simplicial sheaf. Since $\bcA$ is compact, there exists an element $e\in D$ such that the morphism $g$ factors through an object $\bcX_{e}$. For every ordinal $d\in D$ with $e\leq d$, we consider the following cocartesian square
$$\xymatrix@C=10ex@R=10ex{\bcA\ar[d]_{f}\ar[r]^-{}&\bcX_{d}\ar[d]\\\bcB\ar[r]&\bcB\amalg_{\bcA}\bcX_{d}}$$
By item ($a$), the simplicial sheaf $\bcB\amalg_{\bcA}\bcX_{d}$ is representable. Therefore, we get a cocartesian square
$$\xymatrix@C=10ex@R=10ex{\bcA\ar[d]_{f}\ar[r]^-{g}&\colim_{d\in D}\bcX_{d}\ar[d]\\\bcB\ar[r]&\colim_{\underset{d\in D}{e\leq d}}(\bcB\amalg_{\bcA}\bcX_{d})}$$
as required. 
\end{proof}

\begin{lemma}\label{lemff73}
Every $I_{\proj}$-cell complex of $\Deltaop\bcS_{\ast}$  is a directed colimit of representable simplicial sheaves which are compact. In particular, every $I_{\proj}$-cell complex of $\Deltaop\bcS_{\ast}$ is in $(\Deltaop\bcC)^{\#}_+$.
\end{lemma}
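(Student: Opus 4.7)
The plan is transfinite induction on the cell-attachment tower $\ast = \bcX_0 \to \bcX_1 \to \cdots \to \bcX_\lambda$, where each step is a pushout along a wedge $\bigvee_{j \in J_\alpha}\big(\partial\Delta_{U_j}[n_j]_+ \to \Delta_{U_j}[n_j]_+\big)$ of elements of $I_{\proj}$. The stronger statement I would prove by induction is that every $\bcX_\alpha$ is a directed colimit of compact representable simplicial sheaves; the ``in particular'' assertion then follows immediately, since $(\Deltaop\bcC)^{\#}_+$ contains all representables and is closed under directed colimits (by the cited Theorem 6.1.8 of \cite{KS06}).

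For the base, $\bcX_0 = \ast = h^+(\emptyset)$ is itself a compact representable. For the successor step $\bcX_\alpha \to \bcX_{\alpha+1}$, I would first rewrite the attaching wedge indexed by $J_\alpha$ as the directed colimit, over finite subsets $J' \subset J_\alpha$, of the partial wedges. For each finite $J'$, the partial attaching map
\[
\bigvee_{j \in J'} \partial\Delta_{U_j}[n_j]_+ \longrightarrow \bigvee_{j \in J'} \Delta_{U_j}[n_j]_+
\]
has two crucial features: its source and target are compact (by Lemma \ref{radfiniteob} combined with the elementary fact that finite pointed coproducts of compact objects are compact), and it is the image via $h^+$ of a termwise coprojection in $\Deltaop\bcC$ (combining Example \ref{radxcoprjcxx} with the closure of termwise coprojections under finite coproducts furnished by Lemma \ref{radtermwise}). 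Applying Lemma \ref{tspecgq}(b) with $\bcX = \bcX_\alpha$, using the inductive hypothesis that $\bcX_\alpha$ is a directed colimit of compact representables, gives that each partial pushout is again a directed colimit of compact representables. Taking the directed colimit over finite $J'$ and invoking the standard fact that a directed colimit of directed colimits is itself a directed colimit (realised concretely via the Grothendieck construction on the indexing categories, or abstractly via the closure of $\mathrm{Ind}$-categories of essentially small categories under directed colimits), we conclude that $\bcX_{\alpha+1}$ has the required form.

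For a limit ordinal the same directed-colimit-of-directed-colimits principle is applied directly to $\bcX_\alpha = \colim_{\beta < \alpha} \bcX_\beta$, using the inductive hypothesis on each $\bcX_\beta$. The main obstacle is the successor step: the cell-attachment wedge can be indexed by an arbitrary set, while Lemma \ref{tspecgq}(b) demands compact source and target. The resolution is precisely the finite-subwedge filtration described above; once it is in place, the remaining content of the argument is formal manipulation of directed colimits.
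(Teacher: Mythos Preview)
Your proof is correct and follows the same strategy as the paper—transfinite induction with Lemma~\ref{tspecgq}($b$) at each step. The paper's version is shorter only because it adopts Hovey's convention that an element of $I_{\proj}$-$\cell$ is a transfinite composition of pushouts of \emph{single} elements of $I_{\proj}$, so the domain and codomain are already compact and your finite-subwedge filtration (and the accompanying ``directed colimit of directed colimits'' bookkeeping) becomes unnecessary; the limit-ordinal step you spell out is left implicit in the paper.
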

\begin{proof}
Since an element of $I_{\proj}$-$cell$ is a transfinite composition of pushouts of element of $I_{\proj}$, the lemma follows by transfinite induction in view of Lemma \ref{tspecgq} ($b$). 
\end{proof}

\section{Geometric symmetric powers under $\AF^1$-localization}
\label{weakequiv}

In this section, we follow the ideas of Voevodsky \cite{Voe10} in order to prove that geometric symmetric powers preserve $\AF^1$-weak equivalences between simplicial Nisnevich sheaves which termwise are coproducts of representable sheaves.   We also prove the existence of the left derived functors associated to geometric symmetric powers.    

\bigskip

Let $\bcC$ be an admissible category of schemes over a field $k$. For an integer $n\geq1$, the category $\bcC^{\Sigma_n}$ denotes the category of functors $\Sigma_n\into \bcC$, where $\Sigma_n$ is viewed as a category. We recall that $\bcC^{\Sigma_n}$ can be viewed as the category of $\Sigma_n$-objects of $\bcC$.  Let $X$ be an $\Sigma_n$-object on $\bcC$ and let $x\in X$. The {\em stabilizer} of $x$ is by definition the subgroup $\stab(x)$ consisting of elements $\sigma\in\Sigma_n$ such that $\sigma.x=x$. A family of  morphisms $\{f_i:U_i\into X\}_{i\in I}$ in $\bcC^{\Sigma_n}$ is called {\em $\Sigma_n$-equivariant Nisnevich covering} if  each morphism $f_i$, viewed as a morphism of $\bcC$, is \'etale and we have the following property: for each point $x\in X$, viewed as an object of $\bcC$, there exist an index $i\in I$ and a point $y\in U_i$ such that: $f_i(y)=x$, the canonical homomorphism of residual fields $k(x)\into k(y)$ is an isomorphism, and the induced homomorphisms of groups $\stab(y)\incl\stab(x)$ is an isomorphism.  Let $(\bcC^{\Sigma_n})_{\Nis}$ be the site consisting of $\bcC^{\Sigma_n}$ and the Grothendieck topology formed by the $\Sigma_n$-equivariant Nisnevich coverings. We denote by $\bcS^{\Sigma_n}$ the category of sheaves on $(\bcC^{\Sigma_n})_{\Nis}$.  

 \begin{remark}
 {\em 
For $n=1$, a $\Sigma_n$-equivariant Nisnevich covering is a usual Nisnevich covering in $\bcC$.}  
 \end{remark}

A Cartesian square in  $\bcC^{\Sigma_n}$ of the form \eqref{papeq10} is an {\em elementary distinguished  square} if $p$ is an \'etale morphism and $j$ is an open embedding when we forget the action of $\Sigma_n$, such that the morphism of reduced schemes $p^{-1}(X-U)_{\red}\into (X-U)_{\red}$ is an isomorphism. Notice that when $n=1$, this definition coincide with the usual definition of an elementary distinguished square.  The above square induces a diagram $$\xymatrix@C=10ex@R=10ex{\Delta_Y[0]_+\vee \Delta_Y[0]_+\ar[d]\ar[r]&\Delta_Y[0]_+\wedge \Delta[1]_+\\
\Delta_U[0]_+\vee\Delta_V[0]_+&}$$
We denote by  $K_{\mathQ}$ the pushout  in $\Delta^{\op}\bcS^{\Sigma_n}_{\ast}$ of the above diagram and denote by $\mathG_{\Sigma_n,\Nis}$ the set of morphisms in $\bcC^{\Sigma_n}$ of canonical morphisms from $K_{\mathQ}$ to $\Delta_X[0]_+$.
The set $\mathG_{\Sigma_n,\Nis}$ is called {\em set of generating Nisnevich equivalences}.
On the other hand, we denote by $\mathP_{\Sigma_n,\AF^1}$ the set of morphisms in $\bcC^{\Sigma_n}$ which is isomorphic to  the projection from $\Delta_X[0]_+\wedge\Delta_{\AF^1}[0]_+$ to $\Delta_X[0]_+$, for $X$ in $\bcC^{\Sigma_n}$.  By Lemma 13 \cite[page~392]{Del09}, the class of $\AF^1$-weak equivalences in $\Deltaop\bcS^{\Sigma_n}$ coincides with the class 
\begin{equation}\label{papcllg}
\cl_{\bar{\Delta}}(\mathG_{\Sigma_n,\Nis}\cup \mathP_{\Sigma_n,\AF^1})\,.
\end{equation} 

\bigskip

We denote by $\Const:\bcC\into \bcC^{\Sigma_n}$ the functor which sends $X$ to the $\Sigma_n$-object $X$, where $\Sigma_n$ acts on $X$ trivially. Let $\colim_{\Sigma_n}:\bcC^{\Sigma_n}\into \bcC$ be the functor which sends $X$ to $\colim_{\Sigma_n} X=X/\Sigma_n$. By definition of colimit, the functor $\colim_{\Sigma_n}$ is left adjoint to 
the functor $\Const$.  It turns out that the functor $\Const$ preserves finite limits and it sends  Nisnevich coverings to  $\Sigma_n$-equivariant Nisnevich coverings. In consequence, the functor $\Const$ is continuous and the functor $\colim_{\Sigma_n}$ is cocontinuous. 

Let $\Lambda_n:\bcC\into \bcC^{\Sigma_n}$ be the functor which sends $X$ to the $n$th fold product $X^{\times n}$.   
 Then, the endofunctor $\Sym^n$ of $\bcC$ is nothing but the composition of $\colim_{\Sigma_n}$ with $\Lambda_n$.

  \begin{proposition}
 The cocontinuous functor $\colim_{\Sigma_n}:(\bcC^{\Sigma_n})_{\Nis}\into \bcC_{\Nis}$ is also continuous. In consequence, it is a morphism of sites.  
 \end{proposition}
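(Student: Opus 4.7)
The plan is to establish the continuity of $u := \colim_{\Sigma_n}$; combined with the cocontinuity exhibited in the preceding paragraph, this will yield that $u$ is a morphism of sites. Continuity of $u$ amounts to saying that $F \circ u$ is a $\Sigma_n$-equivariant Nisnevich sheaf on $\bcC^{\Sigma_n}$ for every Nisnevich sheaf $F$ on $\bcC$. By the $\Sigma_n$-equivariant analogue of the characterization of Nisnevich sheaves via elementary distinguished squares (the proof of which proceeds exactly as in the non-equivariant case recalled earlier in the section), it therefore suffices to show that $u$ sends every $\Sigma_n$-equivariant elementary distinguished square $\mathQ$ in $\bcC^{\Sigma_n}$ to an elementary distinguished square $u(\mathQ)$ in $\bcC$.

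Fix such a square $\mathQ$ with vertices $Y$, $V$, $U$, $X$ as in \eqref{papeq10} and apply $u$ termwise. I would verify, in order, the four defining properties of an elementary distinguished square. First, $j/\Sigma_n\colon U/\Sigma_n \to X/\Sigma_n$ is an open embedding, which is immediate from the fact that $U$ is a $\Sigma_n$-invariant open subscheme of $X$. Second, the $\Sigma_n$-equivariance of the given isomorphism $p^{-1}(X-U)_{\red} \iso (X-U)_{\red}$ lets it descend to an analogous isomorphism between the reduced complements of $U/\Sigma_n$ in $X/\Sigma_n$ and of its preimage in $V/\Sigma_n$. Third, the Cartesianness of $u(\mathQ)$ is obtained by decomposing $V$ at the reduced level into the two $\Sigma_n$-invariant pieces $p^{-1}(U) = Y$ and $p^{-1}(X-U)$, taking quotients separately, and using that on each piece the quotient commutes with the relevant fibre product.

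The hard part will be the fourth property: proving that $p/\Sigma_n\colon V/\Sigma_n \to X/\Sigma_n$ is étale. Since $u$ is a left adjoint it does not preserve étaleness of arbitrary equivariant morphisms; here, however, the equivariance of $p$ together with the reduced-complement isomorphism impose enough compatibility of stabilizers that the étale structure descends. Concretely, I would work Zariski-locally on $X/\Sigma_n$, lift to a $\Sigma_n$-invariant affine open of $X$, and conclude either by faithfully flat descent along the finite surjection $V \to V/\Sigma_n$ or by a direct computation on local rings exploiting the stabilizer-preservation forced by the distinguished square. Once all four properties are in place, $u(\mathQ)$ is an elementary distinguished square in $\bcC$, continuity of $u$ follows, and together with the already-established cocontinuity this shows that $u$ is a morphism of sites.
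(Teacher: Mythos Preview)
The paper gives no argument here; it simply cites \cite[Prop.~43]{Del09}. Your proposal is therefore an independent direct attempt, and its key step --- that $u=\colim_{\Sigma_n}$ carries every equivariant elementary distinguished square to an elementary distinguished square in $\bcC$ --- is false. For $n=2$, let $\Sigma_2$ act on $X=\AF^2$ by swapping coordinates, set $U=X\setminus\{(0,0)\}$, and let $W=\Sigma_2\times U$ (with $\Sigma_2$ acting on the first factor) map to $X$ by $(\sigma,u)\mapsto\sigma\!\cdot\!u$; this map is \'etale and $\Sigma_2$-equivariant. Put $V=X\amalg W$ with the obvious map $p$ to $X$. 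Then $p^{-1}(X\setminus U)=\{(0,0)\}$, so the resulting square is an equivariant elementary distinguished square in the paper's sense. But $W/\Sigma_2\cong U$, and the induced map $U\to X/\Sigma_2$ is the restriction of the quotient morphism $X\to X/\Sigma_2$, which is ramified along the diagonal; hence $p/\Sigma_2\colon V/\Sigma_2\to X/\Sigma_2$ is not \'etale and $u(\mathQ)$ is not a distinguished square.

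The obstruction lies exactly where you flagged ``the hard part''. The paper's definition of an equivariant distinguished square imposes no stabiliser condition over $U$; the reduced-complement isomorphism forces stabilisers to match only over $X\setminus U$. Your heuristic of ``stabilizer-preservation forced by the distinguished square'' is therefore unfounded over $U$, and neither faithfully-flat descent along $V\to V/\Sigma_n$ nor a local-ring computation can produce \'etaleness across a genuine stabiliser jump. The stabiliser clause that makes the argument go through is built into the definition of the equivariant Nisnevich \emph{covering families}, not into the squares; it is that clause, rather than any property of distinguished squares, that the argument in the cited reference exploits. Your first reduction (to the square-wise sheaf condition) also needs care: with the paper's definition of equivariant distinguished square, the equivalence with the covering-based topology is subtler than in the non-equivariant case, for exactly the same reason.
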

 \begin{proof}
 See \cite[Prop.~43]{Del09} 
 \end{proof}
 
The previous proposition says that the functor $\colim_{\Sigma_n}$ is a morphism of sites, then it induces an adjunction between the inverse and direct image functors,
  $$(\colim_{\Sigma_n})_*:\bcS\rightleftarrows \bcS^{\Sigma_n}:(\colim_{\Sigma_n})^*\,.$$
Hence, one has a commutative diagram up to isomorphisms
  \begin{equation}\label{papeq330}
  \xymatrix@C=10ex@R=10ex{(\bcC^{\Sigma_n})_{\Nis}\ar[d]_{h}\ar[rr]^{\colim_{\Sigma_n}}&&\bcC_{\Nis}\ar[d]^{h}\\
\bcS^{\Sigma_n}\ar[rr]_{(\colim_{\Sigma_n})^*}&&\bcS}
  \end{equation}
where $h$ is the Yoneda embedding. We denote  by 
 $$\gamma_n: \Delta^{\op}\bcS^{\Sigma_n}_{\ast} \longrightarrow \Delta^{\op}\bcS$$
 the functor induced by $(\colim_{\Sigma_n})^*$  defined termwise.   From the diagram \eqref{papeq330}, we deduce that $\gamma_n$ preserve terminal object, then it induces a functor $$\gamma_{n,+}: \Delta^{\op}\bcS^{\Sigma_n}_{\ast}\longrightarrow \Delta^{\op}\bcS_{\ast}\,.$$
 
We write $\tilde{\Lambda}_n$ for the left Kan extension of the composite
 $\bcC\stackrel{\Lambda_n}{ \longrightarrow}\bcC^{\Sigma_n}\stackrel{h}{ \longrightarrow}\bcS^{\Sigma_n}$
 along the Yoneda embedding $h:\bcC \longrightarrow\bcS$. Denote by 
$$\lambda_n:\Delta^{\op}\bcS \longrightarrow\Delta^{\op}\bcS^{\Sigma_n}_{\ast}$$ 
the functor induced by $\tilde{\Lambda}_n$ defined termwise. Since $\tilde{\Lambda}_n$ preserves terminal objects,  the functor  $\lambda_n$ does so, hence it induces a functor 
$$\lambda_{n,+}:\Delta^{\op}\bcS_{\ast} \longrightarrow\Delta^{\op}\bcS^{\Sigma_n}_{\ast}\,.$$ 

\medskip

 The following lemmas will be used in Theorem \ref{radthind107}. 
   
\begin{lemma}\label{papll12}
Left adjoint functors preserves left Kan extensions, in the following sense. Let $L:\bcE\into \bcE'$ be a left adjoint functor. If $\Lan_G F$ is the left Kan extension of a functor $F:\bcC\into \bcE$ along a functor $G:\bcC\into \bcD$, then the composite $L\circ \Lan_G F$ is the left Kan extension of the composite $L\circ F$ along $G$.  
\end{lemma}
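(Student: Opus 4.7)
The plan is to verify the universal property of the left Kan extension directly, using the adjunction to shuffle $L$ back and forth across hom-sets. Let $R:\bcE'\into\bcE$ denote a right adjoint of $L$, so that there is a natural bijection $\Hom_{\bcE'}(L(-),-)\isom \Hom_{\bcE}(-,R(-))$. For any functor $H:\bcD\into\bcE'$, I want to produce a natural bijection
\[
\Nat(L\circ\Lan_G F,\,H)\;\isom\;\Nat(L\circ F,\,H\circ G)\,,
\]
and show it is given by whiskering with the unit of $\Lan_G F$, which is precisely the universal property of the left Kan extension of $L\circ F$ along $G$.

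First I would compose four standard natural bijections. Applying the adjunction $L\dashv R$ termwise on natural transformations gives
\[
\Nat(L\circ\Lan_G F,\,H)\;\isom\;\Nat(\Lan_G F,\,R\circ H)\,.
\]
Next, the defining universal property of $\Lan_G F$ provides
\[
\Nat(\Lan_G F,\,R\circ H)\;\isom\;\Nat(F,\,R\circ H\circ G)\,,
\]
implemented by precomposing with the unit $\eta:F\Rightarrow \Lan_G F\circ G$. Applying the adjunction $L\dashv R$ once more in the reverse direction yields
\[
\Nat(F,\,R\circ H\circ G)\;\isom\;\Nat(L\circ F,\,H\circ G)\,.
\]
Composing these three bijections produces the desired isomorphism.

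Finally, I would check that the composite bijection is given by whiskering with $L\eta:L\circ F\Rightarrow L\circ \Lan_G F\circ G$, i.e.\ that it sends a natural transformation $\alpha:L\circ\Lan_G F\Rightarrow H$ to $(\alpha G)\circ (L\eta)$. This is a triangle-identity chase: the two applications of the $L\dashv R$ adjunction cancel up to naturality once one expands the definition of the adjunction bijection in terms of the unit and counit, leaving precisely the whiskering map. This verifies the universal property characterizing $L\circ\Lan_G F$ as the left Kan extension of $L\circ F$ along $G$, with unit $L\eta$. There is no real obstacle here; the only thing to be careful about is that all the bijections above are natural in $H$, so that the resulting correspondence is indeed a representable natural isomorphism.
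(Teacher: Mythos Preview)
Your argument is correct and is the standard proof of this fact: transpose across the adjunction, apply the universal property of $\Lan_G F$, and transpose back, checking that the composite bijection is given by whiskering with $L\eta$. The paper itself gives no argument and simply cites \cite[Lemma~1.3.3]{Riehl14}, so there is nothing to compare; your direct verification is almost certainly what that reference contains.
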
 
\begin{proof}
See \cite[Lemma~1.3.3]{Riehl14}.
\end{proof}

\begin{lemma}\label{paple811}
 For every natural $n$, the endofunctor $\Sym_g^n$ of $\Delta^{\op}\bcS$ is isomorphic to the composition $\gamma_n\circ \lambda_n$. Similarly, $\Sym_g^n$ as an endofunctor of $\Delta^{\op}\bcS_{\ast}$ is isomorphic to  the composition $\gamma_{n,+}\circ \lambda_{n,+}$. 
\end{lemma}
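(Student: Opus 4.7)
The plan is to unwind the definitions and apply Lemma \ref{papll12} termwise. Recall that $\tilde{\Lambda}_n$ is, by definition, $\Lan_h(h\circ\Lambda_n)$, and that the functor $\gamma_n$ is induced termwise by $(\colim_{\Sigma_n})^{\ast}$, which is the inverse image functor of the morphism of sites $\colim_{\Sigma_n}\colon(\bcC^{\Sigma_n})_{\Nis}\to \bcC_{\Nis}$. In particular, $(\colim_{\Sigma_n})^{\ast}$ is a left adjoint, so $\gamma_n$ is a left adjoint at the level of categories of (simplicial) sheaves.

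First I would apply Lemma \ref{papll12} to the left adjoint $(\colim_{\Sigma_n})^{\ast}$ and the left Kan extension $\tilde{\Lambda}_n=\Lan_h(h\circ\Lambda_n)$. This gives that the composite $(\colim_{\Sigma_n})^{\ast}\circ\tilde{\Lambda}_n$ is the left Kan extension of $(\colim_{\Sigma_n})^{\ast}\circ h\circ\Lambda_n$ along $h$. Next, the commutative square \eqref{papeq330} yields a natural isomorphism $(\colim_{\Sigma_n})^{\ast}\circ h\isom h\circ \colim_{\Sigma_n}$ of functors $\bcC^{\Sigma_n}\to \bcS$. Composing with $\Lambda_n$ and using the identity $\Sym^n=\colim_{\Sigma_n}\circ\Lambda_n$ gives $(\colim_{\Sigma_n})^{\ast}\circ h\circ\Lambda_n\isom h\circ\Sym^n$. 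Therefore $(\colim_{\Sigma_n})^{\ast}\circ\tilde{\Lambda}_n\isom\Lan_h(h\circ\Sym^n)=\Sym^n_g$ as endofunctors of $\bcS$.

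To pass to simplicial sheaves, I would observe that $\gamma_n$ and $\lambda_n$ are defined termwise from $(\colim_{\Sigma_n})^{\ast}$ and $\tilde{\Lambda}_n$, and that the endofunctor $\Sym^n_g$ of $\Delta^{\op}\bcS$ is likewise the termwise extension of $\Sym^n_g$ on $\bcS$ (by Lemma \ref{lerffs}, which identifies the termwise extension with the Kan extension along $\Delta^{\op}h$). Consequently the natural isomorphism established at the level of sheaves extends termwise to a natural isomorphism $\Sym^n_g\isom \gamma_n\circ\lambda_n$ of endofunctors of $\Delta^{\op}\bcS$.

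For the pointed version, the same argument applies after adding a disjoint basepoint: the functors $\gamma_{n,+}$ and $\lambda_{n,+}$ were obtained from $\gamma_n$ and $\lambda_n$ by using that both preserve the terminal object, and the endofunctor $\Sym^n_g$ on $\Delta^{\op}\bcS_{\ast}$ was defined by the same procedure. Since all the natural isomorphisms produced above are compatible with the pointing, one obtains $\Sym^n_g\isom \gamma_{n,+}\circ\lambda_{n,+}$ on $\Delta^{\op}\bcS_{\ast}$. The main (and only real) technical point is the invocation of Lemma \ref{papll12}; everything else is formal manipulation of Kan extensions and of the commutative diagram \eqref{papeq330}.
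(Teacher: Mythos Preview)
Your proof is correct and follows essentially the same approach as the paper: reduce to the sheaf level since all three functors are defined termwise, apply Lemma~\ref{papll12} to the left adjoint $(\colim_{\Sigma_n})^{\ast}$ to identify $(\colim_{\Sigma_n})^{\ast}\circ\tilde{\Lambda}_n$ as a left Kan extension, and then use the commutative square~\eqref{papeq330} together with $\Sym^n=\colim_{\Sigma_n}\circ\Lambda_n$ to match it with $\Sym^n_g$. Your write-up is in fact slightly more explicit than the paper's, invoking Lemma~\ref{lerffs} for the termwise passage and spelling out the pointed case.
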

\begin{proof}
Since the functors $\Sym_g^n$, $\gamma_n$ and $\lambda_n$ are termwise, it is enough to show that $\Sym_g^n$, as a endofunctor of $\bcS$, is isomorphic to the composition 
of $\tilde{\Lambda}_n$ with $(\colim_{\Sigma_n})^*$.  Indeed, as the functor $(\colim_{\Sigma_n})^*$ is left adjoint, Lemma \ref{papll12} implies that the composite 
 \begin{equation}\label{papeq55}
 \xymatrix@C=10ex@R=10ex{\bcS\ar[r]^-{\tilde{\Lambda}_n}&\bcS^{\Sigma_n}\ar[rr]^-{(\colim_{\Sigma_n})^*}&&\bcS}
 \end{equation}
 is the left Kan extension of the composite 
$$\xymatrix@C=10ex@R=10ex{\bcC\ar[r]^-{\Lambda_n}&\bcC^{\Sigma_n}\ar[r]^-{h}&\bcS^{\Sigma_n}\ar[rr]^-{(\colim_{\Sigma_n})^*}&&\bcS}$$
 along the embedding $h:\bcC\into\bcS$. 
Now, in view of the commutativity of diagram \eqref{papeq330}, the preceding composite is  isomorphic to the composite 
 $$\xymatrix@C=10ex@R=10ex{ \bcC\ar[r]^-{\Lambda_n}&\bcC^{\Sigma_n}\ar[rr]^-{\colim_{\Sigma_n}}&&\bcC\ar[r]^-{h}&\bcS}
$$
but it is isomorphic to the composite $\bcC\stackrel{\Sym^n}{\longrightarrow}\bcC\stackrel{h}{\into}\bcS$,
which implies that the composite \eqref{papeq55} is isomorphic to $\Sym_g^n$, as required.   
\end{proof}

 
We denote by $\bar{\bcC}_+$ the full subcategory of coproducts  of pointed objects of the form $(h_X)_+$  in $\bcS_{\ast}$ for objects $X$ in $\bcC$.  For every object $X$ in $\bcC$, the pointed sheaf $(h_X)_+$ is 
isomorphic to $h_{(X_+)}$. Indeed, $(h_X)_+$ is by definition equal to the coproduct $h_X\amalg h_{\Spec(k)}$ and this coproduct is isomorphic to the representable functor $h_{X\amalg \Spec(k)}$ which is equal to $h_{(X_+)}$. 

Similarly, we denote by $\bar{\bcC^{\Sigma_n}_+}$ the full subcategory of coproducts  of pointed objects $(h_X)_+$ in $\bcS^{\Sigma_n}_{\ast}$ for objects $X$ in $\bcC^{\Sigma_n}$. 

\vspace{0.5cm} 

\begin{theorem}[Voevodsky]\label{radthind107}
 Let $f:\bcX\into \bcY$ be a morphism in $\Delta^{\op}\bar{\bcC}_{+}$. If $f$ is an $\AF^1$-weak equivalence in $\Delta^{\op}\bcS_{\ast}$, then $\Sym^n_g(f)$ is an $\AF^1$-weak equivalence.
\end{theorem}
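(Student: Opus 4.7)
The plan is to exploit the factorization $\Sym^n_g\cong \gamma_{n,+}\circ\lambda_{n,+}$ obtained in Lemma \ref{paple811}, and show that each of these two functors carries $\AF^1$-weak equivalences (in the relevant subcategories) to $\AF^1$-weak equivalences, using the characterization of $\AF^1$-weak equivalences as the $\bar{\Delta}$-closure of the generators (Example \ref{papex17} and the description \eqref{papcllg}).

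First I would handle $\gamma_{n,+}$, which is the termwise inverse image associated to the morphism of sites $\colim_{\Sigma_n}:(\bcC^{\Sigma_n})_{\Nis}\to \bcC_{\Nis}$. As a left adjoint, $\gamma_{n,+}$ commutes with all colimits and with smash products; as the inverse image of a morphism of sites, it sends the equivariant Nisnevich generators in $\mathG_{\Sigma_n,\Nis}$ and the equivariant $\AF^1$-projections in $\mathP_{\Sigma_n,\AF^1}$ to their unequivariant counterparts in $\bfW_{\Nis,+}$ and $\mathP_{\AF^1,+}$ respectively. It follows that $\gamma_{n,+}$ maps the $\bar{\Delta}$-class \eqref{papcllg} into $\cl_{\bar{\Delta}}(\bfW_{\Nis,+}\cup \mathP_{\AF^1,+})$, hence into $\AF^1$-weak equivalences.

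The core of the argument is then to show that $\lambda_{n,+}$ preserves $\AF^1$-weak equivalences between objects of $\Delta^{\op}\bar{\bcC}_{+}$. Let $W$ be the class of morphisms $f$ in $\Delta^{\op}\bar{\bcC}_{+}$ such that $\lambda_{n,+}(f)$ is an $\AF^1$-weak equivalence in $\Delta^{\op}\bcS^{\Sigma_n}_{\ast}$. Using that $\tilde{\Lambda}_n$ is a left Kan extension of a functor preserving finite coproducts, so that $\lambda_{n,+}$ commutes with filtered colimits, finite coproducts, and diagonals of bisimplicial objects, one verifies that $W$ satisfies the four $\bar{\Delta}$-axioms within $\Delta^{\op}\bar{\bcC}_{+}$. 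Combined with the radditive-functor formalism of \cite{Voe10-5}, this reduces the problem to checking that $W$ contains the generators from $\bfW_{\Nis,+}$ and $\mathP_{\AF^1,+}$ built from objects of $\bar{\bcC}_{+}$.

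For an $\AF^1$-projection $\Delta_X[0]_+\wedge \Delta_{\AF^1}[0]_+\to \Delta_X[0]_+$ with $X\in\bcC$, its image under $\lambda_{n,+}$ is the projection $\Delta_{X^n\times \AF^n}[0]_+\to \Delta_{X^n}[0]_+$ with diagonal $\Sigma_n$-action on both $X^n$ and $\AF^n$; this admits the obvious equivariant zero-section together with a linear equivariant contraction in the $\AF^n$-direction, so it is an $\AF^1$-homotopy equivalence, and in particular an $\AF^1$-weak equivalence in the equivariant category via \eqref{papcllg}. For a generator of $\bfW_{\Nis,+}$ coming from an elementary distinguished square $\mathQ$ in $\bcC$, I would check that $\mathQ^{\times n}$ with its diagonal $\Sigma_n$-action is an elementary distinguished square in $(\bcC^{\Sigma_n})_{\Nis}$: the \'etale property, the open embedding, and the isomorphism of reduced complements are all preserved by taking $n$th powers, while for any point of $X^n$ the stabilizer in $\Sigma_n$ is matched by the stabilizer of a suitable lift in $U^n\sqcup V^n$ since the lifts can be chosen coordinatewise. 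Hence $\lambda_{n,+}$ carries the generator associated to $\mathQ$ to the generator associated to $\mathQ^{\times n}$, which lies in $\mathG_{\Sigma_n,\Nis}$.

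The main obstacle is the $\bar{\Delta}$-closure step: the class $W$ is defined inside $\Delta^{\op}\bar{\bcC}_{+}$, which is itself not closed under general colimits in $\Delta^{\op}\bcS_{\ast}$, so one must argue that the $\bar{\Delta}$-closure of the generators, \emph{computed within} the subcategory $\Delta^{\op}\bar{\bcC}_{+}$, already contains every $\AF^1$-weak equivalence between objects of that subcategory. This is exactly where the hypothesis $f\in\Delta^{\op}\bar{\bcC}_{+}$ is used, and it is handled by the radditive-functor framework of \cite{Voe10-5} together with the K\"unneth decomposition of Corollary \ref{radccttff} (which controls how $\lambda_{n,+}$ interacts with termwise coprojections of coproducts of representables).
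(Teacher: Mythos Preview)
Your overall strategy coincides with the paper's: factor $\Sym^n_g$ as $\gamma_{n,+}\circ\lambda_{n,+}$ via Lemma~\ref{paple811}, then use the $\bar\Delta$-closed description of $\AF^1$-weak equivalences together with Lemma~2.20 of \cite{Voe10-5} to reduce to the generators. The paper's proof is the same sketch, and your identification of the ``main obstacle'' (that the $\bar\Delta$-closure must be taken inside $\Delta^{\op}\bar{\bcC}_+$) is exactly the point where the radditive-functor machinery of \cite{Voe10-5} is invoked.

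There is, however, a genuine gap in your treatment of the Nisnevich generators. The square $\mathQ^{\times n}$ is \emph{not} an elementary distinguished square in $\bcC^{\Sigma_n}$. With $Z=X\setminus U$, the closed complement $X^n\setminus U^n$ is $\bigcup_i X^{i-1}\times Z\times X^{n-i}$, not $Z^n$; its preimage under $p^n$ is $\bigcup_i V^{i-1}\times p^{-1}(Z)\times V^{n-i}$, and the map $V\to X$ on the outer factors is not an isomorphism, so the reduced-complement condition fails. Likewise, $\{U^n\to X^n,\ V^n\to X^n\}$ need not be an equivariant Nisnevich cover: a point $(x_1,\dots,x_n)\in X^n$ with some $x_i\in Z$ and some $x_j$ having no Nisnevich lift to $U$ (only to $V$) lies neither in $U^n$ nor has a residue-field-preserving, stabilizer-preserving lift to $V^n$.

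The paper does not claim that $\lambda_{n,+}$ sends generators to generators; it only asserts that $\lambda_{n,+}$ sends morphisms in $\bfW_{\Nis,+}\cup\mathP_{\AF^1,+}$ (between objects of $\Delta^{\op}\bar{\bcC}_+$) to $\AF^1$-weak equivalences in $\Delta^{\op}\bcS^{\Sigma_n}_\ast$. The honest argument, carried out in \cite{Del09}, goes through the continuity of $\Lambda_n:\bcC\to\bcC^{\Sigma_n}$ for the respective Nisnevich topologies (so that $\lambda_{n,+}$ preserves local weak equivalences between ind-representables), rather than through any single-square statement. Your handling of $\gamma_{n,+}$ and of the $\AF^1$-projections is fine; you should replace the $\mathQ^{\times n}$ step by an appeal to this continuity statement.
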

\begin{proof}
By Lemma \ref{paple811}, $\Sym_g^n$ is the composition $\gamma_{n,+}\circ \lambda_{n,+}$. The idea of the proof is to show that $\gamma_{n,+}$ and $\lambda_{n,+}$ preserve $\AF^1$-weak equivalences between objects which termwise are coproducts of representable sheaves.  The functor $\lambda_{n,+}$ sends morphisms of $\bfW_{\Nis,+}\cup\mathP_{\Nis,+}$ between objects in $\Delta^{\op}\bar{\bcC_{+}}$ to $\AF^1$-weak equivalences between objects in $\Delta^{\op}\bar{\bcC^{\Sigma_n}_{+}}$. Since
$\lambda_{n,+}$ preserves filtered colimits,  Lemma  2.20 of \cite{Voe10-5} implies that $\lambda_{n,+}$ preserves $\AF^1$-weak equivalence as claimed.  Similarly, in view of the class given in \eqref{papcllg}, we use again Lemma 2.20 of {\em loc.cit.} to prove that $\gamma_{n,+}$ 
sends $\AF^1$-weak equivalences between objects in $\Delta^{\op}\bar{\bcC^{\Sigma_n}_+}$ to $\AF^1$-weak equivalences, as required 
\end{proof}


We define the functor $\Phi:\Delta^{\op}\bar{\bcC}_{+}\into\bcH_{\ast}(\bcC_{\Nis}, \AF^1)$ as the composite 
$$\Delta^{\op}\bar{\bcC}_{+}\incl \Delta^{\op}\bcS_{\ast}\into \bcH_{\ast}(\bcC_{\Nis}, \AF^1)\,.$$ 
 where the first arrow is the inclusion functor and the second arrow is the localization functor with respect to the $\AF^1$-weak equivalences.  

\begin{lemma}\label{radpozhh}
Let $\bcC$ be an admissible category. The functor
$$\Phi:\Delta^{\op}\bar{\bcC}_{+}\into\bcH_{\ast}(\bcC_{\Nis}, \AF^1)$$
is a strict localization, that is,  for every morphism $f$ in $\bcH_{\ast}(\bcC_{\Nis}, \AF^1)$, there is a morphism $g$ of $\Delta^{\op}\bar{\bcC}_{+}$ such that the image $\Phi(g)$ is isomorphic to $f$. 
\end{lemma}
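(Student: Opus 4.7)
My plan is to prove essential surjectivity of $\Phi$ on the arrow category of $\bcH_{\ast}(\bcC_{\Nis},\AF^1)$. Given $f:\bcA\to\bcB$ in $\bcH_{\ast}$, I would first set $\bcX:=Q^{\proj}\bcA$, the (pointed version of the) projective cofibrant resolution from Corollary \ref{radreplac10}, so that $\bcX\in\Delta^{\op}\bar{\bcC}_{+}$ and the canonical map $\bcX\to\bcA$ is a sectionwise trivial fibration. This map is a Nisnevich-local weak equivalence, hence an $\AF^1$-weak equivalence, giving an isomorphism $\alpha:\Phi(\bcX)\xrightarrow{\sim}\bcA$ in $\bcH_{\ast}$. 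Choosing an $\AF^1$-fibrant replacement $\rho:\bcB\to R\bcB$ and applying the same construction, I would set $\bcY:=Q^{\proj}(R\bcB)\in\Delta^{\op}\bar{\bcC}_{+}$ with sectionwise trivial fibration $q:\bcY\to R\bcB$; composing then yields an isomorphism $\beta:=\Phi(\rho)^{-1}\circ\Phi(q):\Phi(\bcY)\xrightarrow{\sim}\bcB$ in $\bcH_{\ast}$.

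The crux is then to produce a genuine morphism $g:\bcX\to\bcY$ in $\Delta^{\op}\bar{\bcC}_{+}$ satisfying $\beta\circ\Phi(g)=f\circ\alpha$. This reduces to two model-theoretic verifications. First, I need $\bcX$ to be cofibrant in the injective $\AF^1$-local pointed model structure: by Corollary \ref{radcorsmll}, $\bcX$ arises as a transfinite composition of pushouts of the monomorphisms $\partial\Delta_U[n]_{+}\hookrightarrow\Delta_U[n]_{+}$, and since monomorphisms are closed under pushouts and transfinite compositions in the topos $\Delta^{\op}\bcS_{\ast}$, the map $\ast\to\bcX$ is a monomorphism, so $\bcX$ is cofibrant. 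Second, I need $q$ to be a trivial fibration in the same structure; being a sectionwise trivial fibration, it has the right lifting property against all monomorphisms by sectionwise lifting of trivial Kan fibrations, and monomorphisms are exactly the injective cofibrations. Granted both, the cofibrancy of $\bcX$ and $\AF^1$-fibrancy of $R\bcB$ allow me to represent $f\circ\alpha$ by a genuine map $f':\bcX\to R\bcB$, and lifting $f'$ through $q$ produces $g:\bcX\to\bcY$ in $\Delta^{\op}\bar{\bcC}_{+}$ with $q\circ g=f'$. Applying $\Phi$ to this identity and rearranging gives $\beta\circ\Phi(g)=f\circ\alpha$, exhibiting $\Phi(g)$ as isomorphic to $f$ in the arrow category.

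The main obstacle is precisely the compatibility between the projective cofibrant resolution $Q^{\proj}$ (built from the set $I_{\proj}$) and the injective $\AF^1$-local pointed model structure underlying $\bcH_{\ast}$; once one observes that the injective cofibrations are exactly the monomorphisms and that the cells defining $Q^{\proj}$ are monomorphisms (so that $\bcX$ is injectively cofibrant and $q$ is an injective trivial fibration), the existence of the lift $g$ and the resulting identity of morphisms in $\bcH_{\ast}$ follow from the standard model-category formalism.
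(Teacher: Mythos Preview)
Your argument is correct, but the paper takes a shorter route that avoids the lifting step entirely. After first observing that $\bcH_{\ast}(\bcC_{\Nis},\AF^1)$ is a further localization of $\bcH_{\ast}(\bcC_{\Nis})$, the paper reduces to showing that $\Delta^{\op}\bar{\bcC}_{+}\to\bcH_{\ast}(\bcC_{\Nis})$ is a strict localization. Given any morphism in $\bcH_{\ast}(\bcC_{\Nis})$ represented by an actual map $f:\bcX\to\bcY$ of pointed simplicial sheaves, the paper simply applies the \emph{functorial} resolution $Q^{\proj}$ to $f$ itself: the naturality square
\[
\xymatrix@C=10ex@R=8ex{
Q^{\proj}(\bcX)\ar[r]^{Q^{\proj}(f)}\ar[d]&Q^{\proj}(\bcY)\ar[d]\\
\bcX\ar[r]_f&\bcY}
\]
has vertical arrows that are sectionwise (hence local) weak equivalences, so $f\cong Q^{\proj}(f)$ in $\bcH_{\ast}(\bcC_{\Nis})$, and $Q^{\proj}(f)$ lies in $\Delta^{\op}\bar{\bcC}_{+}$ by Corollary~\ref{radreplac10}. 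No fibrant replacement, no lifting against $q$, no separate verification that $\bcX$ is injectively cofibrant is needed.

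Your approach treats $Q^{\proj}$ only as an object-level replacement and then rebuilds the arrow by hand via cofibrancy and a lift through a trivial fibration; this is valid, and in fact you are more careful than the paper about how a homotopy-class morphism gets represented by a genuine map. Two small comments: your justification that $\bcX$ is injectively cofibrant is heavier than necessary, since in $\Delta^{\op}\bcS_{\ast}$ every object is injectively cofibrant (the map from the point is automatically a monomorphism); and the phrase ``by sectionwise lifting'' for why a sectionwise trivial Kan fibration has the RLP against all monomorphisms is a little glib, since coherence of the sectionwise lifts is not automatic --- the correct statement is that injective trivial fibrations in simplicial (pre)sheaves coincide with sectionwise trivial Kan fibrations, which is standard. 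The paper's use of functoriality of $Q^{\proj}$ sidesteps all of this.
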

\begin{proof}
By Theorem 2.5 of \cite[page~71]{MoVoe99}, the category $\bcH_{\ast}(\bcC_{\Nis}, \AF^1)$ is the localization of the category  $\bcH_{\ast}(\bcC_{\Nis})$ with respect to the image of $\AF^1$-weak equivalences trough the canonical functor. Then, it is enough to prove that the canonical functor $\Delta^{\op}\bar{\bcC}_{+}\into \bcH_{\ast}(\bcC_{\Nis})$  is a strict localization. Indeed, let $f:\bcX\into\bcY$ be a morphism of pointed simplicial sheaves on the site $\bcC_{\Nis}$ representing a morphism in $\bcH_{\ast}(\bcC_{\Nis})$.  The functorial resolution $Q^{\proj}$ gives a commutative square
$$\xymatrix@C=5ex@R=10ex{
Q^{\proj}(\bcX)\ar[rrr]^{Q^{\proj}(f)}\ar[d]&&&Q^{\proj}(\bcY)\ar[d]\\
\bcX\ar[rrr]_f&&&\bcY}$$ 
 where the vertical arrows are object-wise weak equivalences. 
Since the object-wise weak equivalences are local weak equivalences,  the vertical arrows of the above diagram are weak equivalences. This implies that $f$ is isomorphic to $Q^{\proj}(f)$ in $\bcH_{\ast}(\bcC_{\Nis})$. Moreover, by Corollary \ref{radreplac10}, the morphism $Q^{\proj}(f)$ is in $\Delta^{\op}\bar{\bcC}_{+}$. 
\end{proof}

\begin{corollary}\label{radcozzml}
For each integer $n\geq 1$, there exists the left derived functor $L\Sym^n_g$ from $\bcH_{\ast}(\bcC_{\Nis}, \AF^1)$ to itself such that we have a commutative diagram up to isomorphism 
\begin{equation}\label{rad5tgf}
\xymatrix@C=10ex@R=10ex{\Delta^{\op}\bar{\bcC}_{+}\ar[d]_{\Phi}\ar[rr]^{\Sym^n_g}&&\Delta^{\op}\bcS_{\ast}\ar[d]^{}\\
\bcH_{\ast}(\bcC_{\Nis}, \AF^1)\ar[rr]_{L\Sym^n_g}&&\bcH_{\ast}(\bcC_{\Nis}, \AF^1)}
\end{equation}
where the right arrow is the localization functor. 
\end{corollary}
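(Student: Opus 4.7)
The plan is to construct $L\Sym^n_g$ explicitly using the projective cofibrant resolution $Q^{\proj}$ of Corollary \ref{radreplac10}. Applied termwise in the pointed setting, $Q^{\proj}$ yields a functor $\Deltaop\bcS_{\ast}\to \Deltaop\bar{\bcC}_{+}$ equipped with a natural sectionwise trivial fibration $\varepsilon_{\bcX}:Q^{\proj}(\bcX)\to \bcX$. Writing $\ell:\Deltaop\bcS_{\ast}\to \bcH_{\ast}(\bcC_{\Nis},\AF^1)$ for the canonical localization (so $\Phi$ is the restriction of $\ell$ along $\Deltaop\bar{\bcC}_{+}\hookrightarrow \Deltaop\bcS_{\ast}$), I would define $L\Sym^n_g$ as the functor $\bcH_{\ast}(\bcC_{\Nis},\AF^1)\to\bcH_{\ast}(\bcC_{\Nis},\AF^1)$ induced from the composite $\ell\circ \Sym^n_g\circ Q^{\proj}$ via the universal property of localization.

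The well-definedness reduces to checking that $\ell\circ \Sym^n_g\circ Q^{\proj}$ sends $\AF^1$-weak equivalences to isomorphisms. For such an equivalence $f:\bcX\to\bcY$ in $\Deltaop\bcS_{\ast}$, the naturality square for $\varepsilon$ has sectionwise trivial fibrations (hence $\AF^1$-weak equivalences) as vertical arrows, so the two-out-of-three property forces $Q^{\proj}(f)$ to be an $\AF^1$-weak equivalence between objects of $\Deltaop\bar{\bcC}_{+}$. Theorem \ref{radthind107} then gives that $\Sym^n_g(Q^{\proj}(f))$ is an $\AF^1$-weak equivalence, which becomes an isomorphism after $\ell$. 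The commutativity of \eqref{rad5tgf} is then verified on an object $\bcX\in \Deltaop\bar{\bcC}_{+}$: by construction $L\Sym^n_g(\Phi(\bcX))\cong \ell(\Sym^n_g(Q^{\proj}(\bcX)))$, and since $\varepsilon_{\bcX}:Q^{\proj}(\bcX)\to \bcX$ is an $\AF^1$-weak equivalence between objects of $\Deltaop\bar{\bcC}_{+}$, Theorem \ref{radthind107} supplies a natural isomorphism $L\Sym^n_g(\Phi(\bcX))\cong \ell(\Sym^n_g(\bcX))$, natural in $\bcX$ by functoriality of $\varepsilon$.

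No serious obstacle stands in the way; the corollary is a formal packaging of Theorem \ref{radthind107} together with the cofibrant resolution of Corollary \ref{radreplac10}, mediated by the localization. The only point deserving a brief verification is that the small object argument, carried out in the pointed setting, still produces a functorial resolution $Q^{\proj}$ whose output lies in $\Deltaop\bar{\bcC}_{+}$; this follows because the pointed generating cofibrations $\partial\Delta_U[n]_+\to \Delta_U[n]_+$ remain coprojections in $\bar{\bcC}_+$, so Corollary \ref{radtermwise1} carries over without change.
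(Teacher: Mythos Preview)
Your proposal is correct and follows essentially the same approach as the paper: both use Theorem \ref{radthind107} together with the projective resolution $Q^{\proj}$ of Corollary \ref{radreplac10} to produce $L\Sym^n_g$. The only cosmetic difference is that the paper packages the descent step via Lemma \ref{radpozhh} (that $\Phi$ is a strict localization), whereas you unpack this by directly checking that $\ell\circ\Sym^n_g\circ Q^{\proj}$ inverts $\AF^1$-weak equivalences; the content is the same.
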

\begin{proof}
By theorem \ref{radthind107}, the functor $\Sym_g^n$ preserves $\AF^1$-weak equivalences between objects in $\Delta^{\op}\bar{\bcC}_{+}$. Hence, the composite 
$$\Delta^{\op}\bar{\bcC}_{+}\stackrel{\Sym_g^n}{\longrightarrow}\Delta^{\op}\bcS_{\ast}\longrightarrow \bcH_{\ast}(\bcC_{\Nis}, \AF^1)$$
sends $\AF^1$-weak equivalences to isomorphisms.  Then, by Lemma \ref{radpozhh} there exists a functor  $L\Sym^n_g$ such the diagram \eqref{rad5tgf} commutes and for every simplicial sheaf $\bcX$, the object $L\Sym^n_g(\bcX)$ is isomorphic to $\Sym^n_g(Q^{\proj}(\bcX))$ in $\bcH_{\ast}(\bcC_{\Nis}, \AF^1)$. 
\end{proof}

\bigskip

\section{Lambda structures}
 \label{lamstr}
Our goal in this section is to prove the main Theorem \ref{radmainth1} which asserts that the left derived geometric symmetric powers $L\Sym_g^n$, for $n\in\NN$ (see Corollary \ref{radcozzml}), induce a $\lambda$-structure on the pointed motivic homotopy category $\bcH_{\ast}(\bcC_{\Nis},\AF^1)$.  We start by giving the definition of $\lambda$-structure on the homotopy category of a symmetric monoidal model category as in \cite{GoGu}.

 Let $\bcC$ be a closed symmetric monoidal model category with unit $\one$. A {\em $\lambda$-structure}  on $\Ho(\bcC)$ is a sequence $\Lambda^*=(\Lambda^0,\Lambda^1,\Lambda^2,\dots)$ consisting of endofunctors  $\Lambda^n$ of $\Ho(\bcC)$ for $n\in\NN$, satisfying the following axioms:
\begin{enumerate}
\item[(i)] $\Lambda^0=\one$, $\Lambda^1=\id$
\item[(ii)] {\em (K\"unneth tower axiom)}. For any cofibre sequence $X\stackrel{f}{\into}Y \into Z$ in $\Ho(\bcC)$,  and any $n\in\NN$, there is a unique sequence
$$\Lambda^n(X)= L^n_0 \into  L^n_1\into\cdots \into L^n_i\into\cdots \into L^n_n=\Lambda^n(Y)$$
called {\em K\"unneth tower}, such that for any index $0\leq i\leq n$, the quotient $L^n_i/L^n_{i-1}$ in $\bcC$ is weak equivalent to the product  
$\Lambda^{n-i}(X)\wedge \Lambda^{i}(Z)$.
 \item[(iii)]  {\em (Functoriality axiom)}. For any morphism of cofibre sequences
 $$\xymatrix@C=10ex@R=10ex{X\ar[r]\ar[d]&Y\ar[r]\ar[d]&Z\ar[d]\\
 X'\ar[r]&Y'\ar[r]&Z'}$$
in $\Ho(\bcC)$, there is a commutative diagram 
  $$\xymatrix@C=4ex@R=10ex{\Lambda^n(X)= L^n_0\ar[r]\ar@<+25pt>[d]& L^n_1\ar[r]\ar[d] &L^n_2\ar[r]\ar[d] &\cdots&  \cdots\ar[r]&\ar[r]\ar[d] L^n_{n-1}\ar[r]\ar[d] & L^n_n=\Lambda^n(Y)\ar@<-25pt>[d]\\
\Lambda^n(X')= {L'}^n_0\ar[r]& {L'}^n_1\ar[r]\ar[r]&{L'}^n_2\ar[r] &\cdots&\cdots \ar[r]&\ar[r] {L'}^n_{n-1}\ar[r] &{ L'}^n_n=\Lambda^n(Y')
}$$
in $\Ho(\bcC)$, were the horizontal sequences are their respective K\"unneth towers. 
\end{enumerate} 
\medskip 

Let $\Sym^n$ be the abstract $n$th fold symmetric power defined on $\Delta^{\op}\bcS_{\ast}$, for $n\in\NN$, which is defined for every pointed simplicial sheaf $\bcX$ as the quotient $\Sym^n(\bcX):=(\bcX^{\wedge n})/\Sigma_n$. 

\begin{example}
{\em  The left derived functors $L\Sym^n$, for $n\in\NN$, provide a $\lambda$-structure on $\bcH_{\ast}(\bcC_{\Nis},\AF^1)$  (see  \cite[Theorem~57]{GoGu} for the proof in the context Nisnevich sheaves on the category of smooth schemes). Indeed,  the morphism $\Delta_{\AF^1}[0]\into\Delta_{\Spec(k)}[0]$ is a diagonalizable interval, meaning that  $\Delta_{\AF^1}[0]$ has a structure of symmetric co-algebra in the category $\Delta^{\op}\bcS$.   We claim that the class of cofibrations and the class of trivial cofibrations in $\Delta^{\op}\bcS$ are symmetrizable. Since cofibrations in $\Delta^{\op}\bcS$ are section-wise cofibrations of simplicial sets, it follows from Proposition 55 of \cite{GoGu} that cofibrations are symmetrizable. 
Let $f$ be a trivial cofibration in $\Delta^{\op}\bcS$. As $f$ is a cofibration, it is a symmetrizable cofibration. For every point $P$ of the site $\bcC_{\Nis}$, the induced morphism $f_P$ is a weak equivalence of simplicial sets.  By \cite[Lemma 54]{GoGu}, the $n$th fold symmetric power $\Sym^n(f_P)$ is also a weak equivalence. Since the morphism $\Sym^n(f)_P$ coincide with  $\Sym^n(f_P)$, we deduce that the $n$th fold symmetric power $\Sym^n(f)$ is a weak equivalence too. Hence, by  \cite[Corollary 54]{GoGu}, $f$ is a symmetrizable trivial cofibration. Finally, Theorem 38 and Theorem 22 of \cite{GoGu} imply the existence of left derived functors $L\Sym^n$, for $n\in\NN$, and they  provide a $\lambda$-structure on $\bcH_{\ast}(\bcC_{\Nis},\AF^1)$. 

}
\end{example}

\medskip

\begin{proposition}\label{radthcofibre7}
Let $\bcC$ be an admissible category. Every cofibre sequence in $ \bcH_{\ast}(\bcC_{\Nis},\AF^1)$ is isomorphic to a coprojection sequence of the form $$\bcA\into \bcB\into \bcB/\bcA\,,$$ where $\bcA\into \bcB$ is in $I_{\proj}$-$\cell$ and $\bcA$ is an $I_{\proj}$-cell complex. In particular, $\bcA\into\bcB$ is a morphism in $\Deltaop\bar{\bcC}_+$.   
\end{proposition}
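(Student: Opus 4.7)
The plan is to start with an arbitrary cofibre sequence $X \to Y \to Z$ in $\bcH_{\ast}(\bcC_{\Nis},\AF^1)$ and use the machinery built in Section~\ref{Prelim} (namely Corollary~\ref{radcorsmll} and Corollary~\ref{radreplac10}) to replace it up to $\AF^1$-weak equivalence by a coprojection sequence whose structure map is in $I_{\proj}$-$\cell$.

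\smallskip

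\textbf{Step 1 (choose a representative).} Any cofibre sequence in the homotopy category arises, up to isomorphism, from a morphism $f:\bcX\to\bcY$ of pointed simplicial sheaves together with its homotopy cofibre. So I may assume that $X$, $Y$, $Z$ are the images in $\bcH_{\ast}(\bcC_{\Nis},\AF^1)$ of some $\bcX$, $\bcY$ and of the strict cofibre $\bcY/\bcX'$ of a suitable cofibrant model of $f$. This reduces the problem to producing such a cofibrant model inside $\Deltaop\bar{\bcC}_+$.

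\smallskip

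\textbf{Step 2 (cofibrantly replace source and target).} Apply the functor $Q^{\proj}$ to the morphism $f$. By Corollary~\ref{radreplac10}, both $Q^{\proj}(\bcX)$ and $Q^{\proj}(\bcY)$ lie in $\Deltaop\bar{\bcC}_+$, and by construction the canonical maps $Q^{\proj}(\bcX)\to\bcX$ and $Q^{\proj}(\bcY)\to\bcY$ are sectionwise trivial fibrations, hence in particular $\AF^1$-weak equivalences. Moreover, the map $\emptyset\to Q^{\proj}(\bcX)$ is, by Corollary~\ref{radcorsmll}, a countable transfinite composition of pushouts of coproducts of elements of $I_{\proj}$, so $Q^{\proj}(\bcX)$ is an $I_{\proj}$-cell complex; the same holds for $Q^{\proj}(\bcY)$.

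\smallskip

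\textbf{Step 3 (factor the replaced morphism).} Apply the small object argument once more to the morphism $Q^{\proj}(f):Q^{\proj}(\bcX)\to Q^{\proj}(\bcY)$ relative to $I_{\proj}$, obtaining a factorization
\[
Q^{\proj}(\bcX)\;\xrightarrow{\;i\;}\;\bcB\;\xrightarrow{\;p\;}\;Q^{\proj}(\bcY),
\]
where $i\in I_{\proj}\text{-}\cell$ and $p\in I_{\proj}\text{-}\inj$. By Lemma~\ref{paplem74} the map $p$ is a sectionwise trivial fibration, hence an isomorphism in $\bcH_{\ast}(\bcC_{\Nis},\AF^1)$; and by Corollary~\ref{radtermwise1} the map $i$ is a termwise coprojection with $\bcB\in\Deltaop\bar{\bcC}_+$. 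Since $Q^{\proj}(\bcX)$ is already an $I_{\proj}$-cell complex, so is $\bcB$, because $I_{\proj}$-$\cell$ is closed under composition.

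\smallskip

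\textbf{Step 4 (identify the cofibre).} Set $\bcA:=Q^{\proj}(\bcX)$. Then $\bcA\to\bcB$ is a termwise coprojection in $I_{\proj}$-$\cell$ between $I_{\proj}$-cell complexes in $\Deltaop\bar{\bcC}_+$, and the coprojection sequence $\bcA\to\bcB\to\bcB/\bcA$ maps to $X\to Y\to Z$ in $\bcH_{\ast}(\bcC_{\Nis},\AF^1)$ via $\AF^1$-weak equivalences on each of the first two terms. Because $\bcA\to\bcB$ is a cofibration between cofibrant objects in the injective model structure on $\Deltaop\bcS_{\ast}$, its strict cofibre computes the homotopy cofibre, so $\bcB/\bcA$ is isomorphic to $Z$ in $\bcH_{\ast}(\bcC_{\Nis},\AF^1)$.

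\smallskip

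The main obstacle is the compatibility in Step~4: one must be sure that the strict cofibre $\bcB/\bcA$ agrees with the cofibre of the original sequence in $\bcH_{\ast}(\bcC_{\Nis},\AF^1)$. This is exactly where it is essential that the replacement $i$ is a genuine cofibration (here a termwise coprojection between cofibrant objects), which is guaranteed by the combination of Corollary~\ref{radcorsmll} and Corollary~\ref{radtermwise1}; these ensure the small object argument stays inside $\Deltaop\bar{\bcC}_+$ and produces termwise coprojections rather than merely monomorphisms.
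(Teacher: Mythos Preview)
Your argument is correct and follows essentially the same approach as the paper: replace the source by $\bcA:=Q^{\proj}(\bcX)$, factor the resulting map via the $I_{\proj}$ small object argument, and identify the cofibres. The only cosmetic difference is that the paper factors the composite $\bcA\to\bcX\xrightarrow{f}\bcY$ directly (so the trivial fibration lands in $\bcY$ itself), whereas you first replace $\bcY$ by $Q^{\proj}(\bcY)$ and then factor $Q^{\proj}(f)$; this extra replacement is harmless but unnecessary. For the identification of cofibres in Step~4 the paper simply invokes \cite[Prop.~6.2.5]{Hovey0}, which packages exactly the ``cofibration between cofibrant objects, hence strict cofibre $=$ homotopy cofibre'' argument you spell out.
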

\begin{proof}
 Let $\bcX\into\bcY \into \bcZ$ be a cofibre sequence in $ \bcH_{\ast}(\bcC_{\Nis},\AF^1)$, where $f$ is a cofibration from $\bcX$ to $\bcY$ in $\Delta^{\op}\bcS_{\ast}$, such that $\bcZ=\bcY/\bcX$. Let $Q^{\proj}$ be the cofibrant resolution given in section \ref{Prelim}. We write  $\bcA:=Q^{\proj}(\bcX)$  and consider the induced morphism $\bcA\into \bcX$.  By Corollary \ref{radcorsmll}, the composition of $\bcA\into \bcX$ with $f$ induces a commutative diagram 
$$\xymatrix@C=10ex@R=10ex{\bcA\ar[r]^{\alpha(f)}\ar[d]&\bcB\ar[d]^{\beta(f)}\\\bcX\ar[r]_f&\bcY}$$ 
where $\beta(f)$ is a sectionwise trivial fibration and $\alpha(f)$ is in $I_{\proj}$-$\cell$. By \cite[Prop. 6.2.5]{Hovey0}, the cofibre sequence $\bcA\stackrel{}{\rightarrow}\bcB\rightarrow \bcB/\bcA$
  is isomorphic to the cofibre sequence $\bcX\stackrel{[f]}{\into}\bcY \into \bcZ$ in $\bcH_{\ast}(\bcC_{\Nis}, \AF^1)$. 

\end{proof}

\begin{proposition}\label{gsjpr5}
Let $f:\bcX\into\bcY$ be a morphism in $I_{\proj}$-$\cell$, where $\bcX$ is an $I_{\proj}$-cell complex. Then, for each $n\in\NN$, $\Sym^n_g(f)$ has a functorial K\"unneth tower.
\end{proposition}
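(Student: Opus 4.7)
The plan is to express $f$ as a filtered colimit of morphisms between representable simplicial sheaves coming from $\Deltaop\bcC_+$, apply Proposition \ref{exvvg4} to each of them, and pass to the colimit, exploiting that $\Sym^n_g$ and the relevant categorical operations commute with filtered colimits.

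First, by Lemma \ref{lemff73}, $\bcX$ is a directed colimit $\bcX = \colim_{d\in D} h^+(X_d)$ of compact representable pointed simplicial sheaves. By transfinite induction along the $I_{\proj}$-cell structure of $f$, applying Lemma \ref{tspecgq}(b) at each successor step, I would propagate this presentation to a compatible directed presentation $\bcY = \colim_{d\in D} h^+(Y_d)$ of $\bcY$ (possibly after a cofinal enlargement of $D$), together with a directed system of morphisms $f_d\colon h^+(X_d) \to h^+(Y_d)$ in $\Deltaop\bar{\bcC}_+$ whose colimit is $f$. Corollary \ref{radtermwise1} together with Lemma \ref{tspecgq}(a) guarantees that each $f_d$ can be chosen as the image under $h^+$ of a termwise coprojection in $\Deltaop\bcC_+$.

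Proposition \ref{exvvg4} then provides, canonically and functorially in $d$, a K\"unneth tower
\[
\Sym^n_g(h^+(X_d)) = \bcL^n_0(f_d) \to \cdots \to \bcL^n_n(f_d) = \Sym^n_g(h^+(Y_d))
\]
whose $i$-th cone is $\AF^1$-weak equivalent to $\Sym^{n-i}_g(h^+(X_d)) \wedge \Sym^i_g(h^+(Y_d)/h^+(X_d))$; the termwise-coprojection property of $f_d$ is precisely what identifies the cofibre appearing in the coequalizer completion with this pointed quotient. I would set $\bcL^n_i(f) := \colim_{d\in D} \bcL^n_i(f_d)$. Since $\Sym^n_g$ is the termwise extension of a left Kan extension into the cocomplete category $\bcS_{\ast}$, it preserves all colimits; cofibres and smash products commute with directed colimits in $\Deltaop\bcS_{\ast}$. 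Consequently, the outer terms are $\Sym^n_g(\bcX)$ and $\Sym^n_g(\bcY)$, and directed colimits of the cone identifications yield
\[
\cone\bigl(\bcL^n_{i-1}(f) \to \bcL^n_i(f)\bigr) \isom \Sym^{n-i}_g(\bcX) \wedge \Sym^i_g(\bcY/\bcX)
\]
in $\bcH_{\ast}(\bcC_{\Nis}, \AF^1)$. Functoriality of the resulting tower in $f$ follows from the functoriality of Proposition \ref{exvvg4} combined with the naturality of the directed presentations.

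The hardest step will be the first one: carrying out the transfinite induction that produces matched directed colimit presentations of $\bcX$ and $\bcY$ realizing $f$ as a filtered colimit of $h^+$-images of termwise coprojections between compact representables. This requires propagating the directed decomposition of $\bcX$ coherently through each cell attachment via the pushout description in Lemma \ref{tspecgq}(b). Once that combinatorial input is in place, the remainder of the argument is a formal passage to the colimit.
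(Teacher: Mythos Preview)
Your approach is essentially the same as the paper's: express $f$ as a directed colimit of morphisms between representable simplicial sheaves, apply Proposition~\ref{exvvg4} levelwise, and take the colimit. The one difference is in how you obtain the directed presentation of $f$. You propose to build it by hand via transfinite induction along the cell structure, invoking Lemma~\ref{tspecgq}(b) at each stage; the paper instead observes that $\bcY$ is also an $I_{\proj}$-cell complex (since $\emptyset\to\bcX\to\bcY$ is a composite in $I_{\proj}$-$\cell$), applies Lemma~\ref{lemff73} to both $\bcX$ and $\bcY$, and then invokes Proposition~6.1.13 of Kashiwara--Schapira \cite{KS06} to get the directed presentation of the \emph{morphism} $f$ for free. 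That general result says precisely that a morphism between ind-objects can be written as a filtered colimit of morphisms in the underlying category, which is exactly the output you are trying to manufacture by induction. So your ``hardest step'' is already packaged as a citable lemma, and you can replace the transfinite bookkeeping with a one-line reference. Note also that Proposition~\ref{exvvg4} applies to an arbitrary morphism of representable simplicial sheaves, so you do not need to arrange that each $f_d$ is a termwise coprojection.
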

\begin{proof}
By virtue of Lemma \ref{lemff73}, $\bcX$ and $\bcY$ are directed colimit of representable simplicial sheaves. By Proposition 6.1.13 of \cite{KS06}, the morphism $f$ can be expressed as the colimit of a directed diagram $\{f_d\}_{d\in D}$ of morphisms of representable simplicial sheaves. Hence, by Proposition \ref{exvvg4}, the $n$th fold geometric symmetric power  $\Sym^n_g(f_{d})$ has a K\"unneth tower 

\begin{equation}\label{eqpp4231}\xymatrix{\bcL^n_0(f_{d})\ar[r]&\bcL^n_1(f_{d})\ar[r]&\cdots \ar[r]&\bcL^n_n(f_{d})}\,.
\end{equation}
For each index $0\leq i\leq n$, we define 
$$\bcL^n_i(f):=\colim_{d\in D}\bcL^n_i(f_{d})\,.$$ 
 Thus, we get a sequence 
\begin{equation}\label{eqpp4241}
\bcL^n_0(f)\longrightarrow \bcL^n_1(f)\longrightarrow\cdots\longrightarrow \bcL^n_n(f)\,. 
\end{equation}
Let us show that this gives a K\"unneth tower of $\Sym^n_g(f)$ that is functorial in $f$. 
Since the sequence \eqref{eqpp4231} is a K\"unneth tower of $\Sym^n_g(f_{d})$, we have an isomorphism
$$\bcL^n_i(f_{d})/\bcL^n_{i-1}(f_{d})\isom \Sym^{n-i}_g(h^+(X_{d}))\wedge \Sym^{i}_g(h^+(Y_{d})/h^+(X_{d}))\,.$$ Hence, taking the colimit, for $d\in D$, we get the an isomorphism 
\begin{equation}
\bcL^n_i(f)/\bcL^n_{i-1}(f)\isom \Sym^{n-i}_g(\bcX)\wedge \Sym^{i}_g(\bcY/\bcX)\,.
\end{equation}   
\end{proof}


\begin{lemma}\label{radlaz04}
The endofunctor $L\Sym^0_g$ of $\bcH_{\ast}(\bcC_{\Nis}, \AF^1)$ is the constant functor with value $\one$, where $\one$ is the object $\Delta_{\Spec(k)}[0]$ in $\bcH_{\ast}(\bcC_{\Nis}, \AF^1)$, and the endofunctor $L\Sym^1_g$ is the identity functor on $\bcH_{\ast}(\bcC_{\Nis}, \AF^1)$. 
\end{lemma}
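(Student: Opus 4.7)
The strategy is to compute $\Sym^0_g$ and $\Sym^1_g$ directly as endofunctors of $\Deltaop\bcS_{\ast}$ using their defining left Kan extensions, and then transfer the computation to $\bcH_{\ast}(\bcC_{\Nis},\AF^1)$ via Corollary \ref{radcozzml}, which asserts $L\Sym^n_g(\bcX) \isom \Sym^n_g(Q^{\proj}(\bcX))$. The decisive feature is that, by Corollary \ref{radreplac10}, $Q^{\proj}(\bcX)$ lies in $\Deltaop\bar{\bcC}_+$, so it suffices to understand $\Sym^0_g$ and $\Sym^1_g$ on (termwise) coproducts of pointed representables.

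For $L\Sym^0_g$: By the convention stated just after the definition of $\Sym^n$ in Section~\ref{geomsymu}, the endofunctor $\Sym^0:\bcC\to\bcC$ is constant with value $\Spec(k)$. Hence $h^+\circ\Sym^0$ is the constant functor at $h^+(\Spec(k))=\one$ (the terminal, and also the base, object of $\bcS_{\ast}$). First I would check on representables: $\Sym^0_g(h_{X_+})=h^+(\Sym^0 X)=\one$. Then, using that any object of $\Deltaop\bar{\bcC}_+$ is termwise a pointed coproduct of representables and that $\one$ is the zero object of $\bcS_{\ast}$, I would conclude (either directly from the colimit formula for the left Kan extension, or by iterating Lemma~\ref{potdb57} together with the fact that $\one\wedge F\isom \one$ in $\bcS_{\ast}$ when one factor is zero) that $\Sym^0_g$ is constant at $\one$ on $\Deltaop\bar{\bcC}_+$. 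Therefore $L\Sym^0_g(\bcX)=\Sym^0_g(Q^{\proj}(\bcX))=\one$, and naturality is automatic.

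For $L\Sym^1_g$: Since $\Sym^1=\id_{\bcC}$, the composite $h^+\circ\Sym^1$ equals $h^+$, so by the corollary following Lemma~\ref{lerffs}, $\Sym^1_g$ is the left Kan extension of $h^+$ along itself. By the density of representables—every (simplicial) sheaf is the canonical colimit $\colim_{h_{U_+}\to\bcY}h_{U_+}$—this left Kan extension is naturally isomorphic to the identity on $\Deltaop\bcS_{\ast}$. Alternatively, one may verify the identification on $\Deltaop\bar{\bcC}_+$ directly via Lemma~\ref{potdb57}: for a wedge of pointed representables the only nonzero summand in the Künneth decomposition of $\Sym^1_g$ is the one with $(i,j)=(0,1)$ and $(1,0)$, and since $\one$ is the smash-unit these combine to give back the original wedge. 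Applying Corollary~\ref{radcozzml}, we obtain $L\Sym^1_g(\bcX)\isom \Sym^1_g(Q^{\proj}(\bcX))\isom Q^{\proj}(\bcX)$, and the canonical morphism $Q^{\proj}(\bcX)\to\bcX$ is a sectionwise trivial fibration, hence in particular an $\AF^1$-weak equivalence, so $L\Sym^1_g(\bcX)\isom \bcX$ in $\bcH_{\ast}(\bcC_{\Nis},\AF^1)$, naturally in $\bcX$ by functoriality of $Q^{\proj}$.

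No substantive obstacle is anticipated; both assertions are essentially definitional once one unwinds the left Kan extension. The only point warranting a little care is the $n=0$ case, where one has to reconcile the degenerate convention on $\Sym^0$ with the pointed conventions on $\bcS_{\ast}$—the cleanest way is to verify it on representables and then invoke cocontinuity of the left Kan extension together with $\one$ being the zero object of $\bcS_{\ast}$.
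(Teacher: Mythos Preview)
Your proposal is correct and follows essentially the same approach as the paper: both compute $\Sym^0_g$ and $\Sym^1_g$ directly from the defining left Kan extensions (using that $\Sym^0$ is constant at $\Spec(k)$ and $\Sym^1=\id_{\bcC}$, together with density of representables) and then pass to $\bcH_{\ast}(\bcC_{\Nis},\AF^1)$. The only difference is that the paper argues $\Sym^0_g$ and $\Sym^1_g$ are constant-at-$\one$ and identity on all of $\Deltaop\bcS$ directly, while you restrict to $\Deltaop\bar{\bcC}_+$ and invoke $Q^{\proj}$ via Corollary~\ref{radcozzml}; this is a minor presentational variation, not a substantive difference.
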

\begin{proof}
Since $\Sym^0X=\Spec(k)$ for every scheme $X$ in $\bcC$, the endofunctor $\Sym^0$ of $\bcC$  is constant with value $\Spec(k)$. By the lef Kan extension, we deduce that $\Sym^0$ extends to an endofunctor $\Sym^0_g$ of $\Delta^{\op}\bcS$ given by $\bcX\mapsto\Delta_{\Spec(k)}[0]$. Hence, we deduce that $L\Sym^0_g$ is the endofunctor of $ \bcH_{\ast}(\bcC_{\Nis},\AF^1)$ given by $\bcX\mapsto\one$. On the other hand, for every scheme $X$ in $\bcC$, we have $\Sym^1X=X$. By the left Kan extension, we deduce that the endofunctor $\Sym^1_g$ of $\Delta^{\op}\bcS$ is the identity functor, then  $L\Sym^0_g$ is the identity functor on  $ \bcH_{\ast}(\bcC_{\Nis},\AF^1)$. 
\end{proof}

\medskip

Now, we are ready to state and prove our main theorem. 

\begin{theorem}\label{radmainth1}
 The endofunctors $L\Sym^n_g$, for $n\in \NN$, 
 provides a $\lambda$-structure on $\bcH_{\ast}(\bcC_{\Nis},\AF^1)$.    
\end{theorem}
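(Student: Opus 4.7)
The plan is to verify the three axioms of a $\lambda$-structure for the sequence $(L\Sym^n_g)_{n\in\NN}$. Axiom (i) is handled immediately by Lemma \ref{radlaz04}: the constant functor with value $\one$ for $n=0$ and the identity functor for $n=1$. So the real work is axioms (ii) and (iii), and the strategy throughout is to reduce every homotopy-categorical assertion to a statement about coprojection sequences between objects of $\Deltaop\bar{\bcC}_+$, where the concrete machinery developed in the earlier sections applies directly.

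For the K\"unneth tower axiom (ii), I would start from an arbitrary cofibre sequence $\bcX\to \bcY \to \bcZ$ in $\bcH_{\ast}(\bcC_{\Nis},\AF^1)$ and apply Proposition \ref{radthcofibre7} to replace it, up to isomorphism, by a coprojection sequence $\bcA \to \bcB \to \bcB/\bcA$ with $\bcA\to\bcB$ in $I_{\proj}$-$\cell$ and $\bcA$ an $I_{\proj}$-cell complex. Since $\bcA$ and $\bcB$ then lie in $\Deltaop\bar{\bcC}_+$, Corollary \ref{radcozzml} guarantees that the derived functors $L\Sym^n_g$ are computed on this representative by the honest geometric symmetric power, and Proposition \ref{gsjpr5} supplies an explicit tower
$$\Sym^n_g(\bcA)=\bcL^n_0(f)\to \bcL^n_1(f)\to \cdots\to \bcL^n_n(f)=\Sym^n_g(\bcB)$$
whose successive quotients are $\Sym^{n-i}_g(\bcA)\wedge \Sym^{i}_g(\bcB/\bcA)$. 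Passing to the homotopy category turns these quotients into $L\Sym^{n-i}_g(\bcX)\wedge L\Sym^i_g(\bcZ)$, delivering the required K\"unneth tower.

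For functoriality (axiom (iii)), given a commutative diagram of cofibre sequences in $\bcH_{\ast}(\bcC_{\Nis},\AF^1)$, I would lift both rows to coprojection sequences between $I_{\proj}$-cell complexes in $\Deltaop\bar{\bcC}_+$ via the cofibrant resolution $Q^{\proj}$ of Corollary \ref{radcorsmll}, and then lift the vertical morphisms by appealing to the left lifting property between the $I_{\proj}$-cell structure on top and the section-wise trivial fibrations produced by the replacement on the bottom. The explicit description of $\bcL^n_i(f)$ in Proposition \ref{gsjpr5} as a colimit of K\"unneth towers over a directed diagram of morphisms of representables makes it manifestly functorial in $f$, so applying $\Sym^n_g$ to the lifted morphism of coprojection sequences yields the desired commutative ladder in $\bcH_{\ast}(\bcC_{\Nis},\AF^1)$.

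The main obstacle will be verifying the uniqueness part of axiom (ii), that is, the independence of the K\"unneth tower from the chosen coprojection representative. Two such representatives can always be compared through a zigzag of $\AF^1$-weak equivalences in $\Deltaop\bar{\bcC}_+$, to which Theorem \ref{radthind107} applies to show that $\Sym^n_g$ preserves the weak equivalences in the zigzag; combined with the K\"unneth rule of Corollary \ref{radccttff} applied termwise to the towers, this should identify the two K\"unneth towers up to canonical isomorphism in $\bcH_{\ast}(\bcC_{\Nis},\AF^1)$. The bookkeeping needed to make this identification strictly compatible with the successive-quotient identifications is the most delicate point, but it ultimately rests on the fact that the whole construction takes place within the subcategory $\Deltaop\bar{\bcC}_+$ on which $\Sym^n_g$ is homotopically well-behaved.
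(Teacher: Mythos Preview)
Your proposal is correct and follows essentially the same route as the paper: Lemma \ref{radlaz04} for axiom (i), Proposition \ref{radthcofibre7} to replace an arbitrary cofibre sequence by one coming from an $I_{\proj}$-cell map between $I_{\proj}$-cell complexes, and Proposition \ref{gsjpr5} to obtain the K\"unneth tower together with its functoriality. If anything, your treatment is more careful than the paper's own proof, which does not explicitly discuss the uniqueness clause in axiom (ii) or the lifting of a morphism of cofibre sequences needed for axiom (iii); your sketch of handling these via $Q^{\proj}$, the lifting property against section-wise trivial fibrations, and Theorem \ref{radthind107} is the natural way to fill those gaps.
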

\begin{proof}
By Lemma \ref{radlaz04}, $L\Sym^0_g$ is the constant functor with value $\one$, and $L\Sym^1_g$ is the identity functor on $\bcH_{\ast}(\bcC_{\Nis},\AF^1)$. Let $\bcX\into\bcY\into \bcZ$ be a cofibre sequence in $\bcH_{\ast}(\bcC_{\Nis},\AF^1)$ induced by a cofibration $f:\bcX\into\bcY$ in the injective model structure of $\Delta^{\op}\bcS_{\ast}$. 
By Proposition \ref{radthcofibre7}, we can assume that $f$ is in $I_{\proj}$-$\cell$ and $\bcX$ is an $I_{\proj}$-cell complex. 
Hence, by Proposition \ref{gsjpr5}, for each index $n\in\NN$, $\Sym^n_g(f)$ has a K\"unneth tower,
\begin{equation}\label{edd23}
\Sym^n_g(\bcX)=\bcL^n_0(f)\into\bcL^n_1(f)\into\cdots\into \bcL^n_n(f)=\Sym^n_g(\bcY)\,,
\end{equation}
which induces a K\"unneth tower, 
$$L\Sym^n_g(\bcX)=L\bcL^n_0(f)\into L\bcL^n_1(f)\into\cdots\into L\bcL^n_n(f)=L\Sym^n_g(\bcY)\,,$$ of $L\Sym^n_g(f)$ in $\bcH_{\ast}(\bcC_{\Nis},\AF^1)$. Finally, the functoriality axiom follows from the functionality of K\"unneth towers of the form \eqref{edd23}, see  Proposition \ref{gsjpr5}.
\end{proof}

\bigskip

\section{Morphisms of Lambda structures}
\label{MorphismL}

In this section we define a morphism between two $\lambda$-structures as a sequence of natural transformations which are compatible with their K\"unneth towers. We show the existence of a natural morphism of $\lambda$-structures from left derived categoric symmetric powers to the left derived geometric symmetric powers, see Theorem \ref{nattrath}. 
\medskip

Let us consider the monoidal product $\wedge:\Deltaop\bcS_{\ast}\times\Deltaop\bcS_{\ast}\into \Deltaop\bcS_{\ast}$.
We recall that, for any two morphisms $f:\bcX\into \bcY$ and $f':\bcX'\into \bcY'$ in $\Deltaop\bcS_{\ast}$, the {\em box operation} of $f$ and $f'$ is the object
$$\Box(f,f')=(\bcX\wedge \bcY')\vee_{\bcX\wedge \bcX'}(\bcY\wedge \bcX')$$
together with a universal morphism
$f\Box f':\Box(f,f')\into \bcY\wedge \bcY'$. The box operation $\Box$ is associative and commutative, therefore for a finite collection of morphisms of pointed simplicial sheaves $\{f_i:\bcX_i\into \bcY_i\,|\, i=1,\dots,n\}$, we have an induced morphism
$$f_1\Box\cdots\Box f_n:\Box(f_1,\dots,f_n)\into \bcY_1\wedge\cdots \wedge \bcY_n\,.$$
For each $n\geq2$, we write $\Box^n(f)=\Box(f,\cdots,f)$ and $f^{\Box n}=f\Box\cdots\Box f$. By convention, we set $\Box^1(f)=X$ and $f^{\Box 1}=f$.
Let $\{0,1\}$ be the category with two objects with only one non-identity morphism $0\into 1$. The $n$-fold cartesian product $\{0,1\}^n$ of $\{0,1\}$ is a category whose objects are $n$-tuples $(a_1,\dots,a_n)$ where each $a_i$ is $0$ or $1$, and a morphism $(a_1,\dots,a_n)\into (a'_1,\dots,a'_n)$ is characterized by the condition $a_i\leq a'_i$ for all $i=1,\dots,n$.
Notice that the giving of a functor $K:\{0,1\}\into \bcC$ is the same as giving two objects $K(0)=\bcX$, $K(1)=\bcY$ and a morphism $K(0\into 1)=f$ from $\bcX$ to $\bcY$. For any morphism $f:\bcX\into\bcY$ in $\Deltaop\bcS_{\ast}$ and any integer $n\geq 1$, let $K^n(f):\{0,1\}^n\into \Deltaop\bcS_{\ast}$ be the composition
of the $n$-fold cartesian product $K(f)^n$ of the functor $K(f):\{0,1\}\into \Deltaop\bcS_{\ast}$ with the functor $\wedge:(\Deltaop\bcS_{\ast})^n\into\Deltaop\bcS_{\ast}$ which sends an object $(\bcX_1,\dots,\bcX_n)$ to the product $\bcX_1\wedge\cdots\wedge\bcX_n$.
For instance, the functor $K^2(f)$ is the commutative diagram of the form
\[ \xymatrix@C=10ex@R=10ex{\bcX\wedge \bcX\ar[r]\ar[d]&\bcY\wedge \bcX\ar[d]\\
 \bcX\wedge \bcY\ar[r]&\bcY\wedge \bcY
  }
\]
For any $0\leq i\leq n$, we denote by $\{0,1\}^n_i$ the full subcategory of $\{0,1\}^n$ generated by $n$-tuples $(a_1,\dots,a_n)$ with $a_1+\cdots+a_n\leq i$.
We shall denote by $K^n_i(f)$ the restriction of $K^n(f)$ on $\{0,1\}^n_i$. For instance, if $n=2$, we have that $K^2_0(f)$ is $\bcX\wedge \bcX\wedge \bcX$, $K^2_1(f)$ is the diagram
 \[ \xymatrix{\bcX\wedge\bcX\ar[r]\ar[d]&\bcY\wedge\bcX\\
 \bcX\wedge\bcY&
  }
\]
and $K^2_2(f)=K^2(f)$.
We set
\[\Box^n_i(f)=\colim K^n_i(f).\]
Let $f:\bcX\into \bcY$ be a morphism in $\Deltaop\bcS_{\ast}$ as before. Notice that, as $K^n_0(f)=\bcX^{\wedge n}$, we have $\Box^n_0(f)=\bcX^{\wedge n}$, and since $K^n_n(f)=\bcY^{\wedge n}$, we have $\Box^n_n(f)=\bcY^{\wedge n}$.
In view of the sequence of sub-diagrams
$$K^n_0(f)\subset K^n_1(f)\subset\cdots \subset K^n_n(f)\,,$$
we have a sequence of morphisms in $\Deltaop\bcS_{\ast}$,
$$\bcX^{\wedge n}=\Box^n_0(f)\into\Box^n_1(f)\into \cdots \into \Box^n_n(f)=\bcY^{\wedge n}$$
and its composite is nothing but the $n$-fold product $f^{\wedge n}:\bcX^{\wedge n}\into \bcY^{\wedge n}$ of $f$.

\begin{lemma}
Let $f:\bcX\into \bcY$ be a morphism in $\Deltaop\bcS_{\ast}$ and fix a natural number $n$. The symmetric group $\Sigma_n$ acts naturally on each object $\Box^n_i(f)$ for all $i=1,\dots,n$.
\end{lemma}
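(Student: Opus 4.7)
The plan is to make the $\Sigma_n$-action arise from two compatible symmetries: the permutation action on the indexing category $\{0,1\}^n$, and the symmetry isomorphisms of the smash product on $\Deltaop\bcS_{\ast}$.

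First I would observe that $\Sigma_n$ acts on the poset $\{0,1\}^n$ on the left by permuting coordinates, $\sigma\cdot(a_1,\dots,a_n) = (a_{\sigma^{-1}(1)},\dots,a_{\sigma^{-1}(n)})$. Since the function $(a_1,\dots,a_n)\mapsto a_1+\cdots+a_n$ is $\Sigma_n$-invariant, the subcategory $\{0,1\}^n_i$ is preserved, so for each $\sigma\in\Sigma_n$ I obtain an isomorphism of categories $\sigma_*:\{0,1\}^n_i\iso\{0,1\}^n_i$.

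Second, I would record that the $n$-fold smash product $\wedge:(\Deltaop\bcS_{\ast})^n\to\Deltaop\bcS_{\ast}$ carries the structure of a symmetric multifunctor: for every $\sigma\in\Sigma_n$ and every tuple $(\bcZ_1,\dots,\bcZ_n)$ there is a canonical symmetry isomorphism
\[
s_\sigma(\bcZ_1,\dots,\bcZ_n):\bcZ_1\wedge\cdots\wedge\bcZ_n\iso \bcZ_{\sigma^{-1}(1)}\wedge\cdots\wedge \bcZ_{\sigma^{-1}(n)},
\]
natural in each entry and satisfying the coherence conditions $s_{\tau\sigma}=s_\tau\circ s_\sigma$ (after applying $\tau$ to the arguments) and $s_e=\id$. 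Since the functor $K(f)^n$ is manifestly $\Sigma_n$-equivariant (the factors are identical copies of $K(f)$), composing with $\wedge$ gives a natural isomorphism $\alpha_\sigma: K^n(f) \iso K^n(f)\circ \sigma_*$ whose components are exactly the $s_\sigma$; restricting to $\{0,1\}^n_i$ produces a natural isomorphism $\alpha_\sigma^{(i)}: K^n_i(f)\iso K^n_i(f)\circ\sigma_*$.

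Third, since $\sigma_*$ is an isomorphism of (small) indexing categories, one has the canonical reindexing isomorphism $\colim K^n_i(f)\iso \colim (K^n_i(f)\circ \sigma_*)$. Composing this with the colimit of $\alpha_\sigma^{(i)}$ yields an automorphism
\[
\rho_\sigma^{(i)}:\Box^n_i(f)\iso \Box^n_i(f).
\]
Finally, the coherence relations for the symmetric monoidal structure (together with functoriality of $\colim$) give $\rho_{\tau\sigma}^{(i)}=\rho_\tau^{(i)}\circ\rho_\sigma^{(i)}$ and $\rho_e^{(i)}=\id$, so $\sigma\mapsto \rho_\sigma^{(i)}$ defines a $\Sigma_n$-action. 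Naturality in $f$ (i.e., the fact that a commutative square of morphisms $f\to f'$ induces a $\Sigma_n$-equivariant morphism $\Box^n_i(f)\to \Box^n_i(f')$) is automatic since every ingredient, $K^n_i(-)$, the symmetry $s_\sigma$, and the reindexing isomorphism, is natural.

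The only delicate point, and therefore the main thing I would be careful about, is the verification of the cocycle condition $\rho_{\tau\sigma}^{(i)}=\rho_\tau^{(i)}\circ\rho_\sigma^{(i)}$: one has to untangle that the reindexing isomorphism for $\tau\sigma$ agrees with the composite of reindexings for $\tau$ and $\sigma$, and that this agrees with the coherence $s_{\tau\sigma}=(\tau\cdot s_\sigma)\circ s_\tau$ in the symmetric monoidal category $\Deltaop\bcS_{\ast}$. This is purely formal once the symmetric monoidal structure is written out, but it is where all the bookkeeping lives.
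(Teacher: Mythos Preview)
Your proposal is correct and follows essentially the same approach as the paper: both use the $\Sigma_n$-action on the indexing poset $\{0,1\}^n_i$ together with the symmetry isomorphisms of the smash product to produce a natural isomorphism $K^n_i(f)\iso K^n_i(f)\circ\sigma_*$, and then pass to the colimit. The paper phrases the last step directly via the universal property of the colimit, whereas you factor it as a reindexing isomorphism composed with the colimit of $\alpha_\sigma^{(i)}$; these are the same map. Your treatment is in fact more careful than the paper's, which simply asserts that $\sigma\mapsto\phi_\sigma$ is a group homomorphism without spelling out the coherence verification you flag.
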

\begin{proof}
 We observe that the symmetric group $\Sigma_n$ acts naturally on the category $\{0,1\}^n$. Any permutation $\sigma\in\Sigma_n$ induces an automorphism $\sigma:\{0,1\}^n\into \{0,1\}^n$
 taking a $n$-tuple $(a_1,\dots,a_n)$ to $(a_{\sigma(1)},\dots,a_{\sigma(1)})$.
 Notice that if $a_1+\cdots+a_n\leq i$ then $a_{\sigma(1)}+\cdots+a_{\sigma(n)}=a_1+\cdots+a_n\leq i$, so the subcategory $\{0,1\}^n_i$ is invariant under the action of $\Sigma_n$. Thus, every automorphism $\sigma:\{0,1\}^n\into \{0,1\}^n$ induces an automorphism $\sigma:\{0,1\}^n_i\into \{0,1\}^n_i$ for $1\leq i\leq n$.
 For any morphism $(a_1,\dots,a_n)\into (a'_1,\dots,a'_n)$ in $\{0,1\}^n$, we have a commutative square
 \[ \xymatrix@C=10ex@R=10ex{K_i^n(f)(a_1,\dots,a_n)\ar[r]^-{\sigma}\ar[d]&K^n_i(f)(a_{\sigma(1)},\dots,a_{\sigma(1)})\ar[d]\\
 K_i^n(f)(a'_1,\dots,a'_n)\ar[r]_-{\sigma}&K^n_i(f)(a'_{\sigma(1)},\dots,a'_{\sigma(1)})
  }
\]
Then, by the universal property of colimit, there is a unique automorphism $\phi_{\sigma}$ of $\Box^n_i$ such that we have a commutative diagram
  \[ \xymatrix@C=10ex@R=10ex{K_i^n(f)(a_1,\dots,a_n)\ar[r]^-{\sigma}\ar[d]&K^n_i(f)(a_{\sigma(1)},\dots,a_{\sigma(1)})\ar[d]\\
 \Box^n_i(f)\ar[r]_-{\phi_{\sigma}}&\Box^n_i(f)
  }
\]
where the vertical morphisms are the canonical morphism. Moreover, the map $\phi: \Sigma_n\into\Aut(\Box^n_i(f))$ given by $\sigma\mapsto\phi_{\sigma}$ is a homomorphism of groups. This gives an action of $\Sigma_n$ on $\Box^n_i(f)$. 
\end{proof}

In view of the previous lemma, for every $0\leq i\leq n$, we denote
$$L^n_i(f)=\Box^n_i(f)/\Sigma_n\,.$$
In particular, we have $L^n_0=\bcX^{\wedge n}/\Sigma_n=\Sym^n(\bcX)$ and $L^n_n=\bcY^{\wedge n}/\Sigma_n=\Sym^n(\bcY)$.
 We have the following commutative diagram,
 \[ \xymatrix@C=5ex@R=10ex{\bcX^{\wedge n}=\Box^n_0(f)\ar@/^1.5pc/[rrrr]^{f^{\wedge n}}\ar[r]\ar[d]&\Box^n_1(f)\ar[r]\ar[d]&\cdots\ar[r]&\Box^n_{n-1}(f)\ar[r]\ar[d]&\Box^n_n(f)=\bcY^{\wedge n}\ar[d]\\
  \Sym^n \bcX=L^n_0(f)\ar[r]\ar@/_1.5pc/[rrrr]_{\Sym^nf}&L^n_1(f)\ar[r]&\cdots\ar[r]&L^n_{n-1}(f)\ar[r]&L^n_n(f)=\Sym^n \bcY}
\]

%
%

\medskip

{\it \noindent Morphisms of $\lambda$-structures.}--- Let $\bcC$ be a closed symmetric monoidal model category with unit $\one$ and let $\Lambda^*$ and $\Lambda'^*$ be two $\lambda$-structures on $\Ho(\bcC)$. A morphism of $\lambda$-structures from $\Lambda^*$ to $\Lambda'^*$ consists of a sequence $\Phi^*=(\Phi^0, \Phi^1,\Phi^2,\dots)$ of natural transformations $\Phi^n$ from $\Lambda^n$ to $\Lambda'^n$ for $n\in\NN$, such that for any cofibre sequence $X\into Y \into Z$ in $\Ho(\bcC)$ and any $n\in\NN$, there a commutative diagram   
  $$\xymatrix@C=5ex@R=10ex{\Lambda^n(X)= L^n_0\ar[r]\ar@<+25pt>[d]_{\Phi^n(X)}& L^n_1\ar[r]\ar[d] &L^n_2\ar[r]\ar[d] &\cdots&  \cdots\ar[r]&\ar[r]\ar[d] L^n_{n-1}\ar[r]\ar[d] & L^n_n=\Lambda^n(Y)\ar@<-25pt>[d]^{\Phi^n(Y)}\\
\Lambda'^n(X)= {L'}^n_0\ar[r]& {L'}^n_1\ar[r]\ar[r]&{L'}^n_2\ar[r] &\cdots&\cdots \ar[r]&\ar[r] {L'}^n_{n-1}\ar[r] &{ L'}^n_n=\Lambda'^n(Y)
}$$

For every simplicial sheaf $\bcX$, we want to construct a natural morphism $\vartheta^n_{\bcX}$ from $\Sym^n(\bcX)$ to $\Sym_g^n(\bcX)\,.$
First of all, let us consider the case when $\bcX$ is a representable simplicial sheaf $h_X$ for $X$ in $\bcC$.  
In this case, $\Sym_g^n(h_X)$ is nothing but $h_{\Sym^nX}$. In view  of the isomorphism $(h_X)^{\times n}\isom h_{X^n}$, the canonical morphism 
$h_{X^n}\into h_{\Sym^n}$ induces a morphism $(h_{X})^{\times n}/\Sigma_n\into h_{\Sym^n}$, that is, a morphism $\Sym^n(h_X)\into\Sym_g^n(h_X)$. We denote this morphism by $\vartheta^n_{h_X}$ or simply by  $\vartheta^n_{X}$. 
    
\begin{proposition}\label{papogxm}
For every simplicial sheaf $\bcX$, there is a functorial morphism $$\vartheta^n_{\bcX}:\Sym^n(\bcX)\into\Sym_g^n(\bcX)\,.$$ 
\end{proposition}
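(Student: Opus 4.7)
The plan is to construct $\vartheta^n_\bcX$ by extending the representable case via a colimit presentation. First I would work at the level of (non-simplicial) sheaves, using the density of representables, and then extend termwise to simplicial sheaves.

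For a sheaf $\bcX$ in $\bcS$, Corollary \ref{papcorgg18} provides a canonical isomorphism $\bcX^{\times n} \isom \colim_{(h_X \to \bcX)} h_{X^n}$, where the colimit is indexed by the comma category $(h \downarrow \bcX)$. A close reading of the proof of that corollary shows that this isomorphism is $\Sigma_n$-equivariant, with the $\Sigma_n$-action on the right given by permutation of the factors of $X^n$: the diagonal functor used in the finality argument of Lemma \ref{paplmxq8} is $\Sigma_n$-equivariant when its source is equipped with the trivial $\Sigma_n$-action. On the other hand, the very definition of $\Sym^n_g$ as a left Kan extension yields $\Sym^n_g(\bcX) \isom \colim_{(h_X \to \bcX)} h_{\Sym^n X}$.

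For each object $(h_X \to \bcX)$ of the comma category, the scheme-theoretic quotient morphism $X^n \to \Sym^n X$ in $\bcC$ yields, via the Yoneda embedding, a $\Sigma_n$-invariant morphism $h_{X^n} \to h_{\Sym^n X}$. These morphisms are natural in $(h_X \to \bcX)$, so assembling them produces a $\Sigma_n$-invariant morphism $\bcX^{\times n} \to \Sym^n_g(\bcX)$, which by the universal property of the quotient factors uniquely through $\bcX^{\times n}/\Sigma_n = \Sym^n(\bcX)$. This defines $\vartheta^n_\bcX$. Naturality in $\bcX$ is immediate: any morphism $\bcX \to \bcY$ in $\bcS$ induces a functor between the associated comma categories that is compatible with the colimit constructions on both sides, so the squares defining $\vartheta^n$ commute. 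The passage to simplicial sheaves is then termwise, since all functors involved are defined degreewise.

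The main obstacle I foresee is the careful verification of the $\Sigma_n$-equivariance of the colimit expression for $\bcX^{\times n}$. One has to trace through the finality argument of Lemma \ref{paplmxq8} and Corollary \ref{papcorgg18} to confirm that the permutation action on $\bcX^{\times n}$ corresponds, under the diagonal identification, precisely to the permutation action on the factors of $X^n$ inside each term $h_{X^n}$ of the colimit. Once this equivariance is in place, the remainder of the construction proceeds formally from the universal properties of colimits and of the left Kan extension defining $\Sym^n_g$.
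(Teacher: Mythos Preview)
Your proposal is correct and follows essentially the same route as the paper: both use the colimit presentation $\bcX^{\times n}\isom\colim_{(h_X\to\bcX)}h_{X^n}$ from Corollary~\ref{papcorgg18} together with the representable-level morphisms $h_{X^n}\to h_{\Sym^n X}$, and then pass to the $\Sigma_n$-quotient. The only cosmetic difference is the order of operations: the paper first commutes the quotient past the colimit to identify $\Sym^n(\bcX)\isom\colim_{(h_X\to\bcX)}\Sym^n(h_X)$ and then takes the colimit of the $\vartheta^n_X$, whereas you first assemble a $\Sigma_n$-invariant map $\bcX^{\times n}\to\Sym^n_g(\bcX)$ and then factor through the quotient; the $\Sigma_n$-equivariance you flag as an obstacle is exactly what underlies the paper's commutation step, so the two arguments are equivalent.
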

\begin{proof}
It is enough to show for a sheaf $\bcX$. Indeed, in view of Corollary \ref{papcorgg18}, we have an isomorphism $\bcX^{\times n}\isom \colim_{h_X\into \bcX}h_{X^n}$. Hence, one has
\begin{align*}
\Sym^n(\bcX)&=(\bcX^{\times n})/\Sigma_n\\&\isom (\colim_{h_X\into \bcX}h_{X^n})/\Sigma_n\\&\isom \colim_{h_X\into \bcX}\left(h_{X^n}/\Sigma_n\right)\\&= \colim_{h_X\into \bcX}\Sym^n(h_{X})\,.
\end{align*}
Taking colimit to the canonical morphisms $\vartheta^n_X: \Sym^nh_X\into\Sym_g^nh_X$, for $X$ in $\bcC$,  we get a morphism 
$$\colim_{h_X\into \bcX}\vartheta^n_X: \colim_{h_X\into \bcX}\Sym^nh_X\into\colim_{h_X\into \bcX}\Sym_g^nh_X\,.$$ 
On the one hand, we have seen above that $ \colim_{h_X\into \bcX}\Sym^nh_X$ is isomorphic to $\Sym^n(\bcX)$, and on the other hand, $\colim_{h_X\into \bcX}\Sym_g^nh_X$ is by definition equal to $\Sym_g^n\bcX$. Thus, we get a functorial morphism  from $\Sym^n(\bcX)$ to $\Sym_g^n(\bcX)$ which we denote by $\vartheta^n_{\bcX}$. 
\end{proof}

\begin{corollary}\label{cpapogxm}
For every pointed simplicial sheaf $\bcX$, there is a functorial morphism $$\vartheta^n_{\bcX}:\Sym^n(\bcX)\into\Sym_g^n(\bcX)\,.$$ 
\end{corollary}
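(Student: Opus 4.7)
The plan is to imitate the proof of Proposition \ref{papogxm} in the pointed setting, with smash products replacing Cartesian products throughout.

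First, for a pointed representable sheaf $h^+(X)$ with $X$ in $\Deltaop\bcC$, one has $\Sym_g^n(h^+(X)) \isom h^+(\Sym^n X)$ by the corollary following Lemma \ref{lerffs}. Moreover, because $h^+$ sends Cartesian products of schemes to smash products of pointed representables, via the natural identification $h_{X_+}\wedge h_{Y_+}\isom h_{(X\times Y)_+}$, one obtains $h^+(X)^{\wedge n}\isom h^+(X^n)$. The $\Sigma_n$-equivariant quotient morphism $X^n\to \Sym^n X$ (with trivial action on the target) then induces a $\Sigma_n$-invariant morphism $h^+(X)^{\wedge n}\isom h^+(X^n)\to h^+(\Sym^n X)=\Sym_g^n(h^+(X))$, which factors uniquely through the quotient $\Sym^n(h^+(X))=h^+(X)^{\wedge n}/\Sigma_n$, yielding $\vartheta^n_{h^+(X)}$.

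Second, I will establish a pointed analogue of Corollary \ref{papcorgg18}: for any pointed simplicial sheaf $\bcX$, there is a natural isomorphism $\bcX^{\wedge n}\isom \colim_{h^+(X)\to \bcX} h^+(X)^{\wedge n}$. This rests on two ingredients: the fact that $\Deltaop\bcS_{\ast}$ is closed symmetric monoidal, so that smash preserves colimits in each variable, and the finality of the diagonal functor $(h^+\downarrow \bcX)\to (h^+\downarrow \bcX)^n$, which is the pointed analogue of Lemma \ref{paplmxq8}. Finality holds because $h^+\downarrow \bcX$ inherits finite coproducts through wedges ($h^+(X)\vee h^+(Y)\isom h^+(X\amalg Y)$) and finite Cartesian products from the corresponding scheme-theoretic constructions in $\Deltaop\bcC$. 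Passing to $\Sigma_n$-quotients then gives $\Sym^n(\bcX)\isom \colim_{h^+(X)\to \bcX}\Sym^n(h^+(X))$.

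Third, since $\Sym_g^n(\bcX)$ is by definition the colimit of $\Sym_g^n(h^+(X))$ over $h^+\downarrow \bcX$, the functorial morphism $\vartheta^n_{\bcX}\colon \Sym^n(\bcX)\to \Sym_g^n(\bcX)$ is obtained by taking the colimit of the morphisms $\vartheta^n_{h^+(X)}$ constructed in Step~1; functoriality in $\bcX$ is automatic from the naturality of the colimits involved. The main technical point is establishing the pointed analogue of Corollary \ref{papcorgg18} in Step~2, as it requires verifying that the diagonal functor remains final on the pointed comma category; once this is in place, the remainder of the argument is formal.
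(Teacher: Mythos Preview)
Your proposal is correct and is precisely a detailed unpacking of what the paper's one-line proof (``It follows from the previous Proposition \ref{papogxm}'') intends: rerun the argument of Proposition~\ref{papogxm} with $h^+$, $\wedge$, and $\Deltaop\bcS_\ast$ in place of $h$, $\times$, and $\Deltaop\bcS$. The paper simply declines to spell this out, whereas you have made the three steps explicit; substantively the approaches coincide.

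One small caution on your Step~2: you assert that $(h^+\downarrow\bcX)$ has Cartesian products coming from scheme-theoretic products, in order to invoke Lemma~\ref{paplmxq8}. This is the same appeal the paper makes in Corollary~\ref{papcorgg18}, but it is not literally true in general---the categorical product in the comma category would be a pullback over $\bcX$, not $h^+(X\times Y)$. However, finality of the diagonal still holds using only coproducts: given $A\to\diag(g)$ and $A\to\diag(g')$ with $A=(f_1,\dots,f_n)$, the object $h^+(X_1\amalg\cdots\amalg X_n)\to\bcX$ built from the $f_i$ maps to both $g$ and $g'$ compatibly, giving the required zigzag. So your conclusion stands, but the justification via ``products in the comma category'' should be replaced by this direct connectedness argument (the same remark applies to the paper's own use of Lemma~\ref{paplmxq8}).
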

\begin{proof}
It follows from the previous Proposition \ref{papogxm}. 
\end{proof}

For each $n\in\NN$,  we denote by $\vartheta^n:\Sym^n\into\Sym^n_g$ the natural transformation defined for every pointed simplicial sheaf $\bcX$ to be the functorial morphism $\vartheta^n(\bcX):=\vartheta^n_{\bcX}$ of Corollary \ref{cpapogxm}.   

\medskip

\begin{lemma}\label{paplzva11}
Let $\varphi:X\into Y$ be termwise coprojection in $\Deltaop\bcC_+$ and let us write $f:=h^+(\varphi)$. Then, for every pair of numbers $(n,i)\in\NN^2$ with $0\leq i\leq n$, there exists a canonical morphism 
$$\vartheta^n_i(f):L^n_i(f)\into\bcL^n_i(f)\,,$$
 such that one has a commutative diagram 
 \begin{equation}\label{papess1y}
      \xymatrix@C=4ex@R=10ex{ L^n_0(f)\ar[r]\ar[d]_{\vartheta^n_0(f)}& L^n_1(f)\ar[r]\ar[d]^{\vartheta^n_1(f)} &\cdots&  \cdots\ar[r]&\ar[r]\ar[d] L^n_{n-1}(f)\ar[r]\ar[d]^{\vartheta^n_{n-1}(f)} & L^n_n(f)\ar[d]^{\vartheta^n_n(f)}\\
\bcL^n_0(f)\ar[r]& \bcL^n_1(f)\ar[r]\ar[r]&\cdots&\cdots \ar[r]&\ar[r] \bcL^n_{n-1}(f)\ar[r] &\bcL^n_n(f)
}
   \end{equation}
\end{lemma}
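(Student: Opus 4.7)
The idea is to exploit the K\"unneth decompositions already established on both sides of the natural transformation $\vartheta^n$, and to define $\vartheta^n_i(f)$ summand by summand.

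Since $\varphi:X\into Y$ is a termwise coprojection in $\Deltaop\bcC_+$, I would first fix a simplicial object $Z$ of $\bcC_+$ with $Y\isom X\vee Z$, under which $\varphi$ becomes the canonical inclusion $X\into X\vee Z$. Applying $h^+$ gives $f:h^+(X)\into h^+(X)\vee h^+(Z)$ in $\Deltaop\bcS_{\ast}$. On the geometric side, Example \ref{paprem51} yields a scheme-level K\"unneth filtration of $\Sym^n(\varphi)$, and the argument of Proposition \ref{prfkvvu}, specialized to our representable situation, identifies the associated K\"unneth tower as
$$\bcL^n_i(f)\isom \bigvee_{n-i\leq l\leq n}\bigl(\Sym^l_gh^+(X)\wedge \Sym^{n-l}_gh^+(Z)\bigr),$$
with each transition $\bcL^n_{i-1}(f)\into\bcL^n_i(f)$ realized as the inclusion of a sub-wedge (the one indexed by $l\geq n-i+1$).

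On the categoric side, I would repeat the box-filtration analysis of \cite{GoGu} directly in the symmetric monoidal category $\Deltaop\bcS_{\ast}$ (which has finite colimits and quotients by finite groups), applied to the coprojection $f$. The pointed K\"unneth rule (Proposition \ref{pappro44s} applied in $\Deltaop\bcS_{\ast}$) then yields the parallel decomposition
$$L^n_i(f)\isom \bigvee_{n-i\leq l\leq n}\bigl(\Sym^lh^+(X)\wedge \Sym^{n-l}h^+(Z)\bigr),$$
with the transitions again given by sub-wedge inclusions. I would then define
$$\vartheta^n_i(f):=\bigvee_{n-i\leq l\leq n}\bigl(\vartheta^l_{h^+(X)}\wedge \vartheta^{n-l}_{h^+(Z)}\bigr).$$
The extremal cases $i=0$ and $i=n$ reduce to $\vartheta^n_{h^+(X)}$ and $\vartheta^n_{h^+(Y)}$ respectively via the K\"unneth rule of Corollary \ref{radccttff}. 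Commutativity of each square in \eqref{papess1y} is then automatic: both rows have transitions given by inclusions of sub-wedges and $\vartheta^n_i(f)$ is built summand-wise out of maps that are natural in the summand.

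The main obstacle I foresee is the faithful translation of the scheme-level box-filtration argument of Example \ref{paprem51} into the simplicial Nisnevich sheaf setting to obtain the decomposition of $L^n_i(f)$ asserted above. This is a routine adaptation --- $\Deltaop\bcS_{\ast}$ possesses all the structure required --- but it has to be carried out carefully, keeping track of the sub-wedge interpretation of each transition map and of the $\Sigma_n$-action used to form the quotients $\Box^n_i(f)/\Sigma_n$. Once that decomposition is in hand, the naturality of $\vartheta^n$ with respect to wedges and smash products turns the rest of the argument into a bookkeeping exercise.
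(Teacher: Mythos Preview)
Your approach is correct but takes a different route from the paper. The paper's proof is more structural and avoids any choice of splitting: it invokes Proposition~\ref{exvvg4}, which realizes $\bcL^n_i(f)$ as $\bar h\bigl(\tilde\Box^n_i(\Phi(\varphi))\bigr)$, the image under the extended functor $\bar h$ of the box-filtration of \cite{GoGu} computed in the coequalizer completion $\Deltaop\bcC_{\coeq,+}$. Since $h^+$ is monoidal, one has $\Box^n_i(f)\isom h^+(\Box^n_i(\varphi))$ canonically, and the natural quotient morphism $\Box^n_i(\varphi)\to\tilde\Box^n_i(\varphi)$ in the completion then yields, after applying $\bar h$, a $\Sigma_n$-invariant morphism $\Box^n_i(f)\to\bcL^n_i(f)$. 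This factors uniquely through $L^n_i(f)=\Box^n_i(f)/\Sigma_n$, giving $\vartheta^n_i(f)$; commutativity of the ladder is automatic from naturality of the box construction.

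Your argument---choose a complement $Z$ with $Y\isom X\vee Z$, decompose both towers via K\"unneth, and assemble $\vartheta^n_i(f)$ summand by summand as $\bigvee_l(\vartheta^l\wedge\vartheta^{n-l})$---also works, but trades elegance for explicitness. It requires you to establish the categoric-side decomposition $L^n_i(f)\isom\bigvee_{l\geq n-i}\Sym^l h^+(X)\wedge\Sym^{n-l}h^+(Z)$, which the paper never writes down (though it follows from the same analysis as Example~\ref{paprem51} transported to $\Deltaop\bcS_*$). More importantly, your construction depends a priori on the choice of $Z$, so to justify the word ``canonical'' in the statement you should verify independence of this choice, or else check that your map agrees with the one produced by the universal property. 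The paper's approach sidesteps both issues entirely.
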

\begin{proof}
Let us fix a natural number $n$. For each index $0\leq i\leq n$, $\bcL^n_i(f)$ is nothing but the object $h^+(\tilde{\Box}^n_i(\varphi))$, see Proposition \ref{exvvg4}. Since the functor $h^+:\bcC_+\into\bcS_{\ast}$ is monoidal, $\Box^n_i(f)$ is canonically isomorphic to $h^+(\Box^n_i(\varphi))$. Thus, we have a canonical morphism $\Box^n_i(f)\into \bcL^n_i(f)$, and this morphism induces a morphism $\vartheta^n_i(f):L^n_i(f)\into\bcL^n_i(f)$. Since $\vartheta^n_i(f)$ is constructed canonically, we get a commutative diagram \eqref{papess1y}. 
\end{proof}

\begin{example}
\em Let us consider a coprojection $X\into X\vee Y$ in $\Deltaop\bcC_+$ and let $f$ be the morphism $h^+(\varphi)$. We have a commutative diagram 
\begin{equation}\label{papeqgd72}
\xymatrix{h_{X}\wedge h_{X}\ar[r]\ar[d]& (h_X\vee h_Y)\wedge h_X\ar[d]\\
h_X \wedge  (h_X\vee h_Y)\ar[r]&(h_X\vee h_Y)\wedge ( h_X\vee h_Y)
  }
\end{equation}
which is induced by a diagram 
\[ \xymatrix{X\wedge X\ar[r]\ar[d]& (X\vee Y)\wedge X\ar[d]\\
X \wedge  (X\vee Y)\ar[r]&(X\vee Y)\wedge(X\vee Y)
  }
\]
Then, one gets canonical morphisms 
\begin{align*}
&\vartheta^2_0(f):L^2_0(f)\longrightarrow\bcL^2_0(f)\,,\\&
\vartheta^2_1(f):L^2_1(f)\longrightarrow \bcL^2_1(f)\,,\\&
\vartheta^2_2(f):L^2_2(f)\longrightarrow \bcL^2_2(f)\,.
\end{align*}
where their domains have the form
\begin{align*}
\hspace{1cm}&\Box^2_0(f)= h_{X\wedge X}\,,\\&
\Box^2_1(f)=h_{X \wedge  (X\vee Y)}\wedge_{h_{X\wedge X}}h_{ (X\vee Y)\wedge X}\,, \\
&\Box^2_2(f)=h_{X\wedge Y}\wedge h_{X\wedge Y}
\end{align*}
and their codomains have the shape
\begin{align*}
\hspace{3cm}& \bcL^2_0(f)=h_{\Sym^2X}\,,\\&
 \bcL^2_1(f)=h_{\Sym^2X}\wedge (h_{\Sym^1X}\vee h_{\Sym^1Y}) \,, \\
& \bcL^2_2(f)=h_{\Sym^2X}\wedge (h_{\Sym^1X}\vee h_{\Sym^1Y})\wedge h_{\Sym^2Y}\,.
\end{align*}
\end{example}

\begin{proposition}\label{papffj331}
Let $f:\bcX\into\bcY$ be a morphism of pointed simplicial sheaves in $I_{\proj}$-$cell$ such that $\bcX$ is an $I_{\proj}$-cell complex. Then, for every index $0\leq i\leq n$, there exists a canonical morphism 
$$\vartheta^n_i(f):L^n_i(f)\into\bcL^n_i(f)\,,$$
 such that one has a commutative diagram 
   \begin{equation}\label{papess2y}
   \xymatrix@C=4ex@R=10ex{ L^n_0(f)\ar[r]\ar[d]_{\vartheta^n_0(f)}& L^n_1(f)\ar[r]\ar[d]^{\vartheta^n_1(f)} &\cdots&  \cdots\ar[r]&\ar[r]\ar[d] L^n_{n-1}(f)\ar[r]\ar[d]^{\vartheta^n_{n-1}(f)} & L^n_n(f)\ar[d]^{\vartheta^n_n(f)}\\
\bcL^n_0(f)\ar[r]& \bcL^n_1(f)\ar[r]\ar[r]&\cdots&\cdots \ar[r]&\ar[r] \bcL^n_{n-1}(f)\ar[r] &\bcL^n_n(f)
}
 \end{equation}
where $\vartheta^n_0(f)=\vartheta^n_{\bcX}$ and $\vartheta^n_n(f)=\vartheta^n_{\bcY}$. 
\end{proposition}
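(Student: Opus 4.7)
The plan is to reduce the statement, by passage to a directed colimit, from the representable case already treated in Lemma \ref{paplzva11}. First, by Lemma \ref{lemff73}, both $\bcX$ and $\bcY$ lie in $(\Deltaop\bcC)^{\#}_+$ and are directed colimits of compact representables. Moreover, as recalled in the proof of Proposition \ref{gsjpr5}, the morphism $f$ itself can be written as a directed colimit $f=\colim_{d\in D} f_d$ of morphisms $f_d=h^+(\varphi_d)$ with $\varphi_d:X_d\to Y_d$ a termwise coprojection in $\Deltaop\bcC_+$. This is what allows us to transport the representable-case construction along a colimit.

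Next, for each $d\in D$ and each index $0\leq i\leq n$, Lemma \ref{paplzva11} provides canonical morphisms
$$\vartheta^n_i(f_d):L^n_i(f_d)\longrightarrow \bcL^n_i(f_d)$$
which assemble into commutative ladders of the shape \eqref{papess1y} and which, at $i=0$ and $i=n$, recover the natural transformations $\vartheta^n_{h^+(X_d)}$ and $\vartheta^n_{h^+(Y_d)}$ constructed in Corollary \ref{cpapogxm}. These ladders are functorial in $d$ by naturality of the constructions $\Box^n_i(-)$, $\Sym^n(-)$ and $\Sym^n_g(-)$.

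I would then check that both towers commute with the directed colimit $\colim_{d\in D}$. For the categoric side, $\Box^n_i(f_d)$ is by definition a colimit over the finite poset $\{0,1\}^n_i$ of smash products $\bcX_d^{\wedge a}\wedge \bcY_d^{\wedge (n-a)}$; since the smash product in $\Deltaop\bcS_{\ast}$ is a left adjoint in each variable and finite colimits commute with directed colimits, one gets $\Box^n_i(f)\isom \colim_d \Box^n_i(f_d)$, and quotienting by the finite group $\Sigma_n$ (again a colimit) yields $L^n_i(f)\isom \colim_d L^n_i(f_d)$. For the geometric side, the corresponding fact $\bcL^n_i(f)\isom \colim_d \bcL^n_i(f_d)$ is exactly the definition used in the proofs of Propositions \ref{prfkvvu} and \ref{gsjpr5}. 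Taking the colimit of the ladders \eqref{papess1y} over $D$ therefore produces the required morphisms
$$\vartheta^n_i(f):=\colim_{d\in D}\vartheta^n_i(f_d):L^n_i(f)\longrightarrow\bcL^n_i(f)$$
and the commutative diagram \eqref{papess2y}.

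Finally, the identification of the endpoints with $\vartheta^n_{\bcX}$ and $\vartheta^n_{\bcY}$ comes from Proposition \ref{papogxm}: the natural transformation $\vartheta^n$ is itself constructed as a colimit of $\vartheta^n_{h_X}$ over the comma category of representables, and our directed colimit over $D$ refines this description. The only nontrivial point is the commutation of $L^n_i$ with directed colimits through the $\Sigma_n$-quotient, which I expect to be the main technical step to verify; once that is in place, the remainder is a routine colimit-of-ladders argument.
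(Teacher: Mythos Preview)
Your proposal is correct and follows essentially the same approach as the paper: express $f$ as a directed colimit of representable morphisms via Lemma~\ref{lemff73} and Proposition~\ref{gsjpr5}, apply Lemma~\ref{paplzva11} at each stage, and take the colimit of the resulting ladders. If anything, you are more careful than the paper, which simply asserts that $\colim_d L^n_i(f_d)\isom L^n_i(f)$ ``since this colimit is filtered'' without spelling out the commutation of $\Box^n_i$ and the $\Sigma_n$-quotient with directed colimits that you correctly flag as the one point needing verification.
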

\begin{proof}
As in Proposition \ref{gsjpr5}, the morphism $f$ can be expressed as the colimit of a directed diagram $\{f_d\}_{d\in D}$ of morphisms of representable simplicial sheaves.
Now, by Lemma \ref{paplzva11} we have canonical morphisms $\vartheta^n_i(f_{d}):L^n_i(f_{d})\into\bcL^n_i(f_{d})$. Hence, taking colimit we get a morphism 
$$\colim_{d\in D}\vartheta^n_i(f_{d}):\colim_{d\in D}L^n_i(f_{d})\into\colim_{d\in D}\bcL^n_i(f_{d})\,,$$ 
 Since this colimit is filtered,  $\colim_{d\in D}L^n_i(f_{d})$ is isomorphic to $L^n_i(f)$. Thus, the above morphism is isomorphic to a morphism from $L^n_i(f)$ to $\bcL^n_i(f)$, denoted by $\vartheta^n_i(f)$. Finally, the diagrams of the form \eqref{papess1y} induce a commutative diagram \eqref{papess2y}.    
\end{proof}
By virtue of Proposition \ref{papogxm}, for each $n\in\NN$, we get a natural transformation $$\vartheta^n: \Sym^n\into\Sym^n_g$$ as functors on the category $(\Deltaop\bcC)^{\#}_+$.

\begin{theorem}\label{nattrath}
The natural transformations $\vartheta^n: \Sym^n\into\Sym^n_g$ on $(\Deltaop\bcC)^{\#}_+$, for $n\in\NN$, induce a morphism of $\lambda$-structures from the left derived categoric symmetric powers to the left derived geometric powers on $\bcH_{\ast}(\bcC_{\Nis},\AF^1)$.  
\end{theorem}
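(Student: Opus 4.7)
The plan is to derive the natural transformations $\vartheta^n$ to the homotopy category via cofibrant resolution, and then to verify the compatibility with K\"unneth towers using the explicit comparison already constructed in Proposition~\ref{papffj331}.

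First I would construct the derived natural transformations $L\vartheta^n:L\Sym^n\to L\Sym^n_g$ on $\bcH_{\ast}(\bcC_{\Nis},\AF^1)$. By Corollary~\ref{radreplac10} the cofibrant resolution $Q^{\proj}$ takes values in $\Deltaop\bar{\bcC}\subset (\Deltaop\bcC)^{\#}_+$, where $\vartheta^n$ is defined (Corollary~\ref{cpapogxm}). Both $\Sym^n$ and $\Sym_g^n$ preserve $\AF^1$-weak equivalences between objects of $\Deltaop\bar{\bcC}_+$ (for $\Sym^n$ by the example following the definition of $\lambda$-structure; for $\Sym^n_g$ by Theorem~\ref{radthind107}), so both derived functors are computed as $\Sym^n\circ Q^{\proj}$ and $\Sym^n_g\circ Q^{\proj}$ post-composed with the localization functor. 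Setting $L\vartheta^n(\bcX):=\vartheta^n_{Q^{\proj}(\bcX)}$ yields a well-defined natural transformation, by naturality of $\vartheta^n$ and of $Q^{\proj}$.

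Next I would check the K\"unneth tower compatibility. Let $\bcX\to\bcY\to\bcZ$ be a cofibre sequence in $\bcH_{\ast}(\bcC_{\Nis},\AF^1)$. By Proposition~\ref{radthcofibre7} it is isomorphic in the homotopy category to a coprojection sequence $\bcA\to\bcB\to\bcB/\bcA$, where $f:\bcA\to\bcB$ lies in $I_{\proj}$-$\cell$ and $\bcA$ is an $I_{\proj}$-cell complex; in particular both $\bcA$ and $\bcB$ are in $(\Deltaop\bcC)^{\#}_+$ by Lemma~\ref{lemff73}. The K\"unneth tower for $L\Sym^n$ along this sequence is the image in the homotopy category of the filtration $L^n_i(f)$ of $\Sym^n(f)$, and the K\"unneth tower for $L\Sym^n_g$ is the image of the filtration $\bcL^n_i(f)$ of $\Sym^n_g(f)$ constructed in Proposition~\ref{gsjpr5}. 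Proposition~\ref{papffj331} produces a morphism of filtrations $\vartheta^n_i(f):L^n_i(f)\to\bcL^n_i(f)$ with $\vartheta^n_0(f)=\vartheta^n_{\bcA}$ and $\vartheta^n_n(f)=\vartheta^n_{\bcB}$, fitting into the diagram~\eqref{papess2y}. Projecting this diagram to $\bcH_{\ast}(\bcC_{\Nis},\AF^1)$ gives precisely the commutative ladder of K\"unneth towers required in the definition of a morphism of $\lambda$-structures.

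Finally, it remains to record that $L\vartheta^0$ and $L\vartheta^1$ reduce to the identity natural transformation of $\one$ and of $\id$ respectively (this is immediate from the identifications $\Sym^0\cong\Sym^0_g$ and $\Sym^1=\Sym^1_g=\id$, combined with Lemma~\ref{radlaz04}), and that the naturality of the ladder~\eqref{papess2y} with respect to morphisms of such coprojection sequences of $I_{\proj}$-cell complexes yields the naturality of $L\vartheta^n$ as a morphism between the two $\lambda$-structures. The main obstacle, in my view, is ensuring that the non-canonical choice of replacement $\bcA\to\bcB$ for an arbitrary cofibre sequence does not affect the compatibility diagram up to canonical isomorphism; this is settled by the functoriality of $Q^{\proj}$ and the functoriality in $f$ of the filtrations $L^n_i(f)$, $\bcL^n_i(f)$ and of the comparison $\vartheta^n_i(f)$ built in Proposition~\ref{papffj331}, so that any two such replacements give naturally isomorphic K\"unneth towers and the comparison descends unambiguously to the homotopy category.
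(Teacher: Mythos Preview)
Your proposal is correct and follows the same approach as the paper's proof: derive $\vartheta^n$ to obtain $L\vartheta^n$, then invoke Proposition~\ref{papffj331} for the K\"unneth tower compatibility. The paper compresses this into two sentences, whereas you have spelled out the cofibrant replacement via $Q^{\proj}$, the reduction to $I_{\proj}$-cell complexes via Proposition~\ref{radthcofibre7}, and the well-definedness issues---all of which are implicit in the paper's argument.
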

\begin{proof}
The natural transformations $\vartheta^n: \Sym^n\into\Sym^n_g$ for $n\in\NN$ induce a natural transformation of derived functors $L\vartheta^n: L\Sym^n\into L\Sym^n_g$ on $\bcH_{\ast}(\bcC_{\Nis},\AF^1)$. Hence we apply Proposition 
\ref{papffj331} to get morphisms of K\"unneth towers. 
\end{proof}

%

\section{Generalities of symmetric spectra}
\label{Prelims}

In this section $\bcC$ will denote a small admissible category contained in the category of quasi-projective schemes over a field $k$ of arbitrary characteristic. The letter $\bcS$ to denote the category of simplicial Nisnevish sheaves and the category $\Deltaop\bcS_{\ast}$ is the category of pointed simplicial sheaves studied in the previous sections. We write $S^1$ for pointed simplicial circle, i.e. the cokernel of the morphism $\partial\Delta[1]_+\into \Delta[1]_+$ in $\SSet_{\ast}$. We shall denote by $T$ the smash product $S^1\wedge(\GG_m,1)$. There is an isomorphism $T\isom (\PR^1, \infty)$ in $\bcH_{\ast}(\bcC_{\Nis},\AF^1)$, cf. \cite[Lemma~3.2.15]{MoVoe99}.

 \medskip

{\em\noindent Symmetric spectra.}--- Let $\Sigma$ be the category, whose objects are natural numbers, and for any two natural numbers $m$ and $n$, $\Hom_{\Sigma}(m,n)$ is the group of permutations $\Sigma_n$ if $m=n$, and it is $0$ otherwise.
If $\bcX=(\bcX_0,\bcX_1,\dots )$ and $\bcY=(\bcY_0,\bcY_1,\dots )$ are two symmetric sequences on $\Delta^{\op}\bcS_{\ast}$, then there is a product $\bcX\wedge\bcY$ given by the formula
 $$(\bcX\wedge\bcY)_n=\bigvee_{i+j=n}\cor^{\Sigma_n}_{\Sigma_i\wedge\Sigma_j}(\bcX_i\wedge\bcY_j)\,.$$
 A symmetric $T$-spectrum on $\Deltaop\bcS_{\ast}$  is a sequence of $\Sigma_n$-objects $\bcX_n$ in $\Deltaop\bcS_{\ast}$ together with $\Sigma_n$-equivariant morphisms $\bcX_n\wedge T\into \bcX_{n+1}$ for $n\in\NN$, such that the composite
 $$\bcX_m\wedge T^{\wedge n}\into \bcX_{m+1}\wedge T^{\wedge (n-1)}\into\cdots \into \bcX_{m+n}$$ 
 is $\Sigma_{m+n}$-equivariant for all couples $(m,n)\in\NN^2$.    
 We denote by $\Sp_T(k)$ the category of symmetric $T$-spectra on the category $\Deltaop\bcS_{\ast}$. The category $\Sp_T(k)$ is naturally equivalent to the category of left modules over the commutative monoid $\sym(T):=(\Spec(k)_+,T,T^{\wedge 2},T^{\wedge 3},\dots)$, see \cite[Prop.~2.2.1]{HSBS-01}. For each $n\in\NN$, there is an evaluation functor $\Ev_n$ from $\Sp_T(k)$ to $\Deltaop\bcS_{\ast}$ which takes a symmetric $T$-spectrum $\bcX$ to its $n$th slice $\bcX_n$. The evaluation functor $\Ev_n$ has a left adjoint functor denoted by $F_n$. The functor $F_0$ is called {\em suspension functor}, and it is usually denoted by $\Sigma^{\infty}_T$. This functor takes simplicial sheaf $\bcX$ to the symmetric $T$-spectrum $(\bcX,\bcX\wedge T, \bcX\wedge T^{\wedge 2},\dots)$. 
For a scheme $X$ in $\bcC$, we write $\Sigma^{\infty}_T(X_+)$ instead of $\Sigma^{\infty}_T(\Delta_{X}[0]_+)$. A morphism of $T$-spectra $f:\bcX\into\bcY$ is a level $\AF^1$-weak equivalence (a level fibration) if each term $f_n$ is an $\AF^1$-weak equivalence (a fibration) in $\Deltaop\bcS_{\ast}$ for all $n\in\NN$. We say that $f$ is a projective cofibration if it has the left lifting property with respect to both level $\AF^1$-equivalences and level fibrations.   
 The class of level $\AF^1$-weak equivalences, the class of the level fibrations and the class of projective cofibrations define a left proper cellular model structure on $\Sp_T(k)$ called projective model structure, see \cite{Hov-01}. Let $I$ (resp. $J$) be the set of generating (resp. trivial) cofibrations of the injective model structure of $\Deltaop\bcS_{\ast}$.
The set $I_T:=\bigcup_{n\geq 0}F_n(I)$ (resp. $J_T:=\bigcup_{n\geq 0}F_n(J)$) is the set of generating cofibrations (resp. trivial cofibrations) of the projective model structure of $\Sp_T(k)$, cf. \cite{Hov-01}.  
 
 In order to define the stable model structure on $\Sp_T(k)$, one uses the Bousfield localization of its projective model structure  with respect to a certain set of morphisms of symmetric $T$-spectra, so that the functor $-\wedge T:\Sp_T(k)\into \Sp_T(k)$ becomes a Quillen equivalence. We shall define this set as follows. For every simplicial sheaf  $\bcX$ in $\Deltaop\bcS_{\ast}$ and every $n\in\NN$, we denote by
$\zeta^{\bcX}_n:F_{n+1}(\bcX\wedge T)\into F_n(\bcX)$ the morphism which is adjoint to the morphism
$$\bcX\wedge T\into\Ev_{n+1}(F_n(\bcX))=\Sigma_{n+1}\times_{\Sigma_1}(\bcX\wedge T)$$
induced by the canonical embedding of $\Sigma_1$ into $\Sigma_n$.
We set
$$S:=\{\zeta^{\bcX}_n\,|\,\bcX\in\dom(I)\cup\codom(I), n\in\NN\}$$
The {\em stable model structure} on $\Sp_T(k)$ is the Bousfield localization of the projective model structure on $\Sp_T(k)$ with respect to $S$, cf. \cite{Hov-01}. A $S$-local weak equivalence will be called a {\em stable equivalence}. The stable model structure on $\Sp_T(k)$ is left proper and cellular.
The functor $\Sigma^{\infty}_T:\Deltaop\bcS_{\ast}\into \Sp_T(k)$ is a left Quillen functor, see {\it loc.cit}. 
For any two symmetric $T$-spectra $\bcX$ and $\bcY$, its {\em smash} product $\bcX\wedge_{\sym(T)}\bcY$ is defined to be the coequalizer of the diagram 
$$\bcX\wedge\sym(T)\wedge\bcY\rightrightarrows \bcX\wedge\bcY$$
induced by the canonical morphisms $\bcX\wedge\sym(T)\into\bcX$ and $\sym(T)\wedge\bcY\into\bcY$. The smash product of spectra defines a symmetric monoidal structure on $\Sp_T(k)$.
We denote by $\SH_{T}(k)$ the homotopy category of the category $\Sp_T(k)$ with respect to stable $\AF^1$-weak equivalences.

\bigskip 
{\it \noindent Symmetric spectra of chain complexes.}--- Let $\Ab$ be the category of Abelian groups. The classical Dold-Kan correspondence establishes a Quillen equivalence
\[N:\Delta^{\op}\Ab\rightleftarrows\ch_{+}(\Ab):\Gamma\]
 between the category of simplicial Abelian groups and the category of $\NN$-graded chain complexes of Abelian groups. 
Let $\bcA$ be an Abelian Grothendieck category. We write $\ch_{+}(\bcA)$ for the category of $\NN$-graded chain complexes on $\bcA$. 
The above adjunction induces an adjunction 
\begin{equation}\label{equdkj}
N:\Delta^{\op}\bcA\rightleftarrows\ch_{+}(\bcA):\Gamma
\end{equation}
The category $\ch_{+}(\bcA)$ has a monoidal proper closed simplicial model category such that the class of weak equivalences are quasi-isomorphisms and such that the adjunction \ref{equdkj} becomes a Quillen equivalence \cite[Lemma~2.5]{Jard03}.  For any $n\in\ZZ$, we have the translation functor $\ch_{+}(\bcA)\into \ch_{+}(\bcA)$ which sends a chain complex $C$ to $C[n]$ defined by $(C[n])_i:= C_{n+i}$ for $i\geq 0$. For each $n\geq0$, we denote by $\ZZ[n]$ the chain complex
\[\cdots\into 0\into \ZZ\into 0 \into  \cdots\into 0\]
concentrated in degree $n$. If the symbol $\tensor$ denotes the tensor product of $\NN$-graded chain complexes of Abelian groups, then, for $n\in\NN$, we have $\ZZ[n]=\ZZ[1]^{\tensor n}$.  Hence, the symmetric group $\Sigma_n$ acts naturally on $\ZZ[n]$, and we have the symmetric sequence
\[\sym(\ZZ[1])=\left(\ZZ[0],\ZZ[1],\ZZ[2],\cdots\right)\]
in $\ch_{+}(\Ab)$. For any chain complex $C_*$ in $\ch_-(\bcA)$, we have
\[C_*\tensor\ZZ[n]=C_*[-n]\,.\]

Let $\Sp_{\ZZ[1]}(\ch_{+}(\mathA))$ be the category of symmetric $\ZZ[1]$-spectra. Its objects are symmetric sequences $(C_0,C_1,\dots,C_n\dots)$
where each $C_n$ is a chain complex in $\ch_{+}(\mathA)$ together with an action of the symmetric group $\Sigma_n$ on it.
For a symmetric $\ZZ[1]$-spectrum $C_*$, we have structural morphisms of the form
 $C_n\tensor \ZZ[1]\into C_{n+1}$ for $n\in\NN$.
 
\bigskip 
{\it \noindent Rational stable homotopy category of schemes.}--- In the next paragraphs, we shall recall some results on rational stable homotopy categories of schemes over a field. 
Here, $\SH_T(k)$ will be the stable $\AF^1$-homotopy category of smooth schemes over a field $k$ constructed in \cite{Jar00}.  
 One result that is very important is a theorem due to Morel which asserts an equivalence of categories between the rational stable homotopy category $\SH_T(k)_{\QQ}$ and the rational big Voevodsky's category $\DM(k)_{\QQ}$. This will allows us to show the existence of transfers of some morphisms in $\SH_T(k)_{\QQ}$ that will be studied in Section \ref{Transfers} and \ref{Mainsec}. 
   
\medskip

 Let $\bcT$ be a triangulated  category with small sums and with a small set of compact generators \cite{Nee01}. An object $T$ in $\bcT$ is said to be {\em torsion} (resp. {\em uniquely divisible}) if for every compact generator $X$ in $\bcT$, the canonical morphism from $\Hom_{\bcT}(X,T)$ to $\Hom_{\bcT}(X,T)\tensor_{\ZZ}\QQ$ is the zero morphism (resp. an isomorphism).   Let $\bcT_{\tor}$ (resp. $\bcT_{\QQ}$) be the triangulated subcategory of $\bcT$ generated  by the torsion objects (resp.  uniquely divisible objects). 
The full embedding functor $\bcT_{\QQ}\incl \bcT$ has a left adjoint $L_{\QQ}:\bcT\into \bcT_{\QQ}$ and its kernel is nothing but $\bcT_{\tor}$. Then, $\bcT_{\QQ}$ is equivalent to the Verdier quotient $\bcT/\bcT_{\tor}$ (see \cite[Annexe~A]{Riou06}). We denote by $\SH_T(k)_{\QQ}$ the Verdier quotient of $\SH_T(k)$ by the full-subcategory $\SH_T(k)_{\tor}$ generated by compact torsion objects. We recall that a morphism of symmetric $T$-spectra $f:\bcX\into \bcY$ is a stable $\AF^1$-weak equivalence if and only if the induced morphism
$$f_*:\Hom_{\SH_T(k)}(\Sigma^{\infty}_T(S^r\wedge \GG_m^s\wedge U_+), \bcX)\into\Hom_{\SH_T(k)}(\Sigma^{\infty}_T(S^r\wedge \GG_m^s\wedge U_+), \bcY)$$
is an isomorphism of Abelian groups for all couples $(r,s)\in\NN^2$ and all smooth schemes $U$ over $k$ (see \cite[Th.~1.2.10(iv)]{Hovey0} and \cite[Cor. 3.3.8]{Pel11}). A morphism of $T$-spectra $f:\bcX\into \bcY$ is called  
{\em  rational stable $\AF^1$-weak equivalence} if the induced morphism $f_*\tensor \QQ$
is an isomorphism of $\QQ$-vector spaces for all couples $(r,s)\in\NN^2$ and all smooth schemes $U$ over $k$. The localization of $\SH_T(k)$ with respect to the rational stable $\AF^1$-weak equivalences coincides with $\SH_T(k)_{\QQ}$. 

\bigskip 

{\it \noindent Motivic categories.}--- Let $\Ab^{\tr}_{\Nis}$ be the category of Nisnevich Abelian sheaves with transfers on the category of smooth schemes $\Sm/k$ over a field $k$ c.f. \cite{MVW06,Deg07}. 
Let $\tau$ be either the $h$-topology or $\qfh$-topology on the category of $k$-schemes of finite type. We write $\underline{\Ab}^{\tr}_{\tau}$ for the category of $\tau$-Abelian sheaves with transfers on the category of $k$-schemes of finite type. 
We consider the $\AF^1$-localized model category of the projective model structure on $\ch_+(\Ab^{\tr}_{\Nis})$ and in $\ch_+(\underline{\Ab}^{\tr}_{\tau})$. 
Let $\DM(k)$ be the homotopy category of the category of symmetric $T$-spectra  $\Sp_{T}(\ch_+(\Ab^{\tr}_{\Nis}))$ with respect to stable $\AF^1$-weak equivalences.  If the characteristic of $k$ is zero, then $\DM(k)$ is equivalent to the homotopy category of the category of modules over the motivic spectrum \cite{RO06}. 
We denote by $\underline{\DM}(k)_{\tau}$ the homotopy category of the category of symmetric $T$-spectra  $\Sp_{T}(\ch_+(\underline{\Ab}^{\tr}_{\tau}))$ with respect to stable $\AF^1$-weak equivalences. We write $\DM_{\tau}(k)$ for the localizing subcategory of  $\underline{\DM}_{\tau}(k)$ generated by the objects of the form $\Sigma^{\infty}_T\ZZ_{\tau}(X)(m)[n]$ for $k$-smooth schemes of finite type $X$ and for all couples $(m,n)\in\ZZ$, see \cite{C-D13}. 
 One has an adjunction of triangulated categories 
$$\Hu: \SH_T(k)\rightleftarrows \DM(k): H$$ 
where $\Hu$ is the Hurewicz functor and $H$ is the Eilenberg McLane functor \cite{Mor04,C-D13}. 
This adjunction induces an adjunction of triangulated categories with rational coefficients
$$\Hu_{\QQ}: \SH_T(k)_{\QQ}\rightleftarrows \DM(k)_{\QQ}: H_{\QQ}$$ 
We write $\mathbb{S}^0$ for the sphere $T$-spectrum. Let $\epsilon:\mathbb{S}^0\into\mathbb{S}^0$ be the morphism of spectra induced by the morphism $\mathbb{G}_m\into \mathbb{G}_m$ which comes from  the homomorphism of $k$-algebras $k[x,x^{-1}]\into k[x,x^{-1}]$ given by $x\mapsto x^{-1}$. 
Notice that $\epsilon$ is idempotent i.e. $\epsilon^2=\epsilon$.
 Since $\SH_T(k)_{\QQ}$ has small coproducts (see \cite{Nee01}), the triangulated category $\SH_T(k)_{\QQ}$ is pseudo-abelian, then the morphism $\epsilon$ splits. Hence, $\mathbb{S}^0_{\QQ}$ splits into a direct sum $\mathbb{S}^0_{\QQ}=\mathbb{S}^0_{\QQ,+}\oplus \mathbb{S}^0_{\QQ,-}$ in $\SH_T(k)_{\QQ}$. This decomposition induces a decomposition $$\SH_T(k)_{\QQ}=\SH_T(k)_{\QQ,+}\times \SH_T(k)_{\QQ,-}\,.$$

\begin{theorem}[Morel]\label{thhf74}
Suppose that $-1$ is a sum of squares in $k$. Then we have an equivalence of categories 
$\SH_T(k)_{\QQ}\isom \DM(k)_{\QQ}$.
\end{theorem}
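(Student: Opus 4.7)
The strategy is to exploit the adjunction $\Hu_\QQ \dashv H_\QQ$ between $\SH_T(k)_\QQ$ and $\DM(k)_\QQ$, and show that under the hypothesis on $k$ the unit and counit are both isomorphisms on a family of generators. Since both categories are compactly generated triangulated categories with small sums, it suffices to verify the equivalence on compact generators, namely on objects of the form $\Sigma^\infty_T(X_+)(m)[n]$ for $X$ a smooth $k$-scheme of finite type and $(m,n) \in \ZZ^2$.

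The first step is to analyze the $\epsilon$-decomposition. The rational sphere spectrum splits as $\mathbb{S}^0_\QQ = \mathbb{S}^0_{\QQ,+} \oplus \mathbb{S}^0_{\QQ,-}$, inducing the product decomposition $\SH_T(k)_\QQ = \SH_T(k)_{\QQ,+} \times \SH_T(k)_{\QQ,-}$ already recorded before the statement. I would then observe that the motivic category $\DM(k)_\QQ$ sees only the plus part: because Nisnevich sheaves with transfers force the symmetry isomorphism on $\Gm \wedge \Gm$ to be the identity, the Hurewicz functor $\Hu_\QQ$ sends $\epsilon$ to the identity, and hence kills the minus summand. Thus $\Hu_\QQ$ factors through $\SH_T(k)_{\QQ,+}$, and the plan is to identify the induced functor $\SH_T(k)_{\QQ,+} \to \DM(k)_\QQ$ as an equivalence. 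This is the core of Morel's theorem: on the plus part, the unit $\id \to H_\QQ \Hu_\QQ$ and counit $\Hu_\QQ H_\QQ \to \id$ become isomorphisms because the rational stable stems of the sphere on the plus side match motivic cohomology (via the Hopkins--Morel spectral sequence / Morel's computation of the endomorphism ring of $\mathbb{S}^0_{\QQ,+}$ as $\QQ$).

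The second step is to show that the hypothesis ``$-1$ is a sum of squares in $k$'' forces $\SH_T(k)_{\QQ,-} = 0$, whence the product decomposition collapses and $\SH_T(k)_\QQ \isom \SH_T(k)_{\QQ,+} \isom \DM(k)_\QQ$. The key input is Morel's identification of the endomorphism ring $\End_{\SH_T(k)}(\mathbb{S}^0)$ with the Grothendieck--Witt ring $\GW(k)$, under which $\epsilon$ corresponds to $-\langle -1\rangle$. Consequently the idempotent cutting out the minus part is $(1-\epsilon)/2 = (1+\langle -1\rangle)/2$, and the minus summand of $\mathbb{S}^0_\QQ$ is controlled by the rational Witt ring $W(k)_\QQ$. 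If $-1 = a_1^2 + \cdots + a_r^2$ in $k$, then the form $\langle 1,1,\ldots,1\rangle$ ($r+1$ copies) is hyperbolic in $W(k)$, so $W(k)$ is a torsion group, and hence $W(k)_\QQ = 0$. Chasing this through the $\epsilon$-decomposition gives $\mathbb{S}^0_{\QQ,-} = 0$, and since $\SH_T(k)_{\QQ,-}$ is generated by shifts and smash products of $\mathbb{S}^0_{\QQ,-}$ with compact generators, it vanishes as well.

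The step I expect to be genuinely hard---and the one where I would lean most heavily on Morel's work rather than reprove it from scratch---is establishing that the restricted functor $\Hu_\QQ : \SH_T(k)_{\QQ,+} \to \DM(k)_\QQ$ is fully faithful. This requires a computation of the rational motivic stable stems on the plus side and a comparison with motivic cohomology groups, which is the technical heart of \emph{loc.~cit.} The vanishing of the minus part under the hypothesis on $k$ is comparatively elementary once the Witt-ring dictionary is in place, and the compact-generation reduction is routine.
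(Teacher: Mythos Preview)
Your proposal is correct and follows essentially the same skeleton as the paper's own proof: use the $\epsilon$-decomposition $\SH_T(k)_{\QQ} = \SH_T(k)_{\QQ,+} \times \SH_T(k)_{\QQ,-}$, invoke the hypothesis that $-1$ is a sum of squares to kill the minus part, and then cite the deep input (the equivalence $\SH_T(k)_{\QQ,+} \simeq \DM(k)_{\QQ}$) rather than reprove it. The paper's proof is in fact even terser than yours---it simply states that the hypothesis forces $\SH_T(k)_{\QQ,+} = \SH_T(k)_{\QQ}$ and then points to Theorems~16.1.4 and~16.2.13 of Cisinski--D\'eglise for the rest---whereas you supply the Grothendieck--Witt/Witt-ring mechanism that actually explains \emph{why} the minus part vanishes, which is a useful elaboration but not a different argument.
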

\begin{proof}

The fact that $-1$ is a sum of squares in $k$ implies that the category $\SH_T(k)_{\QQ,+}$ coincides with $\SH_T(k)_{\QQ}$. Hence, the theorem follows from Theorem 16.1.4 and Theorem 16.2.13  in \cite{C-D13}. 
\end{proof}

Let $D_{\AF^1}(k)$ be the homotopy category of the category of symmetric $T$-spectra  $\Sp_{T}(\ch_+(\Ab_{\Nis}))$ with respect to stable $\AF^1$-weak equivalences. The category of Beilinson motives $\DM_{\Bei}(k)$ is the Verdier quotient of $D_{\AF^1}(k)_{\QQ}$ by the localizing subcategory generated by $H_{\Bei}$-acyclic objects, where $H_{\Bei}$ is the Beilison motivic spectrum, see \cite{C-D13, Riou06}.  If $-1$ is a sum of squares in $k$, then we have a diagram of equivalences of categories:
$$\xymatrix{\SH_T(k)_{\QQ}\ar@{=}[r]&D_{\AF^1}(k)_{\QQ}\ar@{=}[r]&\DM_{\Bei}(k)\ar@{=}[r]\ar@{=}[d]&\DM(k)_{\QQ}\\
&&\DM_{h}(k)_{\QQ}\ar@{=}[d]&\\&&\DM_{\qfh}(k)_{\QQ}&}$$
For the proof of these equivalences see \cite{C-D13,Mor04}. As a consequence, we obtain the following corollary.   

\begin{corollary}\label{tfddj}
 If $-1$ is a sum of squares in $k$, then we have an equivalence of categories $\SH_T(k)_{\QQ}\isom \DM_{\qfh}(k)_{\QQ}$. 
\end{corollary}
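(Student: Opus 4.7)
The plan is to read off the corollary directly from the chain of equivalences displayed immediately before its statement, so the work is to justify that chain rather than to introduce new ideas. The assumption that $-1$ is a sum of squares in $k$ is used only once, at the very beginning, to eliminate the minus part of the rational stable homotopy category. Namely, under this hypothesis the involution $\epsilon$ on $\mathbb{S}^0_{\QQ}$ acts as $+1$, so $\mathbb{S}^0_{\QQ,-}=0$ and hence $\SH_T(k)_{\QQ}=\SH_T(k)_{\QQ,+}$; this is precisely the input to Morel's theorem (Theorem \ref{thhf74}) giving an equivalence $\SH_T(k)_{\QQ}\isom \DM(k)_{\QQ}$.

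Next I would invoke the Cisinski--D\'eglise comparison results from \cite{C-D13}. On the one hand, the rationalization of the motivic stable category of symmetric $T$-spectra of chain complexes of Nisnevich Abelian sheaves coincides with the category of Beilinson motives, giving $D_{\AF^1}(k)_{\QQ}\isom \DM_{\Bei}(k)$; and Morel's theorem, reread through this identification, gives the outer equality $\SH_T(k)_{\QQ}\isom D_{\AF^1}(k)_{\QQ}$. On the other hand, the comparison theorems of \cite{C-D13} identify $\DM_{\Bei}(k)$ with both $\DM_h(k)_{\QQ}$ and $\DM_{\qfh}(k)_{\QQ}$: rationally the $h$-topology and the $\qfh$-topology yield the same category of motives, because the difference between these topologies sits in the \'etale cohomological torsion which is killed after tensoring with $\QQ$.

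Concatenating these equivalences produces
\[
\SH_T(k)_{\QQ}\isom D_{\AF^1}(k)_{\QQ}\isom \DM_{\Bei}(k)\isom \DM_{h}(k)_{\QQ}\isom \DM_{\qfh}(k)_{\QQ},
\]
which is exactly the assertion of the corollary. There is no genuine obstacle here beyond citing the right statements from \cite{Mor04} and \cite{C-D13}; the only subtle point worth emphasizing is that the hypothesis on $-1$ is a sum of squares is essential for the very first step (to pass from $\SH_T(k)_{\QQ,+}$ to all of $\SH_T(k)_{\QQ}$), while all the remaining identifications among $\DM_{\Bei}(k)$, $\DM(k)_{\QQ}$, $\DM_h(k)_{\QQ}$, and $\DM_{\qfh}(k)_{\QQ}$ hold without any such assumption. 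Thus the corollary is really a repackaging of Theorem \ref{thhf74} together with the $h$-versus-$\qfh$ comparison, and no additional argument is needed.
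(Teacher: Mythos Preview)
Your proposal is correct and follows exactly the approach of the paper: the corollary is simply read off from the chain of equivalences displayed immediately before its statement, all of which are assembled from Morel's theorem (Theorem~\ref{thhf74}) and the comparison results in \cite{C-D13} and \cite{Mor04}. You have supplied more detail than the paper's one-line proof, but the structure and the citations are the same.
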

\begin{proof}
It is a consequence of the preceding considerations, see \cite{C-D13}.   
\end{proof}
\medskip

\section{Geometric symmetric powers of motivic spectra}
\label{geomsym}

In this section, we shall define geometric symmetric powers $\Sym^n_{g,T}$ on the category of symmetric $T$-spectra of motivic spaces on admissible categories of schemes over a field. We show that they extend, in a natural way, the geometric symmetric powers $\Sym^n_{g}$  of motivic spaces  discussed in Section \ref{geomsymu}, see Proposition \ref{pevvxs}.  
\bigskip

Let $\bcC\subset \Sch/k$ be an admissible category and let $\Sp_T(k)$ be the category of $T$-spectra on the category $\Deltaop\bcS_{\ast}$. We denote by $\Sym^n_T$ the categoric symmetric power in $\Sp_T(k)$, that is, for a symmetric $T$-spectrum $\bcX$, $\Sym^n_T(\bcX)$ is the quotient of the $n$th smash fold product $\bcX^{\wedge n}$ by the symmetric group $\Sigma_n$. 

\begin{lemma}\label{javvc5}
We have  a commutative diagram
$$\xymatrix@C=10ex@R=10ex{\Deltaop\bcS_{\ast}\ar[r]^{\Sym^n}\ar[d]_{\Sigma^{\infty}_T}&\Deltaop\bcS_{\ast}\ar[d]^{\Sigma^{\infty}_T}\\
\Sp_T(k)\ar[r]_{\Sym_{T}^n}&\Sp_T(k)}$$
\end{lemma}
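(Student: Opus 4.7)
The plan is to exploit two formal properties of the suspension functor $\Sigma^\infty_T\colon \Deltaop\bcS_{\ast}\to \Sp_T(k)$: first, that it is strong symmetric monoidal with respect to the smash product on pointed simplicial sheaves and the smash product $\wedge_{\sym(T)}$ on symmetric $T$-spectra; second, that as a left adjoint to the evaluation functor $\Ev_0$ it preserves all small colimits, in particular coequalizers and quotients by finite group actions. Both facts are standard in the theory of symmetric spectra and are implicit in the setup recalled in Section~\ref{Prelims}.

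Starting from a pointed simplicial sheaf $\bcX$, I would compute $\Sym^n_T(\Sigma^\infty_T(\bcX))$ by first unwinding the smash product: the strong monoidality of $\Sigma^\infty_T$ supplies a natural $\Sigma_n$-equivariant isomorphism
$$(\Sigma^\infty_T(\bcX))^{\wedge n}\isom \Sigma^\infty_T(\bcX^{\wedge n}),$$
where both sides carry the symmetric group action by permutation of the smash factors. Passing to the quotient by $\Sigma_n$, the fact that $\Sigma^\infty_T$ preserves colimits yields
$$\Sym^n_T(\Sigma^\infty_T(\bcX))=(\Sigma^\infty_T(\bcX))^{\wedge n}/\Sigma_n\isom \Sigma^\infty_T(\bcX^{\wedge n})/\Sigma_n\isom \Sigma^\infty_T(\bcX^{\wedge n}/\Sigma_n)=\Sigma^\infty_T(\Sym^n(\bcX)).$$
These isomorphisms are natural in $\bcX$, which produces the required natural isomorphism of functors.

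To make this rigorous one should verify the monoidality termwise: by definition $\Sigma^\infty_T(\bcX)_m=\bcX\wedge T^{\wedge m}$, so the $m$th slice of $(\Sigma^\infty_T\bcX)^{\wedge n}$ as defined via the coequalizer presentation over $\sym(T)$ works out to $\bcX^{\wedge n}\wedge T^{\wedge m}$ with the appropriate $\Sigma_n\times\Sigma_m$-action, which is precisely the $m$th slice of $\Sigma^\infty_T(\bcX^{\wedge n})$. The main point requiring care is therefore the identification of the $\Sigma_n$-action on $(\Sigma^\infty_T\bcX)^{\wedge n}$ with the one on $\Sigma^\infty_T(\bcX^{\wedge n})$ induced by permutation of factors in $\bcX^{\wedge n}$, since the smash product on symmetric $T$-spectra involves the corestriction $\cor^{\Sigma_{i+j}}_{\Sigma_i\times\Sigma_j}$. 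Once this matching of actions is settled, the preservation of the quotient by $\Sigma_n$ is automatic from the left adjoint property, and the commutativity of the square follows.
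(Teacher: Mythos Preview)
Your proof is correct and follows essentially the same route as the paper: both use that $\Sigma^\infty_T$ is strong symmetric monoidal to identify $(\Sigma^\infty_T\bcX)^{\wedge n}\isom \Sigma^\infty_T(\bcX^{\wedge n})$, and then that $\Sigma^\infty_T$ is a left adjoint to pass the $\Sigma_n$-quotient through. Your additional remarks on the $\Sigma_n$-equivariance of the monoidal comparison isomorphism make explicit a point the paper leaves implicit, but the underlying argument is the same.
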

\begin{proof}
Let $\bcX$ be a pointed simplicial sheaf in $\Deltaop\bcS_{\ast}$. By \cite[Th.~6.3]{Hovey0}, the functor $\Sigma^{\infty}_T: \Deltaop\bcS_{\ast}\into \Sp_T(k)$ is a monoidal Quillen functor. Hence, for $n\in\NN$, the suspension $\Sigma^{\infty}_T(\bcX^{\wedge n})$ is isomorphic to the product $\Sigma^{\infty}_T(\bcX)^{\wedge n}$. Since $\Sigma^{\infty}_T$ is a left adjoint functor, we have 
\begin{align*}
\Sigma^{\infty}_T(\Sym^n\bcX)&= \Sigma^{\infty}_T(\bcX^{\wedge n}/\Sigma_n)\\&
\isom  \Sigma^{\infty}_T(\bcX^{\wedge n})/\Sigma_n\\&
\isom  \Sigma^{\infty}_T(\bcX)^{\wedge n}/\Sigma_n\\&
\isom\Sym^n_T(\Sigma^{\infty}_T\bcX)\,.
\end{align*} 
This proves the lemma. 
\end{proof}

\begin{corollary}\label{sscor4}
For every simplicial sheaf $\bcX$ in $\Deltaop\bcS_{\ast}$, we have an isomorphism 
$$(\Ev_0\circ\Sym^n_{T}\circ \Sigma^{\infty}_T)(\bcX)\isom\Sym^n(\bcX)\,.$$
\end{corollary}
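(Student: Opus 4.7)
The plan is to simply combine Lemma \ref{javvc5} with the explicit description of $\Sigma^{\infty}_T$. By Lemma \ref{javvc5}, we have a natural isomorphism
\[
\Sym^n_T \circ \Sigma^{\infty}_T \isom \Sigma^{\infty}_T \circ \Sym^n,
\]
so applying $\Ev_0$ on the left yields
\[
(\Ev_0 \circ \Sym^n_T \circ \Sigma^{\infty}_T)(\bcX) \isom (\Ev_0 \circ \Sigma^{\infty}_T \circ \Sym^n)(\bcX) = (\Ev_0 \circ \Sigma^{\infty}_T)(\Sym^n \bcX).
\]

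The remaining step is to observe that $\Ev_0 \circ \Sigma^{\infty}_T$ is the identity functor on $\Deltaop\bcS_{\ast}$. This is immediate from the definitions recalled in Section \ref{Prelims}: the suspension functor $\Sigma^{\infty}_T$ sends a simplicial sheaf $\bcY$ to the symmetric $T$-spectrum $(\bcY, \bcY \wedge T, \bcY \wedge T^{\wedge 2}, \dots)$, whose $0$th slice is precisely $\bcY$. Applied to $\bcY = \Sym^n \bcX$, this gives $(\Ev_0 \circ \Sigma^{\infty}_T)(\Sym^n \bcX) = \Sym^n(\bcX)$, completing the proof.

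There is no real obstacle: the statement is a formal consequence of Lemma \ref{javvc5}, and the entire argument is essentially a one-line chase of identifications. The corollary is included here mainly as a sanity check that the geometric symmetric powers in the spectral setting restrict correctly in level zero to the unstable geometric symmetric powers defined in Section \ref{geomsymu}.
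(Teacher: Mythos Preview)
Your proof is correct and follows exactly the same route as the paper: invoke Lemma \ref{javvc5} and then use that $\Ev_0\circ\Sigma^{\infty}_T$ is the identity on $\Deltaop\bcS_{\ast}$. One small slip in your closing commentary: this corollary concerns the \emph{categoric} symmetric powers $\Sym^n_T$ and $\Sym^n$, not the geometric ones; the analogous compatibility for geometric symmetric powers is handled separately in Proposition \ref{pevvxs}.
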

\begin{proof}
It follows from the previous lemma in view that $\Ev_0(\Sigma^{\infty}_T\bcY)=\bcY$ for a pointed simplicial sheaf $\bcY$. 
\end{proof}
\medskip 

The category $\Delta^{\op}\bcC$ is symmetric monoidal. For two simplicial objects $X$ and $Y$, the product $X\times Y$ is the simplicial object such that each term $(X\times Y)_n$ is given by the product $X_n\times Y_n$.
If $X=(X_0,X_1,X_2,\dots )$ and $Y=(Y_0,Y_1,Y_2,\dots )$ are two symmetric sequences on $\Delta^{\op}\bcC$, then we have a product $X\tensor Y$ which given by the formula
 $$(X\tensor Y)_n=\coprod_{i+j=n}\cor^{\Sigma_n}_{\Sigma_i\times\Sigma_j}(X_i\times Y_j)\,.$$
 For every  symmetric sequence $X=(X_0,X_1, X_2,\dots)$ on the category $\Deltaop\bcC$ and for every $n\in\NN$, there exists the quotient $X^{\tensor n}/ \Sigma_n$ in the category of symmetric sequences on $\Deltaop\bcC$. 
For every $p\in\NN$, we have 
$$(X^{\tensor n})_p=\coprod_{i_1+\cdots +i_n=p}\cor^{\Sigma_p}_{\Sigma_{i_1}\times\cdots \times\Sigma_{i_n}}(X_{i_1}\times\cdots\times X_{i_n})\,$$
and the symmetric group $\Sigma_n$ acts on $(X^{\tensor n})_p$ by permutation of factors. As $\bcC$ allows quotients by finite groups, the quotient $(X^{\tensor n})_p/\Sigma_n$ is an object of $\bcC$ for all $p\in\NN$. Notice that the $0$th slice of $X^{\tensor n}/\Sigma_n$ is nothing but the usual $n$th symmetric power $\Sym^n(X_0)=X_0^{\tensor n}/\Sigma_n$ in $\bcC$.

 Let us fix  an object $S$ of $\Deltaop\bcC$. A  symmetric $S$-spectrum on $\Deltaop\bcC$  is a sequence of $\Sigma_n$-objects $X_n$ in $\Deltaop\bcC$ together with $\Sigma_n$-equivariant morphisms $X_n\times S\into X_{n+1}$ for $n\in\NN$, such that the composite
 $$X_m\times S^{\times n}\into X_{m+1}\times S^{\times (n-1)}\into\cdots \into X_{m+n}$$ is $\Sigma_{m+n}$-equivariant for all couples $(m,n)\in\NN^2$.  We denote by $\Sp_S(\Delta^{\op}\bcC)$ the category of symmetric $S$-spectra on $\Deltaop\bcC$. 
We have a suspension functor $F_0$ from $\Delta^{\op}\bcC$ to $\Sp_S(\Delta^{\op}\bcC)$ that takes an object $X$ of $\Delta^{\op}\bcC$ to the symmetric $S$-spectrum of the form $(X, X\times S, X\times S^{\times 2}, \dots)$. 

Let $T$ be the pointed simplicial sheaf $(\PR^1,\infty)$ and let $T'$ be the pointed simplicial sheaf $\PR^1_+$ in $\Deltaop\bcS_{\ast}$.
 The canonical functor $h^+:\Deltaop\bcC\into\Deltaop\bcS_{\ast}$ induces a functor $$H':\Sp_{\PR^1}(\Delta^{\op}\bcC)\into \Sp_{T'}(k)$$
  which takes a symmetric $\PR^1$-spectrum $(X_0,X_1,\dots)$ to the symmetric $T'$-spectrum $(h^+(X_0),h^+(X_1),\dots)$. Since $\bcC$ is a small category, the category $\Deltaop\bcC$ is also small. Hence, the category $\Sp_{\PR^1}(\Delta^{\op}\bcC)$ is so.

Let $f:T'\into T$ be the canonical morphism of simplicial sheaves.  This morphism induces a morphism of commutative monoids $\sym(T')\into \sym(T)$.  In particular, $\sym(T)$ can be seen as symmetric $T'$-spectrum. For any two symmetric $T'$-spectra $\bcX$ and $\bcY$, we write $\bcX\wedge_{\sym(T')}\bcY$ for the coequalizer of the diagram 
$$\xymatrix{\bcX\wedge\sym(T')\wedge\bcY\ar@<0.7ex>[r]\ar@<-0.7ex>[r]&\bcX\wedge\bcY}
$$
induced by the canonical morphisms $\bcX\wedge\sym(T')\into\bcX$ and $\sym(T')\wedge\bcY\into\bcY$. For every symmetric $T'$-spectrum $\bcX$, the symmetric sequence $\bcX\wedge_{\sym(T')}\sym(T)$ is a symmetric $T$-spectrum. We have a functor 
$$(-)\wedge_{\sym(T')}\sym(T): \Sp_{T'}(k)\longrightarrow \Sp_{T}(k)\,.$$
which is left adjoint. Its right adjoint functor is the restriction functor $\res_{T/T'}$ that sends a symmetric $T$-spectrum  $\bcX$ to $\bcX$ itself thought as a symmetric $T'$-spectrum via the morphism $f:T'\into T$.  Let 
$$H:\Sp_{\PR^1}(\Delta^{\op}\bcC)\into \Sp_{T}(k)$$
be the composition of $H'$ with the functor $(-)\wedge_{\sym(T')}\sym(T)$. 

Let $U$ be an object in $\Sp_{\PR^1}(\Delta^{\op}\bcC)$ and let $n$ be a positive integer. The canonical morphisms $U\tensor\sym(\PR^1)\into U$ and $U\tensor\sym(\PR^1)\into U$ induce a diagram of the form
$$
\xymatrix@C=2.4ex{U\tensor \sym(\PR^1)\tensor U\tensor \cdots \tensor \sym(\PR^1)\tensor U\ar@<1.8ex>[rr]\ar@<-1.8ex>[rr]\ar@<2.5ex>[rr]\ar@<-2.5ex>[rr]\ar@<1.1ex>[rr]\ar@<-1.1ex>[rr]&\cdots\cdots&U^{\tensor n}}\,.
$$
in which $U$ appears $n$ times in the product of the left-hand-side. This diagram can be seen as a functor from a category, with two objects $a,b$ and with $n$ non-trivial arrows $a\into b$, to the category of symmetric sequences $(\Delta^{\op}\bcC)^{\Sigma}$. For instance, when $n=2$, this diagram is nothing but a coequalizer diagram.

The symmetric group acts on this product by permuting of factors of $U$. Hence, we obtain a diagram 
\begin{equation}\label{equu32}
\xymatrix@C=2.4ex{H\Big(\big(U\tensor \sym(\PR^1)\tensor U\tensor \cdots \tensor \sym(\PR^1)\tensor U\big)/\Sigma_n\Big)\ar@<1.8ex>[rr]\ar@<-1.8ex>[rr]\ar@<2.5ex>[rr]\ar@<-2.5ex>[rr]\ar@<1.1ex>[rr]\ar@<-1.1ex>[rr]&\cdots\cdots&H(U^{\tensor n}/\Sigma_n)}\,.
\end{equation}

\medskip
{\it \noindent Geometric symmetric powers.}--- 
For each symmetric $T$-spectrum $\bcX$, we denote by $(H\downarrow \bcX)$ the comma category whose objects are arrows of the form $H(U)\into \bcX$ for all $U$ in $\Sp_{\PR^1}(\Delta^{\op}\bcC)$. Let $F_{\bcX}:(H\downarrow \bcX)\into\Sp_T(k)$ be the functor which sends a morphism $H(U)\into \bcX$ to the colimit of the diagram \eqref{equu32}. 
We define $\Sym_{g,T}^n(\bcX)$ to be the colimit of the functor $F_{\bcX}$.   
 The functor  $\Sym_{g,T}^n$ is called $n$th-fold {\em geometric symmetric power} of symmetric $T$-spectra.

\medskip
All we have constructed in the previous paragraphs are summarized in the following diagram: 

\begin{equation}
\vcenter{
  \xymatrix@C=5ex@R=5ex{ \bcC\ar[rr]^{\Sym^n}
  \ar[dd]_{\Const}& &\bcC\ar[dd]^{\Const}
   & & \\
  & & & & \\ 
  \Deltaop\bcC\ar[dd]_{F_0}
  \ar[rr]^-{\Sym^n_{\PR^1}}
  \ar[dr]^-{h^+} & &
   \Deltaop\bcC \ar[dd]_(0.25){F_0}|[d]\hole
  \ar[dr]^-{h^+} & & \\
  & \Deltaop\bcS_{\ast}\ar[dd]_(0.35){\Sigma^{\infty}_T}
  \ar[rr]_(0.3){\Sym^n_{g}}
  & &\Deltaop\bcS_{\ast}\ar[dd]^-{\Sigma^{\infty}_T} & \\
  \Sp_{\PR^1}(\Delta^{\op}\bcC)\ar[rr]
  \ar[dr]_-{H} & &\Sp_{\PR^1}(\Delta^{\op}\bcC)
  \ar[dr]^-{H} & & \\
  & \Sp_T(k)\ar[rr]_-{\Sym^n_{g,T}}
  & & \Sp_T(k) &}
  }
\end{equation}

\begin{lemma}\label{yhd192}
Let $U$ be an object in $\Sp_{\PR^1}(\Delta^{\op}\bcC)$ and let $n$ be a positive integer. We have a canonical morphism $\vartheta^n_{H(U)}:\Sym^n_TH(U)\into \Sym^n_{g,T}H(U)$. 
\end{lemma}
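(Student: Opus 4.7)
The plan is to build $\vartheta^n_{H(U)}$ by the same pattern as in Proposition \ref{papogxm}: combine a natural comparison at the canonical index of the comma category with the universal property of the colimit defining $\Sym^n_{g,T}H(U)$.

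First, I will single out the identity $\id_{H(U)}\colon H(U)\to H(U)$ as a distinguished object of the comma category $(H\downarrow H(U))$. By definition, $\Sym^n_{g,T}(H(U))=\colim F_{H(U)}$, so there is a tautological structural morphism
\[\iota\colon F_{H(U)}(\id_{H(U)})\longrightarrow \Sym^n_{g,T}(H(U)),\]
and by the construction preceding the lemma, $F_{H(U)}(\id_{H(U)})$ is the colimit of the diagram \eqref{equu32} attached to $U$.

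Next, I will produce a comparison morphism $c\colon \Sym^n_T H(U)\to F_{H(U)}(\id_{H(U)})$. The categoric symmetric power $\Sym^n_T H(U)=H(U)^{\wedge n}_{\sym(T)}/\Sigma_n$ is built as a $\Sigma_n$-quotient of a smash product over $\sym(T)$, both formed inside $\Sp_T(k)$. The colimit of \eqref{equu32}, on the other hand, is the image under $H$ of a corresponding diagram in $\Sp_{\PR^1}(\Delta^{\op}\bcC)$, whose colimit is the $\Sigma_n$-quotient of the $n$-fold smash of $U$ over $\sym(\PR^1)$ computed at the level of simplicial objects in the admissible category of schemes. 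Using that $h^+$ is strong monoidal (since $h^+(X\times Y)\isom X_+\wedge Y_+$), that extension of scalars along the map $\sym(T')\to\sym(T)$ is strong monoidal, and that $H=((-)\wedge_{\sym(T')}\sym(T))\circ H'$ therefore intertwines $\sym(\PR^1)$-module structures with $\sym(T)$-module structures, I obtain canonical maps from the coequalizer and $\Sigma_n$-quotient diagrams defining $\Sym^n_T H(U)$ in $\Sp_T(k)$ into the images under $H$ of the corresponding diagrams in $\Sp_{\PR^1}(\Delta^{\op}\bcC)$. Assembling these maps produces the required $c$, and setting $\vartheta^n_{H(U)}:=\iota\circ c$ completes the construction.

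The principal obstacle will be the careful bookkeeping with the three monoids $\sym(\PR^1)$, $\sym(T')$ and $\sym(T)$: one must track that iterated coequalizers computing smash products and the $\Sigma_n$-quotients are correctly identified as the constructions are transported from $\Sp_{\PR^1}(\Delta^{\op}\bcC)$ through $\Sp_{T'}(k)$ into $\Sp_T(k)$. Since $h^+$ does not in general preserve colimits such as quotients by finite group actions, the comparison $c$ will not be invertible in general; this is exactly the phenomenon that, in the unstable case, prevents $\vartheta^n$ from being an isomorphism (cf. Proposition \ref{papxz301}).
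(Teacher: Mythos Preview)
Your proposal is correct and follows essentially the same approach as the paper. The paper simply writes down the commutative diagram \eqref{eggd91}, whose vertical ``canonical morphisms'' are exactly your comparison map $c$, and then takes the colimit of both rows; your decomposition $\vartheta^n_{H(U)}=\iota\circ c$ makes explicit the passage from $F_{H(U)}(\id_{H(U)})$ to the full colimit $\Sym^n_{g,T}H(U)$, which the paper leaves implicit.
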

\begin{proof}
The diagram \eqref{equu32} yields into a commutative diagram
\begin{equation}\label{eggd91}
\xymatrix@C=2ex@R=7ex{\bigslant{\Big(H(U)\wedge \sym(T)\wedge H(U)\wedge \cdots \wedge \sym(T)\wedge H(U)\Big)}{\Sigma_n}\ar@<1.8ex>[rr]\ar@<-1.8ex>[rr]\ar@<2.5ex>[rr]\ar@<-2.5ex>[rr]\ar@<1.1ex>[rr]\ar@<-1.1ex>[rr]\ar[d]&\cdots\cdots&H(U)^{\wedge n}/{\Sigma_n}\ar[d]\\
H\Big(\big(U\tensor \sym(\PR^1)\tensor U\tensor \cdots \tensor \sym(\PR^1)\tensor U\big)/\Sigma_n\Big)\ar@<1.8ex>[rr]\ar@<-1.8ex>[rr]\ar@<2.5ex>[rr]\ar@<-2.5ex>[rr]\ar@<1.1ex>[rr]\ar@<-1.1ex>[rr]&\cdots\cdots&H(U^{\tensor n}/\Sigma_n)}
\end{equation} 
where the vertical morphisms are the canonical morphisms. After taking colimit on the above  diagram, we obtain a morphism from $\Sym^n_TH(U)$ to $\Sym^n_{g,T}H(U)$.
\end{proof}

 We recall that $h^+$ denotes the canonical functor from $\Deltaop\bcC$ to $\Deltaop\bcS_{\ast}$. 

\begin{lemma}\label{lem721}
Let $\bcX=(\bcX_0,\bcX_1,\dots)$ be a symmetric $T$-spectrum in $\Sp_{T}(k)$. Then, the functor $\Ev_n:(H\downarrow\bcX)\into (h^+\downarrow\bcX_n)$  is final. 
\end{lemma}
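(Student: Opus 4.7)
The plan is to use the classical criterion for finality (\cite[page~213]{McLa71}): the functor $\Ev_n$ is final if and only if, for every object $\alpha:h^+(X)\into \bcX_n$ of $(h^+\downarrow\bcX_n)$, the slice category $\alpha\downarrow \Ev_n$ is nonempty and connected. Unwinding the definitions, an object of $\alpha\downarrow \Ev_n$ consists of a pair $(U,g)$ with $U\in \Sp_{\PR^1}(\Delta^{\op}\bcC)$ and $g:H(U)\into \bcX$, together with a morphism $\bar\psi:X\into U_n$ in $\Delta^{\op}\bcC$ such that the composite
$$h^+(X)\xrightarrow{h^+(\bar\psi)}h^+(U_n)\into (H(U))_n\xrightarrow{g_n}\bcX_n$$
equals $\alpha$; here the middle arrow is the canonical map arising from the unit of the base change $(-)\wedge_{\sym(T')}\sym(T)$ applied to $H'(U)$.

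The key technical input is an identification on free spectra. Let $F_m^{\PR^1}\dashv \Ev_m^{\PR^1}$ (respectively $F_m^T\dashv \Ev_m^T$) denote the evaluation--free spectrum adjunction on $\Sp_{\PR^1}(\Delta^{\op}\bcC)$ (resp.\ on $\Sp_T(k)$). Since the termwise functor $H'$ sends the free $\PR^1$-spectrum $F_m^{\PR^1}(Y)$ to the free $T'$-spectrum $F_m^{T'}(h^+(Y))$, and since base change along $\sym(T')\into \sym(T)$ converts free $T'$-spectra into free $T$-spectra, one verifies a natural isomorphism
$$H(F_m^{\PR^1}(Y))\isom F_m^T(h^+(Y))$$
for every $Y\in \Delta^{\op}\bcC$, compatible with the units of the two adjunctions. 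This identification is the main bookkeeping step and will be the principal technical obstacle.

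For nonemptiness of $\alpha\downarrow \Ev_n$, set $U_0:=F_n^{\PR^1}(X)$, let $\bar\psi_0:X\into (U_0)_n$ be the unit of the $\PR^1$-adjunction, and let $g_0:H(U_0)\into \bcX$ be the morphism corresponding under $F_n^T\dashv \Ev_n^T$ (via the above identification) to $\alpha$. The required compatibility of the triple $(U_0,g_0,\bar\psi_0)$ with $\alpha$ is then exactly the triangle identity for the unit of $F_n^T\dashv \Ev_n^T$ applied to $\alpha$.

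For connectedness, let $(U,g,\bar\psi)$ be an arbitrary object of $\alpha\downarrow \Ev_n$. The morphism $\bar\psi:X\into U_n$ corresponds under $F_n^{\PR^1}\dashv \Ev_n^{\PR^1}$ to a morphism $\bar\Psi:F_n^{\PR^1}(X)\into U$ in $\Sp_{\PR^1}(\Delta^{\op}\bcC)$; applying $H$ yields $H(\bar\Psi):H(U_0)\into H(U)$. Both $g_0$ and the composite $g\circ H(\bar\Psi)$ are morphisms $F_n^T(h^+(X))\into \bcX$ whose adjoint under $F_n^T\dashv \Ev_n^T$ is $\alpha$ (for $g\circ H(\bar\Psi)$ this uses exactly the compatibility of $(g,\bar\psi)$ with $\alpha$), so $g_0=g\circ H(\bar\Psi)$. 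Hence $\bar\Psi$ defines a morphism in $\alpha\downarrow \Ev_n$ from $(U_0,g_0,\bar\psi_0)$ to $(U,g,\bar\psi)$, which yields connectedness and concludes the sketch.
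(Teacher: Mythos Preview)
Your argument is correct. For nonemptiness you do exactly what the paper does: pass from $\alpha:h^+(X)\to\bcX_n$ to $F_n^T(h^+(X))\to\bcX$ by adjunction, identify $F_n^T\circ h^+\isom H\circ F_n^{\PR^1}$, and exhibit the unit as the required factorization. The paper records the same identification as the equality ``$F_n\circ h^+=H\circ F_n$'' without further comment; your explicit unpacking of why this holds (termwise $H'$ on free $\PR^1$-spectra followed by base change to free $T$-spectra) is a useful addition.

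For connectedness the two proofs genuinely diverge. The paper connects two objects $(U,g)$ and $(U',g')$ of $(H\downarrow\bcX)$ through their coproduct $U\amalg U'$, using that $H$ preserves coproducts. Your approach instead shows that the free-spectrum object $(F_n^{\PR^1}(X),g_0,\bar\psi_0)$ is weakly initial in $\alpha\downarrow\Ev_n$: given any $(U,g,\bar\psi)$, the adjoint $\bar\Psi:F_n^{\PR^1}(X)\to U$ of $\bar\psi$ furnishes a morphism to it. Your route is tighter, because it directly handles the compatibility with $\alpha$ (via the triangle identity $\bar\Psi_n\circ\bar\psi_0=\bar\psi$), whereas the paper's coproduct argument, as written, only connects the underlying objects of $(H\downarrow\bcX)$ and does not make explicit why the two coprojections into $U\amalg U'$ can be made compatible with a \emph{single} structure map from $\alpha$. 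The coproduct argument can be repaired, but your approach sidesteps the issue entirely.
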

\begin{proof}
Suppose that it is given a morphism $h^+(U)\into \bcX_n$, where $U$ is an object of $\Deltaop\bcC$. By adjunction, this morphism corresponds to a morphism of $T$-spectra $F_n(h^+(U))\into\bcX$. Since $F_n\circ h^+=H\circ F_n$, we have a morphism $H(F_n(U))\into\bcX$. The unit morphism $h^+(U)\into (\Ev_n\circ F_n)(h^+(U))$ gives a commutative diagram 
   $$\xymatrix{h^+(U)\ar[dr]\ar[rr]&& (\Ev_n\circ F_n)(h^+(U))\ar[dl]\\
   &\bcX_n&}$$
   where $ (\Ev_n\circ F_n)(h^+(U))=h^+(\cor^{\Sigma_n}_{\Sigma_0}(U))$.
Now, suppose that there are two morphisms $H(X)\into \bcX$ and $H(X')\into \bcX$,  where $X$ and $X'$ are in $\Sp_{\PR^1}(\Delta^{\op}\bcC)$. Then, we have a commutative diagram 
$$\xymatrix{&\bcX\ar@/^1pc/[rdd]^{}\ar@/_1pc/[ldd]_{}\ar@{.>}[d]^{}&\\
& H(X)\vee H(X')\ar@/^0.5pc/[rd]\ar@/_0.5pc/[ld]&\\
 H(X)&& H(X')}$$
where the dotted arrow exists by the universal property of coproduct. As $H(X\amalg X')$ is isomorphic to $H(X)\vee H(X')$, the above diagram induces a commutative diagram 
$$\xymatrix{&\bcX_n\ar@/^1pc/[rdd]^{}\ar@/_1pc/[ldd]_{}\ar@{.>}[d]^{}&\\
& h^+(X_n\amalg X'_n)\ar@/^0.5pc/[rd]\ar@/_0.5pc/[ld]&\\
h^+(X_n)&& h^+(X'_n)}$$
This proves that the required functor is final, see \cite[page~213]{McLa71}. 
\end{proof}

\begin{proposition}\label{pevvxs}
Let $n$ be a natural number. For every symmetric $T$-spectrum $\bcX$ in $\Sp_T(k)$, we have a canonical isomorphism 
$$\Ev_0\circ\Sym^n_{g,T}(\bcX)\isom\Sym^n_{g}\circ \Ev_0(\bcX)\,.$$
\end{proposition}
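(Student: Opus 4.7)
The idea is to commute $\Ev_0$ past the two nested colimits used to construct $\Sym^n_{g,T}(\bcX)$ and then invoke the finality statement of Lemma \ref{lem721} (with $n=0$) to rewrite the remaining outer colimit as the one that defines $\Sym^n_g$ on the pointed simplicial sheaf $\bcX_0 = \Ev_0\bcX$. Three ingredients do the work: (a) $\Ev_0: \Sp_T(k)\to\Deltaop\bcS_\ast$ preserves all colimits, because colimits of symmetric $T$-spectra are formed slicewise; (b) the $0$-th slice of $\sym(T)$ (and of $\sym(\PR^1)$) is the monoidal unit $\Spec(k)_+$ (resp.\ $\Spec(k)$), so that extending scalars along $\sym(T')\to\sym(T)$ leaves the $0$-slice unchanged and inserting a factor of $\sym(\PR^1)$ into a tensor product has no effect at level~$0$; (c) Lemma \ref{lem721} in the case $n=0$.

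By definition $\Sym^n_{g,T}(\bcX) = \colim_{(H\downarrow\bcX)} F_{\bcX}$, with $F_{\bcX}(H(U)\to\bcX)$ being the colimit of the parallel-arrow diagram \eqref{equu32}. Applying (a) twice, $\Ev_0\Sym^n_{g,T}(\bcX)$ is canonically isomorphic to $\colim_{(H\downarrow\bcX)}\Ev_0\circ F_{\bcX}$, and in turn $(\Ev_0\circ F_{\bcX})(H(U)\to\bcX)$ is the colimit of the diagram obtained by applying $\Ev_0$ to \eqref{equu32}. Using (b), every term in that $0$-slice diagram equals $h^+(U_0^{\times n}/\Sigma_n) = h^+(\Sym^n U_0)$, and every arrow becomes the identity, since all the parallel maps are instances of the unit action of $\sym(T')_0 = \Spec(k)_+$. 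Hence the inner colimit collapses to $(\Ev_0\circ F_{\bcX})(H(U)\to\bcX)\isom h^+(\Sym^n U_0)$.

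Now define $G:(h^+\downarrow\bcX_0)\to\Deltaop\bcS_\ast$ by $(h^+(V)\to\bcX_0)\mapsto h^+(\Sym^n V) = \Sym^n_g h^+(V)$; by the construction of $\Sym^n_g$ as a left Kan extension, $\colim G = \Sym^n_g(\bcX_0)$. The previous computation shows the equality $\Ev_0\circ F_{\bcX} = G\circ \Ev_0$, where $\Ev_0$ on the right denotes the comma-category functor $(H\downarrow\bcX)\to(h^+\downarrow\bcX_0)$ of Lemma \ref{lem721}. Since that functor is final by (c), $\colim_{(H\downarrow\bcX)}(G\circ\Ev_0)$ coincides with $\colim_{(h^+\downarrow\bcX_0)} G = \Sym^n_g(\bcX_0) = \Sym^n_g(\Ev_0\bcX)$, and stringing together the isomorphisms produces the desired canonical one.

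The main obstacle is fact (b): verifying carefully that the coequalizer defining the extension of scalars $(-)\wedge_{\sym(T')}\sym(T)$ is the identity on the $0$-slice, and that the $0$-slices of the various mixed tensor products appearing in \eqref{equu32} really do collapse to $U_0^{\times n}$ with the expected $\Sigma_n$-action by permutation. Once this slicewise unwinding is in hand, the rest of the argument is just a double application of ``$\Ev_0$ preserves colimits'' followed by the appeal to the finality statement of Lemma \ref{lem721}.
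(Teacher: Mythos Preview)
Your argument is correct and follows essentially the same route as the paper: commute $\Ev_0$ through the two colimits, observe that at level~$0$ the diagram \eqref{equu32} degenerates to identities on $h^+(\Sym^n U_0)$ because $\Ev_0\sym(\PR^1)=\Spec(k)$, and then invoke the finality statement of Lemma~\ref{lem721} together with the Kan-extension description of $\Sym^n_g$ (Lemma~\ref{lerffs}). Your treatment is slightly more explicit about why the change of scalars $(-)\wedge_{\sym(T')}\sym(T)$ is invisible at level~$0$, but the strategy and the key lemmas are identical to the paper's.
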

\begin{proof}
Let $U$ be an object in $\Sp_{\PR^1}(\Delta^{\op}\bcC)$. Since $\Ev_0(\sym(\PR^1))=\Spec(k)$. Applying  functor $\Ev_0$ to the diagram \eqref{equu32}, we obtain diagram consisting of identity morphisms $h^+(U^n/\Sigma)\into h^+(U^n/\Sigma)$. Hence, the colimit of this diagram is $h^+(U^n/\Sigma)$. Thus, we have 
$$\Ev_0\circ\Sym^n_{g,T}(\bcX)=\colim_{H(U)\into \bcX}h^+(U_0^n/\Sigma_n)\,.$$
By Lemma \ref{lem721}, the right-hand-side is isomorphic to $\colim_{h^+(U_0)\into\Ev_0(\bcX)}h^+(U_0^n/\Sigma_n)$, and by Lemma \ref{lerffs}, the latter is isomorphic to $\Sym^n_{g}\circ \Ev_0(\bcX)$. 
\end{proof}

\begin{corollary}
For every simplicial sheaf $\bcX$ in $\Deltaop\bcS_{\ast}$, there is a canonical isomorphism 
$$\Ev_0(\Sym^n_{g,T}(\Sigma^{\infty}_T\bcX))\isom\Sym^n_{g}(\bcX)\,.$$
\end{corollary}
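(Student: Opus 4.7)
The statement is an immediate specialization of the preceding Proposition \ref{pevvxs}, applied to the suspension spectrum $\Sigma^{\infty}_T\bcX$. The plan is therefore very short: invoke Proposition \ref{pevvxs} with the symmetric $T$-spectrum taken to be $\Sigma^{\infty}_T\bcX$, obtaining a canonical isomorphism
\[
\Ev_0(\Sym^n_{g,T}(\Sigma^{\infty}_T\bcX))\isom \Sym^n_g(\Ev_0(\Sigma^{\infty}_T\bcX))\,.
\]
Then identify $\Ev_0(\Sigma^{\infty}_T\bcX)$ with $\bcX$ using the definition of the suspension functor recalled in Section \ref{Prelims}: by construction $\Sigma^{\infty}_T\bcX = (\bcX, \bcX\wedge T, \bcX\wedge T^{\wedge 2},\dots)$, so its $0$th slice is $\bcX$. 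Substituting this into the right-hand side of the displayed isomorphism yields the desired canonical isomorphism $\Ev_0(\Sym^n_{g,T}(\Sigma^{\infty}_T\bcX))\isom \Sym^n_g(\bcX)$.

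There is no real obstacle here; the work has all been done in Proposition \ref{pevvxs}. The only point worth noting is that the naturality of the isomorphism in Proposition \ref{pevvxs} ensures that the isomorphism so obtained is canonical in $\bcX$, as claimed.
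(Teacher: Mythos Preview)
Your proof is correct and matches the paper's own argument exactly: apply Proposition \ref{pevvxs} to $\Sigma^{\infty}_T\bcX$ and use that $\Ev_0(\Sigma^{\infty}_T\bcX)=\bcX$.
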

\begin{proof}
It follows from the preceding proposition in view that $\Ev_0(\Sigma^{\infty}_T\bcX)$ is equal to $\bcX$.  
\end{proof}

\begin{proposition}
For every $k$-scheme $X$ in $\bcC$, one has an isomorphism 
$$\Sym^n_{g,T}\Sigma^{\infty}_TX_+\isom\Sigma^{\infty}_T(\Sym^nX)_+\,.$$
\end{proposition}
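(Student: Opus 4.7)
The plan is to unwind the definition of $\Sym^n_{g,T}$ at the $T$-spectrum $\Sigma^\infty_T X_+$, exhibit it as lying in the image of the comparison functor $H$, and exploit the strong monoidality of the suspension functor $F_0$.

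First I would identify $\Sigma^\infty_T X_+$ with $H(F_0(X))$. Since $h^+:\Deltaop\bcC \to \Deltaop\bcS_{\ast}$ converts Cartesian products of schemes into smash products of pointed sheaves, the termwise computation gives $H'(F_0(X)) = (X_+, X_+\wedge T', X_+\wedge T'^{\wedge 2},\dots) = X_+ \wedge \sym(T')$ as a symmetric $T'$-spectrum; smashing over $\sym(T')$ with $\sym(T)$ then yields $X_+ \wedge \sym(T) = \Sigma^\infty_T X_+$.

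Next I would compute the value of the functor $F_{\bcX}:(H \downarrow \Sigma^\infty_T X_+)\to \Sp_T(k)$ at the object $(F_0(X), \id_{H(F_0(X))})$. This is the colimit of the bar-type diagram \eqref{equu32} for $U=F_0(X)$. Viewing this diagram in $\Sp_{\PR^1}(\Delta^{\op}\bcC)$ before $H$ is applied, it is the standard presentation of the $n$-fold smash product $U^{\wedge_{\sym(\PR^1)} n}$ quotiented by $\Sigma_n$, whose colimit is $\Sym^n_{\PR^1}(U)$. Since $F_0$ is left adjoint to $\Ev_0$, it is the free $\sym(\PR^1)$-module functor, and in particular strong monoidal: $F_0(X)^{\wedge_{\sym(\PR^1)} n} \cong F_0(X^{\times n})$. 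Passing to the $\Sigma_n$-quotient (preserved by the left adjoint $F_0$) gives $\Sym^n_{\PR^1}(F_0(X)) \cong F_0(\Sym^n X)$. Applying $H$ and using the first step with $\Sym^n X$ in place of $X$, this value equals $H(F_0(\Sym^n X)) \cong \Sigma^\infty_T(\Sym^n X)_+$.

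Finally I would show that the pair $(F_0(X), \id)$ is cofinal in the comma category $(H \downarrow \Sigma^\infty_T X_+)$, so that the colimit defining $\Sym^n_{g,T}(\Sigma^\infty_T X_+)$ reduces to the value of $F_{\bcX}$ at this object. The argument passes through the adjunction $F_0 \dashv \Ev_0$ at the level of $\PR^1$-spectra together with the identification $\Sigma^\infty_T X_+ = H(F_0(X))$: any morphism $H(U) \to \Sigma^\infty_T X_+$ should be classified by a canonical morphism $U \to F_0(X)$ compatible with $H$, yielding the required cofinality.

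The principal obstacle is this cofinality argument in the last step: whereas the identification and the monoidality in the earlier steps are formal, verifying that morphisms of $T$-spectra into a suspension spectrum are controlled by morphisms of the underlying $\PR^1$-spectra requires a careful analysis of the relationship between $H$, the smash product over $\sym(T')$, and the adjunctions at play.
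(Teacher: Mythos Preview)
Your approach is essentially the paper's. The paper also identifies $\Sigma^\infty_T X_+ = H(F_0(X))$ and then computes the defining diagram \eqref{equu32} at $U=F_0(X)$: using $\sym(\PR^1)=F_0(\Spec(k))$ it argues that the diagram degenerates, so its colimit is $H\bigl(F_0(X)^{\tensor n}/\Sigma_n\bigr)=H\bigl(F_0(\Sym^nX)\bigr)=\Sigma^\infty_T(\Sym^nX)_+$; your route through the strong monoidality of $F_0$ for the $\sym(\PR^1)$-module product reaches the same endpoint and is arguably more transparent. The cofinality step you flag as the principal obstacle is not argued separately in the paper: it simply writes ``by definition, $\Sym^n_{g,T}H(F_0(X))$ is the [colimit of] diagram \eqref{equu32} for $U=F_0(X)$'', in effect taking for granted that on an object in the image of $H$ the comma-category colimit is computed at the identity.
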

\begin{proof}
We have $$\Sigma^{\infty}_TX_+=\Sigma^{\infty}_T(h^+(X))=H(F_0(X))\,.$$
Hence, $\Sym^n_{g,T}\Sigma^{\infty}_TX_+=\Sym^n_{g,T}H(F_0(X))$. By definition, $\Sym^n_{g,T}H(F_0(X))$ is the coequalizer of the diagram \eqref{equu32} for $U=F_0(\Const(X))$. Let us write $F_0(X)$ for $F_0(\Const(X))$. We have
 $$H\big(F_0(X)^{\tensor n}/\Sigma_n\big)=H\big(F_0(X^n/\Sigma)\big)=\Sigma^{\infty}_T(h^+(\Sym^nX))=\Sigma^{\infty}_T(\Sym^nX)_+\,.$$ 
Since $\sym(\PR^1)=F_0(\Spec(k))$, the object on left-hand-side of diagram \eqref{equu32} is nothing but $H\big(F_0(X)^{\tensor n}/\Sigma_n\big)$ and the arrows are the identities. Therefore, the colimit of this diagram is $H\big(F_0(X)^{\tensor n}/\Sigma_n\big)=\Sigma^{\infty}_T(\Sym^nX)_+$.  
\end{proof}

For a symmetric $T$-spectrum $\bcX$, we shall write $\vartheta^n(\bcX)$ for $\vartheta^n_{\bcX}$.
 
 \begin{corollary}\label{gstabp31}
Let $\bcX$ be a pointed simplicial sheaf in $\Deltaop\bcS_{\ast}$. If the natural morphism $\vartheta^n_T(\Sigma^{\infty}_T\bcX):\Sym^{n}_{T}(\Sigma^{\infty}_{T}\bcX)\into\Sym^{n}_{g,T}(\Sigma^{\infty}_T\bcX)$ is a stable $\AF^1$-weak equivalence, then the natural morphism $\Sym^{n}(\bcX)\into\Sym^{n}_g(\bcX)$ is an $\AF^1$-weak equivalence. 
\end{corollary}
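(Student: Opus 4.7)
The plan is to reduce the claim to the unstable setting by applying the evaluation functor $\Ev_0:\Sp_T(k)\into\Deltaop\bcS_{\ast}$ to the given stable $\AF^1$-weak equivalence $\vartheta^n_T(\Sigma^{\infty}_T\bcX)$. The two identifications that do the heavy lifting are already in place: by Corollary \ref{sscor4} one has a canonical isomorphism $\Ev_0\Sym^n_T\Sigma^{\infty}_T\bcX\isom\Sym^n(\bcX)$, and by Proposition \ref{pevvxs} (together with the equality $\Ev_0\Sigma^{\infty}_T\bcX=\bcX$) one has $\Ev_0\Sym^n_{g,T}\Sigma^{\infty}_T\bcX\isom\Sym^n_g(\bcX)$.

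My next step is to verify that, under these two canonical isomorphisms, the morphism $\Ev_0(\vartheta^n_T(\Sigma^{\infty}_T\bcX))$ is indeed identified with $\vartheta^n(\bcX)$. This follows from inspecting the construction of $\vartheta^n_T$ in Lemma \ref{yhd192} via the commutative diagram \eqref{eggd91}: applying $\Ev_0$ collapses every occurrence of $\sym(T)$ to $\Spec(k)_+$ (since $\Ev_0\sym(T)=\Spec(k)_+$), so the upper row of \eqref{eggd91} becomes the trivial diagram whose colimit is $\Sym^n(\bcX)$, while the lower row becomes the Yoneda-type description of $\Sym^n_g(\bcX)$ from Section \ref{geomsymu}. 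By the naturality of the two constructions and by how $\vartheta^n$ itself was defined in Proposition \ref{papogxm} and Corollary \ref{cpapogxm}, the resulting comparison map coincides with $\vartheta^n(\bcX)$.

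Having established the identification, the conclusion reduces to the assertion that $\Ev_0$ sends the stable $\AF^1$-weak equivalence $\vartheta^n_T(\Sigma^{\infty}_T\bcX)$ to an $\AF^1$-weak equivalence in $\Deltaop\bcS_{\ast}$. This is the \emph{only} place where stability is used, and it is the main technical point of the corollary. The hard part will be this last step: in general $\Ev_0$ does not take stable equivalences to unstable ones, so one must exploit the specific structure of the source and target. Namely, both $\Sym^n_T\Sigma^{\infty}_T\bcX\isom\Sigma^{\infty}_T\Sym^n(\bcX)$ (by Lemma \ref{javvc5}) and $\Sym^n_{g,T}\Sigma^{\infty}_T\bcX$ restrict at level $0$ (via $\Ev_0$) to the expected unstable geometric objects; the argument should therefore proceed by checking that for morphisms of this restricted form, a stable equivalence forces the comparison map at level $0$ to be an $\AF^1$-weak equivalence, using that $\Sigma^{\infty}_T$ is left Quillen and that the relevant Hom groups computing stable equivalence on suspension spectra are colimits of the unstable ones after smashing with $T^{\wedge m}$. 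The contrapositive formulation makes this manageable: if $\vartheta^n(\bcX)$ were not an $\AF^1$-weak equivalence, one would detect this unstably and, by suspending with $T$, stably, contradicting the hypothesis.
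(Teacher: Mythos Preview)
Your overall strategy coincides with the paper's: build the commutative square
\[
\xymatrix@C=14ex@R=10ex{
\Sym^{n}(\bcX)\ar[r]^{\vartheta^n(\bcX)}\ar[d]_{\isom}&\Sym^{n}_g(\bcX)\ar[d]^{\isom}\\
\Ev_0\bigl(\Sym^{n}_{T}(\Sigma^{\infty}_{T}\bcX)\bigr)\ar[r]_{\Ev_0(\vartheta^n_T(\Sigma^{\infty}_T\bcX))}&\Ev_0\bigl(\Sym^{n}_{g,T}(\Sigma^{\infty}_T\bcX)\bigr)
}
\]
using Corollary~\ref{sscor4} and Proposition~\ref{pevvxs}, identify the bottom arrow with $\Ev_0$ applied to the given stable equivalence, and conclude. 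The paper does exactly this, in two sentences.

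Where you diverge from the paper is in your honest treatment of the last step. You are right that ``$\Ev_0$ of a stable $\AF^1$-weak equivalence is an $\AF^1$-weak equivalence'' is false in general: $\Ev_0$ is only right Quillen, so it preserves stable equivalences between stably fibrant objects, and neither $\Sym^n_T\Sigma^{\infty}_T\bcX$ nor $\Sym^n_{g,T}\Sigma^{\infty}_T\bcX$ is shown to be an $\Omega$-spectrum. The paper's proof simply asserts this implication without comment, so the gap you flag is present there as well. Your contrapositive sketch does not close it either: to run that argument you would need to know that $\Sym^n_{g,T}\Sigma^{\infty}_T\bcX$ is (level-equivalent to) a suspension spectrum and that $\Sigma^{\infty}_T$ reflects $\AF^1$-weak equivalences, and neither fact is established in the paper. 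In short, your proof and the paper's proof are the same argument with the same unjustified step; your version is more candid about where the difficulty lies, but does not resolve it.
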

\begin{proof}
In virtue of Corollary \ref{sscor4} and Proposition \ref{pevvxs}, we have a commutative diagram 
\begin{equation}\label{eqst72}
\xymatrix@C=20ex@R=10ex{\Sym^{n}(\bcX)\ar[r]^{\vartheta^n(\bcX)}\ar[d]&\Sym^{n}_g(\bcX)\ar[d]\\
\Ev_0(\Sym^{n}_{T}(\Sigma^{\infty}_{T}\bcX))\ar[r]_{\Ev_0(\vartheta^n_T(\Sigma^{\infty}_T\bcX))}&\Ev_0(\Sym^{n}_{g,T}(\Sigma^{\infty}_T\bcX))}
\end{equation}
 where the vertical  morphisms are isomorphisms.
Since $\vartheta^n_T(\Sigma^{\infty}_T\bcX)$ is a stable $\AF^1$-weak equivalence, the morphism $\Ev_0(\vartheta^n_T(\Sigma^{\infty}_T\bcX))$ is an $\AF^1$-weak equivalence. Therefore, $\vartheta^n_{\bcX}$ is an $\AF^n$-weak equivalence.
\end{proof}

 \medskip
Let $n\in\NN$. For a symmetric sequence $\bcX=(\bcX_0,\bcX_1,\dots)$, we define
$\Sym_{\ell,T'}^n(\bcX)$ to be the symmetric sequence $\Big(\Sym_{g}^n(\bcX_0),\Sym_{g}^n(\bcX_1),\dots\Big)\,,$ 
and called it the $n$th fold {\em level geometric symmetric powers} of $\bcX$. From the definition, we have $$\Ev_i(\Sym_{\ell,T}^n(\bcX))=\Sym_{g}^n(\Ev_i(\bcX))$$ for all $i\in\NN$.  
 
 \begin{lemma}
 For any symmetric $T'$-spectrum $\bcX=(\bcX_0,\bcX_1,\dots)$, the $n$th level geometric symmetric power of $\bcX$ is a symmetric $T'$-spectrum. 
 \end{lemma}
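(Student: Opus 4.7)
The plan is to explicitly construct the $\Sigma_i$-actions and structural morphisms on $\Sym^n_{\ell,T'}(\bcX)=(\Sym^n_g(\bcX_0),\Sym^n_g(\bcX_1),\dots)$. The $\Sigma_i$-action on $\Sym^n_g(\bcX_i)$ is obtained simply by applying the functor $\Sym^n_g$ to the given $\Sigma_i$-action on $\bcX_i$. The nontrivial part is producing structural morphisms $\sigma'_i:\Sym^n_g(\bcX_i)\wedge T'\to\Sym^n_g(\bcX_{i+1})$ satisfying the required equivariance.

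First I would build a natural transformation of endofunctors of $\Deltaop\bcS_{\ast}$,
$$\mu_{\bcY}:\Sym^n_g(\bcY)\wedge T'\longrightarrow \Sym^n_g(\bcY\wedge T'),$$
by left Kan extension from the representable case. For $Y\in\bcC$, the diagonal $\delta:\PR^1\to (\PR^1)^{\times n}$ in $\bcC$ yields a morphism
$$\id_{Y^{\times n}}\times \delta: Y^{\times n}\times \PR^1\longrightarrow Y^{\times n}\times (\PR^1)^{\times n}\isom (Y\times \PR^1)^{\times n},$$
which is $\Sigma_n$-equivariant, the group permuting the $n$ copies on each side and acting trivially on the single $\PR^1$ on the left. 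Passing to $\Sigma_n$-quotients and then to $h^+$ gives a canonical morphism $\Sym^n_g(h^+(Y))\wedge T'\to \Sym^n_g(h^+(Y)\wedge T')$. By Lemma \ref{lerffs}, $\Sym^n_g$ is the left Kan extension from $\Deltaop\bcC$ along $h^+$, and since $(-)\wedge T'$ preserves colimits, taking colimits over $h^+(Y)\to\bcY$ extends this to a natural transformation $\mu_{\bcY}$ for arbitrary $\bcY$.

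Next I would set $\sigma'_i:=\Sym^n_g(\sigma_i)\circ\mu_{\bcX_i}$, where $\sigma_i:\bcX_i\wedge T'\to\bcX_{i+1}$ is the given structural map of $\bcX$. The $\Sigma_i$-action on $\bcX_i$ is trivial on $T'$, so naturality of $\mu$ in its argument gives $\Sigma_i$-equivariance of $\mu_{\bcX_i}$; combined with the $\Sigma_i$-equivariance of $\Sym^n_g(\sigma_i)$, which follows from the $\Sigma_i$-equivariance of $\sigma_i$ and functoriality of $\Sym^n_g$, this gives the $\Sigma_i$-equivariance of $\sigma'_i$. For the iterated composites, one checks that the $k$-fold iterate of $\mu$ is a $\Sigma_k$-equivariant morphism $\Sym^n_g(\bcX_m)\wedge (T')^{\wedge k}\to \Sym^n_g(\bcX_m\wedge (T')^{\wedge k})$; then composition with $\Sym^n_g$ applied to the iterated structural map $\bcX_m\wedge (T')^{\wedge k}\to\bcX_{m+k}$, which is $\Sigma_m\times\Sigma_k$-equivariant by hypothesis, yields the desired iterated structural morphism of $\Sym^n_{\ell,T'}(\bcX)$.

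The main point to verify is the $\Sigma_k$-equivariance of the $k$-fold iterate of $\mu$. By the Kan-extension construction this reduces to the representable case, where the iterate is the image under $h^+$ of the $\Sigma_n$-equivariant map
$$Y^{\times n}\times (\PR^1)^{\times k}\longrightarrow (Y\times (\PR^1)^{\times k})^{\times n},\quad (y_1,\dots,y_n,t_1,\dots,t_k)\longmapsto ((y_1,t_1,\dots,t_k),\dots,(y_n,t_1,\dots,t_k)),$$
followed by the quotient by $\Sigma_n$. This map is manifestly equivariant under the $\Sigma_k$-action permuting the $k$ copies of $\PR^1$ on both sides, so the remaining bookkeeping is routine.
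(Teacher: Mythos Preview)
Your proposal is correct and follows essentially the same approach as the paper: both construct the structural morphisms by using the diagonal $\PR^1\to(\PR^1)^{\times n}$ to produce, for each representable $Y$, a $\Sigma_n$-equivariant map $Y^{\times n}\times\PR^1\to(Y\times\PR^1)^{\times n}$, pass to the $\Sigma_n$-quotient, and then extend along the Kan extension defining $\Sym^n_g$. Your version is somewhat more explicit in first isolating the natural transformation $\mu_{\bcY}:\Sym^n_g(\bcY)\wedge T'\to\Sym^n_g(\bcY\wedge T')$ and in checking the $\Sigma_m\times\Sigma_k$-equivariance of the iterated composites, whereas the paper leaves these verifications implicit.
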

 \begin{proof}
Let us consider a symmetric $T$-spectrum $\bcX=(\bcX_0,\bcX_1,\bcX_2,\dots)$. For a $k$-scheme $U$ in $\bcC$, we define a morphism of $k$-schemes from $U^n\times\PR^1$ to $(U\times \PR^1)^n$ as the composite 
 $$\xymatrix{ U^n\times \PR^1\ar[rr]^{\id\times\Delta_{\PR^1}}&&U^n\times ({\PR^1})^{n}\ar[r]& (U\times \PR^1)^n} 
$$
 where $\Delta_{\PR^1}$ is the diagonal morphism and the second arrow is the canonical isomorphism. This morphism induces a morphism $\Sym^n(U)\times \PR^1$ to $\Sym^n(U\times\PR^1)$, which we denote by $\varphi_n$. Let us fix a natural number $i$. To construct a natural morphism from $\Sym_{g}^n(\bcX_i)\wedge\PR^1_+\into \Sym_{g}^n(\bcX_{i+1})$, it is enough to construct a morphism from $\Sym_{g}^n(\bcX_i)\times\PR^1\into \Sym_{g}^n(\bcX_{i+1})$ considered as unpointed sheaves. Any morphism $h_{U}\into \bcX_i$ induces a morphism $h_{U\times\PR^1}\into \bcX_i\times h_{\PR^1}$. Composing with the preceding morphism, we obtain a morphism $h_{U\times\PR^1}\into \bcX_{i+1}$. Hence, using the morphism $\varphi_n$, we obtain a morphism 
 $$\colim_{h_{U}\into \bcX_i} h_{\Sym^n(U)\times \PR^1}\longrightarrow\colim_{h_{V}\into \bcX_{i+1}} h_{\Sym^n(V)}\,.$$ This gives a morphism from $\Sym_{g}^n(\bcX_i)\times\PR^1$ to $ \Sym_{g}^n(\bcX_{i+1})$. Since this morphism was constructed in a natural way for all index $i$, we get structural morphisms for $\Sym_{\ell,T}^n(\bcX)$.  
 \end{proof}

 \begin{proposition}\label{pexsz}
For every $n\in\NN$, the functor $\Sym_{\ell,T'}^n$ preserves levelwise $\AF^1$-weak equivalences between symmetric $T'$-spectra whose slices are objects in $\Deltaop\bar{\bcC}_{+}$ (see Section \ref{Prelim}). 
\end{proposition}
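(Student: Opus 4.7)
The plan is to reduce the claim directly, via the definition of $\Sym_{\ell,T'}^n$, to Theorem \ref{radthind107} applied levelwise. Let $f:\bcX\into\bcY$ be a levelwise $\AF^1$-weak equivalence between symmetric $T'$-spectra whose slices lie in $\Deltaop\bar{\bcC}_{+}$. By definition
\[
\Sym_{\ell,T'}^n(f) \,=\, \bigl(\Sym^n_g(f_0),\,\Sym^n_g(f_1),\,\Sym^n_g(f_2),\dots\bigr),
\]
so to prove that $\Sym_{\ell,T'}^n(f)$ is a levelwise $\AF^1$-weak equivalence it suffices to show that each $\Sym^n_g(f_i)$ is an $\AF^1$-weak equivalence in $\Deltaop\bcS_{\ast}$.

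First, I would observe that by hypothesis $f_i:\bcX_i\into\bcY_i$ is an $\AF^1$-weak equivalence in $\Deltaop\bcS_{\ast}$ with both source and target in $\Deltaop\bar{\bcC}_{+}$. This is exactly the set-up of Theorem \ref{radthind107}, which asserts that the functor $\Sym^n_g$ preserves $\AF^1$-weak equivalences between simplicial sheaves that are termwise coproducts of representable pointed sheaves. Applying that theorem to each $f_i$, we conclude that $\Sym^n_g(f_i)$ is an $\AF^1$-weak equivalence for every $i\in\NN$.

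Finally, the preceding lemma guarantees that $\Sym_{\ell,T'}^n$ indeed lands in the category of symmetric $T'$-spectra (so the structural morphisms are compatible with $f$), which lets us assemble the levelwise equivalences $\Sym^n_g(f_i)$ into a morphism of symmetric $T'$-spectra. By definition of a levelwise $\AF^1$-weak equivalence in $\Sp_{T'}(k)$, this completes the proof. There is no real obstacle: the statement is essentially a bookkeeping lemma that transports the content of Theorem \ref{radthind107} from the unstable setting to the level structure of symmetric spectra, and the only thing to verify is that the hypothesis ``slices in $\Deltaop\bar{\bcC}_{+}$'' is exactly what is needed to trigger Voevodsky's theorem at each level.
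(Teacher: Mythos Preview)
Your proof is correct and follows essentially the same route as the paper: both reduce the claim to the identity $\Ev_i\bigl(\Sym_{\ell,T'}^n(f)\bigr)=\Sym^n_g\bigl(\Ev_i(f)\bigr)$ and then invoke Theorem~\ref{radthind107} at each level. The paper's version is simply more terse.
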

\begin{proof}
Let $f$ be a morphism of symmetric $T'$-spectra. From the definition we have an equality $\Ev_i(\Sym_{\ell,T'}^n(f))=\Sym_{g}^n(\Ev_i(f))$ for every $i\in\NN$. Hence the proposition follows from Theorem \ref{radthind107}. 
\end{proof}

\begin{remark}
{\em  The left Kan extension of the composite 
$$\Deltaop\bcC\stackrel{\Sym^n}{\longrightarrow} \Deltaop\bcC\stackrel{\Sigma^{\infty}_T}{\longrightarrow}\Sp_T(k)\,,$$
 along the suspension functor $\Sigma^{\infty}_T$, is not a good candidate to be a ``geometric" symmetric power on $\Sp_T(k)$, as this Kan extension is not isomorphic to the identity functor of $\Sp_T(k)$ when $n=1$.
 }
\end{remark}

\begin{remark}\label{Remm74}
{\em For a symmetric $T$-spectrum $\bcX$, the canonical morphism $\vartheta^n_{\bcX}$ from the categoric symmetric power $\Sym^{n}_T(\bcX)$ to geometric symmetric power $\Sym^{n}_g(\bcX)$ is not always a stable $\AF^1$-weak equivalence. For instance, if $\bcX$ is a constant simplicial sheaf whose value is the sheaf represented by the affine space $\AF^2$, then Proposition \ref{papxz301} and Corollary \ref{gstabp31} imply that the canonical morphism from $\Sym^{n}_{T}(\Sigma^{\infty}_{T}\AF^2_+)$ to $\Sym^{n}_{g,T}(\Sigma^{\infty}_T\AF^2_+)$ is not a stable $\AF^1$-weak equivalence.  
}
\end{remark}

\section{Lambda structures and motivic spectra}
\label{lammdctt}

The first result in this section is Theorem\ref{radmainths}, which asserts that, under the assumption that $\Sym^n_{g,T}$  preserves stable $\AF^1$-weak equivalences, the $n$th fold geometric symmetric powers $\Sym^n_{g,T}$, for $n\in\NN$, induce a $\lambda$-structure on the stable $\AF^1$-homotopy category. The second result is Theorem \ref{nattraths}, which establishes, under the same assumption, a morphism of $\lambda$-structures from categoric symmetric powers to geometric symmetric powers on the stable $\AF^1$-homotopy category.  
 
  \medskip

{\em\noindent K\"unneth towers.}--- 
Let $f:\bcX\into\bcY$ be a morphism of symmetric spectra in $\Sp_T(k)$. A filtration of $\Sym^n_{g,T}(f)$ of the form 
$$\Sym^n_{g,T}(\bcX)=\bcL^n_0(f)\into \bcL^n_1(f)\into\cdots\into \bcL^n_n(f)=\Sym^n_{g,T}(\bcY)$$
is called ({\em geometric}) {\em K\"unneth tower} of $\Sym^n_{g,T}(f)$, if for each index $1\leq i\leq n$, there is an isomorphism
$$\cone\Big(\bcL^n_{i-1}(f)\into \bcL^n_{i}(f)\Big)\isom \Sym^{n-i}_{g,T}(\bcX)\wedge \Sym^{i}_{g,T}(\bcX)$$ 
in $\SH_T(k)$.

A symmetric $T'$-spectrum (resp. $T$-spectrum) is called {\em representable}, if it is isomorphic to a $T'$-spectrum (resp. $T$-spectrum) of the form $H'(U)$ (resp. $H(U)$), where $U$ is an object on $\Sp_{\PR^1}(\Deltaop\bcC)$. Denote by $\Sp_{T'}(\Deltaop\bcC)^{\#}$ the full subcategory of $\Sp_{T'}(k)$ generated by directed colimits of representable $T'$-spectra. 
Similarly, we write $\Sp_{T}(\Deltaop\bcC)^{\#}$ for the full subcategory of $\Sp_{T}(k)$ generated by directed colimits of representable $T$-spectra.

Let us consider the category of symmetric sequences $(\Deltaop\bcC)^{\Sigma}$ and its coequalizer completion $(\Deltaop\bcC)^{\Sigma}_{\coeq}$, see page \pageref{lpagrttf}. We recall that the objects of $(\Deltaop\bcC)^{\Sigma}_{\coeq}$ are reflexive pairs in  $(\Deltaop\bcC)^{\Sigma}$. Denote by $\Phi$ the canonical functor from $(\Deltaop\bcC)^{\Sigma}$ to $(\Deltaop\bcC)^{\Sigma}_{\coeq}$.
The symmetric monoidal product on $(\Deltaop\bcC)^{\Sigma}$ extends to a symmetric  monoidal product on 
$(\Deltaop\bcC)^{\Sigma}_{\coeq}$, so that the universal functor $\Phi$ is monoidal. Since $\sym(\PR^1)$ is a commutative monoid on $(\Deltaop\bcC)^{\Sigma}$, the object $\Phi(\sym(\PR^1))$ is a commutative monoid in $(\Deltaop\bcC)^{\Sigma}_{\coeq}$. 
We denote by $\Sp_{\PR^1}(\Deltaop\bcC)_{\coeq}$ the category of left modules in $(\Deltaop\bcC)^{\Sigma}$ over $\Phi(\sym(\PR^1))$. The functor $\Phi$
induces a restriction functor $\Sp_{\PR^1}(\Deltaop\bcC)\into\Sp_{\PR^1}(\Deltaop\bcC)_{\coeq}$, which will be denoted by the same letter  $\Phi$, if no confusion arises.
For any two left modules $X$ and $Y$ over $\Phi(\sym(\PR^1))$, we define the product $X\wedge Y$ as the coequalizer of the canonical diagram 
$$\xymatrix{X\tensor \Phi(\sym(\PR^1))\tensor Y\ar@<0.7ex>[r]^-{}\ar@<-0.7ex>[r]_-{}& X\tensor Y}\,.$$
This product is symmetric monoidal on $\Sp_{\PR^1}(\Deltaop\bcC)_{\coeq}$.  
Thus, the restriction functor $\Phi:\Sp_{\PR^1}(\Deltaop\bcC)\into\Sp_{\PR^1}(\Deltaop\bcC)_{\coeq}$ is monoidal.

\begin{remark}
{\em If $\bcC$ has reflexive coequalizers, then   
$(\Deltaop\bcC)^{\Sigma}_{\coeq}$ is isomorphic to the category $(\Deltaop\bcC)^{\Sigma}$, and $\Sp_{\PR^1}(\Deltaop\bcC)_{\coeq}$ is isomorphic to $\Sp_{\PR^1}(\Deltaop\bcC)$. 
}
\end{remark}

By the universal property of $(\Deltaop\bcC)^{\Sigma}_{\coeq}$, the following commutative square
$$\xymatrix@C=20ex@R=10ex{(\Deltaop\bcC)^{\Sigma}\ar[r]^{(h^+)^{\Sigma}}&(\Delta^{\op}\bcS_{\ast})^{\Sigma}\\
\Sp_{\PR^1}(\Deltaop\bcC)\ar@{^(->}[u]\ar[r]_-{H'}&\Sp_{T'}(k)\ar@{^(->}[u]}$$
where $(h^+)^{\Sigma}$ is the canonical functor induced by $h^+$,
gives a commutative diagram 
$$\xymatrix@C=20ex@R=5ex{(\Deltaop\bcC)^{\Sigma}\ar[rr]^{}\ar[rd]_{}&&(\Delta^{\op}\bcS_{\ast})^{\Sigma}\ar[dd]\\
&(\Deltaop\bcC)^{\Sigma}_{\coeq}\ar[ru]&\\
\Sp_{\PR^1}(\Deltaop\bcC)\ar[rr]_(.4){H'}|(.51)[r]\hole\ar@{^(->}[uu]\ar[rd]_{\Phi}&&\Sp_{T'}(k)\ar@{^(->}[uu]
\\&\Sp_{\PR^1}(\Deltaop\bcC)_{\coeq}\ar@{^(->}[uu]\ar[ru]_{\bar{H'}}&}$$

\begin{lemma}\label{lemdkaa}
For any object $U$ in $\Sp_{\PR^1}(\Deltaop\bcC)$, we have an isomorphism 
$$\Sym^n_{g,T'}H'(U)\isom \bar{H'}\big(\Sym^n_{\PR^1}\Phi(U)\big)\,.$$
\end{lemma}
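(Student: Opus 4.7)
The plan is to identify both sides of the claimed isomorphism as $\bar{H'}$ applied to the same symmetric-power construction computed inside the coequalizer completion $\Sp_{\PR^1}(\Deltaop\bcC)_{\coeq}$.

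First, I would establish the following auxiliary fact: for any object $V$ of $\Sp_{\PR^1}(\Deltaop\bcC)$, the colimit of the diagram \eqref{equu32} (with $V$ in place of $U$) is canonically isomorphic to $\bar{H'}(\Sym^n_{\PR^1}\Phi(V))$. Indeed, the diagram \eqref{equu32} is precisely the image under $H' = \bar{H'}\circ\Phi$ of the diagram in $\Sp_{\PR^1}(\Deltaop\bcC)_{\coeq}$ whose colimit computes $\Phi(V)^{\wedge n}/\Sigma_n = \Sym^n_{\PR^1}\Phi(V)$: recall that the smash product of $\Phi(\sym(\PR^1))$-modules is constructed as a coequalizer, so $\Phi(V)^{\wedge n}$ is obtained as an iterated coequalizer of the form displayed in \eqref{equu32}, and quotienting by $\Sigma_n$ yields the symmetric power. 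By the universal property of the coequalizer completion (which is by construction the free finite-colimit completion of $(\Deltaop\bcC)^{\Sigma}$), the extension $\bar{H'}$ preserves finite colimits and thus transports this colimit into $\Sp_{T'}(k)$.

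Next, I would unpack $\Sym^n_{g,T'}H'(U)$, defined by analogy with $\Sym^n_{g,T}$, as the colimit over the comma category $(H' \downarrow H'(U))$ of the functor $F_{H'(U)}$, which by the previous paragraph takes the object $(V, \alpha : H'(V) \to H'(U))$ to $\bar{H'}(\Sym^n_{\PR^1}\Phi(V))$. The distinguished object $(U, \id_{H'(U)})$ in this comma category then yields a canonical morphism
$$\bar{H'}(\Sym^n_{\PR^1}\Phi(U)) \longrightarrow \Sym^n_{g,T'}H'(U).$$

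To produce an inverse, I would argue that for every object $(V, \alpha)$ of the comma category, the morphism $\alpha$ factors uniquely as $\bar{H'}(\beta)$ for some $\beta : \Phi(V) \to \Phi(U)$ in $\Sp_{\PR^1}(\Deltaop\bcC)_{\coeq}$. Applying $\bar{H'}\circ \Sym^n_{\PR^1}$ to $\beta$ then provides a cocone morphism from $\bar{H'}(\Sym^n_{\PR^1}\Phi(V))$ into $\bar{H'}(\Sym^n_{\PR^1}\Phi(U))$, and the universal property of the colimit gives the required inverse, whence the isomorphism.

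The hard part is the last step: justifying that every morphism $\alpha : H'(V) \to H'(U)$ lifts uniquely to $\Phi(V) \to \Phi(U)$ in the coequalizer completion. This amounts to a full-faithfulness statement for $\bar{H'}$ on objects in the image of $\Phi$, which is a Yoneda-style fact for symmetric $T'$-spectra over the admissible category $\bcC$. When $\bcC$ admits reflexive coequalizers, $\bar{H'}$ reduces to $H'$ and the claim becomes the full-faithfulness of $(h^+)^{\Sigma}$ restricted to module categories; the general case reduces to this by the universal property of $(\Deltaop\bcC)^{\Sigma}_{\coeq}$ together with the Yoneda lemma.
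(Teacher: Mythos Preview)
Your proposal is correct and follows the same approach as the paper, which simply says ``it follows from the definitions.'' You are unwinding those definitions explicitly.

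However, you make unnecessarily heavy weather of the ``hard part.'' The full-faithfulness you need is not a subtle statement about $\bar{H'}$ on the coequalizer completion: it is just the full-faithfulness of $H'$ itself, which holds because $h^+$ is fully faithful (Yoneda) and $H'$ is $h^+$ applied levelwise. This is essentially the content of Lemma~\ref{tspecgqs}(a) in the paper. Once you know $H'$ is fully faithful, the object $(U,\id_{H'(U)})$ is \emph{terminal} in the comma category $(H'\downarrow H'(U))$, so the colimit defining $\Sym^n_{g,T'}H'(U)$ collapses to the single value $F_{H'(U)}(U,\id)$, which is the colimit of the diagram~\eqref{equu32}. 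Your first paragraph already identifies this colimit with $\bar{H'}(\Sym^n_{\PR^1}\Phi(U))$, and you are done. There is no need to construct an explicit inverse via cocones, and no need for the detour through ``when $\bcC$ admits reflexive coequalizers.''
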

\begin{proof}
It follows from the definitions.
\end{proof}

A morphism $\varphi:X\into Y$ in $\Sp_{\PR^1}(\Delta^{\op}\bcC)$ is called {\em level-termwise coprojection}, if each level $\varphi_n$ is a termwise coprojection. 

\begin{proposition}\label{profllqo}
For every $n\in\NN$, the $n$th fold geometric symmetric symmetric power of a morphism of representable $T'$-spectra (resp. $T$-spectra), induced by a level-termwise coprojection in $\Sp_{\PR^1}(\Delta^{\op}\bcC)$,  has a canonical K\"unneth tower.
\end{proposition}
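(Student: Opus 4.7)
The plan is to imitate the proof of Proposition \ref{exvvg4} in the spectral setting by passing through the coequalizer completion. Let $\varphi:X\into Y$ be a level-termwise coprojection in $\Sp_{\PR^1}(\Delta^{\op}\bcC)$. Applying the monoidal restriction functor
$$\Phi:\Sp_{\PR^1}(\Deltaop\bcC)\into\Sp_{\PR^1}(\Deltaop\bcC)_{\coeq}$$
we get a morphism $\Phi(\varphi):\Phi(X)\into\Phi(Y)$ in a symmetric monoidal category which, by construction of the coequalizer completion, has all finite colimits.

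Because $\Sp_{\PR^1}(\Deltaop\bcC)_{\coeq}$ is symmetric monoidal with finite colimits, the general filtration construction recalled in Example \ref{paprem51} (going back to \cite{GoGu}) produces a canonical filtration
$$\Sym^n_{\PR^1}(\Phi(X))=\tilde{\Box}^n_0\longrightarrow \tilde{\Box}^n_1\longrightarrow\cdots\longrightarrow \tilde{\Box}^n_n=\Sym^n_{\PR^1}(\Phi(Y))\,,$$
together with isomorphisms
$$\tilde{\Box}^n_i\big/\tilde{\Box}^n_{i-1}\isom \Sym^{n-i}_{\PR^1}(\Phi(X))\wedge \Sym^i_{\PR^1}\bigl(\Phi(Y)/\Phi(X)\bigr)\,,$$
for $1\leq i\leq n$. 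Here one uses that $\Phi$ being monoidal and preserving finite coproducts ensures that $\Phi(\varphi)$ remains a level-termwise coprojection, so that the filtration of the $n$-th box power is well defined in the monoidal completion.

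Next, I would push this tower forward along $\bar{H'}$. By the universal property of the coequalizer completion, $\bar{H'}:\Sp_{\PR^1}(\Deltaop\bcC)_{\coeq}\into\Sp_{T'}(k)$ preserves finite colimits, and it is symmetric monoidal by construction. Combining this with Lemma \ref{lemdkaa}, which identifies $\bar{H'}(\Sym^n_{\PR^1}\Phi(-))$ with $\Sym^n_{g,T'}H'(-)$, the image under $\bar{H'}$ of the above filtration becomes a sequence
$$\Sym^n_{g,T'}H'(X)=\bar{H'}(\tilde{\Box}^n_0)\longrightarrow\cdots\longrightarrow\bar{H'}(\tilde{\Box}^n_n)=\Sym^n_{g,T'}H'(Y)\,,$$
with successive cofibres isomorphic to $\Sym^{n-i}_{g,T'}H'(X)\wedge \Sym^i_{g,T'}H'(Y/X)$. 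This is the desired K\"unneth tower for $\Sym^n_{g,T'}H'(\varphi)$, and its construction is manifestly functorial in $\varphi$. For the $T$-spectrum version, one composes with the symmetric monoidal left adjoint $(-)\wedge_{\sym(T')}\sym(T):\Sp_{T'}(k)\into\Sp_T(k)$, which preserves finite colimits and monoidal products, so that the image of the preceding tower is a K\"unneth tower of $\Sym^n_{g,T}H(\varphi)$.

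The main obstacle I anticipate is verifying that $\bar{H'}$ genuinely preserves the particular colimits used to build the tower (the iterated pushouts defining each $\tilde{\Box}^n_i$, together with the quotients by symmetric group actions) in a way compatible with the monoidal structure, and that the cofibre identifications survive base change to $T$-spectra. This reduces, however, to the universal property of $\Sp_{\PR^1}(\Deltaop\bcC)_{\coeq}$ together with the fact that $(-)\wedge_{\sym(T')}\sym(T)$ is a strong symmetric monoidal left adjoint, so no essentially new input is required.
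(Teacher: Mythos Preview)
Your proposal is correct and follows essentially the same route as the paper: pass to the coequalizer completion $\Sp_{\PR^1}(\Deltaop\bcC)_{\coeq}$, build the $\tilde{\Box}^n_i$-filtration there via \cite{GoGu}, push forward along $\bar{H'}$ using Lemma \ref{lemdkaa} and the fact that $\bar{H'}$ preserves pushouts and monoidal products, and then base-change along $(-)\wedge_{\sym(T')}\sym(T)$ for the $T$-spectrum case. The paper's proof is slightly terser but uses exactly these ingredients.
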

\begin{proof}
Let $\varphi:U\into V$ be a level-termwise coprojection in $\Sp_{\PR^1}(\Delta^{\op}\bcC)$. By \cite{GoGu}, we obtain a sequence of level-termwise coprojections
$$\Sym^n_{\PR^1}(\Phi(U))=\tilde{\Box}^n_0(\Phi(\varphi))\into \tilde{\Box}^n_1(\Phi(\varphi))\into\cdots\into \tilde{\Box}^n_n(\Phi(\varphi))=\Sym^n_{\PR^1}(\Phi(V))\,.$$ 
such that $\tilde{\Box}^n_i(\Phi(\varphi))/\tilde{\Box}^n_{i-1}(\Phi(\varphi))$ is isomorphic to $ \Sym^{n-i}_{\PR^1}(\Phi(U))\wedge \Sym^i_{\PR^1}(\Phi(V)/\Phi(U))$. In fact, this sequence exists because $\Sp_{\PR^1}(\Delta^{\op}\bcC)_{\coeq}$ has finite colimits.
Then, by Lemma \ref{lemdkaa}, the above sequence induces a sequence    
$$\bar{H'}\Big(\tilde{\Box}^n_0(\Phi(\varphi))\Big)\into \bar{H'}\Big(\tilde{\Box}^n_1(\Phi(\varphi))\Big)\into\cdots\into \bar{H'}\Big(\tilde{\Box}^n_n(\Phi(\varphi))\Big)\,.$$ 
Since $\bar{H'}$ respects pushouts and monoidal product, the above sequence is a K\"unneth tower of the morphism $\Sym^n_{g,T'}(H'(\varphi))$. Finally, since $H(\varphi)=H'(\varphi)\wedge_{\sym(T')}\sym(T)$, the K\"unneth tower of $\Sym^n_{g,T'}(H'(\varphi))$ induces a K\"unneth tower of $\Sym^n_{g,T}(H(\varphi))$.
\end{proof}

\medskip

We set $I_{T,\proj}:=\bigcup_{n\geq 0}F_n(I_{\proj})$, where $I_{\proj}$ is the set of   morphisms defined in \eqref{lagssqq}, see page \pageref{lagssqq}.
Our next goal is to study K\"unneth towers associated to relative $I_{T, \proj}$-cell complexes, see Proposition \ref{gsjps5s}.

\begin{lemma}\label{tspecgqs}
One has the following assertions:
\begin{enumerate}
\item[ ($a$)] A morphism of representable $T'$-spectra is isomorphic to the image of a morphism of $\PR^1$-spectra through the functor $H'$. 
\item[($b$)] Let 
\begin{equation}
\xymatrix@C=10ex@R=10ex{\bcA\ar[d]\ar[r]&\bcX\ar[d]\\
\bcB\ar[r]&\bcY
}
\end{equation}
be a cocartesian square of $T'$-spectra, such that the morphism $\bcA\into\bcB$ is the image of a level-termwise coprojection in $\Sp_{\PR^1}(\Deltaop\bcC)$ through the functor $H'$. Then, if $\bcX$ is a representable $T'$-spectra, then so is $\bcY$. 
\item[ ($c$)] Suppose that $\bcA$ and $\bcB$ are compact objects. If $\bcX$ is in  $\Sp_{T'}(\Deltaop\bcC)^{\#}$, then so is $\bcY$. Moreover, if $\bcX$ is a directed colimit of representable $T'$-spectra that are compact, then so is $\bcY$.
\end{enumerate}
\end{lemma}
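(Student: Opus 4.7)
The plan is to adapt, at the level of symmetric spectra, the strategy of Lemma \ref{tspecgq} by replacing the functor $h^+$ with the functor $H' : \Sp_{\PR^1}(\Deltaop\bcC) \to \Sp_{T'}(k)$, using that both pushouts of $\PR^1$-spectra and of $T'$-spectra are computed levelwise.

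For assertion ($a$), given a morphism $f : H'(U) \to H'(V)$ of representable $T'$-spectra, I would build a morphism $\varphi : U \to V$ in $\Sp_{\PR^1}(\Deltaop\bcC)$ with $H'(\varphi) \isom f$. Level by level, each $f_n$ is a $\Sigma_n$-equivariant pointed morphism $h^+(U_n) \to h^+(V_n)$; by Yoneda's lemma applied termwise to the underlying simplicial objects, and keeping track of the basepoints, such a morphism is the image under $h^+$ of a unique $\Sigma_n$-equivariant morphism $\varphi_n : U_n \to V_n$ in $\Deltaop\bcC$. The identification $h^+(\PR^1) = T'$ and the fact that $h^+$ preserves finite products let the compatibility of $f$ with the $T'$-bonding maps translate into compatibility of $(\varphi_n)_n$ with the $\PR^1$-bonding maps, producing the desired morphism of $\PR^1$-spectra.

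For assertion ($b$), by ($a$) the morphism $\bcA \to \bcX$ is $H'(\psi)$ for some $\psi : A \to X$, and by hypothesis $\bcA \to \bcB$ is $H'(\varphi)$ for a level-termwise coprojection $\varphi : A \to B$. I would form the pushout $Y := B \amalg_A X$ in $\Sp_{\PR^1}(\Deltaop\bcC)$, which exists and is computed levelwise and termwise since, by Lemma \ref{radtermwise}, coprojections in $\bcC$ are stable under pushouts. The functor $H'$ carries this pushout to the corresponding pushout in $\Sp_{T'}(k)$: at each level, a pushout along a coprojection $A_n \to A_n \amalg Z_n$ is just $B_n \to B_n \amalg Z_n$, and $h^+$ turns the disjoint union of schemes into the coproduct of representable sheaves in $\bcS$. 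Thus $\bcY \isom H'(Y)$ is representable, and $\bcB \to \bcY$ is the image of the level-termwise coprojection $B \to Y$ under $H'$.

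For assertion ($c$), I would follow the template of Lemma \ref{tspecgq}($b$). Writing $\bcX = \colim_{d \in D} \bcX_d$ with each $\bcX_d$ representable, compactness of $\bcA$ lets the morphism $\bcA \to \bcX$ factor through some $\bcX_e$. For each $d \geq e$, form the pushout $\bcY_d := \bcB \amalg_{\bcA} \bcX_d$; by ($b$), each $\bcY_d$ is representable. Passing to the colimit gives $\bcY \isom \colim_{d \geq e} \bcY_d$, realizing $\bcY$ as a directed colimit of representable $T'$-spectra. If moreover the $\bcX_d$ are compact, then the pushouts of the compact objects $\bcA, \bcB, \bcX_d$ remain compact, establishing the final clause. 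The main obstacle I expect lies in ($a$): rigorously verifying that a pointed $\Sigma_n$-equivariant morphism $h^+(U_n) \to h^+(V_n)$ lifts uniquely to an unpointed $\Sigma_n$-equivariant morphism $U_n \to V_n$, and that these lifts are compatible with the $\PR^1$-structure maps. This hinges on careful bookkeeping of the basepoint together with the essential fullness of the Yoneda embedding on representable sheaves.
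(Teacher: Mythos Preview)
Your proposal is correct and follows essentially the same route as the paper: part ($a$) is a levelwise Yoneda argument (the paper merely calls it ``a termwise verification''), part ($b$) lifts the second leg via ($a$), forms the pushout in $\Sp_{\PR^1}(\Deltaop\bcC)$ along the level-termwise coprojection, and uses that $H'$ preserves such pushouts, and part ($c$) is the compactness-plus-directed-colimit argument of Lemma~\ref{tspecgq}($b$) transported to spectra. Your identification of the delicate step---lifting a pointed $\Sigma_n$-equivariant morphism $h^+(U_n)\to h^+(V_n)$ back to $\Deltaop\bcC$ and checking compatibility with the $\PR^1$-bonding maps---is apt; the paper suppresses this entirely, and your more explicit treatment is an improvement in clarity.
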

\begin{proof}
($a$). It is a termwise verification.

($b$). Let us write $\bcA=H'(A)$, $\bcB=H'(B)$ and $\bcX=H'(X)$, where $A$, $B$ and $X$ are objects of $\Sp_{\PR^1}(\Deltaop\bcC)$.  Suppose that $\bcA\into\bcB$ is a morphism of the form $H'(\varphi)$, where $\varphi:A\into B$ is a level-termwise coprojection in $\Sp_{\PR^1}(\Deltaop\bcC)$. By item ($a$), the morphism $\bcA\into \bcX$ is is canonically isomorphic to a morphism of the form $H'(\psi)$, where $\psi:A\into X$ is a morphism in $\Sp_{\PR^1}(\Deltaop\bcC)$. 
Since $\varphi$ is a level-termwise coprojection, there exists an object $Y$ in $\Sp_{\PR^1}(\Deltaop\bcC)$ such that there is a cocartesian square
$$\xymatrix@C=10ex@R=10ex{A\ar[d]_{\varphi}\ar[r]^{\psi}&X\ar[d]\\
B\ar[r]&Y
}$$
Hence, $\bcY$ is isomorphic to $H'(Y)$. This proves ($b$). 

($c$). It is immediate from item ($b$) and the fact that finite colimits of compact objects are compact.
\end{proof}

\begin{lemma}\label{lemff73s}
Every $I_{T, \proj}$-cell complex of $\Sp_{T}(k)$ is in $\Sp_{T}(\Deltaop\bcC)^{\#}$. 
\end{lemma}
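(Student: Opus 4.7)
The plan is to mirror the transfinite induction of Lemma \ref{lemff73}, substituting Lemma \ref{tspecgqs}(c) for its unstable counterpart Lemma \ref{tspecgq}(b). The first preparatory step is to verify that every element of $I_{T,\proj}$ has the shape required by that lemma. Each such element is of the form $F_n(\varphi)$ for some $\varphi : \partial\Delta_U[m]_+ \to \Delta_U[m]_+$ in $I_{\proj}$, and by Example \ref{radxcoprjcxx} we have $\varphi = h^+(\varphi')$ for a termwise coprojection $\varphi'$ in $\Deltaop\bcC$. Using the compatibility $F_n \circ h^+ = H \circ F_n$ (where the right-hand $F_n$ is the suspension functor into $\Sp_{\PR^1}(\Deltaop\bcC)$), each generating cofibration in $I_{T,\proj}$ is therefore the image under $H$ of a level-termwise coprojection between representable $\PR^1$-spectra whose slices are finite in the sense of Lemma \ref{radfiniteob}; hence both source and target are compact in $\Sp_T(k)$. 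Since coproducts of termwise coprojections are again termwise coprojections (Lemma \ref{radtermwise}), the same property persists under any small coproduct of elements of $I_{T,\proj}$.

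With this in hand, I would run the transfinite induction. A relative $I_{T,\proj}$-cell complex is built as a sequence
\begin{equation*}
\bcX_0 \to \bcX_1 \to \cdots \to \bcX_\beta \to \cdots
\end{equation*}
starting from the initial object (which is trivially a compact representable $T$-spectrum) and with each successor step $\bcX_\beta \to \bcX_{\beta+1}$ a pushout of a coproduct of elements of $I_{T,\proj}$. Assume inductively that $\bcX_\beta$ is a directed colimit of compact representable $T$-spectra. The defining pushout square has its left vertical arrow equal to $H$ of a level-termwise coprojection between compact representable $\PR^1$-spectra by the previous paragraph, so Lemma \ref{tspecgqs}(c) applies and shows that $\bcX_{\beta+1}$ is again a directed colimit of compact representable $T$-spectra. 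At limit ordinals the hypothesis is preserved, as a directed colimit of directed colimits of compact representables is of the same form.

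The main obstacle will be the bookkeeping in the first step: making precise that each generating cofibration of $\Sp_T(k)$, and every coproduct thereof, does come from a level-termwise coprojection of compact representable $\PR^1$-spectra via $H$, so that hypothesis (c) of Lemma \ref{tspecgqs} is literally applicable at each successor stage. Once this identification is fully justified, the transfinite induction is routine and yields $\bcX_\beta \in \Sp_T(\Deltaop\bcC)^{\#}$ for every $\beta$, proving the lemma.
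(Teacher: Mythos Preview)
Your overall strategy matches the paper's: transfinite induction using Lemma~\ref{tspecgqs}(c), fed by the observation that the generating cofibrations arise from level-termwise coprojections of compact representable $\PR^1$-spectra. However, there is a mismatch you have not addressed. Lemma~\ref{tspecgqs} is stated entirely for $T'$-spectra and the functor $H'$, not for $T$-spectra and $H$: the pushout square in part~(c) lives in $\Sp_{T'}(k)$, the hypothesis is that the left arrow is $H'$ of a level-termwise coprojection, and the conclusion is that $\bcY\in\Sp_{T'}(\Deltaop\bcC)^{\#}$. Your induction step invokes this lemma for a pushout in $\Sp_T(k)$ whose left arrow is $H$ of such a coprojection, so as written the citation does not literally apply.

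The paper handles this by an explicit reduction step you omit: it first reduces the statement to showing that every $I_{T',\proj}$-cell complex lies in $\Sp_{T'}(\Deltaop\bcC)^{\#}$, and only then runs the transfinite induction with Lemma~\ref{tspecgqs}(c). The reduction works because the base-change functor $(-)\wedge_{\sym(T')}\sym(T)$ is a left adjoint, hence sends $I_{T',\proj}$-cell complexes to $I_{T,\proj}$-cell complexes and sends directed colimits of representable $T'$-spectra to directed colimits of representable $T$-spectra (by the very definition $H=(-)\wedge_{\sym(T')}\sym(T)\circ H'$). Your argument can be repaired either by inserting this reduction, or by stating and proving the $T$-spectrum analogue of Lemma~\ref{tspecgqs}(c); but one of the two is needed.
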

\begin{proof}
We reduce the problem to show that every $I_{T', \proj}$-cell complex of $\Sp_{T'}(k)$ is in $\Sp_{T'}(\Deltaop\bcC)^{\#}$. 
Since an element of $I_{T',\proj}$-$\cell$ is a transfinite composition of pushouts of element of $I_{T',\proj}$, this follows by transfinite induction in view of Lemma \ref{tspecgqs} ($c$). 
\end{proof}

\begin{proposition}\label{gsjps5s}
Let $f:\bcX\into\bcY$ be a morphism in $I_{T,\proj}$-$\cell$, where $\bcX$ is an $I_{T,\proj}$-cell complex. Then, for each $n\in\NN$, $\Sym^n_{g,T}(f)$ has a functorial K\"unneth tower.
\end{proposition}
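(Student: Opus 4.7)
The plan is to mimic the strategy used for the unstable analogue in Proposition \ref{gsjpr5}, adapting each step to the stable setting with the help of Lemma \ref{lemff73s} and Proposition \ref{profllqo}.

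First, by Lemma \ref{lemff73s}, both $\bcX$ and $\bcY$ belong to $\Sp_T(\Deltaop\bcC)^{\#}$, so each is a directed colimit of compact representable symmetric $T$-spectra. Using Proposition 6.1.13 of \cite{KS06} (as in the proof of Proposition \ref{gsjpr5}), I would write $f$ as the directed colimit $f=\colim_{d\in D}f_d$ of a directed diagram of morphisms of representable $T$-spectra. Here I have to be careful: in order to apply Proposition \ref{profllqo}, I need each $f_d$ to be induced by a level-termwise coprojection in $\Sp_{\PR^1}(\Delta^{\op}\bcC)$. This is where Lemma \ref{tspecgqs}~($b$) plays the key role: since $f$ is built by transfinite composition of pushouts of elements of $I_{T,\proj}$, the cellular structure of $f$ produces, by transfinite induction, a presentation of $f$ as a directed colimit of morphisms of the form $H(\varphi_d)$ with $\varphi_d$ a level-termwise coprojection in $\Sp_{\PR^1}(\Delta^{\op}\bcC)$.

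Next, for each $d\in D$, Proposition \ref{profllqo} supplies a canonical K\"unneth tower
\[
\Sym^n_{g,T}(\bcX_d)=\bcL^n_0(f_d)\into \bcL^n_1(f_d)\into\cdots\into \bcL^n_n(f_d)=\Sym^n_{g,T}(\bcY_d),
\]
with cofibres $\bcL^n_i(f_d)/\bcL^n_{i-1}(f_d)\isom \Sym^{n-i}_{g,T}(\bcX_d)\wedge \Sym^{i}_{g,T}(\bcY_d/\bcX_d)$. I would then define, for each $0\leq i\leq n$,
\[
\bcL^n_i(f):=\colim_{d\in D}\bcL^n_i(f_d),
\]
which yields a sequence $\bcL^n_0(f)\to \bcL^n_1(f)\to\cdots\to \bcL^n_n(f)$ in $\Sp_T(k)$. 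Since $\Sym^n_{g,T}$ is defined via a left Kan extension and therefore commutes with directed colimits, the outer terms are $\Sym^n_{g,T}(\bcX)$ and $\Sym^n_{g,T}(\bcY)$, and the composite is $\Sym^n_{g,T}(f)$.

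Finally, I have to identify the cofibres. Because directed colimits commute with finite colimits (in particular with the formation of cones), and because $\wedge$ preserves directed colimits in each variable, taking $\colim_{d\in D}$ of the fibre-cofibre isomorphisms above gives
\[
\bcL^n_i(f)/\bcL^n_{i-1}(f)\isom \Sym^{n-i}_{g,T}(\bcX)\wedge \Sym^{i}_{g,T}(\bcY/\bcX),
\]
as required. Functoriality of the tower in $f$ follows from the naturality of Proposition \ref{profllqo} and the functoriality of the chosen directed presentations.

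The main obstacle, as indicated above, is the first step: showing that any relative $I_{T,\proj}$-cell complex with cell complex source can be presented as a directed colimit of morphisms of representable $T$-spectra that arise from level-termwise coprojections in $\Sp_{\PR^1}(\Delta^{\op}\bcC)$. Once this presentation is secured via the transfinite induction built into Lemma \ref{tspecgqs}~($b$)--($c$), the remainder of the argument is a direct transcription of the unstable proof.
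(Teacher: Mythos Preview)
Your proposal is correct and follows essentially the same approach as the paper: write $f$ as a directed colimit of morphisms of representable $T$-spectra via Lemma~\ref{lemff73s} and \cite[Prop.~6.1.13]{KS06}, apply Proposition~\ref{profllqo} to each $f_d$, and pass to the colimit. In fact you are more careful than the paper on one point: the paper's proof simply invokes Proposition~\ref{profllqo} for the $f_d$ without verifying that each $f_d$ comes from a level-termwise coprojection in $\Sp_{\PR^1}(\Delta^{\op}\bcC)$, whereas you explicitly flag this and indicate how Lemma~\ref{tspecgqs}~($b$)--($c$) supplies the needed presentation.
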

\begin{proof}
By Lemma \ref{lemff73s} and Proposition 6.1.13 of \cite{KS06}, we deduce that the morphism $f$ can be expressed as the colimit of a directed diagram $\{f_d\}_{d\in D}$ of morphisms of representable $T$-spectra. Hence, by Proposition \ref{profllqo}, the $n$th fold geometric symmetric power  $\Sym^n_{g,T}(f_{d})$ has a canonical K\"unneth tower 
\begin{equation}\label{eqpp4231s}\xymatrix{\bcL^n_0(f_{d})\ar[r]&\bcL^n_1(f_{d})\ar[r]&\cdots \ar[r]&\bcL^n_n(f_{d})}\,.
\end{equation}
For each index $0\leq i\leq n$, we define 
$$\bcL^n_i(f):=\colim_{d\in D}\bcL^n_i(f_{d})\,.$$ 
 Then, we get a sequence 
\begin{equation}\label{eqpp4241s}
\bcL^n_0(f)\longrightarrow \bcL^n_1(f)\longrightarrow\cdots\longrightarrow \bcL^n_n(f)\,. 
\end{equation}
which is a K\"unneth tower of $\Sym^n_{g,T}(f)$. 
\end{proof}

\begin{corollary}\label{radcorsmlm}
There exists a functorial factorization $(\alpha,\beta)$ on $\Sp_T(k)$ such that for every morphism $f$ is factored as $f=\beta(f)\circ\alpha(f)$, where $\alpha(f)$ is in $I_{T,\proj}$-$\cell$ and $\beta(f)$ is in $I_{T,\proj}$-$\inj$.
\end{corollary}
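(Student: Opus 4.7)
The plan is to mimic the argument of Corollary \ref{radcorsmll}, replacing the unstable category $\Delta^{\op}\bcS_{\ast}$ by the stable category $\Sp_T(k)$, and to invoke the small object argument for the set $I_{T,\proj}$. The only nontrivial input needed is a smallness estimate for the domains and codomains of the generating morphisms in $I_{T,\proj}$.

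First I would record that each $F_n$ is left adjoint to the evaluation functor $\Ev_n:\Sp_T(k)\to\Deltaop\bcS_{\ast}$, and that $\Ev_n$ preserves filtered (in particular transfinite) colimits, since colimits in $\Sp_T(k)$ are computed level-wise. Consequently, for any simplicial sheaf $\bcA$ and any $n\geq 0$, a $\lambda$-sequence $\bcX_0\to\bcX_1\to\cdots$ in $\Sp_T(k)$ induces a natural bijection
\[
\colim_{\beta<\lambda}\Hom_{\Sp_T(k)}\bigl(F_n(\bcA),\bcX_\beta\bigr)\isom \Hom_{\Sp_T(k)}\Bigl(F_n(\bcA),\colim_{\beta<\lambda}\bcX_\beta\Bigr),
\]
whenever $\bcA$ is $\kappa$-small relative to all morphisms of $\Deltaop\bcS_{\ast}$ and $\lambda$ is $\kappa$-filtered.

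Next I would use Lemma \ref{radfiniteob}, which asserts that for every $U\in\bcC$ and every finite simplicial set $K$, the object $\Delta_U[0]\times K$ is finite (hence $\kappa$-small for some finite cardinal $\kappa$) relative to $\Deltaop\bcS_{\ast}$. Applying this to the pointed simplicial sheaves $\partial\Delta_U[n]_+$ and $\Delta_U[n]_+$, which are domains and codomains of $I_{\proj}$, I conclude by the adjunction above that for every $n\geq 0$ the domains and codomains of $F_n(I_{\proj})$, and therefore of the whole set $I_{T,\proj}$, are finite relative to $\Sp_T(k)$.

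With this smallness in hand, the small object argument (see \cite[Thm.~2.1.14]{Hovey0}) produces a functorial factorization $(\alpha,\beta)$ on $\Sp_T(k)$ such that, for every morphism $f$, the map $\alpha(f)$ is a countable transfinite composition of pushouts of coproducts of elements of $I_{T,\proj}$ (in particular, $\alpha(f)\in I_{T,\proj}$-$\cell$) and $\beta(f)\in I_{T,\proj}$-$\inj$. This gives the required factorization $f=\beta(f)\circ\alpha(f)$. No step looks like a serious obstacle; the only point of care is the smallness estimate, which is an immediate combination of Lemma \ref{radfiniteob} with the adjunction $(F_n,\Ev_n)$ and the level-wise nature of colimits in $\Sp_T(k)$.
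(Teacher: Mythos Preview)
Your proposal is correct and follows essentially the same approach as the paper: both invoke the small object argument for $I_{T,\proj}$. The paper simply appeals to \cite[Prop.~A.8]{Hov-01} for the fact that $I_{T,\proj}$ permits the small object argument, while you unpack this explicitly via the adjunction $(F_n,\Ev_n)$ and Lemma~\ref{radfiniteob}; your added detail is accurate and the argument goes through.
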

\begin{proof}
It follows since the set $I_{T,\proj}$ permits the small object argument, see \cite[Prop. A.8]{Hov-01}.  
\end{proof}

\begin{proposition}\label{radthcofibre8}
Every cofibre sequence in $\SH_T(k)$ is isomorphic to a coprojection sequence of the form $$\bcA\into \bcB\into \bcB/\bcA\,,$$ where $\bcA\into \bcB$ is in $I_{T,\proj}$-$\cell$ and $\bcA$ is an $I_{T,\proj}$-cell complex.  
\end{proposition}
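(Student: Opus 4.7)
The plan is to mimic the unstable argument in Proposition \ref{radthcofibre7}, replacing the small-object factorization of Corollary \ref{radcorsmll} with its stable analogue in Corollary \ref{radcorsmlm}. Given a cofibre sequence in $\SH_T(k)$, I would first represent it, up to isomorphism in $\SH_T(k)$, by a diagram $\bcX\stackrel{f}{\to}\bcY\to\bcY/\bcX$, where $f$ is a cofibration in the injective model structure on $\Sp_T(k)$ (so that $\bcY/\bcX$ is a strict model for the cone).

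Next I would apply Corollary \ref{radcorsmlm} twice. First, apply it to the morphism $\emptyset\to\bcX$ to obtain a factorization
\begin{equation*}
\emptyset\longrightarrow \bcA\stackrel{q}{\longrightarrow} \bcX,
\end{equation*}
with $\emptyset\to\bcA$ in $I_{T,\proj}$-$\cell$ (so $\bcA$ is an $I_{T,\proj}$-cell complex) and $q\in I_{T,\proj}$-$\inj$. Then apply Corollary \ref{radcorsmlm} to the composite $f\circ q:\bcA\to \bcY$ to obtain
\begin{equation*}
\bcA\stackrel{j}{\longrightarrow}\bcB\stackrel{p}{\longrightarrow}\bcY,
\end{equation*}
with $j\in I_{T,\proj}$-$\cell$ and $p\in I_{T,\proj}$-$\inj$. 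This produces a commutative square
\begin{equation*}
\xymatrix@C=8ex@R=8ex{\bcA\ar[r]^{j}\ar[d]_{q}&\bcB\ar[d]^{p}\\ \bcX\ar[r]_{f}&\bcY}
\end{equation*}
in which $\bcA\to\bcB$ is an $I_{T,\proj}$-cell complex relative to the cell complex $\bcA$, so passing to cones yields a candidate $\bcA\to\bcB\to\bcB/\bcA$ of the required form.

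The main obstacle is to verify that $q$ and $p$ are stable $\AF^1$-weak equivalences, so that the resulting square is an isomorphism of cofibre sequences in $\SH_T(k)$; this is the step that played the role of Hovey's Proposition~6.2.5 in the unstable argument. For this, I would observe that morphisms in $I_{T,\proj}$-$\inj$ are level trivial fibrations in the projective model structure on $\Sp_T(k)$: by adjointness $(F_n,\Ev_n)$, the right lifting property against every $F_n(I_{\proj})$ is equivalent to $\Ev_n$ taking the morphism to a map with the right lifting property against $I_{\proj}$, and by Lemma \ref{paplem74} such maps are sectionwise trivial fibrations, in particular local (hence $\AF^1$-)weak equivalences in $\Deltaop\bcS_{\ast}$. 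Thus $q$ and $p$ are level $\AF^1$-equivalences, and so stable $\AF^1$-weak equivalences.

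Finally, since pushouts along cofibrations preserve stable equivalences (left properness of the stable model structure) and taking the cone $\bcB/\bcA$ commutes with the relevant equivalences, the cofibre sequence $\bcA\to\bcB\to\bcB/\bcA$ is canonically isomorphic in $\SH_T(k)$ to $\bcX\to\bcY\to\bcY/\bcX$, which completes the argument.
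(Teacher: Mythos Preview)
Your proposal is correct and follows essentially the same approach as the paper: factor $\emptyset\to\bcX$ and then $\bcA\to\bcY$ via Corollary~\ref{radcorsmlm}, and conclude by comparing cofibre sequences (the paper cites Hovey's Proposition~6.2.5 where you invoke left properness). Your explicit verification that $I_{T,\proj}$-$\inj$ maps are level sectionwise trivial fibrations via the $(F_n,\Ev_n)$ adjunction and Lemma~\ref{paplem74} is a useful addition the paper leaves implicit; the one terminological slip is that you should represent $f$ as a \emph{projective} cofibration in $\Sp_T(k)$ (equivalently, a cofibration in the stable model structure), not an ``injective'' one, since the paper only sets up the projective and stable structures on spectra.
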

\begin{proof}
 Let $\bcX\into\bcY \into \bcZ$ be a cofibre sequence in $\SH_T(k)$, where $f$ is a projective cofibration from $\bcX$ to $\bcY$ in $\Sp_T(k)$, such that $\bcZ=\bcY/\bcX$. By Corollary \ref{radcorsmlm}, the morphism $\emptyset\into \bcX$ factors into $\emptyset\into \bcA\into\bcX$.  Again, by Corollary \ref{radcorsmlm}, the composition of $\bcA\into \bcX$ with $f$ induces a commutative diagram 
$$\xymatrix@C=10ex@R=10ex{\bcA\ar[r]^{\alpha(f)}\ar[d]&\bcB\ar[d]^{\beta(f)}\\\bcX\ar[r]_f&\bcY}$$ 
where $\beta(f)$ is a sectionwise trivial fibration and $\alpha(f)$ is in $I_{T,\proj}$-$\cell$. By \cite[Prop. 6.2.5]{Hovey0}, the cofibre sequence $\bcA\stackrel{}{\rightarrow}\bcB\rightarrow \bcB/\bcA$
  is isomorphic to the cofibre sequence $\bcX\stackrel{[f]}{\into}\bcY \into \bcZ$ in $\SH_T(k)$. 

\end{proof}

\begin{lemma}\label{gfta12}
For any $T$-spectrum $\bcX$, there is an isomorphism $$\colim_{H(U)\into\bcX}H(U)\isom \bcX\,.$$ 
\end{lemma}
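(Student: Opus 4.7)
The plan is to verify this density formula slice by slice in the underlying symmetric sequence, and then appeal to Lemma~\ref{lem721} to reduce each slice to the classical density of representables in $\Deltaop\bcS_{\ast}$.

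First I would observe that colimits in $\Sp_T(k)$ are computed levelwise. Indeed, by \cite[Prop.~2.2.1]{HSBS-01}, $\Sp_T(k)$ is the category of left $\sym(T)$-modules in symmetric sequences on $\Deltaop\bcS_{\ast}$, and since the smash product on symmetric sequences preserves colimits in each variable, the forgetful functor from $\sym(T)$-modules to symmetric sequences creates colimits. Colimits of symmetric sequences are in turn computed at each slice in $\Deltaop\bcS_{\ast}$. It therefore suffices to show that for every $n\in\NN$, the canonical morphism
\[
\colim_{H(U)\to\bcX}\Ev_n\bigl(H(U)\bigr)\lra \bcX_n
\]
is an isomorphism in $\Deltaop\bcS_{\ast}$.

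Next I would invoke Lemma~\ref{lem721}, which asserts that the functor $\Ev_n\colon(H\downarrow\bcX)\to(h^+\downarrow\bcX_n)$ is final. Combining finality with the classical density formula for pointed simplicial sheaves, namely $\bcX_n\isom\colim_{h^+(V)\to\bcX_n}h^+(V)$ (every pointed simplicial sheaf is the colimit of the representables mapping to it), one reindexes the colimit of the previous display over the smaller comma category $(h^+\downarrow\bcX_n)$ and identifies it with $\bcX_n$. Reassembling these isomorphisms across levels, with compatibility in the structural morphisms following from naturality of the construction, yields the asserted isomorphism $\colim_{H(U)\to\bcX}H(U)\isom\bcX$ in $\Sp_T(k)$.

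The main obstacle is matching the two functors involved: the tautological functor $(H\downarrow\bcX)\to\Deltaop\bcS_{\ast}$, $(H(U)\to\bcX)\mapsto\Ev_n(H(U))$, with the pullback under $\Ev_n$ of the representation functor $(h^+(V)\to\bcX_n)\mapsto h^+(V)$ on $(h^+\downarrow\bcX_n)$. Since $H(U)=H'(U)\wedge_{\sym(T')}\sym(T)$ is defined via a coequalizer, the slice $\Ev_n(H(U))$ is not literally a single representable $h^+(V)$ but rather a coequalizer of coproducts of smashes of representables with powers of $T$. Reconciling these functors requires unwinding the coequalizer together with the identities $F_n\circ h^+=H\circ F_n$ and the adjunctions $F_n\dashv\Ev_n$ and $(-)\wedge_{\sym(T')}\sym(T)\dashv\res_{T/T'}$, analogous to the bookkeeping carried out in the proof of Proposition~\ref{pevvxs}, in order to exhibit the desired natural isomorphism of cones.
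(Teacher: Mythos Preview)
Your approach is essentially the paper's: work levelwise, invoke Lemma~\ref{lem721} to pass from the comma category $(H\downarrow\bcX)$ to $(h^+\downarrow\bcX_n)$, and then use density of representables in $\Deltaop\bcS_{\ast}$. The paper's proof is two lines: it simply asserts that ``for a symmetric $\PR^1$-spectrum, $\Ev_n(H(U))$ coincides with $h^+(U_n)$'' and then writes down the chain of isomorphisms you describe.

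The obstacle you raise in your final paragraph is precisely the point the paper treats as immediate. You are right to flag it: since $H(U)=H'(U)\wedge_{\sym(T')}\sym(T)$ involves a genuine base change along $\sym(T')\to\sym(T)$, the identification $\Ev_n(H(U))\cong h^+(U_n)$ is not a tautology (and indeed, taking $U=\sym(\PR^1)$ gives $H(U)=\sym(T)$ with $n$th slice $T^{\wedge n}$, whereas $h^+(U_n)=((\PR^1)^n)_+$). The paper does not address this, so your more cautious treatment is in fact more honest than the original. The statement of Lemma~\ref{lem721} itself already presupposes that $\Ev_n$ lands in $(h^+\downarrow\bcX_n)$, so the issue is built into the framework rather than specific to this lemma; any resolution would have to clarify the definition of that functor, not just the present proof.
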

\begin{proof}
Notice that for a symmetric $\PR^1$-spectrum, we have that $\Ev_n(H(U))$ coincides with $h^+(U_n)$. By virtue of Lemma \ref{lem721}, we get canonical isomorphisms $$\Ev_n\Big(\colim_{H(U)\into\bcX}H(U)\Big)= \colim_{H(U)\into\bcX}h^+(U_n)\isom \colim_{h^+(V)\into\bcX_n}h^+(V)=\bcX_n\,,$$
which us to deduce the expected isomorphism. 
\end{proof}

\begin{corollary}\label{tffiios}
For any $T$-spectrum $\bcX$, there is an isomorphism $\Sym^1_{g,T}(\bcX)\isom\bcX$. 
\end{corollary}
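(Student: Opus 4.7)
The plan is to unwind the definition of $\Sym^n_{g,T}$ in the case $n=1$ and observe that it degenerates to a colimit over the comma category $(H\downarrow\bcX)$ of the representable $T$-spectra themselves, and then invoke Lemma \ref{gfta12}.

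More precisely, recall that $\Sym^n_{g,T}(\bcX)$ was defined as $\colim F_{\bcX}$, where $F_{\bcX}$ sends an object $H(U)\to \bcX$ of $(H\downarrow \bcX)$ to the colimit of the diagram \eqref{equu32}. For $n=1$ that diagram has only one $U$-factor and no occurrences of $\sym(\PR^1)$, so it collapses to $H(U^{\tensor 1}/\Sigma_1)=H(U)$, with no non-trivial parallel arrows to coequalize. Hence $F_{\bcX}(H(U)\to \bcX)=H(U)$, and consequently
\[
\Sym^1_{g,T}(\bcX)\;\isom\;\colim_{H(U)\to \bcX} H(U).
\]

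The right-hand side is exactly the colimit computed in Lemma \ref{gfta12}, which was shown to be canonically isomorphic to $\bcX$. Composing these two isomorphisms yields the desired $\Sym^1_{g,T}(\bcX)\isom \bcX$, and naturality is automatic because each step is given by a canonical universal-property morphism. There is no real obstacle here: the only thing to double-check is that the collapse of the diagram \eqref{equu32} for $n=1$ is indeed trivial (the ``$U$ appears $n$ times'' clause gives a single factor, so the parallel arrows disappear), and that the comma category $(H\downarrow\bcX)$ used here is the same one employed in Lemma \ref{gfta12} — both of which are immediate from the definitions.
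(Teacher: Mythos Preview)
Your proposal is correct and matches the paper's own proof essentially verbatim: the paper simply observes that for $n=1$ the colimit of diagram \eqref{equu32} is $H(U)$ and then invokes Lemma \ref{gfta12}. Your version is just a more detailed unwinding of the same two steps.
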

\begin{proof}
For $n=1$, the equalizer of diagram \eqref{equu32} is $H(U)$. Hence, we are in the case of Lemma \ref{gfta12}.     
\end{proof}

\medskip

\begin{theorem}\label{radmainths}
 Suppose that, for every $n\in \NN$, the left derived functor $L\Sym^n_{T,g}$  exists on $\SH_T(k)$. Then, the endofunctors $L\Sym^n_{T,g}$, for $n\in \NN$, 
 provides a $\lambda$-structure on $\SH_T(k)$.    
\end{theorem}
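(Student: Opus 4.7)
The plan is to follow the template of the proof of Theorem \ref{radmainth1} in the unstable case, replacing each ingredient by the stable counterpart already established in Sections \ref{geomsym} and \ref{lammdctt}.

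First I would verify axiom (i). For $n=1$, Corollary \ref{tffiios} gives a functorial isomorphism $\Sym^1_{g,T}(\bcX)\isom \bcX$ in $\Sp_T(k)$; combined with the hypothesis that $L\Sym^1_{g,T}$ exists, this identifies $L\Sym^1_{g,T}$ with the identity endofunctor of $\SH_T(k)$. For $n=0$, the coequalizer \eqref{equu32} reduces to the single vertex $H(F_0(\Spec(k))) = \Sigma^{\infty}_T\Spec(k)_+ = \one$, so $\Sym^0_{g,T}$ is the constant functor with value $\one$ on representable spectra and, via the colimit construction used to define it, on all of $\Sp_T(k)$. Hence $L\Sym^0_{g,T}$ is constant with value $\one$.

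Next, to establish axiom (ii), let $\bcX\into \bcY\into \bcZ$ be a cofibre sequence in $\SH_T(k)$. By Proposition \ref{radthcofibre8} it is isomorphic in $\SH_T(k)$ to a coprojection sequence $\bcA\into \bcB\into \bcB/\bcA$ in which $f\colon \bcA\into \bcB$ lies in $I_{T,\proj}$-$\cell$ and $\bcA$ is an $I_{T,\proj}$-cell complex. Proposition \ref{gsjps5s} then provides a functorial K\"unneth tower
\begin{equation*}
\Sym^n_{g,T}(\bcA) = \bcL^n_0(f) \into \bcL^n_1(f) \into \cdots \into \bcL^n_n(f) = \Sym^n_{g,T}(\bcB)
\end{equation*}
in $\Sp_T(k)$, whose successive cones are identified in $\SH_T(k)$ with $\Sym^{n-i}_{g,T}(\bcA)\wedge \Sym^{i}_{g,T}(\bcB/\bcA)$. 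Passing to $\SH_T(k)$ under the hypothesized existence of $L\Sym^n_{g,T}$ yields the required K\"unneth tower of $L\Sym^n_{g,T}$ applied to the original cofibre sequence.

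Finally, axiom (iii) follows from the functoriality of the tower in Proposition \ref{gsjps5s} combined with the functorial factorization of Corollary \ref{radcorsmlm}: a morphism of cofibre sequences in $\SH_T(k)$ lifts functorially to a morphism of coprojection sequences of the required $I_{T,\proj}$-cell type, inducing a morphism of the associated K\"unneth towers in $\Sp_T(k)$ and hence in $\SH_T(k)$. The step I expect to be the main obstacle, and the one I would scrutinize most carefully, is the passage of the tower from $\Sp_T(k)$ to $\SH_T(k)$: \emph{a priori} the identifications of the successive cones only give stable $\AF^1$-weak equivalences between potentially non-cofibrant spectra, and it is precisely the hypothesis that the left derived functors $L\Sym^n_{g,T}$ exist, together with the cell-complex structure on $\bcA$ arising from Proposition \ref{radthcofibre8}, that promotes these weak equivalences to honest isomorphisms at the level of $\SH_T(k)$.
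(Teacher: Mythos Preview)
Your proposal is correct and follows essentially the same approach as the paper: verify axiom (i) via Corollary \ref{tffiios} and the trivial $n=0$ case, then use Proposition \ref{radthcofibre8} to reduce to an $I_{T,\proj}$-cell situation and invoke Proposition \ref{gsjps5s} for the K\"unneth tower, with functoriality coming from the functoriality of that tower. If anything you are more explicit than the paper about the $n=0$ case, the role of Corollary \ref{radcorsmlm} in axiom (iii), and the passage to $\SH_T(k)$; the paper simply records the tower and asserts that it descends.
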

\begin{proof}
We have evidently that $L\Sym^0_g$ is the constant functor with value $\one$.  By Corollary \ref{tffiios}, $L\Sym^1_g$ is the identity functor on $\SH_T(k)$. Let $\bcX\into\bcY\into \bcZ$ be a cofibre sequence in $\SH_T(k)$ induced by a cofibration $f:\bcX\into\bcY$ in $\Sp_T(k)$. 
By Proposition \ref{radthcofibre8}, we can assume that $f$ is in $I_{T,\proj}$-$\cell$ and $\bcX$ is an $I_{T,\proj}$-cell complex. 
Hence, by Proposition \ref{gsjpr5}, for each index $n\in\NN$, $\Sym^n_{g,T}(f)$ has a K\"unneth tower,
\begin{equation}\label{edd23s}
\Sym^n_{g,T}(\bcX)=\bcL^n_0(f)\into\bcL^n_1(f)\into\cdots\into \bcL^n_n(f)=\Sym^n_{g,T}(\bcY)\,,
\end{equation}
which induces a K\"unneth tower, 
$$L\Sym^n_{g,T}(\bcX)=L\bcL^n_0(f)\into L\bcL^n_1(f)\into\cdots\into L\bcL^n_n(f)=L\Sym^n_{g,T}(\bcY)\,,$$ of $L\Sym^n_{g,T}(f)$. The functoriality axiom follows from the functionality of K\"unneth towers of the form \eqref{edd23s}..
\end{proof}

\bigskip

{\em\noindent Morphisms of Lambda structures.}---  For a symmetric $T$-spectra $\bcX$, we shall construct a natural morphism $\vartheta^n_{\bcX}$ from $\Sym^n_T(\bcX)$ to $\Sym_{g,T}^n(\bcX)\,.$

\begin{proposition}\label{yhd1925}
Let $\bcX$ be an object in $\Sp_{T}(k)$ and let $n\in\NN$. Then, we have a canonical morphism $\vartheta^n_{\bcX}:\Sym^n_T(\bcX)\into \Sym^n_{g,T}(\bcX)$. 
\end{proposition}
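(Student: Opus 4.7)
The plan is to imitate the construction of Proposition \ref{papogxm}: build $\vartheta^n_{\bcX}$ first on representable $T$-spectra using Lemma \ref{yhd192}, and then glue the result via colimits using the canonical presentation of any $T$-spectrum as a colimit of representable ones (Lemma \ref{gfta12}).

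First, Lemma \ref{gfta12} yields the isomorphism $\bcX \isom \colim_{H(U) \to \bcX} H(U)$, where the colimit is indexed by the comma category $(H \downarrow \bcX)$. The functor $H : \Sp_{\PR^1}(\Deltaop\bcC) \to \Sp_T(k)$ is the composite of $H'$ (which is the termwise application of $h^+$, hence preserves coproducts) with the left adjoint $(-)\wedge_{\sym(T')}\sym(T)$; therefore $H$ preserves coproducts, and $(H \downarrow \bcX)$ inherits finite coproducts from $\Sp_{\PR^1}(\Deltaop\bcC)$. The argument of Lemma \ref{paplmxq8} then shows that the diagonal functor $\diag : (H \downarrow \bcX) \to (H \downarrow \bcX)^{\times n}$ is final.

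Second, since the smash product on $\Sp_T(k)$ preserves colimits in each variable, one has
\[
\bcX^{\wedge n} \isom \colim_{(H(U_i) \to \bcX)_{1 \le i \le n}} H(U_1)\wedge \cdots \wedge H(U_n).
\]
Restricting along the final functor $\diag$ and quotienting by the induced $\Sigma_n$-action (quotients commute with colimits) yields a natural isomorphism
\[
\Sym^n_T(\bcX) \isom \colim_{H(U) \to \bcX} \Sym^n_T(H(U)).
\]
For every object $(H(U) \to \bcX)$ of $(H \downarrow \bcX)$, Lemma \ref{yhd192} produces, via passage to the colimit of the diagram \eqref{eggd91}, a morphism
\[
\vartheta^n_{H(U)} : \Sym^n_T(H(U)) \longrightarrow F_{\bcX}(H(U) \to \bcX),
\]
where $F_{\bcX}(H(U) \to \bcX)$ is the colimit of diagram \eqref{equu32} associated to $U$. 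Composing with the canonical cocone morphism $F_{\bcX}(H(U) \to \bcX) \to \colim F_{\bcX} = \Sym^n_{g,T}(\bcX)$ gives a family of morphisms $\Sym^n_T(H(U)) \to \Sym^n_{g,T}(\bcX)$ that are compatible with morphisms in $(H \downarrow \bcX)$ by the naturality of \eqref{eggd91}. The universal property of the colimit then supplies the required morphism
\[
\vartheta^n_{\bcX} : \Sym^n_T(\bcX) \longrightarrow \Sym^n_{g,T}(\bcX),
\]
which is natural in $\bcX$ by construction, and which recovers $\vartheta^n_{H(U)}$ (up to the canonical map of Lemma \ref{yhd192}) when $\bcX = H(U)$.

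The main technical point is the finality of $\diag$ in step one, which relies on the fact that $H$ preserves finite coproducts so that the analogue of Lemma \ref{paplmxq8} applies to $(H \downarrow \bcX)$. Everything else — the compatibility of the smash product with colimits, the commutativity of colimits with the $\Sigma_n$-quotient, and the naturality of \eqref{eggd91} in $U$ — is formal once this is in place.
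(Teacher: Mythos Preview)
Your proof is correct and follows essentially the same approach as the paper: both reduce to Lemma~\ref{yhd192} via the presentation $\bcX\isom\colim_{H(U)\to\bcX}H(U)$ of Lemma~\ref{gfta12}, use finality of the diagonal $(H\downarrow\bcX)\to(H\downarrow\bcX)^{\times n}$ to identify $\Sym^n_T(\bcX)$ with $\colim_{H(U)\to\bcX}\Sym^n_T(H(U))$, and then pass to the colimit of the morphisms $\vartheta^n_{H(U)}$. Your justification of finality via finite \emph{coproducts} in $(H\downarrow\bcX)$ (which give an initial object in each comma category $(A_1,\dots,A_n)\downarrow\diag$) is in fact cleaner than the paper's appeal to ``Cartesian products'' on $(H\downarrow\bcX)$; the paper also unwinds the coequalizer defining $\wedge_{\sym(T)}$ explicitly, whereas you use directly that the smash product on $\Sp_T(k)$ preserves colimits, which is equivalent but more economical.
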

\begin{proof}
We define $\vartheta^n_{\bcX}$ to be the colimit of the morphisms $\vartheta^n_{H(U)}$ of Lemma \ref{yhd192}, where $H(U)\into \bcX$ runs on the objects of  the comma category $(H\downarrow\bcX)$. By definition $\Sym^n_{g,T}\bcX=\colim_{H(U)\into \bcX}\Sym^n_{g,T}H(U)$. It remains to show that there is a canonical isomorphism $\Sym^n_{T}\bcX=\colim_{H(U)\into \bcX}\Sym^n_{T}H(U)$. Notice the Cartesian product of $\Deltaop\bcC$ induces a Cartesian product on category $(H\downarrow\bcX)$. By Lemma \ref{paplmxq8} and Lemma \ref{gfta12}, we deduce an isomorphism $\bcX^{\wedge n}\isom \colim_{H(U)\into \bcX}H(U)^{\wedge n}$. By the same argument, we deduce that the product $\bcX\wedge\sym(T)\wedge \bcX\wedge \cdots\wedge\sym(T)\wedge \bcX$, in which the object $\bcX$ appears $n$ times, is isomorphic to the colimit 
$$\colim_{H(U)\into\bcX}\Big(H(U)\wedge\sym(T)\wedge H(U)\wedge \cdots \wedge\sym(T)\wedge H(U)\Big)\,.$$
By change of colimits and by the above considerations, we deduce that the colimit of the diagram 
$$\xymatrix{\bigslant{\Big(\bcX\wedge \sym(T)\wedge \bcX\wedge \cdots \wedge \sym(T)\wedge \bcX\Big)}{\Sigma_n}\ar@<1.8ex>[rr]\ar@<-1.8ex>[rr]\ar@<2.5ex>[rr]\ar@<-2.5ex>[rr]\ar@<1.1ex>[rr]\ar@<-1.1ex>[rr]&\cdots\cdots&\bcX^{\wedge n}/{\Sigma_n}}$$
is a double colimit, that is, the colimit of the colimits of diagrams of the form 
$$\xymatrix{\bigslant{\Big(H(U)\wedge \sym(T)\wedge H(U)\wedge \cdots \wedge \sym(T)\wedge H(U)\Big)}{\Sigma_n}\ar@<1.8ex>[rr]\ar@<-1.8ex>[rr]\ar@<2.5ex>[rr]\ar@<-2.5ex>[rr]\ar@<1.1ex>[rr]\ar@<-1.1ex>[rr]&\cdots\cdots&H(U)^{\wedge n}/{\Sigma_n}}$$
where $H(U)\into \bcX$ runs on the objects of   $(H\downarrow\bcX)$. This implies that $\Sym^n_{T}\bcX$ is isomorphic to $\colim_{H(U)\into \bcX}\Sym^n_{T}H(U)$. 
\end{proof}

For each $n\in\NN$,  we denote by $\vartheta^n:\Sym^n_T\into\Sym^n_{g,T}$ the natural transformation defined for every pointed simplicial sheaf $\bcX$ to be the functorial morphism $\vartheta^n(\bcX):=\vartheta^n_{\bcX}$.   

\medskip

\begin{lemma}\label{paplzva11s}
Let $\varphi:X\into Y$ be level-termwise coprojection in $\Sp_{\PR^1}(\Deltaop\bcC)$ and let us write $f:=\Deltaop H(\varphi)$. Then, for every pair of numbers $(n,i)\in\NN^2$ with $0\leq i\leq n$, there exists a canonical morphism 
$$\vartheta^n_i(f):L^n_i(f)\into\bcL^n_i(f)\,,$$
 such that one has a commutative diagram 
 \begin{equation}\label{papess1ys}
      \xymatrix@C=4ex@R=10ex{ L^n_0(f)\ar[r]\ar[d]_{\vartheta^n_0(f)}& L^n_1(f)\ar[r]\ar[d]^{\vartheta^n_1(f)} &\cdots&  \cdots\ar[r]&\ar[r]\ar[d] L^n_{n-1}(f)\ar[r]\ar[d]^{\vartheta^n_{n-1}(f)} & L^n_n(f)\ar[d]^{\vartheta^n_n(f)}\\
\bcL^n_0(f)\ar[r]& \bcL^n_1(f)\ar[r]\ar[r]&\cdots&\cdots \ar[r]&\ar[r] \bcL^n_{n-1}(f)\ar[r] &\bcL^n_n(f)
}
   \end{equation}
\end{lemma}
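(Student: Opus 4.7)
The plan is to mimic the proof of Lemma \ref{paplzva11}, replacing the Yoneda-type functor $h^+:\Deltaop\bcC_+\into\Deltaop\bcS_{\ast}$ by the symmetric monoidal composite
$$\Sp_{\PR^1}(\Deltaop\bcC)_{\coeq}\xrightarrow{\bar{H'}}\Sp_{T'}(k)\xrightarrow{(-)\wedge_{\sym(T')}\sym(T)}\Sp_T(k)$$
already used in the proof of Proposition \ref{profllqo}. The category $\Sp_{\PR^1}(\Deltaop\bcC)_{\coeq}$ plays the role of the stable analogue of $\Deltaop\bcC_+$: it is symmetric monoidal, has finite colimits, and receives the level-termwise coprojection $\Phi(\varphi)$ via the coequalizer completion $\Phi$.

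First, I would apply the machinery of \cite{GoGu} to $\Phi(\varphi):\Phi(X)\into\Phi(Y)$ in order to produce two parallel towers inside $\Sp_{\PR^1}(\Deltaop\bcC)_{\coeq}$: the box filtration $\{\Box^n_i(\Phi(\varphi))\}_{0\leq i\leq n}$ of the $n$-fold product $\Phi(\varphi)^{\wedge n}$, and the symmetric K\"unneth tower $\{\tilde{\Box}^n_i(\Phi(\varphi))\}_{0\leq i\leq n}$ of $\Sym^n(\Phi(\varphi))$, together with the canonical $\Sigma_n$-equivariant quotient morphisms $\Box^n_i(\Phi(\varphi))\into\tilde{\Box}^n_i(\Phi(\varphi))$. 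Next, I would push both towers through $\bar{H'}$ and then base-change along $\sym(T')\into\sym(T)$. Because both of these functors are symmetric monoidal and preserve the pushouts used to assemble $\Box^n_i$ and $\tilde{\Box}^n_i$, the image of the box tower is canonically identified with $\{\Box^n_i(f)\}_{0\leq i\leq n}$, and the image of the symmetric tower is $\{\bcL^n_i(f)\}_{0\leq i\leq n}$ by Proposition \ref{profllqo}. Taking $\Sigma_n$-quotients then converts the image of the canonical morphism $\Box^n_i(\Phi(\varphi))\into\tilde{\Box}^n_i(\Phi(\varphi))$ into the desired $\vartheta^n_i(f):L^n_i(f)\into\bcL^n_i(f)$.

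The end-point identifications $\vartheta^n_0(f)=\vartheta^n_{\bcX}$ and $\vartheta^n_n(f)=\vartheta^n_{\bcY}$ follow by direct inspection: both constructions descend from the same canonical morphism $\bcX^{\wedge n}\into\Sym^n_{g,T}(\bcX)$ (and similarly for $\bcY$) defined in Proposition \ref{yhd1925}. The commutativity of each square in \eqref{papess1ys} is then automatic, since the two towers and the comparison morphism between them have been produced in a functorial way, already at the level of $\Sp_{\PR^1}(\Deltaop\bcC)_{\coeq}$, and then transported by functors.

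The main obstacle I anticipate is the bookkeeping required to check that the two-stage functor $(-)\wedge_{\sym(T')}\sym(T)\circ\bar{H'}$ genuinely commutes with both the box operations and the $\Sigma_n$-quotients that build $\Box^n_i$ and $\tilde{\Box}^n_i$. Since the base-change is a left adjoint, hence cocontinuous and symmetric monoidal, and $\bar{H'}$ was built to preserve finite colimits and the monoidal structure, the compatibility holds; nevertheless, it has to be traced carefully through the definition of the filtrations at each index $i$ in order to ensure that the identifications used above are canonical and natural in $\varphi$.
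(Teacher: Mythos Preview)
Your approach is correct and essentially the same as the paper's. The paper's proof is terser---it writes $H(\tilde{\Box}^n_i(\varphi))$ and $H(\Box^n_i(\varphi))$ directly, tacitly identifying $H$ with its extension through the coequalizer completion---whereas you spell out the passage through $\Phi$, $\bar{H'}$, and the base change $(-)\wedge_{\sym(T')}\sym(T)$ explicitly; but the construction of $\vartheta^n_i(f)$ as the morphism induced on $\Sigma_n$-quotients by the canonical map $\Box^n_i\to\tilde{\Box}^n_i$, transported through a monoidal finite-colimit-preserving functor, is identical in both.
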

\begin{proof}
Let us fix a natural number $n$. For each index $0\leq i\leq n$, $\bcL^n_i(f)$ is nothing but the object $H(\tilde{\Box}^n_i(\varphi))$, see Proposition \ref{profllqo}.  Since the functor $H$ is monoidal, $\Box^n_i(f)$ is canonically isomorphic to $H(\Box^n_i(\varphi))$. Thus, we have a canonical morphism $\Box^n_i(f)\into \bcL^n_i(f)$, and this morphism induces a morphism $\vartheta^n_i(f):L^n_i(f)\into\bcL^n_i(f)$. Since $\vartheta^n_i(f)$ is constructed canonically, we get a commutative diagram \eqref{papess1ys}. 
\end{proof}

\begin{proposition}\label{papffj331s}
Let $f:\bcX\into\bcY$ be a morphism of $T$-spectra in $I_{T,\proj}$ such that $\bcX$ is a $I_{T,\proj}$-cell complex. Then, for every index $0\leq i\leq n$, there exists a canonical morphism 
$$\vartheta^n_i(f):L^n_i(f)\into\bcL^n_i(f)\,,$$
 such that one has a commutative diagram 
   \begin{equation}\label{papess2ys}
   \xymatrix@C=4ex@R=10ex{ L^n_0(f)\ar[r]\ar[d]_{\vartheta^n_0(f)}& L^n_1(f)\ar[r]\ar[d]^{\vartheta^n_1(f)} &\cdots&  \cdots\ar[r]&\ar[r]\ar[d] L^n_{n-1}(f)\ar[r]\ar[d]^{\vartheta^n_{n-1}(f)} & L^n_n(f)\ar[d]^{\vartheta^n_n(f)}\\
\bcL^n_0(f)\ar[r]& \bcL^n_1(f)\ar[r]\ar[r]&\cdots&\cdots \ar[r]&\ar[r] \bcL^n_{n-1}(f)\ar[r] &\bcL^n_n(f)
}
 \end{equation}
where $\vartheta^n_0(f)=\vartheta^n_{\bcX}$ and $\vartheta^n_n(f)=\vartheta^n_{\bcY}$. 
\end{proposition}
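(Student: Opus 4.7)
The plan is to mimic the unstable argument of Proposition \ref{papffj331}, substituting the analogous spectral lemmas at each step. First, I would invoke Lemma \ref{lemff73s} together with Proposition 6.1.13 of \cite{KS06} (as already used in the proof of Proposition \ref{gsjps5s}) to write $f:\bcX\into\bcY$ as a directed colimit $f=\colim_{d\in D}f_d$ of morphisms $f_d=H(\varphi_d)$ with $\varphi_d:U_d\into V_d$ a level-termwise coprojection in $\Sp_{\PR^1}(\Deltaop\bcC)$; the hypothesis that $\bcX$ itself is an $I_{T,\proj}$-cell complex is what guarantees that the diagram can be arranged so that each $U_d$ is a representable $T$-spectrum, not merely that each map lies in the cellular class.

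Next, for each $d\in D$, Lemma \ref{paplzva11s} supplies canonical morphisms $\vartheta^n_i(f_d):L^n_i(f_d)\into\bcL^n_i(f_d)$ assembling into a commutative ladder of the form \eqref{papess1ys}. Since the categoric symmetric powers and the box constructions $\Box^n_i(-)$ are built from finite colimits and smash products, which commute with filtered colimits in $\Sp_T(k)$, the formation $d\mapsto L^n_i(f_d)$ is itself a directed diagram whose colimit is canonically $L^n_i(f)$; this is the spectral analogue of the isomorphism $\colim_{d\in D}L^n_i(f_d)\isom L^n_i(f)$ used in the proof of Proposition \ref{papffj331} and uses that in the comma category $(H\downarrow \bcX)$ finite products and pushouts compute levelwise, an observation already implicit in the proof of Proposition \ref{yhd1925}. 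On the geometric side, the identification $\bcL^n_i(f)=\colim_{d\in D}\bcL^n_i(f_d)$ is exactly the definition used in Proposition \ref{gsjps5s}. Taking the filtered colimit of the morphisms $\vartheta^n_i(f_d)$ therefore produces the desired morphism $\vartheta^n_i(f):L^n_i(f)\into\bcL^n_i(f)$.

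For the extreme indices $i=0$ and $i=n$, one has $L^n_0(f)=\Sym^n_T\bcX$, $\bcL^n_0(f)=\Sym^n_{g,T}\bcX$ and similarly at $i=n$, and under these identifications $\vartheta^n_0(f)$ coincides with the colimit of the morphisms $\vartheta^n_{H(U_d)}$ constructed in Lemma \ref{yhd192}, which by the very definition of $\vartheta^n_{\bcX}$ in Proposition \ref{yhd1925} is $\vartheta^n_{\bcX}$; similarly $\vartheta^n_n(f)=\vartheta^n_{\bcY}$. The commutativity of the ladder \eqref{papess2ys} follows termwise from the commutativity of the corresponding ladders for each $f_d$ supplied by Lemma \ref{paplzva11s}, since all of the transition maps in both rows are obtained by taking the same filtered colimit.

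The main obstacle I expect is the bookkeeping required to show that the categoric box filtration $L^n_i(f)$ genuinely commutes with the filtered colimit $f=\colim f_d$; in the unstable case this is immediate because coproducts of representable sheaves model the colimit transparently, but in the spectral setting one must be careful that the box operation $\Box^n_i$, which involves a pushout over a subcube of an $n$-cube of smash products, interacts correctly with directed colimits in $\Sp_T(k)$ and with the functor $H$. Once this compatibility is secured (it reduces to the fact, used already in Proposition \ref{yhd1925}, that $H$ preserves filtered colimits and that finite limits of monoidal products in $\Sp_T(k)$ commute with filtered colimits of cell objects), the rest is formal.
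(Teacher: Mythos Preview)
Your proposal is correct and follows essentially the same approach as the paper: express $f$ as a directed colimit of morphisms of representable $T$-spectra (via the argument of Proposition \ref{gsjps5s}), apply Lemma \ref{paplzva11s} to each $f_d$, and pass to the colimit. Your write-up is in fact more careful than the paper's, which leaves implicit both the identification $\colim_{d\in D}L^n_i(f_d)\isom L^n_i(f)$ and the verification at the extreme indices $i=0,n$; your discussion of the box filtration commuting with filtered colimits addresses exactly the point the paper glosses over.
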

\begin{proof}
As in Proposition \ref{gsjps5s}, the morphism $f$ can be expressed as the colimit of a directed diagram $\{f_d\}_{d\in D}$ of morphisms of representable $T$-spectra.
Now, by Lemma \ref{paplzva11s}, we have canonical morphisms $\vartheta^n_i(f_{d}):L^n_i(f_{d})\into\bcL^n_i(f_{d})$. Hence, taking colimit we get a morphism 
$$\colim_{d\in D}\vartheta^n_i(f_{d}):\colim_{d\in D}L^n_i(f_{d})\into\colim_{d\in D}\bcL^n_i(f_{d})\,,$$ 
This morphism gives a morphism from $L^n_i(f)$ to $\bcL^n_i(f)$, and we denote it by $\vartheta^n_i(f)$. Finally, the diagrams of the form \eqref{papess1ys} induce a commutative diagram \eqref{papess2ys}.    
\end{proof}

\begin{theorem}\label{nattraths}
Suppose that, for every $n\in\NN$, the left derived functor of $\Sym^n_{g,T}$ exists on $\SH_T(k)$. Then, the natural transformations $\vartheta^n: \Sym^n_T\into\Sym^n_{g,T}$ on $\Sp_{\PR^1}(\Deltaop\bcC)^{\#}$, for $n\in\NN$, induce a morphism of $\lambda$-structures from the left derived categoric symmetric powers to the left derived geometric powers on $\SH_T(k)$.  
\end{theorem}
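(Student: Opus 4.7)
The plan is to imitate the proof of Theorem \ref{nattrath} from the unstable case, adapted to symmetric $T$-spectra. First, by hypothesis the left derived functors $L\Sym^n_{g,T}$ exist on $\SH_T(k)$, and $L\Sym^n_T$ exists by the standard argument (using that categoric symmetric powers preserve stable $\AF^1$-weak equivalences between projectively cofibrant spectra, analogous to the unstable case). Since the construction $\vartheta^n : \Sym^n_T \to \Sym^n_{g,T}$ of Proposition \ref{yhd1925} is functorial in the $T$-spectrum, it descends to a natural transformation $L\vartheta^n : L\Sym^n_T \to L\Sym^n_{g,T}$ on $\SH_T(k)$, after restricting to a convenient class of cofibrant replacements (namely, $I_{T,\proj}$-cell complexes, which lie in $\Sp_{T}(\Deltaop\bcC)^{\#}$ by Lemma \ref{lemff73s}).

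Next, one must verify the compatibility axiom of a morphism of $\lambda$-structures, namely that $L\vartheta^n$ respects K\"unneth towers of cofibre sequences. Given a cofibre sequence in $\SH_T(k)$, by Proposition \ref{radthcofibre8} we may replace it up to isomorphism by a coprojection sequence $\bcA \to \bcB \to \bcB/\bcA$ where $f:\bcA\to\bcB$ lies in $I_{T,\proj}$-$\cell$ and $\bcA$ is an $I_{T,\proj}$-cell complex. Proposition \ref{gsjps5s} (and its categoric analogue, obtained in the same way from \cite{GoGu}) endows both $\Sym^n_{g,T}(f)$ and $\Sym^n_T(f)$ with canonical functorial K\"unneth towers. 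Proposition \ref{papffj331s} then produces the comparison ladder
$$\xymatrix@C=3ex@R=6ex{L^n_0(f)\ar[r]\ar[d]_{\vartheta^n_0(f)}& L^n_1(f)\ar[r]\ar[d]^{\vartheta^n_1(f)}&\cdots\ar[r]& L^n_n(f)\ar[d]^{\vartheta^n_n(f)}\\
\bcL^n_0(f)\ar[r]&\bcL^n_1(f)\ar[r]&\cdots\ar[r]&\bcL^n_n(f)
}$$
in $\Sp_T(k)$, whose left and right vertical arrows are $\vartheta^n_{\bcA}$ and $\vartheta^n_{\bcB}$ respectively. Localizing this ladder to $\SH_T(k)$ produces exactly the commutative diagram of K\"unneth towers required by the definition of a morphism of $\lambda$-structures, with vertical arrows given by the derived natural transformations $L\vartheta^n(\bcA)$ and $L\vartheta^n(\bcB)$. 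Functoriality of the resulting ladder in the cofibre sequence follows from the functoriality statements in Propositions \ref{gsjps5s} and \ref{papffj331s}.

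The main obstacle I anticipate is purely bookkeeping: verifying that the natural transformation $\vartheta^n$ defined on the subcategory $\Sp_{\PR^1}(\Deltaop\bcC)^{\#}$ actually descends well to the derived level. Specifically, one needs that, for any $T$-spectrum $\bcX$, the cofibrant replacement can be chosen to be an $I_{T,\proj}$-cell complex, so that both $L\Sym^n_T(\bcX)$ and $L\Sym^n_{g,T}(\bcX)$ are computed by applying $\Sym^n_T$ and $\Sym^n_{g,T}$ respectively to the same replacement; this is ensured by Corollary \ref{radcorsmlm}. The genuine content --- that the $\vartheta^n_i(f)$ assemble into a morphism of K\"unneth towers --- is the one already supplied by Proposition \ref{papffj331s}, so no further combinatorial work is required beyond invoking it.
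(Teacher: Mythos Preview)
Your proposal is correct and follows essentially the same approach as the paper: derive the natural transformation $L\vartheta^n$ and then invoke Proposition~\ref{papffj331s} to obtain the required morphism of K\"unneth towers. In fact the paper's proof is a two-sentence sketch, and you have simply fleshed out the bookkeeping (via Lemma~\ref{lemff73s}, Corollary~\ref{radcorsmlm}, Proposition~\ref{radthcofibre8}, and Proposition~\ref{gsjps5s}) that the paper leaves implicit.
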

\begin{proof}
The natural transformations $\vartheta^n: \Sym^n\into\Sym^n_g$ for $n\in\NN$ induce a natural transformation of derived functors $L\vartheta^n: L\Sym^n\into L\Sym^n_g$ on $\SH_T(k)$. Then, we apply Proposition 
\ref{papffj331s} to get morphisms of K\"unneth towers, as required. 
\end{proof}

\section{Formalism of transfers}
\label{Transfers}

The purpose of this section is to study the notion of transfer of morphisms, in a categorical context, involving transfers in stable $\AF^1$-homotopy category and in the category of Voedvosky's motives. 

\medskip

In the next paragraphs $(\bcD,\wedge)$ and $(\bcE,\tensor)$ will be two symmetric monoidal categories. Assume that $\bcE$ is also an additive category. Let $E$ be a monoidal functor from $(\bcD,\wedge)$ to $(\bcE,\tensor)$.
Assume $G$ a finite group. Let $X$ be an $G$-object in $\bcD$ and let $\rho_X:G\into\Aut(X)$ be a representation of $G$ on $X$. The functor $E$ induces an homomorphism of groups $\Aut(X)\into \Aut(E(X))$. Hence, the composition of this homomorphism with $\rho_X$ gives an homomorphism of groups $G\into \Aut(E(X))$, thus $G$ acts on $E(X)$. This homomorphism induces an homomorphism of Abelian groups $\ZZ[G]\into\End(E(X))$. The norm $\Nm(E(X))$ of $E(X)$ is the image of the element $\sum_{g\in G}g$ under this map. Explicitly, it is given by the formula
 \[\Nm^E(\pi)=\sum_{g\in G}E(\rho_X(g))\,.\] 
Now, suppose that the quotient $X/G$ exists in $\bcD$ and let $\pi:X\into X/G$ be the canonical morphism. The {\em transfer morphism}, or simply, the {\em transfer} of $E(\pi)$ is a morphism
\[\tr^E(\pi):E(X/G)\into E(X)\]
such that $E(\pi)\circ \tr^E(\pi)=n.\id_{E(X/G)}$ and $\tr^E(\pi)\circ E(\pi)=\Nm(E(X))$. 

\begin{example}
{\em Consider $(\bcD,\wedge)$ to be the category of quasi-projective schemes over a field $k$ together with the Cartesian product of schemes, and consider $(\bcE,\tensor)$ to be the category of $\qfh$-sheaves together with the Cartesian product of sheaves. For every $n\in\NN$ and for every quasi-projective $k$-scheme $X$, the canonical morphism $\ZZ_{\qfh}(X^n)\into\ZZ_{\qfh}(\Sym^nX)$ has transfer, see Proposition \ref{Voetranf}. 
}
\end{example}
The following example is a consequence of the previous one. 
\begin{example}
{\em Suppose that $(\bcD,\wedge)$ is the same category as in the previous example and $(\bcE,\tensor)$ is the category of $\qfh$-motives together with the monoidal product of motives \cite{Voe96}. Then, the canonical morphism of $\qfh$-motives $M_{\qfh}(X^n)\into  M_{\qfh}(\Sym^nX)$ has transfer. 
}
\end{example}

Let us study the case when $G$ is the symmetric group $\Sigma_n$ acting of the $n$th fold product $X^{\wedge n}$ of an object $X$ of $\bcD$. Since $E$ is monoidal we have an isomorphism $E(X^{\wedge n})\isom E(X)^{\tensor n}$. 
Let $\varrho: E(X^{\wedge n})\into  E(X)^{\tensor n}/\Sigma_n$ be the composite of the isomorphism $E(X^{\wedge n})\isom E(X)^{\tensor n}$ with the canonical morphism $E(X)^{\tensor n}\into E(X)^{\tensor n}/\Sigma_n$.  One has a commutative diagram
\begin{equation}\label{eqhhs38}
\xymatrix{E(X^{\wedge n})\ar[dd]_{\sigma}\ar[rd]^{\varrho}\ar@/^1pc/[rrd]^{E(\pi)}&&\\
&E(X)^{\tensor n}/\Sigma_n\ar@{.>}[r]^u&E(X^{\wedge n}/\Sigma_n)\\
E(X^{\wedge n})\ar[ru]_{\varrho}\ar@/_1pc/[rru]_{E(\pi)}&&
}
\end{equation}
where the dotted arrow exists by the universal property of colimit. Let us keep these considerations for the proof of Proposition \ref{gstab45}.

A $\QQ$-linear category is a category enriched over the category of $\QQ$-vector spaces. 

\begin{proposition}\label{gstab45}
Suppose $E:(\bcD,\wedge)\into (\bcE,\tensor)$ 
is a monoidal functor of monoidal symmetric categories. Assume that $\bcE$ is a $\QQ$-linear  and closed under finite colimits. Let $X$ be an object of $\bcD$  and let $\pi:X^{\wedge n}\into X^{\wedge n}/\Sigma_n$ be the canonical morphism. Suppose that  $E(\pi)$ is an epimorphism and has a transfer $\tr^{E}(\pi)$. Then, the universal morphism 
\[u:E(X)^{\tensor n}/\Sigma_n\into E(X^{\wedge n}/\Sigma_n) \]
is an isomorphism. 
\end{proposition}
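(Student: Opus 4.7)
The plan is to construct an explicit two-sided inverse for $u$ using the transfer $\tr^E(\pi)$, exploiting the $\QQ$-linear structure of $\bcE$ to divide by the order $|\Sigma_n|=n!$. Concretely, I set
\[
v \;:=\; \tfrac{1}{n!}\,\varrho\circ \tr^E(\pi)\colon E(X^{\wedge n}/\Sigma_n)\lra E(X)^{\tensor n}/\Sigma_n\,,
\]
with $\varrho$ the factorizing map of diagram \eqref{eqhhs38}, and then verify the identities $u\circ v=\id$ and $v\circ u=\id$ separately.

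The identity $u\circ v=\id$ will fall out of a single short calculation: the commutativity of \eqref{eqhhs38} yields $u\circ\varrho=E(\pi)$, and the first defining property of the transfer, $E(\pi)\circ\tr^E(\pi)=n!\cdot\id$, then cancels the scalar $\tfrac{1}{n!}$.

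The identity $v\circ u=\id$ is the step with a subtlety. The key observation will be that $\varrho$ decomposes as the isomorphism $E(X^{\wedge n})\iso E(X)^{\tensor n}$ followed by the canonical projection $E(X)^{\tensor n}\to E(X)^{\tensor n}/\Sigma_n$; the latter is a coequalizer and hence an epimorphism, so it suffices to check the weaker identity $v\circ u\circ\varrho=\varrho$. Rewriting via $u\circ\varrho=E(\pi)$ and invoking the second transfer identity $\tr^E(\pi)\circ E(\pi)=\sum_{\sigma\in\Sigma_n}E(\sigma)$, this reduces to verifying $\varrho\circ E(\sigma)=\varrho$ for each $\sigma\in\Sigma_n$, which is clear because $\varrho$ factors through the coequalizer of the $\Sigma_n$-action; the $n!$ summands of $\varrho$ then absorb the $\tfrac{1}{n!}$ scalar exactly.

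The epimorphism hypothesis on $E(\pi)$ is not used decisively in the argument sketched above, although it serves as a consistency check (since $u$ is necessarily epi once it is invertible). The main obstacle I anticipate is essentially notational: one must keep careful track of the order $|\Sigma_n|=n!$ (which plays the role of the ``$n$'' appearing in the paper's general transfer identity) and confirm that $\QQ$-linearity is invoked only to invert the nonzero rational $n!$ and nothing else.
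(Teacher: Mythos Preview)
Your proof is correct and follows essentially the same approach as the paper: both construct the inverse $v=\tfrac{1}{n!}\,\varrho\circ\tr^E(\pi)$ and verify the two identities, using that $\varrho$ is an epimorphism to cancel in the computation of $v\circ u$. The one noteworthy difference is that for $u\circ v=\id$ you compute directly
\[
u\circ v=\tfrac{1}{n!}\,(u\circ\varrho)\circ\tr^E(\pi)=\tfrac{1}{n!}\,E(\pi)\circ\tr^E(\pi)=\id,
\]
whereas the paper precomposes with $E(\pi)$ and then cancels it using the hypothesis that $E(\pi)$ is an epimorphism. Your route is cleaner and, as you observed, shows that the epimorphism hypothesis on $E(\pi)$ is in fact superfluous for the conclusion.
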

\begin{proof}
Let consider the above notations and diagram \eqref{eqhhs38}. Set $\xi:=\varrho\circ \tr^E(\pi)$. We have
\begin{equation}\label{tr.eq5}
\begin{split}
\xi\circ u\circ \varrho&=\varrho\circ\tr^E(\pi)\circ u\circ \varrho\\&
 =\varrho\circ\tr^E(\pi)\circ E(\pi)\\&
 =\varrho\circ \Nm(E(X))\\&
 = n!\cdot\varrho
\end{split}
\end{equation}
Hence, $\xi\circ u\circ \varrho= n!\cdot\varrho$. Notice that $\varrho$ is an epimorphism. This implies the equality
$\xi\circ u=n!\cdot\id$.
On the other hand, we have
\begin{equation}\label{tr.eq6}
\begin{split}
 u\circ (\frac{1}{n!}\cdot\xi)\circ E(\pi)&=u\circ \Big(\frac{1}{n!}\cdot\varrho\circ \tr^E(\pi)\Big)\circ E(\pi)\\&
 =\frac{1}{n!}\cdot\Big( E(\pi)\circ\tr^E(\pi)\circ E(\pi) \Big)\\&
 =\frac{1}{n!}\cdot\left( n!\cdot E(\pi) \right)\\&
 =  E(\pi) 
\end{split}
\end{equation}
It follows that $ u\circ (1/n!\cdot\xi)\circ E(\pi)
 = E(\pi)$. By assumption $E(\pi)$ is an epimorphism. Therefore, we get $u\circ (1/n!\cdot\xi)=\id$.
 We conclude that $u$ is an isomorphism. 
\end{proof}

 \medskip
 
{\em\noindent Projector symmetric powers}--- Let $\bcD$ be a stable model category \cite{HPSN97}. We say that $\bcD$ is $\QQ$-linear if $\Ho(\bcD)$ is a $\QQ$-linear triangulated category \cite{C-D13}. Let $\bcD$ be a symmetric monoidal $\QQ$-linear stable model category. Let $X$ be an object of $\Ho(\bcD)$. Suppose that $\tensor$ is the corresponding symmetric monoidal product of $\Ho(\bcD)$. For a positive integer $n$, we have a representation $\rho_{X^{\tensor n}}:\Sigma_n\into\Aut(X^{\tensor n})$ of $\Sigma_n$ on $X^{\tensor n}$ induced by permutation of factors. Set
$$d_n:=\frac{1}{n!}\cdot \Nm(X^{\tensor n})=\frac{1}{n!}\cdot \sum_{\sigma\in\Sigma_n}\rho_{A^{\tensor n}}(\sigma)\,.$$
This endomorphism is nothing but that the image of the symmetrization projector $1/n!\cdot\sum_{\sigma\in\Sigma_n}\sigma$ under the induced $\QQ$-linear map $\QQ[\Sigma_n]\into \End(X^{\tensor n})$. 
Since the category $\Ho(\bcD)$ is a $\QQ$-linear triangulated category with small coproducts, it is a pseudo-abelian category, see \cite{Nee01}. As $d_n$ is idempotent, i.e. $d_n\circ d_n=d_n$,  it splits in $\Ho(\bcD)$. This implies that $p$ has image in $\Ho(\bcD)$. We write 
$$\Sym^n_{\pr}(X):=\im d_n\,,$$
 and called it the $n$th {\em fold projector symmetric powers} of $X$. By convention, for $n=0$,  $\Sym^n_{\pr}(X)$ is the unit object $\Ho(\bcC)$. 

\begin{example}
{\em Let $\DM^{-}(k,\QQ)$ be the Voevodsky's category with rational coefficients over a field $k$ \cite{MVW06}. A $k$-rational point of a smooth projective curve $C$ induces a decomposition of the motive $M(C)$ into $\QQ\oplus M^1(C)\oplus\QQ(1)[2]$ in $\DM^{-}(k,\QQ)$. The $n$th fold projector symmetric power $\Sym^n_{\pr}(M^1(C))$ vanishes for $n$ sufficiently bigger that $2g$, where $g$ is the genus of $C$. }
\end{example}

\begin{proposition}
Let $\bcD$ be a symmetric monoidal $\QQ$-linear stable model category. The projector symmetric powers $\Sym^n_{\pr}$, for all $n\in\NN$, induce a $\lambda$-structure on $\Ho(\bcD)$.   
\end{proposition}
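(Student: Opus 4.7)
The plan is to verify the three axioms of a $\lambda$-structure directly, exploiting the fact that in a $\QQ$-linear pseudo-abelian triangulated category the symmetrizer idempotent $d_n = \frac{1}{n!}\sum_{\sigma\in\Sigma_n}\sigma$ splits functorially. Axiom (i) is immediate: $\Sym^0_{\pr}$ is constant with value $\one$ by convention, and $\Sym^1_{\pr} = \id$ because $d_1 = \id$ on any object $X$.

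For the K\"unneth tower axiom, the idea is to transport the box filtration from Section \ref{MorphismL} into the $\QQ$-linear setting and then symmetrize. Given a cofibre sequence $X \stackrel{f}{\into} Y \into Z$ in $\Ho(\bcD)$, represented by a cofibration between cofibrant objects, I would form the $n$th fold box filtration of $f$,
\[
X^{\tensor n}=\Box^n_0(f)\into\Box^n_1(f)\into\cdots\into\Box^n_n(f)=Y^{\tensor n},
\]
which is $\Sigma_n$-equivariant and whose $i$th subquotient is canonically $\Sigma_n$-equivariantly isomorphic to $\cor^{\Sigma_n}_{\Sigma_{n-i}\times\Sigma_i}(X^{\tensor(n-i)}\tensor Z^{\tensor i})$. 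Applying the idempotent $d_n$ termwise and splitting it in $\Ho(\bcD)$ (this is where pseudo-abelianness of the $\QQ$-linear triangulated category $\Ho(\bcD)$, in the sense of Neeman, is used) yields a filtration
\[
\Sym^n_{\pr}(X) = L^n_0 \into L^n_1 \into \cdots \into L^n_n = \Sym^n_{\pr}(Y),
\]
where $L^n_i = d_n\cdot\Box^n_i(f)$. The associated graded pieces are then $d_n$-summands of $\cor^{\Sigma_n}_{\Sigma_{n-i}\times\Sigma_i}(X^{\tensor(n-i)}\tensor Z^{\tensor i})$, and by Frobenius reciprocity (i.e.\ the identity $d_n\cdot\cor^{\Sigma_n}_{\Sigma_{n-i}\times\Sigma_i} = \cor^{\Sigma_n}_{\Sigma_{n-i}\times\Sigma_i}\circ (d_{n-i}\tensor d_i)$ modulo the factor $\binom{n}{i}$, which is invertible rationally) one identifies them with $\Sym^{n-i}_{\pr}(X)\tensor\Sym^i_{\pr}(Z)$.

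Finally, the functoriality axiom (iii) follows from the naturality of the box filtration in morphisms of cofibre sequences, together with the fact that the splitting of $d_n$ is functorial. The main technical obstacle is the interplay between idempotent-splitting and the triangulated structure: one must verify that applying $d_n$ to the cofibre sequence $\Box^n_{i-1}(f)\into\Box^n_i(f)\into\Box^n_i(f)/\Box^n_{i-1}(f)$ produces a genuine distinguished triangle in $\Ho(\bcD)$ whose third term is the claimed product. This amounts to showing that the exact functor ``take the trivial-isotypic summand'' commutes with cones, which in a $\QQ$-linear pseudo-abelian triangulated category reduces to the semisimplicity of $\QQ[\Sigma_n]$ and the fact that a direct summand of a distinguished triangle (cut out by a compatible family of idempotents) is again a distinguished triangle.
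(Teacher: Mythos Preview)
Your proposal is correct and follows essentially the same strategy as the paper, though with considerably more detail. The paper's own proof is extremely brief: it verifies axiom (i) as you do, and for the K\"unneth tower and functoriality axioms simply cites \cite[Proposition~15]{Go06}, which constructs exactly the filtration you describe by symmetrizing the box filtration and identifying the graded pieces via the Frobenius-reciprocity isomorphism $(\cor^{\Sigma_n}_{\Sigma_{n-i}\times\Sigma_i}V)^{\Sigma_n}\isom V^{\Sigma_{n-i}\times\Sigma_i}$ in a $\QQ$-linear pseudo-abelian setting. So your sketch is effectively an unpacking of that reference; the one point worth tightening is your formulation of the Frobenius identity (the cleanest statement is that the $d_n$-summand of the induced object is the $(d_{n-i}\tensor d_i)$-summand of the inducing object, without any binomial factor), but the substance is right and matches what the paper invokes.
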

\begin{proof}
By convention, $\Sym^0_{\pr}$ is the constant endofunctor whose value is the unit object of $\Ho(\bcD)$. From the definition, the endofunctor $\Sym^1_{\pr}$ is the identity on $\Ho(\bcD)$. Let $X\into Y\into Z$ be a cofibre sequence in $\Ho(\bcD)$. By \cite[Proposition 15]{Go06}, there exists a sequence
\begin{equation}\label{fscr9}
\Sym^n_{\pr}(X)=A_0\into A_1\into \cdots\into A_n=\Sym^n_{\pr}(Y)
\end{equation} 
in $\Ho(\bcD)$, such that for each $1\leq i\leq n$, we have
$$\cone(A_{i-1}\into A_i)=\Sym^{n-i}_{\pr}(X)\tensor \Sym^i_{\pr}(Z)\,.$$
Thus, the K\"unneth tower axiom is satisfied. The functorial axiom on cofibre sequences follows from the functorial construction of the sequences of the form\eqref{fscr9}, see {\it loc.cit.} 
\end{proof}

%

\bigskip

We recall that an $h$-{\em covering} of a scheme $X$ is a finite family $\{p_i:X_i\into X\}_{i\in I}$ of morphisms of finite type such that the induced morphism $\amalg_{i\in I}p_i:\coprod_{i\in I}X_i\into X$ is a universal topological epimorphism. A $\qfh$-{\em covering} of $X$ is a  $h$-covering $\{p_i\}_{i\in I}$ such that $p_i$ is quasi-finite for all $i\in I$ (see \cite{Voe96}).
In the next paragraphs, all $\qfh$-sheaves are defined on the category of schemes of finite type over a field $k$. 

\begin{lemma}[Voevodsky]\label{tr471}
Let $X$ be a quasi-projective $k$-scheme and let $\pi$ be the canonical morphism from $X^n$ onto $\Sym^n(X)$.  Suppose that $F$ is a $\qfh$-sheaf of Abelian monoids on the category of $k$-schemes of finite type, and let us denote by $\pi^*$ the restriction morphism $F(\Sym^n(X))\into F(X^n)$ induced by $\pi$. Then, the image of $\pi^*$ coincides with $F(X^n)^{\Sigma_n}$. 
\end{lemma}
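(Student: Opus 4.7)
The plan is to establish the two inclusions separately. The inclusion $\im(\pi^*) \subseteq F(X^n)^{\Sigma_n}$ is essentially formal: for every $\sigma \in \Sigma_n$ regarded as an automorphism of $X^n$, we have $\pi \circ \sigma = \pi$, hence $\sigma^* \circ \pi^* = \pi^*$, so any section of the form $\pi^*(t)$ is $\Sigma_n$-fixed.

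For the reverse inclusion, I would exploit the $\qfh$-sheaf condition. Since $X$ is quasi-projective, $X^n$ and $\Sym^n X$ are of finite type over $k$, and $\pi : X^n \to \Sym^n X$ is finite and surjective, hence a single-element $\qfh$-covering. The sheaf property then yields an equalizer diagram
\[
F(\Sym^n X) \longrightarrow F(X^n) \rightrightarrows F(X^n \times_{\Sym^n X} X^n),
\]
where the two right-hand arrows are pullbacks along the two projections $p_1, p_2$. It therefore suffices to identify this equalizer with $F(X^n)^{\Sigma_n}$.

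To this end, I would introduce the morphism
\[
\Phi : \coprod_{\sigma \in \Sigma_n} X^n \longrightarrow X^n \times_{\Sym^n X} X^n
\]
whose restriction to the $\sigma$-th copy is $(\id_{X^n}, \sigma)$. Each restriction is a closed immersion (the graph of $\sigma$), so $\Phi$ is finite; and $\Phi$ is surjective because over any point $y$ of $\Sym^n X$ the set-theoretic fibre of the right-hand side is $\pi^{-1}(y) \times \pi^{-1}(y)$, with $\pi^{-1}(y)$ being a single $\Sigma_n$-orbit since $\pi$ is the quotient by $\Sigma_n$. Hence $\Phi$ is itself a $\qfh$-covering, and applying the sheaf property once more gives an injection $F(X^n \times_{\Sym^n X} X^n) \hookrightarrow \prod_{\sigma \in \Sigma_n} F(X^n)$. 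Under this embedding, $p_1^*(s)$ corresponds to the constant tuple $(s)_{\sigma}$ while $p_2^*(s)$ corresponds to $(\sigma^* s)_{\sigma}$, so $s$ lies in the equalizer exactly when $\sigma^* s = s$ for every $\sigma \in \Sigma_n$.

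The main obstacle is the topological surjectivity of $\Phi$, i.e.\ verifying that at every (possibly non-closed) point of $\Sym^n X$ all preimages in $X^n$ lie in a single $\Sigma_n$-orbit. This relies on the fact that $\pi$ is the geometric quotient for the $\Sigma_n$-action on the quasi-projective $k$-scheme $X^n$, so that scheme-theoretic fibres of $\pi$ agree with $\Sigma_n$-orbits; once this is granted, both inclusions combine to give $\im(\pi^*) = F(X^n)^{\Sigma_n}$.
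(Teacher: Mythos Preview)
Your argument is correct and is precisely the approach the paper has in mind: the paper's own proof consists of the single observation that $\pi$ is a $\qfh$-covering, together with a reference to \cite[Prop.~3.3.2]{Voe96} and \cite[Lemma~5.16]{SV96}, whose proofs proceed exactly as you do (equalizer from the sheaf condition, then the auxiliary $\qfh$-cover $\coprod_{\sigma}X^n\to X^n\times_{\Sym^nX}X^n$ by graphs of group elements). Your identification of the one nontrivial point---surjectivity of $\Phi$, which comes down to $\pi$ being a geometric quotient so that fibres are $\Sigma_n$-orbits---is also the crux in those references.
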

\begin{proof}
 As the morphism $\pi$ forms a $\qfh$-covering of $\Sym^n(X)$, we can follow the arguments of the proof of \cite[Prop.~3.3.2]{Voe96} or \cite[Lemma~5.16]{SV96}. 
\end{proof}

\begin{proposition}\label{Voetranf}
Let $X$ be a quasi-projective $k$-scheme. For every $n\in\NN$, the morphism $\ZZ_{\qfh}(\pi):\ZZ_{\qfh}(X^n)\into\ZZ_{\qfh}(\Sym^nX)$ has transfer, i.e. there exists a morphism $\tr_n$ such that 
\begin{equation}\label{speq3c}
\ZZ_{\qfh}(\pi)\circ\tr_n=\sum_{\sigma}\ZZ_{\qfh}(\sigma), \qquad \text{and}
\end{equation}
\begin{equation}\label{eqg57}
\ZZ_{\qfh}(\pi)\circ\tr_n
=n!\cdot\id_{\ZZ_{\qfh}(\Sym^nX)}\,.
\end{equation}
\end{proposition}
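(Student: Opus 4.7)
The plan is to produce the transfer by Yoneda and then invoke Lemma~\ref{tr471} to realize the norm element in the image of $\pi^{\ast}$. Concretely, for any $\qfh$-sheaf of Abelian groups $F$, the Yoneda identification gives
\[
\Hom_{\qfh}(\ZZ_{\qfh}(Y),F)\isom F(Y),
\]
so a morphism $\tr_n:\ZZ_{\qfh}(\Sym^nX)\to\ZZ_{\qfh}(X^n)$ corresponds to a section $t\in\ZZ_{\qfh}(X^n)(\Sym^nX)$. Under these identifications, precomposition with $\ZZ_{\qfh}(\pi)$ corresponds to the restriction map $\pi^{\ast}:\ZZ_{\qfh}(X^n)(\Sym^nX)\to\ZZ_{\qfh}(X^n)(X^n)$, while the symmetric group $\Sigma_n$ acts on $\ZZ_{\qfh}(X^n)(X^n)$ through its action on $X^n$ by permutation of factors. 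The element $\nu:=\sum_{\sigma\in\Sigma_n}\ZZ_{\qfh}(\sigma)\in\ZZ_{\qfh}(X^n)(X^n)$ is manifestly $\Sigma_n$-invariant, so by Lemma~\ref{tr471} applied to $F=\ZZ_{\qfh}(X^n)$ there exists $t\in\ZZ_{\qfh}(X^n)(\Sym^nX)$ with $\pi^{\ast}(t)=\nu$. Define $\tr_n$ to be the morphism corresponding to $t$.

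Next I would verify the two defining identities of the transfer. The first, namely
\[
\tr_n\circ\ZZ_{\qfh}(\pi)=\sum_{\sigma\in\Sigma_n}\ZZ_{\qfh}(\sigma),
\]
is immediate from the construction: by the Yoneda translation, the left-hand side corresponds to $\pi^{\ast}(t)$, which equals $\nu$ by choice of $t$. For the second identity
\[
\ZZ_{\qfh}(\pi)\circ\tr_n=n!\cdot\id_{\ZZ_{\qfh}(\Sym^nX)},
\]
I would use that $\ZZ_{\qfh}(\pi)$ is an epimorphism in the category of $\qfh$-sheaves of Abelian groups, which follows from the fact that the singleton $\{\pi:X^n\to\Sym^nX\}$ is a $\qfh$-covering (as $\pi$ is finite and surjective). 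Thus it suffices to check the identity after precomposition with $\ZZ_{\qfh}(\pi)$. Using the already established first identity, we compute
\[
\bigl(\ZZ_{\qfh}(\pi)\circ\tr_n\bigr)\circ\ZZ_{\qfh}(\pi)=\ZZ_{\qfh}(\pi)\circ\Bigl(\sum_{\sigma}\ZZ_{\qfh}(\sigma)\Bigr)=\sum_{\sigma}\ZZ_{\qfh}(\pi\circ\sigma)=n!\cdot\ZZ_{\qfh}(\pi),
\]
because $\pi\circ\sigma=\pi$ for every $\sigma\in\Sigma_n$. The right-hand side also equals $(n!\cdot\id)\circ\ZZ_{\qfh}(\pi)$, and cancelling the epimorphism $\ZZ_{\qfh}(\pi)$ yields the desired equality.

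The main (and essentially only) nontrivial input is the existence of a lift of the $\Sigma_n$-invariant element $\nu$ along $\pi^{\ast}$, which is precisely the content of Lemma~\ref{tr471}; without it, one could not define the transfer at all, since no section of $\pi$ exists in the category of schemes. The rest is a formal manipulation of the Yoneda identifications together with the observation that finite surjective morphisms give rise to epimorphisms of $\qfh$-sheaves.
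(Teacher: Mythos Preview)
Your proof is correct and follows essentially the same approach as the paper: apply Lemma~\ref{tr471} to $F=\ZZ_{\qfh}(X^n)$ to lift the $\Sigma_n$-invariant norm element along $\pi^{\ast}$, then deduce the second identity by precomposing with $\ZZ_{\qfh}(\pi)$ and using $\pi\circ\sigma=\pi$. You have in fact been more careful than the paper in two respects: you silently corrected the evident typo in \eqref{speq3c} (which should read $\tr_n\circ\ZZ_{\qfh}(\pi)=\sum_{\sigma}\ZZ_{\qfh}(\sigma)$, as the stated composite does not even typecheck), and you explicitly justified the cancellation step by noting that $\{\pi\}$ is a $\qfh$-covering so that $\ZZ_{\qfh}(\pi)$ is an epimorphism, whereas the paper simply asserts that the final equality ``induces'' \eqref{eqg57}.
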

\begin{proof}
Let us consider the representable $\qfh$-sheaf $F=\ZZ_{\qfh}(X^n)$. Every permutation $\sigma$ in $\Sigma_n$ induces an automorphism $\sigma:X^n\into X^n$ by permuting factors, $\sigma$ corresponds to an element of $F(X^n)$, denoted by the same letter. Notice that the element $\theta_n:=\sum_{\sigma\in\Sigma_n}\sigma$ is an element of $F(X^n)$ which is $\Sigma_n$-invariant, i.e. $\sigma(\theta_n)=\theta$ for all permutation $\sigma\in\Sigma_n$. By Lemma \ref{tr471}, there exists an element $t_n$ of $F(\Sym^nX)$ such that $t_n\circ \pi^*=\theta$. We denote by $\tr_n:\ZZ_{\qfh}(\Sym^n)\into\ZZ_{\qfh}(X^n)$ the morphism of $\qfh$-sheaves corresponding to the section $t_n$. Then the equality $t_n\circ \pi^*=\theta$ gives the equality \eqref{speq3c}. Now, from \eqref{speq3c}, we have 
 \begin{align*}
 \ZZ_{\qfh}(\pi)\circ\tr_n\circ \ZZ_{\qfh}(\pi)&=\left(\sum_{\sigma}\ZZ_{\qfh}(\sigma)\right)\circ \ZZ_{\qfh}(\pi)\\&
=\sum_{\sigma}\ZZ_{\qfh}(\sigma)\circ \ZZ_{\qfh}(\pi)\\&
=\sum_{\sigma}\ZZ_{\qfh}(\pi)\\&
=n!\cdot\ZZ_{\qfh}(\pi)\,.
 \end{align*}
hence, $ \ZZ_{\qfh}(\pi)\circ\tr_n\circ \ZZ_{\qfh}(\pi)
=n!\cdot\ZZ_{\qfh}(\pi)$. This induces the equality \eqref{eqg57}.
\end{proof}

\bigskip

\section{Comparison of symmetric powers}
\label{Mainsec}

In this section $\SH_T(k)$ will be the stable $\AF^1$-homotopy category of schemes over a field $k$ constructed in \cite{Jar00}. The main result in this section is Theorem \ref{finth1}, which asserts that if $-1$ is a sum of squares, then the categoric, geometric and projector symmetric powers of a quasi-projective scheme are isomorphic in $\SH_T(k)_{\QQ}$.

\medskip

  \begin{lemma}\label{tscor15}
Let $X$ be a quasi-projective $k$-scheme. Then, the canonical morphism $\QQ_{\qfh}(\Sym^nh_X)\into \QQ_{\qfh}(\Sym^n_gh_X)$ is an isomorphism of $\qfh$-sheaves of $\QQ$-vector spaces.  
 \end{lemma}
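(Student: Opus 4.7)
The plan is to identify this canonical morphism with the universal morphism $u$ appearing in Proposition \ref{gstab45}, and then to verify that the hypotheses of that proposition are satisfied for the functor $E = \QQ_{\qfh} \circ h$ from the category of quasi-projective $k$-schemes (with Cartesian product) to the category of $\qfh$-sheaves of $\QQ$-vector spaces (with tensor product over $\QQ$). First I would observe that since $\QQ_{\qfh}$ is a left adjoint (the free $\QQ$-vector space functor followed by $\qfh$-sheafification), it preserves colimits, and it sends Cartesian products of representable sheaves to tensor products; hence there is a canonical identification
\begin{equation*}
\QQ_{\qfh}(\Sym^n h_X) \;=\; \QQ_{\qfh}(h_X^{\times n}/\Sigma_n) \;\isom\; \QQ_{\qfh}(h_X)^{\tensor n}/\Sigma_n,
\end{equation*}
while by Example \ref{papexxv71} we have $\QQ_{\qfh}(\Sym^n_g h_X) = \QQ_{\qfh}(h_{\Sym^n X})$. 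Under these identifications the canonical morphism in the statement is precisely the universal arrow $u : E(X)^{\tensor n}/\Sigma_n \into E(X^{\wedge n}/\Sigma_n)$ of diagram \eqref{eqhhs38}.

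Next I would check the hypotheses of Proposition \ref{gstab45}. The category of $\qfh$-sheaves of $\QQ$-vector spaces is abelian, $\QQ$-linear, and has all small colimits; the functor $E$ is monoidal because representable $\qfh$-sheaves convert Cartesian products of schemes into tensor products of the associated free $\QQ$-modules. The morphism $E(\pi) = \QQ_{\qfh}(\pi) : \QQ_{\qfh}(X^n) \into \QQ_{\qfh}(\Sym^n X)$ is an epimorphism of $\qfh$-sheaves because $\pi : X^n \into \Sym^n X$ is itself a $\qfh$-covering, so at the level of $\qfh$-sheafified free $\QQ$-modules the induced map is surjective. Finally, Proposition \ref{Voetranf} supplies a transfer morphism $\tr_n$ for $\ZZ_{\qfh}(\pi)$ satisfying the two relations required in the definition of a transfer; tensoring with $\QQ$ transports these to $\QQ_{\qfh}(\pi)$, giving a transfer $\tr^E(\pi)$ for $E(\pi)$.

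With all the hypotheses verified, Proposition \ref{gstab45} immediately yields that $u$ is an isomorphism, which is exactly the claim. I do not expect any serious obstacle: the only point requiring minor care is the compatibility between the sheaf-theoretic quotient $h_X^{\times n}/\Sigma_n$ (which a priori involves a $\qfh$-sheafification step on top of a presheaf quotient) and the algebraic quotient $\QQ_{\qfh}(h_X)^{\tensor n}/\Sigma_n$, but this is immediate from the fact that $\QQ_{\qfh}$ preserves colimits and therefore commutes with the formation of quotients by finite group actions. Everything else is a direct application of the general formalism of transfers already set up in Section \ref{Transfers}.
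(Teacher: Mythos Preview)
Your proposal is correct and follows essentially the same approach as the paper: identify $\QQ_{\qfh}(\Sym^n h_X)$ with $\QQ_{\qfh}(X)^{\tensor n}/\Sigma_n$ using that $\QQ_{\qfh}$ is a left adjoint, invoke the transfer from Proposition~\ref{Voetranf} (tensored with $\QQ$), and then apply Proposition~\ref{gstab45}. Your write-up is in fact more explicit than the paper's, since you spell out the epimorphism hypothesis (via $\pi$ being a $\qfh$-covering) that the paper leaves implicit.
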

 \begin{proof}
Let $\pi:X^n\into\Sym^n(X)$ be the canonical morphism. Since the functor $\QQ_{\qfh}(-)$ is a left adjoint, we have the following isomorphisms
 $$\Sym^n\QQ_{\qfh}(X)=\QQ_{\qfh}(X)^{\tensor n}/\Sigma_n\isom \QQ_{\qfh}(X^{ n})/\Sigma_n\,.$$
 By Proposition \ref{Voetranf}, the morphism $\QQ_{\qfh}(\pi)$ has transfer. Hence, the corollary follows from Proposition \ref{gstab45}.  
 \end{proof}

  \begin{corollary}\label{tscor21}
 Let $X$ be a quasi-projective $k$-scheme. Then, the morphism  from $\QQ_{\qfh}(\Sym^nh_X)$ to $\QQ_{\qfh}(\Sym^n_gh_X)$ is an isomorphism in  $\DM_{\qfh}(k)_{\QQ}$.  
 \end{corollary}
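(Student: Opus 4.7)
The plan is essentially to observe that the work has already been done at the sheaf level in Lemma \ref{tscor15}, and only the functoriality of the passage from $\qfh$-sheaves to $\DM_{\qfh}(k)_{\QQ}$ remains to be invoked. More precisely, the category $\DM_{\qfh}(k)_{\QQ}$ is built as the stable $\AF^1$-homotopy category of symmetric $T$-spectra of chain complexes of $\qfh$-Abelian sheaves with transfers, with rational coefficients. There is a canonical functor sending a $\qfh$-sheaf of $\QQ$-vector spaces $F$ to the $T$-suspension spectrum $\Sigma^{\infty}_T F[0]$ of the chain complex concentrated in degree zero; this functor takes $\QQ_{\qfh}(Y)$ to the motive of $Y$ in $\DM_{\qfh}(k)_{\QQ}$, and it preserves isomorphisms because it is a functor.

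Thus the plan is: apply the canonical functor from $\qfh$-sheaves of $\QQ$-vector spaces to $\DM_{\qfh}(k)_{\QQ}$ to the isomorphism $\QQ_{\qfh}(\Sym^n h_X) \iso \QQ_{\qfh}(\Sym^n_g h_X)$ provided by Lemma \ref{tscor15}. The result is the required isomorphism in $\DM_{\qfh}(k)_{\QQ}$. There is no real obstacle here; the only point requiring a moment of care is to confirm that in the rational $\qfh$-setting every sheaf automatically carries transfers (so the passage through the ``with transfers" category is harmless), but this is part of the setup recalled in Section \ref{Prelims} leading to the equivalence $\SH_T(k)_{\QQ} \isom \DM_{\qfh}(k)_{\QQ}$ of Corollary \ref{tfddj}. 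Hence the corollary follows immediately from Lemma \ref{tscor15} by functoriality.
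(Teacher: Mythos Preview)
Your proposal is correct and follows essentially the same line as the paper: the paper's proof simply invokes Lemma~\ref{tscor15} together with \cite[Prop.~5.3.37]{C-D13}, the latter supplying exactly the point you spell out about rational $\qfh$-sheaves automatically carrying transfers so that the passage to $\DM_{\qfh}(k)_{\QQ}$ is well-defined and functorial. Your argument is just a slightly more expanded version of the same observation that functors preserve isomorphisms.
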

\begin{proof}
It follows from Corollary \ref{tscor15} and \cite[Prop.~5.3.37]{C-D13}.  
\end{proof}
Write $M_{\qfh,\QQ}$ for the canonical functor from the category of $k$-schemes of finite type to $\DM_{\qfh}(k)_{\QQ}$.
 \begin{corollary}\label{tscor2}
 Let $X$ be a quasi-projective $k$-scheme and let $\pi:X^n\into\Sym^n(X)$ be the canonical morphism. Then the morphism $M_{\qfh,\QQ}(\pi)$ has transfer.  
 \end{corollary}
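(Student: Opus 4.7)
The plan is to show that the transfer morphism constructed in Proposition \ref{Voetranf} at the level of $\qfh$-sheaves can be transported to $\DM_{\qfh}(k)_{\QQ}$ by the canonical functor. More precisely, the functor $M_{\qfh,\QQ}$ factors as
$$\Sch_{ft}/k \xrightarrow{\QQ_{\qfh}(-)} \Shv_{\qfh}(k,\QQ) \hookrightarrow \ch_+(\underline{\Ab}^{\tr}_{\qfh})_{\QQ} \xrightarrow{\Sigma^{\infty}_T} \Sp_T\ch_+(\underline{\Ab}^{\tr}_{\qfh})_{\QQ} \longrightarrow \DM_{\qfh}(k)_{\QQ},$$
where the first arrow sends a scheme to its representable $\qfh$-sheaf with $\QQ$-coefficients, and the remaining arrows are the natural additive (and monoidal) functors entering into the definition of $\DM_{\qfh}(k)_{\QQ}$. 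Let $F$ denote this composite from $\Shv_{\qfh}(k,\QQ)$ to $\DM_{\qfh}(k)_{\QQ}$, so that $F(\QQ_{\qfh}(X)) \cong M_{\qfh,\QQ}(X)$ canonically and $F(\QQ_{\qfh}(g)) = M_{\qfh,\QQ}(g)$ for any morphism $g$ of $k$-schemes of finite type.

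First I would extend Proposition \ref{Voetranf} from $\ZZ$-coefficients to $\QQ$-coefficients by tensoring with $\QQ$ (or, equivalently, applying the same argument to the $\qfh$-sheaf $\QQ_{\qfh}(X^n)$ in place of $\ZZ_{\qfh}(X^n)$ and invoking Lemma \ref{tr471} for the sheaf of Abelian monoids $\QQ_{\qfh}(X^n)$). This yields a morphism
$$\tr_n : \QQ_{\qfh}(\Sym^n X) \longrightarrow \QQ_{\qfh}(X^n)$$
of $\qfh$-sheaves satisfying
$$\tr_n \circ \QQ_{\qfh}(\pi) = \sum_{\sigma \in \Sigma_n} \QQ_{\qfh}(\sigma), \qquad \QQ_{\qfh}(\pi) \circ \tr_n = n!\cdot \id_{\QQ_{\qfh}(\Sym^n X)},$$
which are exactly the two relations in the definition of transfer given at the beginning of Section \ref{Transfers}.

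Applying the additive functor $F$ to $\tr_n$ produces a morphism
$$\tr_n' := F(\tr_n) : M_{\qfh,\QQ}(\Sym^n X) \longrightarrow M_{\qfh,\QQ}(X^n)$$
in $\DM_{\qfh}(k)_{\QQ}$. Because $F$ is additive and sends $\QQ_{\qfh}(\pi)$ and $\QQ_{\qfh}(\sigma)$ to $M_{\qfh,\QQ}(\pi)$ and $M_{\qfh,\QQ}(\sigma)$ respectively, the two displayed identities above are preserved under $F$, giving
$$\tr_n' \circ M_{\qfh,\QQ}(\pi) = \sum_{\sigma \in \Sigma_n} M_{\qfh,\QQ}(\sigma) = \Nm^{M_{\qfh,\QQ}}(X^n), \qquad M_{\qfh,\QQ}(\pi) \circ \tr_n' = n!\cdot \id,$$
which is exactly the assertion that $M_{\qfh,\QQ}(\pi)$ admits a transfer in $\DM_{\qfh}(k)_{\QQ}$. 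The only non-formal point is the identification $F(\QQ_{\qfh}(X)) \cong M_{\qfh,\QQ}(X)$ together with its compatibility with the $\Sigma_n$-action on $X^n$; this is built into the construction of $\DM_{\qfh}(k)_{\QQ}$, and is the one verification requiring care.
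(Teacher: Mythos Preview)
Your proposal is correct and follows the same approach as the paper, which simply cites Proposition~\ref{Voetranf}; you have spelled out in detail the step the paper leaves implicit, namely that the transfer on $\qfh$-sheaves is carried to $\DM_{\qfh}(k)_{\QQ}$ by the canonical additive functor $M_{\qfh,\QQ}$, so the two transfer identities persist.
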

\begin{proof}
It follows from Proposition \ref{Voetranf}.
\end{proof}
    
Let $E_{\QQ}$ be the canonical functor from the category of $k$-schemes of finite type to $\SH_T(k)_{\QQ}$. 

\begin{corollary}\label{cog7ww}
Suppose that $-1$ is a sum of squares in a field $k$. For a quasi-projective $k$-scheme  $X$, the induced morphism $E_{\QQ}(\pi)$ from $E_{\QQ}(X^n)$ to $ E_{\QQ}(\Sym^nX)$ has transfer. 
\end{corollary}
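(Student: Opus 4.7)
The plan is to reduce the statement to Corollary \ref{tscor2} by invoking the equivalence of categories between the rational stable $\AF^1$-homotopy category and the rational category of $\qfh$-motives. More precisely, the hypothesis that $-1$ is a sum of squares in $k$ is exactly what is needed to apply Corollary \ref{tfddj}, which provides an equivalence $\SH_T(k)_{\QQ}\isom \DM_{\qfh}(k)_{\QQ}$. Under this equivalence, the two functors $E_{\QQ}$ and $M_{\qfh,\QQ}$, both defined on the category of quasi-projective $k$-schemes, should correspond (this is how the equivalence is set up, via the Hurewicz/Eilenberg--MacLane adjunction composed with the canonical comparison between Nisnevich motives with transfers and $\qfh$-motives with rational coefficients). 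So I would first record that, for every quasi-projective $k$-scheme $Y$, the object $E_{\QQ}(Y)$ corresponds to $M_{\qfh,\QQ}(Y)$ under the equivalence of Corollary \ref{tfddj}, and similarly $E_{\QQ}(\pi)$ corresponds to $M_{\qfh,\QQ}(\pi)$.

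Next, I would apply Corollary \ref{tscor2}: the morphism $M_{\qfh,\QQ}(\pi): M_{\qfh,\QQ}(X^n)\to M_{\qfh,\QQ}(\Sym^n X)$ admits a transfer, i.e. a morphism $\tr$ in $\DM_{\qfh}(k)_{\QQ}$ satisfying $M_{\qfh,\QQ}(\pi)\circ\tr=n!\cdot\id$ and $\tr\circ M_{\qfh,\QQ}(\pi)=\sum_{\sigma\in\Sigma_n}M_{\qfh,\QQ}(\sigma)$. Transporting this morphism through the equivalence of Corollary \ref{tfddj} produces a morphism in $\SH_T(k)_{\QQ}$ from $E_{\QQ}(\Sym^n X)$ to $E_{\QQ}(X^n)$ which satisfies the two defining relations for a transfer of $E_{\QQ}(\pi)$, because an equivalence of symmetric monoidal additive categories preserves composition, identities, sums, and the action of $\Sigma_n$.

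The one substantive point to verify is the compatibility of $E_{\QQ}$ with $M_{\qfh,\QQ}$ under the equivalence; this is not really an obstacle since it is part of the construction of the equivalence through the chain $\SH_T(k)_{\QQ}=D_{\AF^1}(k)_{\QQ}=\DM_{\Bei}(k)=\DM_{\qfh}(k)_{\QQ}$ recalled just before Corollary \ref{tfddj}, and each step of this chain sends the rationalized suspension spectrum of a scheme to the corresponding motive. Once this compatibility is made explicit, the corollary follows formally by transporting $\tr$ across the equivalence.
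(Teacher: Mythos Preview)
Your proposal is correct and follows exactly the same approach as the paper, which simply cites Corollary~\ref{tscor2} and Corollary~\ref{tfddj}. You have spelled out in more detail the transport-through-equivalence argument that the paper leaves implicit, including the compatibility of $E_{\QQ}$ with $M_{\qfh,\QQ}$ under the chain of equivalences.
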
 
\begin{proof}
It follows from Corollary \ref{tscor2} and Corollary \ref{tfddj}.   

\end{proof}

\begin{proposition}\label{prof3401}
Assume $-1$ is a sum of squares in a field $k$. For a  quasi-projective $k$-scheme  $X$, one has an isomorphism
$$\Sym^n_{\pr}E_{\QQ}(X)\isom E_{\QQ}(\Sym^n X)\,.$$
\end{proposition}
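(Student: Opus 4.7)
The plan is to deduce Proposition \ref{prof3401} directly from Proposition \ref{gstab45} applied to the monoidal functor $E_{\QQ}$, after identifying the target of the universal morphism with the projector symmetric power. Concretely, I would apply Proposition \ref{gstab45} with $(\bcD,\wedge)$ the category of quasi-projective $k$-schemes with the Cartesian product, $(\bcE,\tensor)$ the category $\SH_T(k)_{\QQ}$ with its smash product, and $E=E_{\QQ}$. This is legitimate because $E_{\QQ}$ factors through the suspension functor $\Sigma^{\infty}_T(-)_+$, which is monoidal (as $(X\times Y)_+\isom X_+\wedge Y_+$), and because $\SH_T(k)_{\QQ}$ is a $\QQ$-linear triangulated category with arbitrary coproducts (hence pseudo-abelian by \cite{Nee01}) and, in particular, admits finite colimits.

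First I would verify the two hypotheses of Proposition \ref{gstab45} for $E_{\QQ}(\pi)$, where $\pi: X^n \to \Sym^n X$ is the quotient map. The existence of a transfer $\tr^{E_{\QQ}}(\pi)$ is precisely Corollary \ref{cog7ww}, which uses the hypothesis that $-1$ is a sum of squares in $k$ via the chain of equivalences passing through $\DM_{\qfh}(k)_{\QQ}$. The epimorphism condition is then automatic: from the defining identity $E_{\QQ}(\pi)\circ \tr^{E_{\QQ}}(\pi)= n!\cdot \id_{E_{\QQ}(\Sym^n X)}$ and the invertibility of $n!$ in the $\QQ$-linear setting, $E_{\QQ}(\pi)$ is a split epimorphism, hence in particular an epimorphism in $\SH_T(k)_{\QQ}$. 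Therefore Proposition \ref{gstab45} yields that the universal morphism
\[ u: E_{\QQ}(X)^{\wedge n}/\Sigma_n \longrightarrow E_{\QQ}(X^n/\Sigma_n) = E_{\QQ}(\Sym^n X) \]
is an isomorphism in $\SH_T(k)_{\QQ}$.

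It remains to identify the source $E_{\QQ}(X)^{\wedge n}/\Sigma_n$ with $\Sym^n_{\pr}E_{\QQ}(X)$. In a $\QQ$-linear pseudo-abelian category, the coequalizer of a finite group $G$ acting on an object $Y$ (with $|G|$ invertible) coincides with the image of the symmetrization projector $\tfrac{1}{|G|}\sum_{g\in G}\rho(g)$: the projector splits the canonical epimorphism $Y\to Y/G$, and conversely the image of the projector is the coequalizer of the $G$-action by the standard averaging argument. Applying this to $Y=E_{\QQ}(X)^{\wedge n}$ and $G=\Sigma_n$ gives a canonical isomorphism $E_{\QQ}(X)^{\wedge n}/\Sigma_n \isom \Sym^n_{\pr}E_{\QQ}(X)$. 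Composing with $u$ yields the required isomorphism $\Sym^n_{\pr}E_{\QQ}(X)\isom E_{\QQ}(\Sym^n X)$.

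The main obstacle, if any, is a bookkeeping issue rather than a conceptual one: making rigorous the identification of the colimit $E_{\QQ}(X)^{\wedge n}/\Sigma_n$ (which is what Proposition \ref{gstab45} produces via the universal property) with the projector image $\Sym^n_{\pr}E_{\QQ}(X)$ (which is defined via splitting of idempotents in $\SH_T(k)_{\QQ}$). Everything else is a straightforward citation: existence of transfers reduces via Morel's equivalence $\SH_T(k)_{\QQ}\isom \DM_{\qfh}(k)_{\QQ}$ (Corollary \ref{tfddj}) to Voevodsky's transfer on $\qfh$-sheaves (Proposition \ref{Voetranf}), and the epimorphism property follows formally from the transfer identity together with the invertibility of $n!$.
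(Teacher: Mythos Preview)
Your proof is correct and rests on the same essential ingredient as the paper---the transfer from Corollary \ref{cog7ww}---but packages the argument slightly differently. The paper's proof is more direct: from the transfer identities $\tr_{\QQ}(\pi)\circ E_{\QQ}(\pi)=\Nm(E_{\QQ}(X))$ and $E_{\QQ}(\pi)\circ \tr_{\QQ}(\pi)=n!\cdot\id$, one has $d_n=\tfrac{1}{n!}\tr_{\QQ}(\pi)\circ E_{\QQ}(\pi)$, and this factorization (together with $E_{\QQ}(\pi)\circ\tfrac{1}{n!}\tr_{\QQ}(\pi)=\id$) directly exhibits $E_{\QQ}(\Sym^n X)$ as the image of the idempotent $d_n$. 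Your route instead passes through Proposition \ref{gstab45} to obtain $E_{\QQ}(X)^{\wedge n}/\Sigma_n\isom E_{\QQ}(\Sym^n X)$ and then separately identifies the $\Sigma_n$-quotient with the projector image; this is the same content unwound into two steps. The paper's approach sidesteps the bookkeeping you flagged about colimits in the triangulated category, while yours makes the link to Proposition \ref{gstab45} explicit.
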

\begin{proof}
By Corollary \ref{cog7ww}, the morphism $E_{\QQ}(\pi)$ has transfer, say $\tr_{\QQ}(\pi)$. From the equality  $\tr_{\QQ}(\pi)\circ E_{\QQ}(\pi)=\Nm(E_{\QQ}(X))$, we obtain that the projector $d_n$ is equal to $1/n!\cdot\tr_{\QQ}(\pi)\circ E_{\QQ}(\pi)$. 
Hence, from the equality $E(\pi)\circ \tr_{\QQ}(\pi)=n!.\id$, we deduce that $\im d_n\isom E_{\QQ}(\Sym^nX)$, as required. 
\end{proof}

\begin{proposition}\label{geostth74}
Suppose that $-1$ is a sum of squares in $k$. For every  quasi-projective $k$-scheme $X$, the canonical morphism $$\Sym^n_{T}(\Sigma^{\infty}_TX_+)\into \Sigma^{\infty}_T(\Sym^nX)_+$$ 
is a rational stable $\AF^1$-weak equivalence. 
\end{proposition}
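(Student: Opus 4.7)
The plan is to pass to the rational stable homotopy category and apply Proposition~\ref{gstab45} to the monoidal functor $E_{\QQ}$, regarded as a functor from the category of quasi-projective $k$-schemes (with Cartesian product) to $\SH_T(k)_{\QQ}$ (with smash product), and to the object $X$ with $\Sigma_n$ acting on $X^n$ by permutation of factors. A morphism of $T$-spectra is a rational stable $\AF^1$-weak equivalence if and only if its image in $\SH_T(k)_{\QQ}$ is an isomorphism, so it suffices to prove that the image of $\vartheta^n_X$ there is invertible.

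First I would identify the image of $\vartheta^n_X$ in $\SH_T(k)_{\QQ}$ with the universal morphism
$$u: E_{\QQ}(X)^{\tensor n}/\Sigma_n \longrightarrow E_{\QQ}(\Sym^n X)$$
appearing in diagram~\eqref{eqhhs38}. This identification is essentially tautological: the functor $E_{\QQ} = L_{\QQ}\circ \Sigma^{\infty}_T\circ (-)_+$ is monoidal (each of its factors is), and the construction of $\vartheta^n$ in Sections~\ref{MorphismL}--\ref{lammdctt} is exactly the universal map out of the quotient induced by the canonical morphism $E_{\QQ}(X^n)\to E_{\QQ}(\Sym^n X)$ coming from $\pi:X^n\to \Sym^n X$.

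Next I would verify the hypotheses of Proposition~\ref{gstab45}. The category $\SH_T(k)_{\QQ}$ is $\QQ$-linear and closed under finite colimits. The morphism $E_{\QQ}(\pi)$ admits a transfer by Corollary~\ref{cog7ww}; this is the only step where the assumption that $-1$ is a sum of squares in $k$ enters, since it is used, via Theorem~\ref{thhf74} and Corollary~\ref{tfddj}, to identify $\SH_T(k)_{\QQ}$ with $\DM_{\qfh}(k)_{\QQ}$ and transport Voevodsky's transfer of Proposition~\ref{Voetranf}. From the transfer identity $E_{\QQ}(\pi)\circ \tr = n!\cdot\id$ together with the invertibility of $n!$ in $\QQ$, the morphism $\tfrac{1}{n!}\tr$ is a section of $E_{\QQ}(\pi)$, so $E_{\QQ}(\pi)$ is a split epimorphism. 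All hypotheses of Proposition~\ref{gstab45} are therefore satisfied, and it follows that $u$ is an isomorphism in $\SH_T(k)_{\QQ}$, hence $\vartheta^n_X$ is a rational stable $\AF^1$-weak equivalence.

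The main obstacle is the bookkeeping in the first step, namely matching the abstract universal morphism $u$ from Proposition~\ref{gstab45} with the concrete natural transformation $\vartheta^n$ built in Sections~\ref{MorphismL} and \ref{lammdctt}. Once one unwinds the definitions, both morphisms are characterized by the same universal property relative to the quotient $E_{\QQ}(X)^{\tensor n}/\Sigma_n$ and to $E_{\QQ}(\pi)$, so they must agree; but spelling this out requires being careful about the passage from the unstable to the stable setting and from the integral to the rational homotopy category.
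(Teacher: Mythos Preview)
Your approach is correct in outline but takes a different route from the paper, and the step you flag as ``bookkeeping'' is more substantial than you suggest.

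The paper argues as follows: by Lemma~\ref{javvc5}, the morphism in question is the $T$-suspension of the unstable morphism $\vartheta^n_X:\Sym^n(h_{X_+})\to\Sym^n_g(h_{X_+})$; it then invokes Corollary~\ref{tscor21}, which establishes the isomorphism at the level of $\qfh$-sheaves with rational coefficients (this is where Proposition~\ref{gstab45} is actually applied, inside the proof of Lemma~\ref{tscor15}, with $\bcE$ the abelian category of $\qfh$-sheaves), and finally transports the result to $\SH_T(k)_{\QQ}$ via the equivalence of Corollary~\ref{tfddj}. So the paper applies Proposition~\ref{gstab45} in an abelian category, where finite colimits are unambiguous, and only afterwards passes to the rational stable category.

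You instead apply Proposition~\ref{gstab45} directly in $\SH_T(k)_{\QQ}$. Two points require care. First, $\SH_T(k)_{\QQ}$ is triangulated, not closed under finite colimits in the usual sense; the quotient $E_{\QQ}(X)^{\tensor n}/\Sigma_n$ exists only because the category is $\QQ$-linear and pseudo-abelian, and it is realized as the image of the symmetrization projector, i.e.\ as $\Sym^n_{\pr}E_{\QQ}(X)$. Second, and more seriously, identifying this projector quotient with the image of $\Sym^n_T(\Sigma^\infty_T X_+)$ under $L_{\QQ}$ is precisely the statement that $L\Sym^n_T\cong\Sym^n_{\pr}$ rationally on suspension spectra, which is not a tautology: it amounts to knowing that the derived categoric symmetric power computes the homotopy quotient and that the latter agrees with the strict quotient rationally (cf.\ \eqref{eqg15} and the surrounding discussion). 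Once this identification is made your argument goes through, but it is a genuine input, not mere unwinding of definitions. The paper's route avoids this issue entirely by staying in an abelian category until the last step.
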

\begin{proof}
By Lemma \ref{javvc5}, the morphism $\Sym^n_{T}(\Sigma^{\infty}_TX_+)\into \Sigma^{\infty}_T(\Sym^nX)_+$ is isomorphic to the $T$-suspension of the canonical morphism $\Sym^n_{T}(h_{X_+})\into \Sym^n_g(h_{X_+})$ of pointed simplicial sheaves.  Hence the proposition follows from Corollary \ref{tscor21} and Corolary \ref{tfddj}. 
\end{proof}

Now, we are ready to state and prove our main theorem in this section. We recall that $E_{\QQ}$ is the canonical functor from the category of $k$-schemes of finite type to $\SH_T(k)_{\QQ}$.  

\begin{theorem}\label{finth1}
Suppose that $-1$ is a sum of squares in a field $k$. For any  quasi-projective $k$-scheme $X$, we have the following isomorphisms
$$L\Sym^n_TE_{\QQ}(X)\isom  E_{\QQ}(\Sym^n X)\isom \Sym^n_{\pr}E_{\QQ}(X)\,.$$
\end{theorem}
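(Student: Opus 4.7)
The theorem is essentially a combination of two results already established earlier in the excerpt, so the proof amounts to stringing them together with a bit of care about derived functors.

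The plan is to handle the two claimed isomorphisms separately. The second one, $E_{\QQ}(\Sym^n X) \isom \Sym^n_{\pr}E_{\QQ}(X)$, is exactly the content of Proposition \ref{prof3401}, so nothing more needs to be done there. For the first isomorphism, $L\Sym^n_T E_{\QQ}(X) \isom E_{\QQ}(\Sym^n X)$, I would proceed as follows. First, note that $E_{\QQ}(X) = \Sigma^{\infty}_T(X_+)$, and that for a scheme $X$ in $\bcC$ the object $\Sigma^{\infty}_T(X_+)$ is already (projectively) cofibrant in $\Sp_T(k)$, since it is the image $F_0(h^+(X))$ of a representable simplicial sheaf, which is an $I_{\proj}$-cell complex after a trivial fibrant replacement. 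Consequently, the left derived functor $L\Sym^n_T$ may be computed on $E_{\QQ}(X)$ by the underived formula, yielding $L\Sym^n_T E_{\QQ}(X) \isom \Sym^n_T(\Sigma^{\infty}_T X_+)$ in $\SH_T(k)_{\QQ}$.

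Now apply Proposition \ref{geostth74}: under the hypothesis that $-1$ is a sum of squares in $k$, the canonical comparison morphism $\Sym^n_T(\Sigma^{\infty}_T X_+) \to \Sigma^{\infty}_T(\Sym^n X)_+$ is a rational stable $\AF^1$-weak equivalence, hence an isomorphism in $\SH_T(k)_{\QQ}$. Since $\Sigma^{\infty}_T(\Sym^n X)_+ = E_{\QQ}(\Sym^n X)$, this gives the desired first isomorphism $L\Sym^n_T E_{\QQ}(X) \isom E_{\QQ}(\Sym^n X)$. Concatenating with Proposition \ref{prof3401} produces the chain of isomorphisms in the statement.

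The only subtle point --- and the one place I would spend a few lines to be careful --- is the existence/value of $L\Sym^n_T E_{\QQ}(X)$. Strictly speaking, Theorem \ref{radmainths} and Theorem \ref{nattraths} are stated under the hypothesis that $L\Sym^n_{g,T}$ exists on $\SH_T(k)$, but for the present statement what is needed is only $L\Sym^n_T$ on a single rationalized object of the form $\Sigma^{\infty}_T X_+$. This follows from the cofibrancy of $\Sigma^{\infty}_T X_+$ together with the existence of a $\lambda$-structure by derived categoric symmetric powers (analogous to the unstable statement discussed after Theorem \ref{radmainth1}). So the main obstacle, if any, is cosmetic: making sure one is allowed to identify the left derived functor with its underived value on a cofibrant object in the rational localization $\SH_T(k)_{\QQ}$, which follows because the localization functor $\SH_T(k) \to \SH_T(k)_{\QQ}$ preserves isomorphisms and the underived vs.\ derived identification already holds in $\SH_T(k)$.
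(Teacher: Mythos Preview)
Your proposal is correct and follows essentially the same approach as the paper: both isomorphisms are obtained by invoking Proposition~\ref{geostth74} and Proposition~\ref{prof3401} respectively. The paper's proof is in fact even terser than yours---it simply cites these two propositions without the additional care you take about cofibrancy and the computation of the left derived functor, so your version is, if anything, more detailed.
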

\begin{proof}
 The isomorphism on the left-hand side follows from Proposition \ref{geostth74}. The second isomorphism follows from Proposition \ref{prof3401}.
\end{proof}
\bigskip


Let us consider the sets $I^+_T=\bigcup_{n>0}F_n(I)$, $J^+_T=\bigcup_{n>0}F_n(J)$, where $I$ (resp. $J$) is the class of generating (resp. trivial) cofibrations of the injective model structure of $\Deltaop\bcS_{\ast}$. 
Denote by $W^+_T$ the class of morphisms of symmetric $T$-spectra $f:\bcX\into\bcY$ such that each term $f_n:\bcX_n\into\bcY_n$ is an $\AF^1$-weak equivalence for $n>0$.  The sets $I^+_T$, $J^+_T$ and the class $W^+_T$ define on $\Sp_T(k)$ a cofibrantly generated model structure called {\em positive projective model structure}, see \cite{GoGu}. 
The positive projective cofibrations are projective cofibrations that are isomorphisms in the level zero.      
%

For a $T$-spectra $\bcX$ in $\Sp_T(k)$,  the  $n$th fold {\em homotopy symmetric power} $\Sym^n_{h,T}(\bcX)$ is defined as the homotopy colimit $\hocolim_{\Sigma_n}\bcX^{\wedge n}$. The Borel construction allows one to express $\Sym^n_{h,T}(\bcX)$ as the homotopy quotient $(E\Sigma_n)_+\wedge_{\Sigma_n}\bcX^{\wedge n}$, where $E\Sigma_n$ is the $\Sigma_n$-universal principal bundle,
 see \cite[Example~4.5.5]{Riehl14}. 
The canonical morphism from $(E\Sigma_n)_+\wedge\bcX^{\wedge n}$ to $\bcX^{\wedge n}$ induces a morphism 
$$
\Sym^n_{h,T}(\bcX)\into \Sym^n_T(\bcX)
$$
which is a stable $\AF^1$-weak equivalence when $\bcX$ is a cofibrant $T$-spectra with respect to the positive projective model structure. This implies the existence of an isomorphism of endofunctors 
\begin{equation}\label{eqg15}
\Sym^n_{h,T}(\bcX)\into L\Sym^n_T
\end{equation}
on stable $\AF^1$-homotopy category $\SH_T(k)$, see \cite{GoGu11}.

\begin{remark}
By Theorem \ref{finth1} and \eqref{eqg15}, we get the following isomorphisms
 $$\Sym^n_{h,T}E_{\QQ}(X)\isom L\Sym^n_TE_{\QQ}(X)\isom  E_{\QQ}(\Sym^n X)\isom \Sym^n_{\pr}E_{\QQ}(X)$$
for any quasi-projective $k$-scheme $X$.
\end{remark}

\begin{example}
{\em Let $X$ be the $2$-dimensional affine space $\AF^2$ over $k$. Then, by Proposition \ref{papxz301}, the canonical morphism $L\vartheta_X: L\Sym^n h_X\isom L\Sym^n_g h_X$ is not an isomorphism in the unstable motivic category over $k$. However, by Theorem \ref{finth1}, $\vartheta_X$ induces an isomorphism $L\Sym^n_TE_{\QQ}(X)\isom  E_{\QQ}(\Sym^n_g X)\,.$
}
\end{example}

\section{Appendix}
\label{Appen}

Here, we study the canonical morphism $\vartheta^n_X:\Sym^nh_X\into \Sym^n_gh_X$, where $X$ is the spectrum $\Spec(L)$ and $L/k$ is a finite Galois extension. We  show that  $\vartheta^n_X$ is an isomorphism on sections (see Proposition \ref{abs.prgx}). In Proposition \ref{papxz301}, we show that the canonical morphism $\vartheta^n_X:\Sym^nh_X\into \Sym^n_gh_X$ is not always an $\AF^1$-weak equivalence. 
\medskip

{\it \noindent Categoric and geometric symmetric  powers do not coincide.}--- Let $\bcC$ be the category of quasi-projective schemes over a field $k$. 
We shall prove that, if $X$ is the $2$-dimensional affine space  $\AF^2$ over $k$, then the canonical morphism $\vartheta^n_X$ from $\Sym^nh_X$ to $\Sym_g^nh_X$ is not an $\AF^1$-weak equivalence in $\Deltaop\bcS$, see Proposition \ref{papxz301},. 
 
\begin{lemma}
Let $X$ be a scheme in $\bcC$. The morphism of simplicial presheaf $\vartheta^n_X: \Sym^nh_X\into\Sym_g^nh_X$ is an $\AF^1$-weak equivalence if and only if for every $\AF^1$-local simplicial presheaf $\bcZ$ the induced morphism $(\vartheta^n_X)^*:\bcZ(\Sym^n
X)\into\bcZ(X^n)^{\Sigma_n}$ is a weak equivalence of simplicial sets. 
\end{lemma}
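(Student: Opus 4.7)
\begin{pf}[Proof proposal]
The plan is to express both sides of the claimed equivalence as a statement about the function complex $\Map(-,\bcZ)$ for $\AF^1$-local $\bcZ$, and then invoke the standard characterization of $\AF^1$-weak equivalences via mapping into local objects.

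First I would reduce each of the function complexes to a concrete simplicial set using Yoneda and the universal property of the left Kan extension. Since $\Sym^n_g h_X$ is, by construction, the representable sheaf $h_{\Sym^n X}$ (cf.\ Example \ref{papexxv71}), Yoneda gives a natural isomorphism
\[
\Map(\Sym^n_g h_X,\bcZ)\isom \bcZ(\Sym^n X).
\]
On the other hand, the categoric symmetric power $\Sym^n h_X$ is the colimit $(h_X)^{\times n}/\Sigma_n$, so that, using Yoneda together with $(h_X)^{\times n}\isom h_{X^n}$ and the fact that $\Map(-,\bcZ)$ sends colimits to limits, one obtains
\[
\Map(\Sym^n h_X,\bcZ)\isom \Map(h_{X^n},\bcZ)^{\Sigma_n}\isom \bcZ(X^n)^{\Sigma_n}.
\]
Under these identifications, the map induced on function complexes by $\vartheta^n_X$ is precisely $(\vartheta^n_X)^{\ast}$.

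Next I would invoke the standard fact that a morphism $f$ in $\Deltaop\bcS$ is an $\AF^1$-weak equivalence if and only if $\Map(f,\bcZ)$ is a weak equivalence of simplicial sets for every $\AF^1$-local (Nisnevich-fibrant and $\AF^1$-invariant) simplicial presheaf $\bcZ$; this is the Bousfield localization characterization underlying the construction of $\bcH(\bcC_{\Nis},\AF^1)$. Applied to $f=\vartheta^n_X$ and combined with the identifications above, this yields the equivalence claimed in the lemma.

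The only mild subtlety (and the one place where care is needed) is ensuring the identifications of function complexes are valid for every $\bcZ$ in the stated class, not merely for fibrant ones; this is fine because $\Map(-,\bcZ)$ turns finite colimits of (sheaves of) sets into limits regardless of fibrancy, and because an $\AF^1$-local presheaf is, in particular, Nisnevich-local so that the sheafification implicit in passing from $\Sym^n h_X$ in presheaves to $\Sym^n h_X$ in sheaves does not affect mapping into $\bcZ$. With these observations the two conditions are tautologically equivalent, and the lemma follows.
\end{pf}
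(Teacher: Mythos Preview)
Your proposal is correct and follows essentially the same approach as the paper: both arguments identify $\Map(\Sym^n_g h_X,\bcZ)\isom\bcZ(\Sym^n X)$ via Yoneda and $\Map(\Sym^n h_X,\bcZ)\isom\bcZ(X^n)^{\Sigma_n}$ via the colimit-to-limit property of $\Map(-,\bcZ)$, and then invoke the characterization of $\AF^1$-weak equivalences in terms of mapping into $\AF^1$-local objects. Your additional remarks on fibrancy and sheafification are not present in the paper's proof but do no harm.
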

\begin{proof}
By definition of $\AF^1$-weak equivalence, $\vartheta^n_X$ is an $\AF^1$-weak equivalence if and  only if  for every $\AF^1$-local simplicial presheaf the induced morphism $$(\vartheta^n_X)^*:\Map(\Sym^n_gh_X,\bcZ)\longrightarrow\Map(\Sym^nh_X,\bcZ)$$
 is a weak equivalence of simplicial sets. On one side, we have
 $$\Map(\Sym^n_gh_X,\bcZ)=\Map(h_{\Sym^nX},\bcZ)\isom \bcZ(\Sym^nX)\,,$$ 
 where the above isomorphism follows from the Yoneda's lemma. On the other hand, the functor $\Map(-,\bcZ)$ sends colimits to limits, in particular, we have $$\Map((h_X^{\times n})/\Sigma_n,\bcZ)\isom\Map(h_X^{\times n},\bcZ)^{\Sigma_n}\,.$$ Then, we have   
 $$\Map(\Sym^nh_X,\bcZ)\isom\Map(h_X^{\times n},\bcZ)^{\Sigma_n}\isom \Map(h_{X^n},\bcZ)^{\Sigma_n}\isom \bcZ(X^n)^{\Sigma_n}\,.$$
 Thus, the lemma follows. 
\end{proof}

\begin{proposition}\label{papxz301}
Let $X=\AF^2$ be the $2$-dimensional affine space over a field $k$.  Then, the natural morphism $\vartheta^n_X$ is not an $\AF^1$-weak equivalence.
\end{proposition}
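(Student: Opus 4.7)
The previous lemma reduces the claim to producing a single $\AF^1$-local simplicial presheaf $\bcZ$ on $\bcC$ for which the induced map
$$(\vartheta^n_X)^{\ast}\colon \bcZ(\Sym^n \AF^2) \longrightarrow \bcZ(\AF^{2n})^{\Sigma_n}$$
fails to be a weak equivalence of simplicial sets. I will specialize to $n=2$, so that $X^n = \AF^4$.

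The target side can be simplified considerably. Because $\AF^4$ is $\AF^1$-contractible, for any $\AF^1$-local $\bcZ$ the projection $\AF^4 \to \Spec k$ induces a weak equivalence $\bcZ(\Spec k) \xrightarrow{\sim} \bcZ(\AF^4)$. Moreover, the $\Sigma_2$-action on $\AF^4$ consists of automorphisms $\AF^1$-homotopic to the identity (since $\AF^4$ is $\AF^1$-contractible), so the induced action on $\bcZ(\AF^4)$ is homotopically trivial; choosing $\bcZ$ in a suitably rigidified setting (e.g.\ a presheaf of chain complexes or of fibrant spectra, so that strict invariants compute the cohomologically relevant fixed points), one obtains $\bcZ(\AF^4)^{\Sigma_2} \simeq \bcZ(\Spec k)$. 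Consequently it suffices to produce an $\AF^1$-local $\bcZ$ with $\bcZ(\Sym^2 \AF^2) \not\simeq \bcZ(\Spec k)$, equivalently to verify that the singular quotient $\Sym^2\AF^2$ is not $\AF^1$-equivalent to a point.

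For this I will use an $\AF^1$-invariant cohomological presheaf sensitive to the singularities of the four-dimensional normal affine hypersurface $\Sym^2\AF^2 = \Spec k[x_1,y_1,x_2,y_2]^{\Sigma_2}$, whose singular locus is the image of the diagonal $\Delta\AF^2 \subset \AF^4$. A natural candidate is homotopy $K$-theory $KH$, which is $\AF^1$-invariant on all Noetherian schemes and satisfies cdh-descent. The Hilbert–Chow small resolution $\Hilb^2\AF^2 \to \Sym^2\AF^2$ provides a cdh-square
$$
\xymatrix@C=7ex@R=5ex{E \ar[r]\ar[d] & \Hilb^2\AF^2 \ar[d]\\ \Delta\AF^2 \ar[r] & \Sym^2\AF^2}
$$
with smooth upper-right corner and exceptional divisor $E$ (a $\PR^1$-bundle over $\AF^2$); the associated Mayer–Vietoris long exact sequence contributes a non-trivial class to $KH_\ast(\Sym^2\AF^2)$ coming from $E$, which is invisible on $\Spec k$. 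Taking $\bcZ$ to be the $\AF^1$-local fibrant replacement of the $KH$-presheaf then yields the required discrepancy.

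The main obstacle is the explicit cdh-descent computation on the non-smooth scheme $\Sym^2\AF^2$: one must track the exceptional-divisor contribution through the cdh spectral sequence and verify that it does not cancel after identification of the target side via $\AF^1$-contractibility of $\AF^4$. Alternative $\AF^1$-invariants sensitive to quotient singularities — e.g.\ motivic cohomology extended to singular schemes via cdh-descent, or a carefully chosen $\AF^1$-rigid representing scheme — reduce to essentially the same verification.
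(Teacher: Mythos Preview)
Your strategy is sound in outline, but the proof is left incomplete at precisely the point where the work lies: you explicitly flag the cdh-descent computation for $KH$ on $\Sym^2\AF^2$ as ``the main obstacle'' and do not carry it out. There is also a subtle issue earlier: the passage from ``the $\Sigma_2$-action on $\bcZ(\AF^4)$ is homotopically trivial'' to ``$\bcZ(\AF^4)^{\Sigma_2}\simeq\bcZ(\Spec k)$'' is not automatic, since strict fixed points of simplicial sets need not preserve weak equivalences; your parenthetical about rigidification does not really resolve this.

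Both difficulties vanish with a much simpler invariant, and this is what the paper does. Take $\bcZ=CH^1(-)$, the first Chow group viewed as a discrete simplicial presheaf; it is $\AF^1$-homotopy invariant, hence $\AF^1$-local. Then $CH^1(\AF^4)=0$, so the $\Sigma_2$-invariants are trivially zero and no fixed-point argument is needed at all. On the other side one has the classical isomorphism $\Sym^2\AF^2\cong\AF^2\times\mathQ$, where $\mathQ\subset\AF^3$ is the affine quadric cone $uw=v^2$; by $\AF^1$-invariance $CH^1(\Sym^2\AF^2)\cong CH^1(\mathQ)\cong\ZZ/2\ZZ$ (Fulton, \emph{Intersection Theory}, Example~2.1.3). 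The map $(\vartheta^2_{\AF^2})^*$ is then the map of discrete sets $\ZZ/2\ZZ\to 0$, which is not a weak equivalence. This replaces your unfinished spectral-sequence argument with a two-line computation and sidesteps the fixed-point subtlety by making the target literally zero.
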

\begin{proof}
 We recall that higher Chow groups $CH^i(-,m)$, for $i$ and $m$ in $\NN$, are $\AF^1$-homotopy invariant (see \cite[page~136]{MVW06}). For $m=0$ the  higher Chow group $CH^i(-,m)$ is nothing but the Chow group $CH^i(-)$. In particular,  the Chow group $CH^i(-)$ is $\AF^1$-homotopy invariant. Then $CH^i(-)$ is $\AF^1$-local as a simplicial presheaf. We shall take $\bcZ=CH^1(-)$ in the previous lemma. 
On one side, we have $X^2=\AF^4$, hence $CH^1(X^2)=CH^1(\AF^4)$ is zero.
On the other hand, $\Sym^2(\AF^2)$ is isomorphic to the product of $\AF^2$ with the quadric cone $\mathQ$ defined by the equation $uw-v^2=0$ in $\AF^3$. By the $\AF^1$-homotopy invariance, $CH^1(\AF^2\times\mathQ)$ is isomorphic to $CH^1(\mathQ)$. By Example 2.1.3 of \cite{Ful98}, $CH^1(\mathQ)=CH_1(\mathQ)$ it is isomorphic to $\ZZ/2\ZZ$. 
Then $(\vartheta^2_{\AF^2})^*$ is the morphism of constant simplicial sets induced by a morphism of sets 
$\ZZ/2\ZZ\into 0$. Since $\ZZ/2\ZZ$ consists of two points, the morphism $(\vartheta^2_{\AF^2})^*$ cannot be a weak equivalence.  We conclude that $\vartheta^2_{\AF^2}$ is not an isomorphism in the motivic $\AF^1$-homotopy category.
\end{proof}

\bigskip 
{\it \noindent Galois extensions.}--- 
Let $L/k$ be a finite Galois field extension and set $X=\Spec(L)$. Let $K$ be an algebraically closed field containing $L$ and let $U=\Spec(K)$. In the following paragraphs, we shall prove that for any integer $n\geq0$, the canonical morphism of sets
\[\vartheta^n_X(U): (\Sym^nh_X)(U)\into (\Sym^n_gh_X)(U)\,.\] 
is an isomorphism (see proposition \ref{abs.prgx}).

\begin{lemma}
Let $L/k$ be a finite Galois extension of degree $r\geq1$ and let $n$ be an integer $n\geq1$. The $k$-algebra $(L^{\tensor_k n})^{\Sigma_n}$ has dimension $\binom{r+n-1}{n}$ as $k$-vector space.
\end{lemma}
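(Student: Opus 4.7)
The plan is to reduce the computation to a counting problem by base-changing to a field that splits $L$, since the conclusion is a statement about $k$-dimensions and dimensions are preserved under field extension.

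First I would fix a field extension $K/k$ containing $L$ such that $L\otimes_k K\isom K^r$ as $K$-algebras; since $L/k$ is finite Galois of degree $r$ this holds, for instance, with $K=L$ or with $K$ any algebraically closed field containing $L$. Base changing the $\Sigma_n$-representation $L^{\tensor_k n}$ then gives, by associativity of the tensor product,
\[
L^{\tensor_k n}\tensor_k K\isom (L\tensor_k K)^{\tensor_K n}\isom (K^r)^{\tensor_K n}\isom K^{r^n},
\]
where the last isomorphism is of $K$-algebras with the product indexed by the set $S_{r,n}:=\{1,\dots,r\}^n$. Under this identification the action of $\Sigma_n$ on $L^{\tensor_k n}$, which permutes the tensor factors, transports to the action of $\Sigma_n$ on $K^{S_{r,n}}$ induced by the natural permutation action on $S_{r,n}$ itself.

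Next I would use that forming $\Sigma_n$-invariants commutes with flat base change: since $(-)^{\Sigma_n}$ is the equalizer of the $|\Sigma_n|$ endomorphisms $\{\sigma\cdot(-)\}_{\sigma\in\Sigma_n}$, and $K$ is flat over $k$, one has
\[
(L^{\tensor_k n})^{\Sigma_n}\tensor_k K\isom (L^{\tensor_k n}\tensor_k K)^{\Sigma_n}\isom (K^{S_{r,n}})^{\Sigma_n}.
\]
The right-hand side is simply $K^{S_{r,n}/\Sigma_n}$, a product of $K$'s indexed by the set of orbits of $\Sigma_n$ on $S_{r,n}$. Counting dimensions and using that $\dim_K(V\tensor_k K)=\dim_k V$ for a finite-dimensional $k$-vector space $V$, we obtain
\[
\dim_k (L^{\tensor_k n})^{\Sigma_n}=|S_{r,n}/\Sigma_n|.
\]

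Finally, an orbit of $\Sigma_n$ on $\{1,\dots,r\}^n$ is the same as an unordered $n$-tuple with entries in $\{1,\dots,r\}$, i.e.\ a multiset of size $n$ on an $r$-element set; the number of such multisets is $\binom{r+n-1}{n}$ by a standard stars-and-bars count. The only step that needs any care is the flat base change of invariants, which is harmless here because $\Sigma_n$ is finite and $K/k$ is a (faithfully) flat field extension, so no assumption on the characteristic of $k$ is needed.
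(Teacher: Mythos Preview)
Your proof is correct. It differs from the paper's argument: the paper works directly over $k$, choosing a $k$-basis $\{v_1,\dots,v_r\}$ of $L$, observing that $\Sigma_n$ permutes the induced tensor basis $\{v_{i_1}\otimes\cdots\otimes v_{i_n}\}$ of $L^{\otimes_k n}$, and concluding that the invariants are spanned by the orbit sums, one for each multiset of size $n$ on $\{1,\dots,r\}$. In particular the paper's computation uses only that $\dim_k L=r$; the Galois hypothesis plays no role there. Your route instead base-changes to a splitting field $K$, uses $L\otimes_k K\cong K^r$ (which does need the separability/Galois assumption), and then appeals to flatness to commute invariants with $-\otimes_k K$. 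Both arguments land on the same orbit count $|\{1,\dots,r\}^n/\Sigma_n|=\binom{r+n-1}{n}$. The paper's argument is more elementary and slightly more general; yours is more conceptual and, as a bonus, identifies $(L^{\otimes_k n})^{\Sigma_n}\otimes_k K$ with $K^{S_{r,n}/\Sigma_n}$ as a $K$-algebra, which is exactly the structure exploited in the next lemma of the paper.
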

\begin{proof}
Since $L$ is a Galois extension over $k$ of degree $r$, the tensor product $L^{\tensor_k n}$ is isomorphic to $L^{\times r^{n-1}}$ as vector spaces over $k$.
Let $\{v_1,v_2,\dots,v_r\}$ be a $k$-basis of $L$. The family 
 $\left\lbrace v_{i_1}\tensor v_{i_2}\tensor\dots\tensor v_{i_n}\right\rbrace_{0\leq i_1,i_2,\dots, i_n\leq r}$ is a $k$-basis of $L^{\tensor_k n}$. An element of $(L^{\tensor_k n})^{\Sigma_n}$ has the form 
\[\sum_{0\leq i_1,\dots, i_n\leq r}a_{i_1,\dots, i_n}\cdot v_{i_1}\tensor \dots\tensor v_{i_n}\]
 such that 
 \[\sum_{0\leq i_1,\dots, i_n\leq r}a_{i_1,\dots, i_n}.v_{i_{\sigma(1)}}\tensor \dots\tensor v_{i_{\sigma(n)}}=\sum_{0\leq i_1,\dots, i_n\leq r}a_{i_1,\dots, i_n}.v_{i_1}\tensor \dots\tensor v_{i_n}\]
for all $\sigma\in\Sigma_n$. 
We have deduce from the above equality that 
\begin{equation}\label{abs.5ta}
a_{i_1,\dots, i_n}= a_{i_{\sigma(1)},\dots, i_{\sigma(n)}}
\end{equation}
for all  $\sigma\in\Sigma_n$. 
We recall that a combination of $\{1,2,\dots,r\}$ choosing $n$ elements is an unordered $n$-tuple $\{i_1,\dots, i_n\}$ allowing repetition of the elements $i_1,\dots, i_n$ in $\{1,2,\dots,r\}$. Let us denote by $C(r,n)$ the set of all repetitions of $\{1,2,\dots,r\}$ choosing $n$ elements, and fix $I=\{i_1,\dots, i_n\}$ in $C(r,n)$. Suppose $I$ has $p$ different elements $j_1,\dots, j_p$, where $1\leq p\leq n$, such that each there are $k_l$ repetitions of $j_l$ in $I$ for $1\leq l\leq p$. In particular, one has $\sum_{j=1}^lk_l=n$. Let us denote by $P(I)=P(i_1,\dots, i_n)$ the set of permutations with repetitions of $\{i_1,\dots, i_n\}$. By a computation in combinatorics, $P(I)$ has cardinal equal to $\frac{n!}{k_1!.\cdots.k_p!}$ elements.    
We have 
\[ \sum_{\{i'_1,\dots, i'_n\}\in P(i_1,\dots, i_n)}v_{i'_1}\tensor \dots\tensor v_{i'_n}=\frac{k_1!.\cdots.k_p!}{n!}\cdot\sum_{\sigma\in\Sigma_n}v_{\sigma(i_1)}\tensor \dots\tensor v_{\sigma(i_n)}\,.\]
and from \eqref{abs.5ta} we deduce that $a_{i'_1,\dots, i'_n}=a_{i_1,\dots, i_n}$ for all $\{i'_1,\dots, i'_n\}\in P(i_1,\dots, i_n)$, hence
\[\sum_{0\leq i_1,\dots, i_n\leq r}a_{i_1,\dots, i_n}.v_{i_1}\tensor \dots\tensor v_{i_n}=\sum_{\{i_1,\dots, i_n\}\in C(r,n)}a_{i_1,\dots, i_n}\left(\sum_{\{i'_1,\dots, i'_n\}\in P(i_1,\dots, i_n)}v_{i'_1}\tensor \dots\tensor v_{i'_n}\right)\]
Observe that the set \[\left\lbrace\sum_{\{i'_1,\dots, i'_n\}\in P(i_1,\dots, i_n)}v_{i'_1}\tensor \dots\tensor v_{i'_n}\right\rbrace_{\{i_1,\dots, i_n\}\in C(r,n)}\] 
is formed by linearly independent vectors in the $k$-vector space $L^{\tensor_k n}$. Hence, it is a basis of $(L^{\tensor_k n})^{\Sigma_n}$.  
Then the dimension of $(L^{\tensor_k n})^{\Sigma_n}$ is determined by the cardinal of $C(r,n)$, thus  $(L^{\tensor_k n})^{\Sigma_n}$ has dimension $|C(r,n)|=\binom{r+n-1}{n}$.
\end{proof}

\begin{example}
{\em In the previous lemma, if $L/k$ is a cubic extension i.e. $r=3$ with a $k$-basis $\{v_1,v_2,v_3\}$, and $n=2$; then the $k$-algebra $(L\tensor L)^{\Sigma_2}$ has dimension $6$ as $k$-vector space and its canonical basis is formed by the vectors
\[\left\lbrace v_1\tensor v_1,  v_2\tensor v_2,v_3\tensor v_3,(v_1\tensor v_2+v_2\tensor v_1), (v_1\tensor v_3+v_3\tensor v_1),  (v_2\tensor v_3+v_3\tensor v_2) \right\rbrace\,.\]  
 }
\end{example}

\begin{lemma}\label{abslmla}
Let $L/k$ be a finite Galois extension of degree $r\geq 1$ and set $X=\Spec(L)$. Let $K$ be an algebraically closed field containing $L$ and let $U=\Spec(K)$. Then, for any integer $n\geq0$, the set $h_{\Sym^nX}(U)$ is a finite set with $\binom{r+n-1}{n}$ elements. 
\end{lemma}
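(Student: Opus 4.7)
\begin{pf}[Proof proposal]
The plan is to identify $h_{\Sym^n X}(U)$ with the set of unordered $n$-tuples of $K$-points of $X$, and then carry out an elementary combinatorial count. Write $X=\Spec(L)$ and $U=\Spec(K)$. Since $\Sym^n X=X^n/\Sigma_n$ is the categorical quotient of the affine scheme $X^n=\Spec(L^{\tensor_k n})$ by the (finite) action of $\Sigma_n$, one has $\Sym^n X\isom \Spec((L^{\tensor_k n})^{\Sigma_n})$. Hence
\[
h_{\Sym^n X}(U)=\Hom_{k\text{-alg}}\big((L^{\tensor_k n})^{\Sigma_n},K\big).
\]

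First I would show that $X^n(U)/\Sigma_n \to \Sym^n X(U)$ is a bijection. Since $K$ is algebraically closed, $K$-points of an affine $k$-scheme $\Spec(A)$ correspond to maximal ideals of $A\tensor_k K$ with residue field $K$, i.e.\ to the $K$-points of $\Spec(A\tensor_k K)$. Because $K$ is flat over $k$, taking $\Sigma_n$-invariants commutes with base change to $K$, so
\[
(L^{\tensor_k n})^{\Sigma_n}\tensor_k K\isom \big(L^{\tensor_k n}\tensor_k K\big)^{\Sigma_n}.
\]
As $L/k$ is Galois of degree $r$ and $K\supset L$, one has $L\tensor_k K\isom K^{r}$, whence $L^{\tensor_k n}\tensor_k K\isom K^{r^n}$, on which $\Sigma_n$ acts by permuting the $n$ tensor factors (i.e.\ permuting the $r^n$ coordinate functions through the induced action on $\{1,\dots,r\}^n$). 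A direct computation shows that the invariants of $K^S$ under a finite group acting by permutations of the finite set $S$ is a product of copies of $K$ indexed by the orbits, so $(K^{r^n})^{\Sigma_n}\isom K^{|\{1,\dots,r\}^n/\Sigma_n|}$.

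It remains to count $|\{1,\dots,r\}^n/\Sigma_n|$: this is exactly the number of multisets of size $n$ from $r$ elements, i.e.\ $\binom{r+n-1}{n}$. Therefore $(L^{\tensor_k n})^{\Sigma_n}\tensor_k K$ is a product of $\binom{r+n-1}{n}$ copies of $K$, and its $K$-algebra homomorphisms to $K$ (equivalently, the $k$-algebra homomorphisms from $(L^{\tensor_k n})^{\Sigma_n}$ to $K$) are in bijection with the factors, giving exactly $\binom{r+n-1}{n}$ elements. Note that this is consistent with the $k$-dimension formula from the previous lemma, since the base change is finite étale of rank equal to the $k$-dimension of $(L^{\tensor_k n})^{\Sigma_n}$.

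The only non-formal step is the commutation of $\Sigma_n$-invariants with the base change $-\tensor_k K$; this is where the characteristic-zero-like behaviour of a field extension (flatness of $K/k$) is used and is the main (small) obstacle. Everything else is a combinatorial verification and the identification of the $K$-points of a finite product $K^m$ with the index set.
\end{pf}
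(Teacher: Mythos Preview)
Your proof is correct and takes a genuinely different route from the paper's. The paper stays over $k$ throughout: it uses the preceding lemma to know that $\dim_k (L^{\tensor_k n})^{\Sigma_n}=\binom{r+n-1}{n}$, then argues that $(L^{\tensor_k n})^{\Sigma_n}$, being a $k$-subalgebra of $L^{\tensor_k n}\isom L^{\times r^{n-1}}$, is a finite product of subfields $L_j\subset L$; since each $L_j/k$ is separable, $|\Hom_k(L_j,K)|=\dim_k L_j$, and summing gives $\binom{r+n-1}{n}$.

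Your approach instead base-changes to $K$ first, using flatness to commute $\Sigma_n$-invariants with $-\tensor_k K$, and then computes $(K^{r^n})^{\Sigma_n}$ as $K^{|\{1,\dots,r\}^n/\Sigma_n|}$ via the orbit description. This is cleaner in two respects: it is self-contained (you do not need the previous dimension lemma, only the multiset count), and it avoids the structural claim about $(L^{\tensor_k n})^{\Sigma_n}$ decomposing as a product of subfields of $L$, which the paper states without elaboration. What the paper's approach buys is that it makes the connection to the dimension formula explicit and does not invoke the base-change/invariants commutation, which is the one non-formal step in your argument (though, as you note, it is immediate from flatness since $M^G$ is a kernel of a $k$-linear map between finite products of copies of $M$). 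Your identification of $h_{\Sym^n X}(U)$ with $X^n(U)/\Sigma_n$ is in fact exactly what the paper uses in the \emph{next} proposition, so your argument also anticipates that step.
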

\begin{proof}
Since $(L^{\tensor_k n})^{\Sigma_n}$ is a sub-algebra of $L^{\tensor_k n}\isom L^{\times r^{n-1}}$,  the $k$-algebra $(L^{\tensor_k n})^{\Sigma_n}$ is isomorphic to a product $\prod_{j=1}^{r^{n-1}}L_{j}$, 
where each $L_j$ is a field extension of $k$ contained in $L$. By the previous lemma, we have that the sum $\sum_{j=1}^{r^{n-1}}\dim_kL_j$ is equal to $\binom{r+n-1}{n}$. 
Let $K$ be an  algebraically closed field containing $L$. One has,  
\begin{equation*}
\begin{split}
\Hom_{k}\left((L^{\tensor_k n})^{\Sigma_n}, K\right)&=\Hom_{k}\left(\prod_{1\leq j\leq r^{n-1}}L_{j}, K\right)\\&
\isom \coprod_{1\leq j\leq r^{n-1}}\Hom_{k}(L_{j}, K)
\end{split}
\end{equation*}
Since $L_j/k$ is a finite separable extension, $\Hom_{k}(L_{j}, K)$ is a finite set and its cardinal equal to $\dim_kL_j$ for all $j=1,\dots,r^{n-1}$. Hence, the set
$\Hom_{k}\left((L^{\tensor_k n})^{\Sigma_n}, K\right)$ is  finite of cardinal equal to
$\sum_{j=1}^{r^{n-1}}\dim_kL_j=\binom{r+n-1}{n}$. 
Let $U=\Spec(K)$. Hence, we have
\begin{equation*}
\begin{split}
h_{\Sym^nX}(U)&=\Hom_{k}(U, \Sym^nX)\\&
=\Hom_{\Spec(k)}\big(\Spec(K),\Spec((L^{\tensor_k n})^{\Sigma_n})\big)\\&
=\Hom_{k}\left((L^{\tensor_k n})^{\Sigma_n}, K\right)\\&
\isom \coprod_{1\leq j\leq r^{n-1}}\Hom_{k}(L_{j}, K)
\end{split}
\end{equation*}
Thus, we conclude that $h_{\Sym^nX}(U)$ is a finite set with $\binom{r+n-1}{n}$ elements. 
\end{proof}

In conclusion, we have the following proposition. 

\begin{proposition}\label{abs.prgx}
Let $L/k$ be a finite Galois extension and set $X=\Spec(L)$. Let $K$ be an algebraically closed field containing $L$ and let $U=\Spec(K)$. Then, for any integer $n\geq0$, the canonical morphism of sets
\[\vartheta^n_X(U): (\Sym^nh_X)(U)\into (\Sym^n_gh_X)(U)\] 
is an isomorphism. 
\end{proposition}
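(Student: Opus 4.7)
The plan is to argue by a cardinality comparison. Set $r := [L:k]$ and write $U = \Spec(K)$. By Lemma \ref{abslmla}, the target set $h_{\Sym^n X}(U)$ has exactly $\binom{r+n-1}{n}$ elements, so it will suffice to show that $(\Sym^n h_X)(U)$ has the same cardinality and that $\vartheta^n_X(U)$ is surjective; surjectivity together with equal finite cardinalities then forces bijectivity.

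To identify $(\Sym^n h_X)(U)$, I would use that $\Sym^n h_X$ is the Nisnevich sheafification of the presheaf $V\mapsto h_X^{\times n}(V)/\Sigma_n$. Since $K$ is algebraically closed (hence separably closed), every \'etale cover of $\Spec(K)$ is a disjoint union of copies of $\Spec(K)$, so every Nisnevich cover of $U$ admits the identity as a refinement. Hence sheafification preserves the value on $U$, giving $(\Sym^n h_X)(U) = h_X^{\times n}(U)/\Sigma_n = \Hom_k(L,K)^n/\Sigma_n$. Since $L/k$ is separable of degree $r$ and $K\supseteq L$, the set $\Hom_k(L,K)$ has exactly $r$ elements, so the quotient by the permutation action on $n$-tuples is the set of multisets of size $n$ drawn from an $r$-set, of cardinality $\binom{r+n-1}{n}$.

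For surjectivity of $\vartheta^n_X(U)$, any $\psi\in h_{\Sym^n X}(U)$ is a $k$-algebra homomorphism $(L^{\otimes n})^{\Sigma_n}\to K$. Since $L^{\otimes n}$ is a finite, hence integral, extension of $(L^{\otimes n})^{\Sigma_n}$, and $K$ is algebraically closed, $\psi$ extends to a $k$-algebra homomorphism $\phi\colon L^{\otimes n}\to K$ by going-up followed by embedding the residue field of the lifted prime into $K$. The map $\phi$ represents an $n$-tuple in $h_X^{\times n}(U) = \Hom_k(L,K)^n$ whose image in $(\Sym^n h_X)(U)$ is sent by $\vartheta^n_X(U)$ to $\psi$, proving surjectivity.

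The main subtlety to address carefully is the identification $(\Sym^n h_X)(U) = h_X^{\times n}(U)/\Sigma_n$: if $\Spec(K)$ does not itself lie in $\bcC$ (e.g.\ when $K/k$ is not finitely generated), one interprets the sections as a filtered colimit over finite subextensions $L\subseteq K_\alpha\subseteq K$, and the splitting-of-\'etale-covers argument is applied uniformly to each $K_\alpha$ containing $L$.
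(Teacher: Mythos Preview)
Your proof is correct and shares the overall cardinality-comparison strategy with the paper, but the key step is genuinely different. The paper, after noting both sides have $\binom{r+n-1}{n}$ elements, proves \emph{injectivity} of $\vartheta^n_X(U)$ directly: given two unordered $n$-tuples $\{f_1,\dots,f_n\}$ and $\{f'_1,\dots,f'_n\}$ in $\Hom_k(L,K)^n/\Sigma_n$ with the same restriction to $(L^{\otimes n})^{\Sigma_n}$, it evaluates both on the elementary symmetric tensors in $\alpha$ (a primitive element of $L/k$) to deduce that the elementary symmetric functions of the $\alpha_i=f_i(\alpha)$ and of the $\alpha'_i=f'_i(\alpha)$ coincide, hence the multisets of roots agree. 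You instead prove \emph{surjectivity} by the commutative-algebra fact that any homomorphism from $(L^{\otimes n})^{\Sigma_n}$ into an algebraically closed field lifts across the finite (integral) extension $(L^{\otimes n})^{\Sigma_n}\hookrightarrow L^{\otimes n}$. Your argument is shorter and avoids the explicit computation with symmetric polynomials; the paper's argument is more hands-on and makes the bijection concrete. You are also more careful than the paper about justifying the identification $(\Sym^n h_X)(U)=\Hom_k(L,K)^n/\Sigma_n$, which the paper simply asserts.
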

\begin{proof}
Suppose that $L=k(\alpha)$ where $\alpha$ is a root of an irreducible polynomial $P(t)$ of degree $r\geq 1$. 
Notice that  $(\Sym^nh_X)(U)=\Hom_k(L,K)^n/\Sigma_n$ 
is a finite set with $\binom{r+n-1}{n}$ elements. On the other hand, by lemma \ref{abslmla}, $(\Sym^nh_X)(U)$ is also a finite set with $\binom{r+n-1}{n}$ elements, then it is enough to prove the injectivity of the canonical morphism of sets from $\Hom_k(L,K)^n/\Sigma_n$ to $\Hom_k\left((L^{\tensor_kn})^{\Sigma_n},K\right)$,
defined by $\{f_1,\dots,f_n\}\mapsto(f_1\tensor\cdots\tensor f_n)|_{(L^{\tensor_kn})^{\Sigma_n}}$. Indeed, let $\{f_1,\dots,f_n\}$ and $\{f'_1,\dots,f'_n\}$ be two unordered $n$-tuple in $\Hom_k(L,K)^n/\Sigma_n$ such that 
\begin{equation}\label{abs.bgt}
(f_1\tensor\cdots\tensor f_n)|_{(L^{\tensor_kn})^{\Sigma_n}}=(f'_1\tensor\cdots\tensor f'_n)|_{(L^{\tensor_kn})^{\Sigma_n}}
\end{equation}
We put $\alpha_1=f_1(\alpha),\dots,\alpha_n=f_n(\alpha)$ and $\alpha'_1=f'_1(\alpha),\dots,\alpha'_r=f'_n(\alpha)$.
Then $\{\alpha_1,\dots,\alpha_n\}$ and $\{\alpha'_1,\dots,\alpha'_n\}$ are two unordered $n$-tuples formed by roots of $P(t)$ non necessarily distinct each other. 
Notice that to prove that the set $\{f_1,\dots,f_n\}$ is equal to $\{f'_1,\dots,f'_n\}$. It will be enough to prove that the set $\{\alpha_1,\dots,\alpha_n\}$ is equal to $\{\alpha'_1,\dots,\alpha'_n\}$, since a homomorphism of $k$-algebras  $L\into K$ is uniquely determined by a root of  $P(t)$. 
In fact, observe that the elements 
\begin{equation*}
\left\{
\begin{split}
&\sum_{i=1}^n \left(1\tensor \cdots\tensor1\tensor\underbrace{\alpha}_{\text{$i$th position}}\tensor1\tensor\cdots\tensor 1\right),\\&
\sum_{1\leq i<j\leq n}^n\left(1\tensor \cdots\tensor1\tensor\underbrace{\alpha}_{\text{$i$th position}}\tensor1\tensor \cdots\tensor1\tensor\underbrace{\alpha}_{\text{$j$th position}}\tensor1\tensor\cdots\tensor 1\right),\\&
\qquad\cdots\cdots\\&
\qquad\cdots\cdots\\&
\alpha\tensor \alpha\tensor \cdots\tensor \alpha
\end{split}
\right.
\end{equation*}
lie in $(L^{\tensor n})^{\Sigma_n}$. In view of the equality
$(f_1\tensor\cdots\tensor f_n)(a_1\tensor\cdots\tensor a_n)=a_1\cdot\cdots\cdot a_n$ for all elements $a_1,\dots, a_n$ in $L$, we deduce the following equalities,
\begin{align*}
\sum_{i=1}^n\alpha_i\hspace{1.2cm}&= (f_1\tensor\cdots\tensor f_n)\left(\sum_{i=1}^n 1\tensor \cdots\tensor\alpha\tensor\cdots\tensor 1\right)\\
\sum_{1\leq i<j\leq n}^n\alpha_i\cdot\alpha_j\hspace{0.3cm}&= (f_1\tensor\cdots\tensor f_n)\left(\sum_{1\leq i<j\leq n}^n 1\tensor \cdots\tensor\alpha\tensor \cdots\tensor\alpha\tensor\cdots\tensor 1\right)\\
\cdots&\cdots\\
\cdots&\cdots\\
\alpha_1\cdot\alpha_2\cdot \cdots\cdot \alpha_n&=  (f_1\tensor\cdots\tensor f_n)(\alpha\tensor \alpha\tensor \cdots\tensor \alpha)\,.
\end{align*}
Using \eqref{abs.bgt}, these equalities allow us to deduce the following,

\begin{align*}
\sum_{i=1}^n\alpha_i \hspace{1.2cm}&= \sum_{i=1}^n\alpha'_i\\
\sum_{1\leq i<j\leq n}^n\alpha_i\cdot\alpha_j\hspace{0.3cm}&=\sum_{1\leq i<j\leq n}^n\alpha'_i\cdot\alpha'_j\\
\cdots&\cdots\\
\cdots&\cdots\\
\alpha_1\cdot\alpha_2\cdot \cdots\cdot \alpha_n&= \alpha'_1\cdot\alpha'_2\cdot\cdots\cdot\alpha'_n\,.
\end{align*}
Notice also that these elements are in $k$, because they are invariants under $\Gal(L/k)$. 
Now, observe that $\alpha_1,\dots,\alpha_n$ are all solutions of the polynomial
\[P(t):=t^n-\left(\sum_{i=1}^n\alpha_i\right)\cdot t^{n-1}+\left(\sum_{1\leq i<j\leq n}^n\alpha_i\cdot \alpha_j\right)\cdot t^{n-2}+\dots +(-1)^n\cdot\alpha_1\cdot\cdots\cdot\alpha_n \] 
 in $k[t]$, whereas $\alpha'_1,\dots,\alpha'_n$ are all solutions of the polynomial
\[P'(t):=t^n-\left(\sum_{i=1}^n\alpha'_i\right)\cdot t^{n-1}+\left(\sum_{1\leq i<j\leq n}^n\alpha'_i\cdot\alpha'_j\right)\cdot t^{n-2}+\dots +(-1)^n\cdot\alpha'_1\cdot\cdots\cdot\alpha'_n \] 
 which is also in $k[t]$. Since $P(t)=P'(t)$, we conclude that  $\{\alpha_1,\dots,\alpha_n\}=\{\alpha'_1,\dots,\alpha'_n\}$, as required. 
\end{proof}

\begin{small}

\end{small}

\bigskip

\bigskip

\begin{small}

{\sc Department of Mathematical Sciences, University of Liverpool, Peach Street, Liverpool L69 7ZL, England, UK}

\end{small}

\medskip

\begin{footnotesize}

{\it E-mail address}: {\tt Joe.Palacios-Baldeon@liverpool.ac.uk}

\end{footnotesize}

\end{document}